\newcommand{\myauthor}{Benjamin Antieau and Lennart Meier}
\newcommand{\mytitle}{The Brauer group of the moduli stack of elliptic curves}
\author{Benjamin Antieau\footnote{Benjamin Antieau was supported
by NSF Grants DMS-1461847 and DMS-1552766.}~ and Lennart Meier\footnote{Lennart Meier was
supported by DFG SPP 1786.}}
\title{\mytitle}
\date{}
\definecolor{todo}{rgb}{1,0,0}
\definecolor{conditional}{rgb}{0,1,0}
\definecolor{e-mail}{rgb}{0,.40,.80}
\definecolor{reference}{rgb}{.20,.60,.22}
\definecolor{mrnumber}{rgb}{.80,.40,0}
\definecolor{citation}{rgb}{0,.40,.80}
\let\oldmarginpar\marginpar
\renewcommand\marginpar[1]{\-\oldmarginpar[\raggedleft\footnotesize #1]%
{\raggedright\footnotesize #1}}
\newcommand{\df}[1]{{\bf #1}}
\newcommand{\Ascr}{\mathcal{A}}
\newcommand{\Bscr}{\mathcal{B}}
\newcommand{\Fscr}{\mathcal{F}}
\newcommand{\Hscr}{\mathcal{H}}
\newcommand{\Lscr}{\mathcal{L}}
\newcommand{\Mscr}{\mathcal{M}}
\newcommand{\MM}{\mathcal{M}}
\newcommand{\Oscr}{\mathcal{O}}
\newcommand{\Xscr}{\mathcal{X}}
\newcommand{\Yscr}{\mathcal{Y}}
\newcommand{\B}{\mathrm{B}}
\newcommand{\E}{\mathrm{E}}
\newcommand{\F}{\mathrm{F}}
\renewcommand{\H}{\mathrm{H}}
\newcommand{\M}{\mathrm{M}}
\newcommand{\R}{\mathrm{R}}
\renewcommand{\AA}{\mathds{A}}
\newcommand{\CC}{\mathds{C}}
\newcommand{\FF}{\mathds{F}}
\newcommand{\GG}{\mathds{G}}
\newcommand{\PP}{\mathds{P}}
\newcommand{\QQ}{\mathds{Q}}
\newcommand{\Q}{\mathds{Q}}
\newcommand{\RR}{\mathds{R}}
\newcommand{\ZZ}{\mathds{Z}}
\newcommand{\Z}{\mathds{Z}}
\newcommand{\ZZf}[1]{\ZZ[\tfrac{1}{#1}]}
\newcommand{\op}{\mathrm{op}}
\DeclareMathOperator{\tors}{tors}
\DeclareMathOperator{\id}{id}
\DeclareMathOperator{\Tr}{Tr}
\newcommand{\legendre}[2]{\genfrac{(}{)}{}{}{#1}{#2}}
\newcommand{\OO}{\mathcal{O}}
\DeclareMathOperator{\Pic}{Pic}
\DeclareMathOperator{\Br}{Br}
\DeclareMathOperator{\Hom}{Hom}
\DeclareMathOperator{\PGL}{PGL}
\DeclareMathOperator{\GL}{GL}
\newcommand{\Gm}{\mathds{G}_{m}}
\newcommand{\et}{\mathrm{\acute{e}t}}
\newcommand{\Zar}{\mathrm{Zar}}
\newcommand{\pl}{\mathrm{pl}}
\newcommand{\ind}{\mathrm{ind}}
\newcommand{\tr}{\mathrm{tr}}
\newcommand{\res}{\mathrm{res}}
\newcommand{\coker}{\mathrm{coker}}
\newcommand{\tensor}{\otimes}
\DeclareMathOperator{\Spec}{Spec}
\newcommand{\ShEnd}{\mathcal{E}\mathrm{nd}}
\newcommand{\iso}{\cong}
\numberwithin{equation}{section}
\theoremstyle{plain}
\newtheorem{theorem}[equation]{Theorem}
\newtheorem*{theorem*}{Theorem}
\newtheorem{lemma}[equation]{Lemma}
\newtheorem{proposition}[equation]{Proposition}
\newtheorem{corollary}[equation]{Corollary}
\newtheoremstyle{named}{}{}{\itshape}{}{\bfseries}{.}{.5em}{#1 \thmnote{#3}}
\theoremstyle{named}
\theoremstyle{definition}
\newtheorem{definition}[equation]{Definition}
\newtheorem{example}[equation]{Example}
\newtheorem{construction}[equation]{Construction}
\theoremstyle{remark}
\newtheorem{remark}[equation]{Remark}
\begin{document}

\maketitle

\begin{abstract}
    \noindent
    We compute the Brauer group of $\Mscr_{1,1}$, the moduli stack of elliptic
    curves, over $\Spec\ZZ$, its localizations, finite fields of odd
    characteristic, and algebraically closed fields of characteristic not $2$. The methods involved include the use of the parameter space
    of Legendre curves and the moduli stack $\Mscr(2)$ of curves with full (naive) level $2$
    structure, the study of the Leray--Serre spectral sequence
    in \'etale cohomology and the Leray spectral sequence in fppf cohomology,
    the computation of the group cohomology of $S_3$ in a certain integral
    representation, the
    classification of cubic Galois extensions of $\QQ$, the computation of
    Hilbert symbols in the ramified case for the primes $2$ and $3$, and
    finding $p$-adic elliptic curves with specified properties.

    \paragraph{Key Words.}
    Brauer groups, moduli of elliptic curves, level structures, Hilbert symbols.

    \paragraph{Mathematics Subject Classification 2010.}
    Primary:
    \href{http://www.ams.org/mathscinet/msc/msc2010.html?t=14Fxx&btn=Current}{14F22},
    \href{http://www.ams.org/mathscinet/msc/msc2010.html?t=14Fxx&btn=Current}{14H52},
    \href{http://www.ams.org/mathscinet/msc/msc2010.html?t=14Fxx&btn=Current}{14K10}.
    Secondary:
    \href{http://www.ams.org/mathscinet/msc/msc2010.html?t=11Gxx&btn=Current}{11G05},
    \href{http://www.ams.org/mathscinet/msc/msc2010.html?t=11Gxx&btn=Current}{11G07}.
\end{abstract}

\tableofcontents

\section{Introduction}

Brauer groups of fields have been considered since the times of Brauer and
Noether; later Grothendieck generalized Brauer groups to the case of arbitrary
schemes. Although both the definition via Azumaya algebras and the
cohomological definition generalize to arbitrary Deligne--Mumford stacks,
Brauer groups of stacks have so far mostly been neglected especially for
stacks containing arithmetic information. Some exceptions are
the use of Brauer groups of root stacks, as in the work of Chan--Ingalls~\cite{chan-ingalls} on the
minimal model program for orders on surfaces, the work of
Auel--Bernardara--Bolognesi~\cite{auel-bernardara-bolognesi} on derived  categories of families of quadrics, and
the work of
Lieblich \cite{lieblich-arithmetic}, who computed the Brauer group
$\Br(\B\mu_n)$ over a field and applied it to the period-index problem. In this
paper, we study the Brauer group $\Br(\Mscr)$ of the moduli stack of elliptic
curves $\Mscr=\Mscr_{1,1}$.

The case of the Picard group has been considered before. Mumford showed
in~\cite{mumford-picard} that $\Pic(\Mscr_k)=\H^1_{\et}(\Mscr_k,\Gm)\iso\ZZ/12$ if $k$ is a field of
characteristic not dividing $6$, and that the Picard
group is generated by the Hodge bundle $\lambda$. The bundle $\lambda$ is characterized by the property
that $u^*\lambda\iso p_*\Omega^1_{E/T}$ when $u:T\rightarrow\Mscr$ classifies a
family $p:E\rightarrow T$ of elliptic curves. This calculation was extended by Fulton
and Olsson who showed that $\Pic(\Mscr_S)\iso\Pic(\AA^1_S)\oplus\ZZ/12$ whenever $S$
is a reduced scheme~\cite{fulton-olsson}.

In contrast, an equally uniform description of $\Br(\Mscr_S)$ does not seem
possible (even if we assume that $S$ is regular noetherian); both the result
and the proofs depend much more concretely on the arithmetic on $S$. The
following is a sample of our results in ascending order of difficulty.
We view (5) as the main result of this paper.

\begin{theorem}\label{thm:intromain}
    \begin{enumerate}
        \item[\emph{(1)}] $\Br(\Mscr_k) = 0$ if $k$ is an algebraically closed field of
        characteristic not $2$,\footnote{Minseon Shin has proved in~\cite{shin} that when $k$ is algebraically closed of characteristic $2$ one
    has $\Br(\Mscr_k)\iso\ZZ/2$.}
        \item[\emph{(2)}] $\Br(\Mscr_k) \cong \Z/12$ if $k$ is a finite field of characteristic not $2$,
        \item[\emph{(3)}] $\Br(\Mscr_\QQ)\cong \Br(\QQ) \;\oplus\bigoplus_{p\not\equiv
        3\bmod 4}\ZZ/4\;\oplus\bigoplus_{p\equiv 3\bmod 4}\ZZ/2\; \oplus\;\H^1(\QQ, C_3),$ \\
        where $p$ runs over all primes and $-1$,
        \item[\emph{(4)}] $\Br(\Mscr_{\ZZf{2}})\iso \Br(\ZZf{2})\oplus
            \ZZ/2\oplus\ZZ/4$, and
        \item[\emph{(5)}] $\Br(\Mscr) = 0$.
    \end{enumerate}
\end{theorem}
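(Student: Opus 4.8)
The plan is to leverage the computation of $\Br(\Mscr_{\ZZf2})$ in part (4) together with the regularity of $\Mscr$ over $\Spec\ZZ$. Since $\Mscr=\Mscr_{1,1}$ is a regular integral Deligne--Mumford stack and $\Mscr_{\ZZf2}\subset\Mscr$ is the dense open complement of the divisor $\Mscr_{\FF_2}$, Brauer classes are detected on any dense open, so the restriction map
$$\Br(\Mscr)\longrightarrow\Br(\Mscr_{\ZZf2})\iso\ZZ/2\oplus\ZZ/2\oplus\ZZ/4$$
is injective (here I use $\Br(\ZZf2)\iso\ZZ/2$, the quaternion class ramified at $2$ and $\infty$). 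Thus $\Br(\Mscr)$ is a subgroup of the right-hand side, and the task reduces to showing that no nonzero class extends across the prime $2$; equivalently, that every nonzero class in $\Br(\Mscr_{\ZZf2})$ is ramified at $2$. Note that the passage from (3) to (4) has already disposed of the odd primes, the summand $\Br(\QQ)$, and the part $\H^1(\QQ,C_3)$, the latter because $\QQ$ admits no nontrivial everywhere-unramified cyclic cubic extension, so only the prime $2$ remains.

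The difficulty is that all three summands are $2$-primary, so the tame residue along $\Mscr_{\FF_2}$, valued in $\H^1(\kappa,\QQ/\ZZ)$ for $\kappa$ the function field of $\Mscr_{\FF_2}$, vanishes identically and detects nothing; the ramification at $2$ is genuinely wild. I would therefore detect it by restriction to $2$-adic points. Concretely, if $\alpha\in\Br(\Mscr)$ and $E$ is an elliptic curve over $\ZZ_2$ with good reduction, then the classifying map $\Spec\ZZ_2\to\Mscr$ sends $\alpha$ into $\Br(\ZZ_2)=\Br(\FF_2)=0$; restricting further to the generic point shows that the pullback of $\alpha|_{\ZZf2}$ along $E_{\QQ_2}\colon\Spec\QQ_2\to\Mscr_{\ZZf2}$ vanishes in $\Br(\QQ_2)$. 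Hence the image of $\Br(\Mscr)$ lies in the intersection, over all $E/\ZZ_2$ of good reduction, of the kernels of the restriction maps $E_{\QQ_2}^{*}\colon\Br(\Mscr_{\ZZf2})\to\Br(\QQ_2)=\QQ/\ZZ$.

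It therefore remains to produce enough good-reduction curves to separate the three generators. Using the Legendre and level-$2$ presentations of $\Mscr_{\ZZf2}$, I would write explicit quaternion or cyclic representatives for the generators of $\ZZ/2\oplus\ZZ/2\oplus\ZZ/4$ and for the base summand $\Br(\ZZf2)$; pulling these back along a chosen $E/\ZZ_2$ then expresses $E_{\QQ_2}^{*}\alpha$ as a Hilbert symbol $(a,b)_2$ over $\QQ_2$. The main obstacle is exactly this last step: one must exhibit $2$-adic elliptic curves with prescribed reduction data and evaluate the relevant Hilbert symbols in the wildly ramified case at $2$, where the special geometry of $\Mscr$, namely the extra automorphisms and the supersingular point in characteristic $2$, enters decisively. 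Showing that each generator restricts to a nontrivial class in $\Br(\QQ_2)$ for a suitable such curve forces the above intersection of kernels to be trivial, whence $\Br(\Mscr)=0$.
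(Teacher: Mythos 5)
Your argument addresses only part (5), and takes part (4) as an input; within that scope it coincides with the paper's proof. The paper likewise uses injectivity of $\Br(\Mscr)\to\Br(\Mscr_{\ZZf{2}})$ (Proposition~\ref{prop:BrOmnibus}(iv)), writes the generators as $[(-1,-1)_2]$, $[(-1,\Delta)_2]$ and a square root of $[(2,\Delta)_2]$, reduces to the seven nonzero $2$-torsion combinations exactly as you indicate, and kills each one by pulling back along $\ZZ_2$-points of $\Mscr$ given by curves with good reduction at $2$ (Cremona labels \texttt{11a3}, \texttt{15a8}, \texttt{53a1}, with discriminants $-11$, $-15$, $-53$) and evaluating Hilbert symbols $\binom{d,2}{2}$ and $\binom{d,-1}{2}$ by the standard formula over $\QQ_2$ (Proposition~\ref{prop:2ram}). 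One small correction: no special geometry of the supersingular point or of extra automorphisms in characteristic $2$ enters; the detection is purely a computation with discriminants of ordinary Weierstrass models.

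The genuine gap is everything else. Parts (1)--(4) are not addressed, and they carry nearly all of the weight of the theorem: the descent spectral sequence for $\Mscr(2)\to\Mscr_{\ZZf{2}}$, the identification $\Mscr(2)\simeq\B C_{2,X}$ over the Legendre scheme $X$, the $S_3$-cohomology of $\tilde{\rho}$, and the comparison with $\B C_{4}$ over $\ZZ[\tfrac12,i]$ needed to resolve the differential $d_2^{1,1}$ and the extension producing the $\ZZ/4$. Moreover, your parenthetical disposal of the $\H^1(\QQ,C_3)$ summand is not correct as stated: what matters for $\Mscr_{\ZZf{2}}$ is cubic extensions of $\QQ$ unramified outside $\{2,3\}$ (not everywhere-unramified ones), and there is exactly one, namely $\QQ(\zeta_9+\overline{\zeta}_9)$. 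Its associated class \emph{does} extend to $\Mscr_{\ZZf{6}}$, and ruling out its extension to $\Mscr_{\ZZf{2}}$ is a separate argument of the same flavor as your step at $p=2$, requiring cubic Hilbert symbols at $3$ and explicit Legendre curves over $\ZZ_3[\zeta_3]$ (Theorem~\ref{thm:Br3}). So even granting the shape of (3), the passage to (4) is not the formality your sketch suggests.
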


In all cases the non-trivial classes can be explicitly described via cyclic
algebras (see Lemma \ref{lem:3section}, Proposition \ref{prop:2cyclic} and
Remark \ref{rem:4cyclic}). In general, the $p$-primary torsion for $p\geq 5$ is
often easy to control via the following theorem.

\begin{theorem}
    Let $S$ be a regular noetherian scheme and $p\geq 5$ prime.
    Assume that $S[\frac1{2p}] = S_{\ZZf{2p}}$ is dense in $S$ and that $\Mscr_S \to S$ has a section.
    Then, the natural map $_p\Br'(S)\rightarrow{_p\Br'(\Mscr_{S})}$ is an isomorphism.
\end{theorem}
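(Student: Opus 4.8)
The plan is to reduce the statement to the dense open locus where $2p$ is invertible, and there to replace $\Mscr$ by its coarse moduli space, exploiting that for $p\geq 5$ every automorphism group of an elliptic curve has order prime to $p$. Write $U=S_{\ZZf{2p}}$, let $\pi\colon\Mscr_S\to S$ be the structure map and $s\colon S\to\Mscr_S$ the given section, so that $s^*\pi^*=\id$ and $\pi^*$ is split injective on ${}_p\Br'$; it therefore remains to prove surjectivity.

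First I would settle the computation over $U$. Let $c\colon\Mscr_U\to\AA^1_U$ be the coarse moduli map onto the $j$-line. Since $p\geq 5$, every geometric automorphism group occurring over $U$ (of order $2$, $4$, $6$, or, over the characteristic-$3$ fibres, $12$) has order prime to $p$, so \'etale-locally $\Mscr_U\simeq[V/G]$ with $p\nmid|G|$ and hence $\H^{>0}(G,\mu_{p^n})=0$. Thus $c_*\mu_{p^n}=\mu_{p^n}$ and $R^{q}c_*\mu_{p^n}=0$ for $q>0$, and the Leray spectral sequence gives $\H^*_{\et}(\Mscr_U,\mu_{p^n})\cong\H^*_{\et}(\AA^1_U,\mu_{p^n})$. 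Composing with $\AA^1$-homotopy invariance, valid because $p$ is invertible on $U$, yields an isomorphism $\pi^*\colon\H^*_{\et}(U,\mu_{p^n})\xrightarrow{\sim}\H^*_{\et}(\Mscr_U,\mu_{p^n})$. Feeding this into the two Kummer sequences and using the Fulton--Olsson isomorphism $\Pic(\Mscr_U)\cong\Pic(\AA^1_U)\oplus\ZZ/12\cong\Pic(U)\oplus\ZZ/12$, one gets $\Pic(\Mscr_U)/p^n\cong\Pic(U)/p^n$ since $p\nmid 12$; the five lemma then shows $\pi^*\colon{}_{p^n}\Br'(U)\to{}_{p^n}\Br'(\Mscr_U)$ is an isomorphism for every $n$, so $\pi^*$ is an isomorphism on $p$-power torsion over $U$. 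In particular $\pi^*s^*=\id$ on ${}_p\Br'(\Mscr_U)$.

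To descend from $U$ to $S$ I would use regularity. Because $\Mscr\to\Spec\ZZ$ is smooth and $S$ is regular, $\Mscr_S$ is regular; as $U$ is dense in $S$ and $\pi$ is smooth and surjective, $\Mscr_U$ is dense in $\Mscr_S$. By Grothendieck's theorem that the cohomological Brauer group of a regular scheme injects into that of any dense open, in its extension to regular Deligne--Mumford stacks, the restriction ${}_p\Br'(\Mscr_S)\to{}_p\Br'(\Mscr_U)$ is injective. Now take any $\alpha\in{}_p\Br'(\Mscr_S)$ and set $\tilde\beta=s^*\alpha\in{}_p\Br'(S)$. Restricting to $U$ and using $\pi^*s^*=\id$ there gives $(\pi^*\tilde\beta)|_U=\pi^*\bigl((s^*\alpha)|_U\bigr)=\alpha|_U$, and the injectivity just noted forces $\pi^*\tilde\beta=\alpha$. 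Hence $\pi^*$ is surjective, and therefore an isomorphism.

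The formal skeleton — split injectivity from the section and the final diagram chase — is routine; the substantive input is the computation over $U$, whose only real subtlety is verifying that the prime-to-$p$ stabilizers force $R^{>0}c_*\mu_{p^n}=0$, so that $\Mscr_U$ is indistinguishable from its coarse space in $p$-torsion cohomology. I expect the genuine obstacle to be the passage from $U$ to $S$: here the hypotheses that $S$ is regular and that $S_{\ZZf{2p}}$ is dense are precisely what let me avoid any cohomological computation at the characteristic-$2$ and characteristic-$p$ fibres, where neither the Kummer sequence nor homotopy invariance is available, replacing it by the Brauer-group injectivity for regular stacks.
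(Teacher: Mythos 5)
Your proof is correct, and the endgame — split injectivity from the section $s$, density of $\Mscr_{S[\frac{1}{2p}]}$ in $\Mscr_S$ via openness of $\Mscr_S\to S$, and injectivity of restriction to a dense open for regular Deligne--Mumford stacks — is exactly the paper's final diagram chase. Where you genuinely diverge is in the computation over $U=S[\frac{1}{2p}]$. The paper never touches the coarse space of $\Mscr_U$ itself: it runs the $S_3$-descent spectral sequence for $\Mscr(2)_U\to\Mscr_U$, identifies $\Mscr(2)_U\simeq\B C_{2,X_U}$ with $X_U=\AA^1_U-\{0,1\}$ the Legendre parameter space, notes that $\H^{>0}(S_3,-)$ is $6$-torsion so only $\left({}_p\Br'(X_U)\right)^{S_3}$ survives, and then kills the ramified part using the ramification exact sequence together with $(\tilde{\rho}\otimes M)^{S_3}=M[3]=0$ for $p$-primary $M$ (Lemmas \ref{lem:ptorsion}, \ref{lem:supportsrho}, \ref{lem:hrho}). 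You instead pass directly to the $j$-line and use tameness: every stabilizer over $U$ has order dividing $24$ (order $12$ in characteristic $3$, and characteristic $2$ is excluded), hence prime to $p$, so $R^{>0}c_*\mu_{p^n}=0$; then $\AA^1$-invariance, the two Kummer sequences, and Fulton--Olsson for $\Pic(\Mscr_U)/p^n\cong\Pic(U)/p^n$ finish the job. Both routes are valid, and in both the role of $p\geq 5$ is ultimately that $p$ is prime to all automorphism group orders, but the inputs differ: yours trades the level-$2$/Legendre apparatus for the standard vanishing of higher direct images along a coarse moduli map with prime-to-$p$ inertia plus the Fulton--Olsson Picard computation as a black box, which makes for a shorter and more self-contained argument for this particular theorem; the paper's version reuses machinery it must build anyway for the much harder $2$- and $3$-primary computations, and as a byproduct identifies the invariants ${}_p\Br'(X_U)^{S_3}$ explicitly, which is the template for Theorem \ref{thm:3torsion}. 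The one point you should make explicit if you write this up is the identification of the coarse space of $\Mscr_U$ with the $j$-line $\AA^1_U$ after arbitrary (not necessarily flat) base change, and the local presentation $[V/G]$ needed for the stalkwise vanishing of $R^{>0}c_*\mu_{p^n}$; both are classical but are doing real work in your argument.
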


Here, $\Br'(\Mscr_S)$ denotes the cohomological Brauer group, which agrees with
$\Br(\Mscr_S)$ whenever $S$ is affine and at least one prime is invertible on
$S$.

Let us now explain how to compute the $2$- and $3$-primary torsion in the example of $\Br(\Mscr_{\ZZf{2}})$. 
We use the $S_3$-Galois cover
$\Mscr(2)\rightarrow\Mscr_{\ZZf{2}}$, where $\Mscr(2)$ is the moduli stack of
elliptic curves with full (naive) level $2$-structure. The Leray--Serre spectral
sequence reduces the problem of computing $\Br(\Mscr_{\ZZf{2}})$
to understanding the low-degree $\Gm$-cohomology of $\Mscr(2)$ \emph{together}
with the action of $S_3$ on these cohomology groups and to the computation of differentials.

To understand the groups themselves, it is sufficient to use the fact that
$\Mscr(2)\iso\B C_{2,X}$, the classifying stack of cyclic $C_2$-covers over
$X$, where $X$ is the parameter space of Legendre curves. Explicitly, $X$ is
the arithmetic
surface $$X=\PP^1_{\ZZf{2}}-\{0,1,\infty\}=\Spec\ZZ[\tfrac12,t^{\pm 1},(t-1)^{-1}],$$ and the universal
Legendre curve over $X$ is defined by the equation 
$$y^2=x(x-1)(x-t).$$
The Brauer group of $\B C_{2,X}$ can be described using a Leray--Serre spectral
sequence as well, but this description is not $S_3$-equivariant, which causes
some complications, and we have to use the $S_3$-equivariant map from $\Mscr(2)$ to
$X$, its coarse moduli space, to get full control. Knowledge about the Brauer group of $X$ leads for the $3$-primary torsion to the following conclusion.

\begin{theorem}
    Let $S$ be a regular noetherian scheme. If $6$ is a unit on $S$, then
    there is an exact sequence
    $$0\rightarrow{_3\Br'(S)}\rightarrow{_3\Br'(\Mscr_S)}\rightarrow\H^1(S,C_3)\rightarrow 0,$$ which is non-canonically split.
\end{theorem}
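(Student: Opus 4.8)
The plan is to analyze the Leray (Grothendieck) spectral sequence for the structure map $\pi\colon\Mscr_S\to S$ in the fppf (or \'etale) topology,
$$E_2^{p,q}=\H^p(S,R^q\pi_*\Gm)\Longrightarrow\H^{p+q}(\Mscr_S,\Gm),$$
and to extract its $3$-primary part. The inputs are the cohomologies of the geometric fibres $\Mscr_{\overline k}$, all of which have residue characteristic prime to $6$. First I would identify the sheaves $R^q\pi_*\Gm$ in low degrees. One has $R^0\pi_*\Gm=\Gm$, since $\Mscr_S\to S$ has geometrically connected fibres with only constant global functions. By Mumford's theorem $\Pic(\Mscr_{\overline k})\iso\ZZ/12$, generated by the Hodge bundle $\lambda$, which is defined globally over $\Mscr_S$; hence the constant sheaf $\underline{\ZZ/12}\to R^1\pi_*\Gm$ is an isomorphism on stalks, and the $3$-primary part of $R^1\pi_*\Gm$ is $\underline{C_3}$, generated by $\lambda^{\otimes 4}$. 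Finally Theorem \ref{thm:intromain}(1) gives $\Br(\Mscr_{\overline k})=0$, so the $3$-primary part of $R^2\pi_*\Gm$ vanishes. On $3$-primary parts the three contributions in total degree $2$ are therefore $E_2^{2,0}={}_3\Br'(S)$, $E_2^{1,1}=\H^1(S,C_3)$, and $E_2^{0,2}=0$.

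The claimed sequence will then be the edge sequence, once two differentials are controlled. First I would show that the transgression $d_2\colon E_2^{0,1}\to E_2^{2,0}$ vanishes on $3$-primary parts: the generator $\lambda^{\otimes 4}$ of $({R^1\pi_*\Gm})_{(3)}=C_3$ is the image under the edge map of the globally defined line bundle $\lambda^{\otimes 4}\in\Pic(\Mscr_S)=\H^1(\Mscr_S,\Gm)$, hence is a permanent cycle and lies in $\ker d_2$. Since it generates the $3$-part of $E_2^{0,1}$, the only incoming differential to $E_2^{2,0}$ is zero there, so $\pi^*\colon{}_3\Br'(S)\to{}_3\Br'(\Mscr_S)$ is injective and $E_\infty^{2,0}={}_3\Br'(S)$. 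No section of $\pi$ is needed here: the Hodge bundle plays the role a section would. Together with $E_2^{0,2}=0$ this already gives left-exactness $0\to{}_3\Br'(S)\to{}_3\Br'(\Mscr_S)\to E_\infty^{1,1}\to 0$ with $E_\infty^{1,1}\subseteq\H^1(S,C_3)$.

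The remaining, and main, point is to prove $E_\infty^{1,1}=E_2^{1,1}=\H^1(S,C_3)$ — equivalently that $d_2\colon E_2^{1,1}\to E_2^{3,0}=\H^3(S,\Gm)$ vanishes on $3$-torsion — together with the non-canonical splitting. I would establish both at once by constructing a homomorphism $\sigma\colon\H^1(S,C_3)\to{}_3\Br'(\Mscr_S)$ whose composite with the edge map $\Br'(\Mscr_S)\to E_\infty^{1,1}\subseteq\H^1(S,R^1\pi_*\Gm)$ is the identity. The natural candidate is a cup product with the $3$-torsion Hodge class: writing $\beta\in\H^1(\Mscr_S,\mu_3)$ for a Kummer lift of $\lambda^{\otimes 4}\in\Pic(\Mscr_S)[3]$ and using $C_3\otimes\mu_3\iso\mu_3$, one sets $\sigma(\alpha)$ to be the image of $\pi^*\alpha\cup\beta\in\H^2(\Mscr_S,\mu_3)$ in ${}_3\Br'(\Mscr_S)$. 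Concretely this is the cyclic algebra attached to the cubic $C_3$-cover classified by $\alpha$ and to $\lambda^{\otimes 4}$; it is unramified precisely because $\lambda^{\otimes 4}$ is an honest line bundle on all of $\Mscr_S$. Surjectivity of the edge map then forces $d_2|_{E_2^{1,1}}=0$ on $3$-parts, $\sigma$ splits the sequence, and the splitting is non-canonical because $\beta$ is only well defined modulo $\Oscr^*(\Mscr_S)/3$.

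I expect essentially all the work to be in this last step. Verifying that $\sigma$ is a section of the edge homomorphism is a compatibility between the cup product and the Leray edge map, which must be checked fibrewise against the generator of $\Pic(\Mscr_{\overline k})[3]=C_3$, where $\pi^*\alpha\cup\beta$ projects to $\alpha\otimes\lambda^{\otimes 4}=\alpha$; and one must confirm that the resulting class is genuinely $3$-torsion and unramified over \emph{all} of $\Mscr_S$, not merely over the generic fibre, which is exactly what pins the construction to the globally defined Hodge bundle rather than to an arbitrary function. The two spectral-sequence steps, by contrast, are formal once the sheaves $R^q\pi_*\Gm$ have been identified from Theorem \ref{thm:intromain}(1) and Mumford's theorem, and the splitting also guarantees that the middle term is genuinely $3$-torsion rather than an extension with a summand of order $9$.
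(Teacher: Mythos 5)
Your architecture --- the Leray spectral sequence for $\pi\colon\Mscr_S\to S$ rather than descent along the $S_3$-cover $\Mscr(2)_S\to\Mscr_S$ --- is genuinely different from the paper's, and your splitting map is in substance the paper's own (Lemma \ref{lem:3section}: the cyclic algebra $(\chi,\Delta^{-1})_3$, where $\Delta$ is the canonical trivialization of $(\lambda^{\otimes 4})^{\otimes 3}$). But there is a real gap at the one step that carries all the content: the identification of the $3$-primary part of $R^2\pi_*\Gm$. First, the stalk of $R^2\pi_*\Gm$ at a geometric point $\bar s$ is $\H^2(\Mscr_{R},\Gm)$ for $R=\Oscr_{S,\bar s}^{\mathrm{sh}}$ the strict henselization, \emph{not} the Brauer group of the geometric fibre $\Mscr_{\kappa(\bar s)}$; since $\Mscr\to S$ is not proper, no base-change theorem lets you pass from one to the other, and you offer no comparison. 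Second, even the input $\Br(\Mscr_{\overline k})=0$ is Theorem \ref{thm:intromain}(1), a main result of the paper whose odd-primary part is proved in the final section by exactly the analysis you are trying to bypass: descent along $\Mscr(2)\to\Mscr$, the identification $\Mscr(2)\iso\B C_{2,X}$, and Tsen/purity on the Legendre parameter space. Citing it here is circular in the paper's logical order, and for a blind proof it amounts to assuming the hard local statement by fiat.

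The repair is to prove directly that ${_3\H^2(\Mscr_R,\Gm)}=0$ for $R$ strictly henselian local with $6$ invertible. This is true and provable --- pass to $\Mscr(2)_R\iso\B C_{2,X_R}$, use that the $3$-part of $\H^2(\B C_{2,X_R},\Gm)$ is ${_3\Br'(X_R)}$ by Proposition \ref{prop:bcn}, and then the purity sequence gives ${_3\Br'(X_R)}\hookrightarrow{_3\Br'(R)}\oplus\H^1(R,\QQ_3/\ZZ_3)^{\oplus 2}=0$ --- but that is precisely the machinery of Lemmas \ref{lem:ptorsion} and \ref{lem:supportsrho}, now run over the local rings of $S$ instead of over $S$ globally. (A milder version of the same caveat applies to $R^1\pi_*\Gm$, where you need $\Pic(\Mscr_R)\iso\ZZ/12$ for such $R$; Fulton--Olsson does cover this.) The paper instead works globally: it establishes ${_3\Br'(\Mscr_S)}\iso\left({_3\Br'(X_S)}\right)^{S_3}$ from the descent spectral sequence and the $S_3$-cohomology of the representation $\tilde{\rho}$, and reads $\H^1(S,C_3)$ off as $\left(\tilde{\rho}\otimes\H^1(S,\QQ_3/\ZZ_3)\right)^{S_3}$, so the local vanishing is never needed separately. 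Your remaining steps --- $\lambda^{\otimes 4}$ killing $d_2^{0,1}$, and the cup-product section simultaneously killing $d_2^{1,1}$ and furnishing the splitting --- are sound, and the latter would give a clean alternative to the paper's ramification computation at $t=0$ once the $R^q\pi_*\Gm$ are actually pinned down.
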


There is a unique cubic Galois extension of $\QQ$ which is ramified at most at $(2)$ and $(3)$, namely $\QQ(\zeta_9+\overline{\zeta}_9)$. This shows that the cokernel of $\Br(\ZZf{6})\hookrightarrow\Br(\Mscr_{\ZZf{6}})$ is
$\ZZ/3$.
The proof that this extra class does not extend to $\Mscr_{\ZZf{2}}$, which is
similar to the strategy discussed below for the $2$-torsion, uses the
computation of cubic Hilbert symbols at the prime $3$.
Putting these ingredients together, we conclude that
$\Br(\Mscr_{\ZZf{2}})$ is a $2$-group, and further computations first over $\ZZ[\tfrac12,i]$ and then over $\ZZf{2}$
let us deduce the structure in Theorem~\ref{thm:intromain}(4).
The corresponding general results on $2$-torsion are contained in Proposition
\ref{prop:mi} and Theorem \ref{thm:brm}. They are somewhat more complicated to
state so we omit them from this introduction.

To show that $\Br(\Mscr)=0$, we need an extra argument since all our arguments
using the Leray--Serre spectral sequence presuppose at least that $2$ is inverted. Note first that the map $\Br(\Mscr) \to \Br(\Mscr_{\ZZf{2}})$ is an injection. Thus, we have only to show that the non-zero classes in $\Br(\Mscr_{\ZZf{2}})$ do not
extend to $\Mscr$. Our general method is the following. For each non-zero class
$\alpha$ in the Brauer group of $\Mscr_{\ZZf{2}}$, we exhibit an elliptic curve
over $\Spec\ZZ_2$ such that the restriction of $\alpha$ to
$\Spec\QQ_2$ is non-zero. Such an elliptic curve defines a morphism $\Spec \ZZ_2 \to \Mscr$ and we obtain a commutative diagram
\begin{equation}\label{eq:BrauerExtending}
\xymatrix{
    \Br(\QQ_2)  &   \Br(\Mscr_{\ZZf{2}})\ar[l]\\
    \Br(\ZZ_2)\ar[u]    &   \Br(\Mscr)\ar[u]\ar[l]
}
\end{equation} 
Together with the fact that $\Br(\ZZ_2)=0$, this diagram implies that the class $\alpha$ cannot come
from $\Br(\Mscr)$. This argument requires us to
understand explicit generators for $\Br(\Mscr_{\ZZf{2}})$ and the computation of Hilbert symbols again. 

We remark that the computation of $\Br(\Mscr)$ -- while important in algebraic geometry and in the arithmetic of
elliptic curves -- was nevertheless originally motivated by considerations in chromatic
homotopy theory, especially in the possibility in constructing twisted
forms of the spectrum $\mathrm{TMF}$ of topological modular
forms. The Picard group computations of Mumford and Fulton--Olsson are primary
inputs into the computation of the Picard group of $\mathrm{TMF}$ due to Mathew
and Stojanoska~\cite{mathew-stojanoska}.

\paragraph{Conventions.} We will have occasion to use Zariski, \'etale, and
fppf cohomology of schemes and Deligne-Mumford stacks. We will denote these by
$\H^i_{\Zar}(X,F)$, $\H^i(X,F)$, and $\H^i_{\pl}(X,F)$ when $F$ is an
appropriate sheaf on $X$. Note in particular that without other adornment,
$\H^i(X,F)$ or $\H^i(R,F)$ (when $X=\Spec R$) always denotes \'etale
cohomology. If $G$ is a group and $F$ is a $G$-module, we let $\H^i(G,F)$ denote the group
cohomology.

For all stacks $\Xscr$ appearing in this paper, we will have
$\Br'(\Xscr)\iso\H^2(\Xscr,\Gm)$. Thus, we will use the two groups
interchangeably. When working over a general base $S$, we will typically state
our results in terms of $\Br'(S)$ or $\Br'(\Mscr_S)$. However, when working
over an affine scheme, such as $S=\Spec\ZZf{2}$, we will write $\Br(S)$ or
$\Br(\Mscr_S)$. There should be no confusion as in all of these cases we will
have $\Br(S)=\Br'(S)$ and $\Br(\Mscr_S)=\Br'(\Mscr_S)$ and so on by~\cite{dejong-gabber}.
%One general exception is that we will always prefer $\Br(R)$ when $R$ is a ring.
%By~\cite{dejong-gabber}, we always have $\Br(R)=\Br'(R)$.  

For an abelian group $A$ and an integer $n$, we will denote by $_nA$ the subgroup of $n$-primary torsion
elements: $_nA=\{x\in A:n^kx=0\text{ for some $k\geq 1$}\}$.

\paragraph{Acknowledgments.} We thank the Hausdorff Research Institute for
Mathematics, UIC, and Universit\"at Bonn for hosting one or the other author in
2015 and 2016 while this paper was being written. Furthermore, we thank Asher
Auel, Akhil Mathew, and Vesna Stojanoska for their ideas
at the beginning of this project and Peter Scholze and Yichao Tian for the
suggestion to use fppf-cohomology and the idea for the splitting in Proposition
\ref{prop:fppfsplitting}. Finally, we thank the anonymous referees for their
helpful suggestions.

\section{Brauer groups, cyclic algebras, and ramification}\label{sec:BrauerCyclicRam}

We review here some basic facts about the Brauer group, with special attention
to providing references for those facts in the generality of Deligne-Mumford
stacks. For more details about the Brauer group in general,
see~\cite{grothendieck-brauer-1}. 

Any Deligne-Mumford stack has an associated \'etale topos, and we
can therefore consider \'etale sheaves and \'etale cohomology~\cite{lmb}.

\begin{definition}
    If $X$ is a quasi-compact and quasi-separated Deligne-Mumford stack, the
    \df{cohomological Brauer group} of $X$ is defined to be
    $\Br'(X)=\H^2(X,\Gm)_{\tors}$, the torsion subgroup of $\H^2(X,\Gm)$.
\end{definition}

Because of its definition as the torsion in a cohomology group, the
cohomological Brauer group is amenable to computation via Leray--Serre spectral
sequences, long exact sequences, and so on, as we will see in the next sections.
However, our main interest is in the \emph{Brauer group} of Deligne-Mumford
stacks.

\begin{definition}
    An \df{Azumaya algebra} over a Deligne-Mumford stack $X$ is a sheaf of
    quasi-coherent $\Oscr_X$-algebras $\Ascr$ such that $\Ascr$ is
    \'etale-locally on $X$ isomorphic to $\Mscr_n(\Oscr_X)$, the sheaf of $n\times
    n$-matrices over $\Oscr_X$, for some $n\geq 1$.
\end{definition}

In particular, an Azumaya algebra $\Ascr$ is a locally free $\Oscr_X$-module, and the degree $n$ appearing
in the definition is a locally constant function. If the degree $n$ is in fact
constant, then $\Ascr$ corresponds to a unique $\PGL_n$-torsor on $X$ because
the group of $k$-algebra automorphisms of $\M_n(k)$ is isomorphic to $\PGL_n(k)$ for
fields $k$. The exact sequence
$1\rightarrow\Gm\rightarrow\GL_n\rightarrow\PGL_n\rightarrow 1$ gives a
boundary map $$\delta:\H^1(X,\PGL_n)\rightarrow\H^2(X,\Gm).$$ For an Azumaya
algebra $\Ascr$ of degree $n$, we write $[\Ascr]$ for the class $\delta(\Ascr)$ in
$\H^2(X,\Gm)$. In general, when $X$ has
multiple connected components, its invariant $[\Ascr]\in\H^2(X,\Gm)$ is
computed on each component.

\begin{example}
    \begin{enumerate}
        \item   If $E$ is a vector bundle on $X$ of rank $n>0$, then $\Ascr=\ShEnd(E)$, the
            sheaf of endomorphisms of $E$, is an Azumaya algebra on $X$. Indeed,
            in this case, $\Ascr$ is even Zariski-locally equivalent to
            $\Mscr_n(\Oscr_X)$. The class of $\Ascr$ in $\H^1(X,\PGL_n)$ is the
            image of $E$ via $\H^1(X,\GL_n)\rightarrow\H^1(X,\PGL_n)$, so
            the long exact sequence in nonabelian cohomology implies that
            $[\Ascr]=0$ in $\H^2(X,\Gm)$.
        \item   If $\Ascr$ and $\Bscr$ are Azumaya algebras on $X$, then
            $\Ascr\otimes_{\Oscr_X}\Bscr$ is an Azumaya algebra.
    \end{enumerate}
\end{example}

\begin{definition}
    Two Azumaya algebras $\Ascr$ and $\Bscr$ are \df{Brauer equivalent} if
    there are vector bundles $E$ and $F$ on $X$ such that
    $\Ascr\otimes_{\Oscr_X}\ShEnd(E)\iso\Bscr\otimes_{\Oscr_X}\ShEnd(F)$.
    The \df{Brauer group} $\Br(X)$ of a Deligne-Mumford stack $X$ is the multiplicative monoid of isomorphism classes of
    Azumaya algebras under tensor product modulo Brauer equivalence.
\end{definition}

In terms of Azumaya algebras, addition is given by the tensor product
$[\Ascr]+[\Bscr]=[\Ascr\otimes_{\Oscr_X}\Bscr]$, and
$-[\Ascr]=[\Ascr^{\op}]$. Here are the basic structural facts we will use about the Brauer group.

\begin{proposition}\label{prop:BrOmnibus}
    \begin{enumerate}
        \item[{\rm (i)}] The Brauer group of a quasi-compact and quasi-separated
            Deligne-Mumford stack $X$ is the subgroup of $\Br'(X)$ generated by $[\Ascr]$
            for $\Ascr$ an Azumaya algebra on $X$.
        \item[{\rm (ii)}]    The Brauer group $\Br(X)$ is a torsion group for any
            quasi-compact and quasi-separated Deligne-Mumford stack $X$.
        \item[{\rm (iii)}]   If $X$ is a regular and noetherian Deligne-Mumford stack, then $\H^2(X,\Gm)$ is
            torsion, so in particular $\Br'(X)\iso\H^2(X,\Gm)$.
        \item[{\rm (iv)}]   If $X$ is a regular and noetherian Deligne-Mumford
            stack, and if $U\subseteq X$ is a dense
            open subset, then
            the restriction map $\H^2(X,\Gm)\rightarrow\H^2(U,\Gm)$ is injective.
        \item[{\rm (v)}]    If $X$ is a scheme with an ample line bundle,
            then $\Br(X)=\Br'(X)$.
        \item[{\rm (vi)}] If $X$ is a regular and noetherian scheme with $p$
            invertible on $X$, then the morphism
            $_p\H^i(X,\Gm) \to {_p\H^i(\AA^1_X, \Gm)}$ on $p$-primary torsion is an
            isomorphism for all $i\geq 0$. 
    \end{enumerate}
\end{proposition}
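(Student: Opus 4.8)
The plan is to treat the six parts by reducing each to the corresponding classical statement for schemes; only the passage to Deligne--Mumford stacks in (iii)--(iv) and the bookkeeping in (vi) require real care, and everything else is either formal or a citation.

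For (i) and (ii) I would construct the comparison homomorphism by hand. Sending the Brauer class of an Azumaya algebra $\Ascr$ to $\delta(\Ascr)=[\Ascr]\in\H^2(X,\Gm)$ is well defined on Brauer equivalence classes because $\ShEnd(E)$ has trivial invariant (as recorded in the Example), and it is additive because the block inclusion $\PGL_n\times\PGL_m\to\PGL_{nm}$ realizes the group law on $\H^2(X,\Gm)$, giving $\delta(\Ascr\otimes\Bscr)=\delta(\Ascr)+\delta(\Bscr)$. Injectivity is the standard nonabelian-cohomology fact that two Azumaya algebras with equal class in $\H^2(X,\Gm)$ become isomorphic after tensoring with endomorphism algebras of vector bundles; this identifies $\Br(X)$ with the subgroup of $\H^2(X,\Gm)$ generated by the $[\Ascr]$, which is exactly (i). For (ii) I would lift $[\Ascr]$, for $\Ascr$ of degree $n$, along $1\to\mu_n\to\SL_n\to\PGL_n\to 1$ to a class in $\H^2(X,\mu_n)$; since the composite $\mu_n\hookrightarrow\Gm\xrightarrow{\,n\,}\Gm$ is trivial, the image of $\H^2(X,\mu_n)$ in $\H^2(X,\Gm)$ is killed by $n$, so $[\Ascr]$ is $n$-torsion and hence $\Br(X)$ is torsion.

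Parts (iii) and (iv) I would prove together, the injectivity statement (iv) being the heart; (iii) then follows since being torsion may be checked after restriction to a dense open. For a regular noetherian integral \emph{scheme} $X$, Grothendieck's theorem \cite{grothendieck-brauer-1} gives that $\H^2(X,\Gm)$ injects into $\H^2(\Spec K,\Gm)=\Br(K)$ for $K$ the function field, with this restriction factoring through $\H^2(U,\Gm)$ for every dense open $U$; as $\Br(K)$ is torsion this yields both (iv) and (iii) for schemes. To pass to a regular noetherian Deligne--Mumford stack I would choose an \'etale surjection from a regular noetherian scheme and transport the generic-fibre argument to the residual gerbe of a generic point, reducing torsion and injectivity to the scheme case through the descent spectral sequence for the cover. \emph{This stacky reduction is the step I expect to be the main obstacle}: one must check that the generic residual gerbe has torsion $\H^2(-,\Gm)$ and that restriction to it is injective, which I would handle by combining the scheme-level result with the observation that a gerbe over a field, having finite inertia, contributes only torsion.

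For (v) I would simply invoke the theorem of de Jong and Gabber \cite{dejong-gabber}: a scheme admitting an ample line bundle satisfies $\Br(X)=\Br'(X)$, so nothing beyond the citation is needed. Finally, for (vi) I would argue through the Kummer sequence. Since $p$ is invertible on $X$, $\AA^1$-homotopy invariance of \'etale cohomology with coefficients in the locally constant sheaf $\mu_{p^k}$ gives isomorphisms $\H^i(X,\mu_{p^k})\xrightarrow{\ \sim\ }\H^i(\AA^1_X,\mu_{p^k})$ for all $i$ and all $k$; regularity supplies the companion fact that $\Pic(X)\xrightarrow{\ \sim\ }\Pic(\AA^1_X)$. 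Comparing the long exact Kummer sequences for $X$ and $\AA^1_X$, applying the five lemma, and passing to the colimit over $k$ then identifies the $p$-primary torsion subgroups $_p\H^i(X,\Gm)$ and $_p\H^i(\AA^1_X,\Gm)$. I expect this last part to be routine once the homotopy-invariance input is cited.
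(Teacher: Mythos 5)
Parts (i), (ii), (v) amount to the same citations the paper uses, and your Kummer-sequence argument for (vi) is the paper's proof (modulo one small point you gloss over: for $i\geq 3$ the five lemma needs $\H^{i-1}(X,\Gm)/p^m\to\H^{i-1}(\AA^1_X,\Gm)/p^m$ to be an isomorphism, and since the inductive hypothesis only gives $\AA^1$-invariance of the $p$-primary torsion in degree $i-1$, you must also invoke that $\H^{i-1}(X,\Gm)$ is itself torsion for $i-1\geq 2$ by regularity). The genuine gap is in (iv), exactly at the step you flagged. Reducing to a scheme via an \'etale surjection $V\to X$ and the descent spectral sequence cannot give injectivity of $\H^2(X,\Gm)\to\H^2(U,\Gm)$: the classes of $\H^2(X,\Gm)$ not visible on $V$ live in the subquotients $E^{1,1}$ and $E^{2,0}$ of the \v{C}ech-type spectral sequence, and the comparison map to the spectral sequence for $V_U\to U$ fails to be injective there --- concretely, $\Pic(V\times_X\cdots\times_X V)\to\Pic(V_U\times_U\cdots\times_U V_U)$ has kernel generated by the divisorial components of the complement, so the five lemma does not apply. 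Nor is $\H^2(X,\Gm)\to\H^2(V,\Gm)$ itself injective in general (for $\B C_{2,X}$ in this very paper the Brauer group is strictly larger than that of the cover), so there is no scheme on which all of $\H^2(X,\Gm)$ is detected and to which the generic-point argument could be transported. The appeal to the generic residual gerbe presupposes the very injectivity statement being proved.

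The paper's argument for (iv) is structurally different and avoids spectral sequences altogether: represent $\alpha\in\H^2(X,\Gm)$ by a $\Gm$-gerbe $\Xscr\to X$, so that $\alpha=0$ if and only if $\Xscr$ carries a twisted line bundle; the hypothesis gives a twisted line bundle $\Lscr$ on $\Xscr_U$, one writes $j_*\Lscr$ as a filtered colimit of coherent twisted subsheaves, picks one restricting to $\Lscr$, and takes its double dual to obtain a reflexive rank-one twisted sheaf on all of $\Xscr$, which is invertible because reflexive rank-one sheaves on a regular noetherian Deligne--Mumford stack are invertible (checked on an atlas via Hartshorne). You would need to adopt this, or some equally global mechanism, to close (iv). The stacky half of (iii) then also needs repair: the paper chooses a dense open $U\subseteq X$ admitting a \emph{finite} \'etale cover by a scheme $V$ and uses that restriction followed by transfer is multiplication by the degree, so $\H^2(U,\Gm)$ is torsion because $\H^2(V,\Gm)$ is, and concludes by (iv); your "gerbe over a field has finite inertia" remark points in the same direction but again rests on first knowing that restriction to the generic locus is injective.
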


\begin{proof}
    See~\cite{grothendieck-brauer-1}*{Section 2} for points (i) and (ii).
    The proof of (iv) is analogous to that of \cite{lieblich}*{Lemma~3.1.3.3}, using an analogue of \cite{lmb}*{Proposition~15.4} to generalize \cite{lieblich}*{Lemma~3.1.1.9} to algebraic stacks. 
    See also~\cite{auel-bernardara-bolognesi}*{Proposition~1.26}.
    For (v), see de Jong~\cite{dejong-gabber}.

%    Now, let $\alpha\in\H^2(X,\Gm)$ be in the kernel of the restriction to
%    $U$, where $j:U\rightarrow X$ is a dense open immersion in a regular and noetherian Deligne-Mumford
%    stack. We show that $\alpha$ is zero. To begin, represent $\alpha$
%    by a $\Gm$-gerbe $\Xscr\rightarrow X$. Then, it is a standard fact
%    (see~\cite{lieblich}*{Lemma~3.1.1.8}) that
%    $\alpha=0$ in $\H^2(X,\Gm)$ if and only if there is an $\Xscr$-twisted line
%    bundle on $\Xscr$. Hence, our assumption implies the existence of an $\Xscr$-twisted
%    line bundle $\Lscr$ on $\Xscr_U$. By writing $j_*\Lscr$ as a filtered colimit of
%    coherent $\Xscr$-twisted $\Oscr_\Xscr$-modules, we can find a single
%    coherent $\Xscr$-twisted $\Oscr_\Xscr$-module
%    $\Mscr'$ such that $j^*\Mscr'\iso\Lscr$. Taking the double dual of $\Mscr'$
%    we obtain a reflexive rank $1$ $\Xscr$-twisted sheaf $\Mscr$ on $\Xscr$ whose
%    restriction to $\Xscr_U$ is isomorphic to $\Lscr$. However, on regular noetherian Deligne-Mumford
%    stacks, reflexive rank $1$ (twisted) sheaves are invertible since this may be checked
%    on an atlas and we can apply~\cite{hartshorne-reflexive}*{Proposition~1.9}.
%    This proves (iv).

    For (iii), see for instance~\cite{grothendieck-brauer-2}*{Proposition~1.4}
    in the case of schemes. We must generalize it to the case of a regular noetherian Deligne-Mumford stack $X$. We can assume that
    $X$ is connected and hence irreducible as $X$ is normal. Pick $U\subseteq
    X$ a dense open such that $U$ admits a \emph{finite} \'etale map
    $V\rightarrow U$ of degree $n$ where $V$ is a scheme. The composition
    $\H^2(U,\Gm)\rightarrow\H^2(V,\Gm)\rightarrow\H^2(U,\Gm)$ of restriction
    and transfer is multiplication by $n$. Since $\H^2(V,\Gm)$ is torsion, this
    implies that $\H^2(U,\Gm)$ is torsion. By (iv), $\H^2(X,\Gm)$ is torsion as well. 
    
    Finally, we have to prove (vi). For $i\leq 1$, the $\AA^1$-invariance is even true before taking $p$-power torsion (see e.g.\ \cite[II.6.6 and II.6.15]{hartshorne}). Because $p$ is invertible on $S$, the maps
    $\H^i(X,\mu_{p^m})\rightarrow\H^n(\AA^1_X,\mu_{p^m})$ are isomorphisms for
    all $i$ and $m$. This is the $\AA^1$-invariance of \'etale
    cohomology and a proof can be found in \cite{milne}*{Corollary VI.4.20}. The short exact sequence
    $$ 1 \to \mu_{p^m} \to \Gm \xrightarrow{p^m} \Gm \to 1$$
    induces short exact sequences
    \[\xymatrix{
        0 \ar[r] & \H^{i-1}(X, \Gm)/p^m \ar[r]\ar[d] & \H^i(X, \mu_{p^m}) \ar[r]\ar[d]^\cong & \H^i(X,\Gm)[p^m] \ar[d] \ar[r] & 0 \\
        0 \ar[r] & \H^{i-1}(\AA^1_X, \Gm)/p^m \ar[r] & \H^i(\AA^1_X, \mu_{p^m}) \ar[r] & \H^i(\AA^1_X,\Gm)[p^m] \ar[r] & 0 
    }\]
    Inductively, using the $\AA^1$-invariance of $\H^{i-1}(X,\Gm)$ if $i\leq 2$ or
    of $_p\H^{i-1}(X,\Gm)$ if $i>2$ as well as the fact that $\H^{i-1}(X,\Gm)$
    is torsion for $i\geq 3$ since $X$ is regular and noetherian
    (\cite{grothendieck-brauer-2}*{Proposition~1.4}),
    we see by the five lemma that $\H^i(X, \Gm)[p^m] \to\H^i(\AA^1_X,
    \Gm)[p^m]$ is an isomorphism for all $i$ and $m$ and hence we also get an
    isomorphism $_p \H^i(X, \Gm) \to {_p \H^i(\AA^1_X, \Gm)}$.
\end{proof}

\begin{remark}\label{rem:Gabber}
    At a couple points, we use another important fact, due to Gabber, which
    says that if $p:Y\rightarrow X$ is a surjective finite locally free map, if $\alpha\in\Br'(X)$, and
    if $p^*\alpha\in\Br(Y)$, then $\alpha\in\Br(X)$. This is already proved in
    Gabber~\cite{gabber}*{Chapter II, Lemma 4} for locally ringed topoi with strict hensel
    local rings, so we need to add nothing further in our setting.
\end{remark}

By far the most important class of Azumaya algebras arising in arithmetic
applications is the class of cyclic algebras. For a treatment over fields,
see~\cite{gille-szamuely}*{Section 2.5}. These algebras give a concrete
realization of the cup product in fppf cohomology
$$\H^1_{\pl}(\Xscr,C_n)\times\H^1_{\pl}(\Xscr,\mu_n)\rightarrow\H^2_{\pl}(\Xscr,\mu_n)\rightarrow\H^2_{\pl}(\Xscr,\Gm)$$
for an algebraic stack $\Xscr$.
Given that $C_n$ and $\Gm$ are smooth and that the image of such a cup product
is torsion, we can re-write this as
$\H^1(\Xscr,C_n)\times\H^1_{\pl}(\Xscr,\mu_n)\rightarrow\H^2_{\pl}(\Xscr,\mu_n)\rightarrow\Br'(\Xscr)$.
Given $\chi\in\H^1(\Xscr,C_n)$ and
$u\in\H^1_{\pl}(\Xscr,\mu_n)$, we write $[(\chi,u)_n]$ or $[(\chi,u)]$ for the image of the cup product in
$\Br'(\Xscr)$.

Fix an algebraic stack $\Xscr$.
Let $p(\chi):\Yscr\rightarrow\Xscr$ be the cyclic Galois cover defined by
$\chi\in\H^1(\Xscr,C_n)$. Then, $p(\chi)_*\Oscr_\Yscr$ is a locally free
$\Oscr_\Xscr$-algebra of
finite rank which comes equipped with a canonical $C_n$-action. There is a natural isomorphism
$\mathbf{Spec}_\Xscr\left(p(\chi)_*\Oscr_\Yscr\right)\iso \Yscr\rightarrow \Xscr$.

The group $\H^1_{\pl}(\Xscr,\mu_n)$
fits into a short exact sequence
\begin{equation}\label{eq:h1fppf}
0\rightarrow\Gm(\Xscr)/n\rightarrow\H^1_{\pl}(\Xscr,\mu_n)\rightarrow\Pic(\Xscr)[n]\rightarrow 0.
\end{equation}
It is helpful to have a more concrete description of $\H^1_{\pl}(\Xscr,\mu_n)$, which will
also show that the exact sequence \eqref{eq:h1fppf} is non-canonically split.  Let $\H(\Xscr,n)$ 
be the abelian group of equivalence classes of pairs $(\Lscr,s)$ where
$\Lscr\in\Pic(\Xscr)[n]$
and $s$ is a choice of trivialization $s:\Oscr_\Xscr\rightarrow\Lscr^{\otimes n}$.
Two pairs $(\Lscr,s)$ and $(\Mscr,t)$ are equivalent if there is an isomorphism
$g:\Lscr\rightarrow\Mscr$ and a unit $v\in\Gm(\Xscr)$ such that $g(s)=v^nt$.
The group structure is given by tensor product of line bundles and of
trivializations. The following construction is part of Kummer theory and is well-known in the scheme case (see e.g.\ \cite{milne}*{p.\ 125}, which also shows part of Proposition \ref{prop:fppfsplitting} below).

\begin{construction}\label{const:root}
    Let $\Xscr$ be an algebraic stack and fix a class $[u]\in\H(\Xscr,n)$ with $u = (\Lscr,s)$ for a line
    bundle $\Lscr$ on $\Xscr$ with
    a trivialization $s\colon \OO_{\Xscr} \to \Lscr^{\tensor n}$. Define
    $\Oscr_{\Xscr}(\sqrt[n]{u})$ as $\bigoplus_{i\in\ZZ}\Lscr^{\otimes
    i}/(s-1)$ and
    $\Xscr(\sqrt[n]{u})\rightarrow\Xscr$ as
    the affine morphism 
    $\mathbf{Spec}_\Xscr \Oscr_{\Xscr}(\sqrt[n]{u})\rightarrow\Xscr.$ 
    It is easy to see that $\Xscr(\sqrt[n]{u}) \to \Xscr$
    is an fppf $\mu_n$-torsor and that this construction defines a group
    homomorphism $\H(\Xscr,n)\rightarrow\H^1_{\pl}(\Xscr,\mu_n)$. Indeed, if $\Xscr = \Spec A$ and $\Lscr$ is
    trivial, then $\Xscr(\sqrt[n]{u}) \cong \Spec A(\sqrt[n]{s^{-1}})\iso\Spec
    A(\sqrt[n]{s})$.
\end{construction}

\begin{proposition}\label{prop:fppfsplitting}
    The map $\H(\Xscr,n)\rightarrow\H^1_{\pl}(\Xscr,\mu_n)$ is an
    isomorphism. In particular, there is a non-canonical splitting 
    $$\H^1_{\pl}(\Xscr, \mu_n) \cong \Gm(\Xscr)/n \oplus \Pic(\Xscr)[n].$$
\end{proposition}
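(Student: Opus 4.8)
The plan is to show that the homomorphism $\Phi\colon\H(\Xscr,n)\to\H^1_{\pl}(\Xscr,\mu_n)$ of Construction~\ref{const:root} is an isomorphism by exhibiting it as a morphism between two short exact sequences with identical outer terms and invoking the five lemma, and then to deduce the splitting from the structure of $\H(\Xscr,n)$ as an abelian group of bounded exponent.

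First I would record that $\H(\Xscr,n)$ itself sits in a short exact sequence
$$0\to\Gm(\Xscr)/n\to\H(\Xscr,n)\xrightarrow{q}\Pic(\Xscr)[n]\to 0.$$
Here $q$ forgets the trivialization, $(\Lscr,s)\mapsto\Lscr$; it is surjective because every $\Lscr\in\Pic(\Xscr)[n]$ admits some trivialization $s\colon\Oscr_\Xscr\xrightarrow{\we}\Lscr^{\otimes n}$. A pair maps to $0$ exactly when $\Lscr\iso\Oscr_\Xscr$, in which case it is equivalent to one of the form $(\Oscr_\Xscr,a)$ with $a\in\Gm(\Xscr)$, and $(\Oscr_\Xscr,a)\we(\Oscr_\Xscr,a')$ iff $a/a'$ is an $n$-th power; hence the kernel is $\Gm(\Xscr)/n$, embedded by $a\mapsto(\Oscr_\Xscr,a)$.

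Next I would compare this sequence with \eqref{eq:h1fppf} via $\Phi$. For the left-hand square, Construction~\ref{const:root} already identifies $\Phi(\Oscr_\Xscr,a)$ with the $\mu_n$-torsor $\Spec A(\sqrt[n]{a})$, which is precisely the image of $a$ under the Kummer boundary $\Gm(\Xscr)/n\to\H^1_{\pl}(\Xscr,\mu_n)$, so this square commutes. For the right-hand square I must check that the class in $\Pic(\Xscr)[n]$ attached by Kummer theory to the torsor $\Xscr(\sqrt[n]{u})$ is again $\Lscr$. This is the one point needing care: the boundary $\H^1_{\pl}(\Xscr,\mu_n)\to\Pic(\Xscr)[n]$ is induced by $\mu_n\hookrightarrow\Gm$, i.e.\ it sends a torsor to its associated line bundle, and I would extract the latter as the weight-$1$ eigensheaf of the $\mu_n$-action on $p_*\Oscr_{\Xscr(\sqrt[n]{u})}=\bigoplus_{i\in\ZZ/n}\Lscr^{\otimes i}$, which one checks is $\Lscr$. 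With both squares commuting and the outer vertical maps isomorphisms, the five lemma shows $\Phi$ is an isomorphism. The main obstacle in this half is pinning down this weight-decomposition identification, including sign conventions, exactly.

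Finally, for the splitting, I would first observe that $\H(\Xscr,n)$ has exponent dividing $n$: for any $(\Lscr,s)$ the $n$-fold power $(\Lscr^{\otimes n},s^{\otimes n})$ is equivalent to $(\Oscr_\Xscr,1)$ via the isomorphism $s^{-1}\colon\Lscr^{\otimes n}\xrightarrow{\we}\Oscr_\Xscr$, since $(s^{-1})^{\otimes n}\circ s^{\otimes n}=1$. Thus $A:=\Gm(\Xscr)/n$ is a subgroup of bounded exponent in $B:=\H(\Xscr,n)$, and by the classical fact that a pure subgroup of bounded exponent is a direct summand it suffices to prove purity, $A\cap mB=mA$ for all $m\mid n$. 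Given $x\in A$ with $x=m\cdot(\Lscr,s)$, applying $q$ forces $\Lscr\in\Pic(\Xscr)[m]$, so I may choose a trivialization $r\colon\Oscr_\Xscr\xrightarrow{\we}\Lscr^{\otimes m}$; writing $s=b\cdot r^{\otimes(n/m)}$ with $b\in\Gm(\Xscr)$ gives $(\Lscr,s)=(\Oscr_\Xscr,b)+(\Lscr,r^{\otimes(n/m)})$, and the same $n$-th-power computation as above (now with $r^{-1}\colon\Lscr^{\otimes m}\xrightarrow{\we}\Oscr_\Xscr$) shows $m\cdot(\Lscr,r^{\otimes(n/m)})=0$. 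Hence $x=m\cdot(\Oscr_\Xscr,b)\in mA$, proving purity. This purity argument — resting on the observation that a torsion line bundle admits an honest trivialization of a suitable lower tensor power — is the crux of the splitting, and transporting the resulting direct-sum decomposition across $\Phi$ yields the asserted non-canonical splitting of $\H^1_{\pl}(\Xscr,\mu_n)$.
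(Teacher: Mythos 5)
Your argument is correct, and the first half (the isomorphism) is essentially the paper's own proof in slightly expanded form: the paper also identifies the line bundle underlying $\Xscr(\sqrt[n]{u})$ with $\Lscr$ (via the $\mu_n\to\Gm$-equivariant map to the $\Gm$-torsor of $\Lscr$, which amounts to your weight-one eigensheaf computation) and then reduces to the kernel, i.e.\ to $\Gm(\Xscr)/n$, exactly as in your five-lemma comparison of the two extensions. The mild sign/inversion ambiguity you flag is present in the paper too (it notes $\Spec A(\sqrt[n]{s^{-1}})\iso\Spec A(\sqrt[n]{s})$ and moves on), and in any case an isomorphism on the kernel up to sign is all the five lemma needs.

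Where you genuinely diverge is the splitting. The paper applies Pr\"ufer's theorem to the \emph{quotient}: $\Pic(\Xscr)[n]$ is a direct sum of cyclic groups, and one splits the surjection by lifting each generator of order $k\mid n$ to an element of order $k$, namely the $\mu_n$-torsor induced from $\Xscr(\sqrt[k]{(\Lscr,r)})$ for an honest trivialization $r$ of $\Lscr^{\otimes k}$. You instead work with the \emph{subgroup}: $\Gm(\Xscr)/n$ has bounded exponent, so by Kulikov's theorem it is a direct summand once it is pure, and your purity computation is correct (the reduction to $m\mid n$ is legitimate since both groups are killed by $n$, so $mB=\gcd(m,n)B$). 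Note that the two arguments hinge on exactly the same geometric fact --- an $m$-torsion line bundle admits a lift of order dividing $m$, obtained by trivializing $\Lscr^{\otimes m}$ rather than $\Lscr^{\otimes n}$ --- packaged through two different standard structure theorems from Fuchs. Neither route is stronger than the other here; the paper's is marginally shorter because it never needs to verify an intersection condition, while yours has the small advantage of not requiring an explicit choice of cyclic decomposition of $\Pic(\Xscr)[n]$.
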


\begin{proof}
    We claim that the line bundle associated with the $\mu_n$-torsor
    $\Xscr(\sqrt[n]{u}) \to \Xscr$ (via the map
    $\H^1_{\pl}(\Xscr,\mu_n)\rightarrow\H^1(\Xscr,\Gm)$ induced by the  inclusion $\mu_n \to \Gm$) is
    exactly $\Lscr$. Indeed, the obvious map from $\Xscr(\sqrt[n]{u})$ to
    $\mathbf{Spec}_\Xscr\left(\bigoplus_{i\in\Z} \Lscr^{\tensor i}\right)$ is
    equivariant along the inclusion $\mu_n \to \Gm$ and the target is the
    $\Gm$-torsor associated with $\Lscr$. Thus, the composition
    $\H(\Xscr,n)\rightarrow\H^1_{\pl}(\Xscr,\mu_n)\rightarrow\Pic(\Xscr)[n]$ is
    surjective. By~\eqref{eq:h1fppf}, to prove that
    $\H(\Xscr,n)\rightarrow\H^1_{\pl}(\Xscr,\mu_n)$ is an isomorphism, it suffices to prove that the induced map
    from the kernel of this map to $\Gm(\Xscr)/n$ is an isomorphism.
    But, this follows immediately from the definition of $\H(\Xscr,n)$.
    
    It remains to construct the splitting. By Pr{\"u}fer's theorem \cite[Theorem 17.2]{fuchs}, $\Pic(\Xscr)[n]$ is a direct
    sum of cyclic groups. Thus, we have only to show that for every divisor $k$ of
    $n$ and each $k$-torsion element
    $[\Lscr]$ in $\Pic(\Xscr)$, there exists a pre-image in
    $\H^1_{\pl}(\Xscr,\mu_n)$ that is $k$-torsion. This pre-image can be constructed as
    follows: choose a trivialization $s\colon \OO_\Xscr \to \Lscr^{\tensor k}$ and
    take the $\mu_n$-torsor associated with the $\mu_k$-torsor
    $\Xscr(\sqrt[k]{v}) \to \Xscr$ with $v=(\Lscr,s)$.
\end{proof}

Now, given $\chi$ and $u$ as above, we let $\widetilde{\Ascr}_{\chi,u}$ be the coproduct
$$\left(p(\chi)_*\Oscr_\Yscr\coprod_{\Oscr_\Xscr}\Oscr_\Xscr(\sqrt[n]{u})\right)$$ in the category of
sheaves of quasi-coherent (associative and unital) $\Oscr_\Xscr$-algebras. Finally, we let
$$\Ascr_{\chi,u}=\widetilde{\Ascr}_{\chi,u}/(ab-ba^{g(\chi)})$$ 
be the quotient of
$\widetilde{\Ascr}_{\chi,u}$ by the two-sided ideal generated by terms
$ab-ba^{g(\chi)}$ where $a$ is a local section of $\chi_*\Oscr_Y$, $b$ is a local
section of $\Oscr_X(\sqrt[n]{u})$, and $a^{g(\chi)}$ denotes the action of
$g(\chi)$ on $\chi_*\Oscr_\Yscr$ for $g(\chi)$
a fixed generator of the action of $C_n$ on $p(\chi)_*\Oscr_\Yscr$.

\begin{lemma}
    Given an algebraic stack $\Xscr$ and classes $\chi\in\H^1(\Xscr,C_n)$ and
    $u\in\H^1_{\pl}(\Xscr,\mu_n)$, the algebra $\Ascr_{\chi,u}$ is an Azumaya
    algebra on $\Xscr$.
\end{lemma}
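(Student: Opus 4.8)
The plan is to exploit that being an Azumaya algebra is an \'etale-local condition: it suffices to produce an \'etale cover of $\Xscr$ over which $\Ascr_{\chi,u}$ becomes isomorphic to $\M_n(\Oscr)$. The first point to record is that the entire construction of $\Ascr_{\chi,u}$ -- the push\-forward $p(\chi)_*\Oscr_\Yscr$, the algebra $\Oscr_\Xscr(\sqrt[n]{u})$, the coproduct over $\Oscr_\Xscr$, and the quotient by the two-sided ideal -- is assembled from left-adjoint operations (tensor products, colimits, cokernels) and therefore commutes with pullback along any morphism $U\rightarrow\Xscr$. Consequently $\Ascr_{\chi,u}|_U\iso\Ascr_{\chi|_U,\,u|_U}$, so I only have to treat the case in which the two inputs $\chi$ and $u$ have been suitably trivialized over $U$.

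Next I would choose the cover. Since $C_n$ is \'etale, the torsor $p(\chi)\colon\Yscr\rightarrow\Xscr$ is representable finite \'etale and becomes trivial after pullback to $\Yscr$ itself (the diagonal provides a section of $\Yscr\times_\Xscr\Yscr\rightarrow\Yscr$), so already $\chi|_\Yscr=0$; refining $\Yscr$ in the Zariski topology I may in addition assume that the line bundle $\Lscr$ underlying $u=(\Lscr,s)$ is trivial. Over such a $U$ both inputs are explicit: $p(\chi)_*\Oscr_\Yscr|_U\iso\prod_{i\in\ZZ/n}\Oscr_U$ is the split \'etale algebra on which the chosen generator $g(\chi)$ of $C_n$ acts by the cyclic shift of the factors, while $\Oscr_\Xscr(\sqrt[n]{u})|_U\iso\Oscr_U[y]/(y^n-a)$ for a unit $a\in\Gm(U)$ read off from the trivialization $s$. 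It is worth stressing that I do \emph{not} trivialize $u$, only the underlying line bundle $\Lscr$; this is what lets me avoid splitting the $\mu_n$-torsor, which would be problematic when $n$ is not invertible on $\Xscr$ since $\mu_n$ need not be \'etale-locally constant.

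It then remains to identify the resulting crossed product with a matrix algebra. Writing $e_0,\dots,e_{n-1}$ for the orthogonal idempotents of $\prod_{\ZZ/n}\Oscr_U$, the defining relation $ab=b\,a^{g(\chi)}$ forces $y$ to intertwine the idempotents by the cyclic shift, so that each $e_iy$ lands in a single off-diagonal slot $e_iA e_{i\pm 1}$ and $y^n=a$ is a unit. This is exactly the classical presentation exhibiting the split cyclic algebra as $\M_n(\Oscr_U)$: sending each $e_i$ to the diagonal matrix unit $E_{ii}$ and each $e_iy$ to a scalar multiple of the superdiagonal unit $E_{i,i+1}$ (rescaling to absorb $a$) defines an $\Oscr_U$-algebra isomorphism onto $\M_n(\Oscr_U)$, where invertibility of $a$ is used to see the map is an isomorphism rather than merely injective. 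I expect the main obstacle to be organizing this final identification cleanly -- in particular checking that the quotient by the two-sided ideal really leaves the free module $\bigoplus_{j}\bigl(\prod_{\ZZ/n}\Oscr_U\bigr)y^j$ of rank $n^2$ and no smaller, so that the comparison with $\M_n(\Oscr_U)$ is an isomorphism on the nose -- rather than anything conceptually deep, since all the reduction steps above are formal once the local models are in hand.
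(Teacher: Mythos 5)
Your argument is correct, but it departs from the paper's at the decisive step. The paper likewise reduces fppf-locally to the case of a scheme $X$ with $\Lscr$ trivial, but it does \emph{not} trivialize the $C_n$-torsor: it writes the local presentation $\Oscr_X\langle x,y\rangle/(f(x),\,y^n-u,\,xy-yx^{g(\chi)})$ with $f$ a Galois polynomial for the (possibly nontrivial) cover $Y\rightarrow X$, observes that the fibers are the classical cyclic algebras over fields in the sense of Gille--Szamuely, hence central simple of constant rank $n^2$, and then invokes Grothendieck's criterion (\cite{grothendieck-brauer-1}*{Th\'eor\`eme~5.1}) that a locally free algebra with central simple fibers is Azumaya. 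You instead pull back along $\Yscr\rightarrow\Xscr$ itself to split the Galois algebra into $\prod_{\ZZ/n}\Oscr_U$ with the shift action and exhibit the isomorphism with $\M_n(\Oscr_U)$ by hand. Your route is more self-contained --- it needs neither the fiberwise criterion nor the theory of cyclic algebras over fields, and it directly produces the \'etale cover splitting $\Ascr_{\chi,u}$ --- at the cost of the matrix-unit bookkeeping and of verifying that the crossed product is free of rank $n^2$; the latter point, which you rightly flag, is settled exactly as you indicate by playing the evident surjection from $\bigoplus_{i,j}\Oscr_U\,e_iy^j$ against the explicit homomorphism to $\M_n(\Oscr_U)$, whose images $e_iy^j\mapsto(\text{unit})\cdot E_{i,i+j}$ form a basis. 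One small presentational caveat: when $\Xscr$ is a genuine stack, ``refining $\Yscr$ in the Zariski topology'' to trivialize $\Lscr$ should be preceded by passage to a smooth or \'etale atlas so that one is working on a scheme; the paper's opening reduction (``we can assume that $\Xscr$ is a scheme'') does this implicitly, and your proof should record the same step.
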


\begin{proof}
    It suffices to check this
fppf-locally, and in particular we can assume that in fact $\Xscr$ is a
scheme $X$, that $\Lscr\cong \OO_{\Xscr}$, and that $\chi$ classifies a $C_n$-Galois cover
$p(\chi)\colon Y\rightarrow X$.
In this case, we can write the $\Oscr_X$-algebra $p(\chi)_*\Oscr_Y$ Zariski locally as a quotient $\Oscr_X[x]/f(x)$ for some monic
polynomial $f(x)$ of degree $n$. Then, locally, we
have that 
$$\Ascr_{\chi,u}\iso\Oscr_X\langle
x,y\rangle/(f(x),y^n-u,xy-yx^{g(\chi)}),$$
a quotient of the free algebra over $\Oscr_X$ on generators $x$ and $y$. Note that the sections $x^iy^j$ for $0 \leq i,j \leq n-1$ form a basis of $\Ascr_{\chi,u}$ as an $\OO_X$-module and in particular that $\Ascr_{\chi,u}$ is (locally) a free $\OO_X$-module. 
Examining the fibers of $\Ascr_{\chi,u}$ over $X$, we
obtain the usual definition of a cyclic algebra given
in~\cite{gille-szamuely}*{Proposition 2.5.2}. So, $\Ascr_{\chi,u}$ is locally free with central
simple fibers and~\cite{grothendieck-brauer-1}*{Th\'eor\`eme~5.1} implies
that $\Ascr_{\chi,u}$ is Azumaya. 
\end{proof}

The following proposition is well-known, but we do not know an exact reference.
However, in the case of quaternion algebras, it is given
in~\cite{parimala-srinivas}*{Lemma~8}.

\begin{proposition}\label{prop:cyclicpresentation}
    Let $X$ be a regular noetherian scheme and suppose that
    $\chi\in\H^1(X,C_n)$ and $u\in\H^1_{\pl}(X,\mu_n)$ are fixed classes. In the notation above,
    we have $[(\chi,u)_n]=[\Ascr_{\chi,u}]$ in $\Br'(X)$. In particular
    $[(\chi,u)_n]\in\Br(X)$.
\end{proposition}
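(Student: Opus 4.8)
The plan is to reduce the identity to the classical case of a field and then propagate it back to $X$ using the injectivity supplied by Proposition~\ref{prop:BrOmnibus}(iv). Both $[(\chi,u)_n]$ and $[\Ascr_{\chi,u}]$ are computed one connected component at a time, so I may assume $X$ is connected; since $X$ is regular and noetherian it is then normal and hence integral. Write $K=k(X)$ for its function field and $\eta$ for its generic point.

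First I would establish that the restriction map $\Br'(X)=\H^2(X,\Gm)\to\H^2(\Spec K,\Gm)=\Br(K)$ is injective. Ordering the nonempty \emph{affine} open subschemes $U\subseteq X$ by reverse inclusion gives a cofiltered system with affine transition maps and $\Spec K=\lim_U U$, so continuity of \'etale cohomology yields $\Br(K)=\colim_U\H^2(U,\Gm)$. A class $\alpha\in\H^2(X,\Gm)$ restricting to $0$ in $\Br(K)$ therefore already restricts to $0$ on some (automatically dense) open $U$, whence $\alpha=0$ by Proposition~\ref{prop:BrOmnibus}(iv). It thus suffices to verify $[(\chi,u)_n]=[\Ascr_{\chi,u}]$ after restriction along $j\colon\Spec K\to X$.

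Next I would check that both sides restrict compatibly. Cup products commute with pullback, so $j^*[(\chi,u)_n]=[(j^*\chi,j^*u)_n]$. The algebra $\Ascr_{\chi,u}$ is assembled from the finite pushforward $p(\chi)_*\Oscr_\Yscr$, the $\mu_n$-algebra $\Oscr_X(\sqrt[n]{u})$, a coproduct, and a quotient, all of which commute with the flat base change $j$; hence $j^*\Ascr_{\chi,u}\iso\Ascr_{j^*\chi,j^*u}$. Over the field $K$ the line bundle underlying $u$ becomes trivial, so $j^*u$ is a genuine class in $K^\times/(K^\times)^n$ and, by the presentation recorded in the preceding lemma, $\Ascr_{j^*\chi,j^*u}$ is exactly the classical cyclic algebra attached to $(j^*\chi,j^*u)$. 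The equality $[(j^*\chi,j^*u)_n]=[\Ascr_{j^*\chi,j^*u}]$ in $\Br(K)$ is then the standard identification of the cyclic algebra with the cup-product class, \cite{gille-szamuely}*{Proposition~2.5.2}. Combining this with the injectivity above proves the first assertion, and the second follows at once: $\Ascr_{\chi,u}$ is Azumaya by the preceding lemma, so $[(\chi,u)_n]=[\Ascr_{\chi,u}]\in\Br(X)$ by Proposition~\ref{prop:BrOmnibus}(i).

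The step I expect to require the most care is the field case when $\operatorname{char}K$ divides $n$: there the cup product genuinely lives in the fppf cohomology of $\mu_n$ rather than in Galois cohomology, and one must check that the crossed-product presentation of $\Ascr_{j^*\chi,j^*u}$ still computes the image of $j^*\chi\cup j^*u$ under $\H^2_{\pl}(K,\mu_n)\to\Br(K)$. The reduction to fields, by contrast -- injectivity together with the base-change compatibility of the two constructions -- is essentially formal, provided one is careful to pass to the limit over \emph{affine} opens so that the transition maps are affine.
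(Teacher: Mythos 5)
Your argument is correct and follows essentially the same route as the paper: reduce to the generic point $\Spec K$ via injectivity of $\Br'(X)\to\Br(K)$ and then invoke the classical identification of the cyclic algebra with the cup product over a field (for which the paper cites \cite{gille-szamuely}*{Proposition~4.7.3}, the more precise reference than 2.5.2). You supply more detail on the reduction --- the limit argument for injectivity and the flat-base-change compatibility of the construction of $\Ascr_{\chi,u}$ --- which the paper leaves implicit.
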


\begin{proof}
    We can assume that $X$ is connected. As $\Br'(X) \to \Br'(K)$ is injective (see \cite[IV.2.6]{milne} or Proposition~\ref{prop:BrOmnibus}(iv)), it is enough to check this on a
    generic point $\Spec K$ of $X$.
%     In this case, the line bundle $\Lscr$ no
%     longer matters, and we can choose an isomorphism $v:\Oscr\rightarrow\Lscr$.
%     In this case, we have $v(1)^{\otimes n}=w s(1)$
%     for some $w\in K^\times$. The relation $1=s(1)$ then implies that $v(1)$ is
%     an $n$th root of $w$ in $\Ascr_{\chi,\Lscr_u,s}$. Moreover, $u$
%     is equal to $w$ in $\H^1(K,\mu_n)$.
    By definition and the previous example,
    $\Ascr_{\chi,u}$ is a standard cyclic algebra over $K$ as
    defined in~\cite{gille-szamuely}*{Chapter 2}. They check
    in~\cite{gille-szamuely}*{Proposition~4.7.3} that $\Ascr_{\chi,u}$ does indeed have
    Brauer class given by the cup product. See also the remark at the beginning
    of the proof of~\cite{gille-szamuely}*{Proposition~4.7.1}.
\end{proof}

\begin{remark}
    The reader may notice that $\Ascr_{\chi,u}$ is defined in complete
    generality, but that we only prove the equality
    $[(\chi,u)_n]=[\Ascr_{\chi,u}]$ for regular noetherian schemes. In fact,
    this equality extends to arbitrary algebraic stacks, but a different
    argument is necessary. It is given at the end of Section~\ref{section:BCm}.
\end{remark}

We will abuse notation and write $(\chi,u)_n$ or even just $(\chi,u)$ for
$\Ascr_{\chi,u}$. This is called a \df{cyclic algebra}. If there is a primitive
$n$th root of unity $\omega \in \mu_n(\Xscr)$ and the cyclic Galois cover
$\Yscr\to\Xscr$
is obtained by adjoining an $n$th root of an element $a\in \Gm(\Xscr)$, we write
$(a,u) = (a,u)_{\omega}$ for the corresponding cyclic algebra, where we need
the choice of $\omega$ to fix an isomorphism $\H^1(\Xscr,C_n)\iso\H^1(\Xscr,\mu_n)$. For $n=2$, we obtain the classical notion
of a quaternion algebra.

For us, the key
point about the cyclic algebra is that it allows us to compute the ramification
of a Brauer class explicitly. Before explaining this, we mention that by the Gabber--\v{C}esnavi\v{c}ius purity theorem the Brauer group of a regular noetherian scheme $X$ is insensitive to throwing away high
codimension subschemes.

\begin{proposition}[Purity~\cite{gabber}*{Chapter I},~\cite{cesna}]\label{prop:purity}
    Let $X$ be a regular noetherian scheme. If
    $U\subseteq X$ is a
    dense open subscheme with complement of codimension at least $2$, then the
    restriction map $\Br'(X)\rightarrow\Br'(U)$ is an isomorphism.
\end{proposition}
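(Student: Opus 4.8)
The plan is to study the restriction map through the long exact sequence of \'etale cohomology with supports along the closed complement $Z=X\setminus U$,
$$\cdots\to\H^i_Z(X,\Gm)\to\H^i(X,\Gm)\to\H^i(U,\Gm)\to\H^{i+1}_Z(X,\Gm)\to\cdots,$$
and to show that the relevant local cohomology groups vanish. Injectivity of $\Br'(X)\to\Br'(U)$ is already supplied by Proposition~\ref{prop:BrOmnibus}(iv), so the real content is surjectivity, for which it suffices to prove $\H^3_Z(X,\Gm)=0$; in fact I would aim at the cleaner uniform statement $\H^i_Z(X,\Gm)=0$ for all $i\le 3$. The bottom two degrees are immediate: a unit on $U$ extends uniquely to the reduced scheme $X$ since $U$ is dense, and a line bundle on $U$ extends across $Z$ because $X$ is regular and $Z$ has codimension at least $2$ (Weil divisors supported in codimension $\ge 2$ are trivial, and $\Pic(X)\to\Pic(U)$ is an isomorphism), so $\H^0_Z=\H^1_Z=0$.

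First I would reduce the vanishing of $\H^2_Z$ and $\H^3_Z$ to a purely local statement at the points of $Z$. Because $\dim X\le 3$ and $Z$ has codimension at least $2$, every point $z\in Z$ has codimension $c\in\{2,3\}$ in $X$ and $\dim Z\le 1$. Using the niveau spectral sequence attached to the filtration of $Z$ by dimension of support (the Bloch--Ogus--Gabber formalism), the groups $\H^i_Z(X,\Gm)$ are assembled from the local contributions $\H^i_z(\Spec\OO_{X,z},\Gm)$, so it is enough to check that each of these vanishes for $i\le 3$.

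Next I would linearize the coefficients. By Proposition~\ref{prop:BrOmnibus}(iii), $\H^i(-,\Gm)$ is torsion for $i\ge 2$ on regular noetherian schemes, so the local cohomology in question is torsion and can be analyzed one prime at a time via the Kummer sequence $1\to\mu_n\to\Gm\xrightarrow{n}\Gm\to 1$. For a prime $\ell$ invertible at $z$, Gabber's absolute cohomological purity identifies $\H^i_z(\Spec\OO_{X,z},\mu_{\ell^m})$ with a Tate twist of $\H^{i-2c}(\kappa(z),\mu_{\ell^m})$; since $2c\ge 4$, this annihilates the entire prime-to-residue-characteristic part in degrees $i\le 3$.

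The hard part is the $p$-primary contribution at the residue characteristic $p$, where absolute purity for $\mu_{p^m}$ is unavailable and the Kummer argument no longer applies. This is precisely the delicate input that the hypothesis $\dim X\le 3$ (hence $\dim Z\le 1$) is designed to control: the support is small enough that the $p$-torsion in $\H^2_z$ and $\H^3_z$ still vanishes, but verifying this requires Gabber's refined purity results rather than the clean $\ell$-adic statement. Rather than reprove it, at this step I would invoke Gabber~\cite{gabber}*{Theorem I.2'}, which delivers exactly the required vanishing of $\H^3_Z(X,\Gm)$ and thereby the surjectivity of the restriction, completing the argument.
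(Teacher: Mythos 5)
Your proposal is correct and ultimately coincides with the paper's treatment: the paper gives no proof at all, simply attributing the statement to Gabber~\cite{gabber}*{Theorem I.2'}, and your argument likewise bottoms out in that same citation at the one genuinely hard step (the $p$-primary local cohomology at residue characteristic $p$). The surrounding reductions you sketch (excision/long exact sequence with supports, vanishing of $\H^0_Z$ and $\H^1_Z$, coniveau, and absolute purity for the prime-to-$p$ part) are accurate but are not part of the paper's argument, which treats the whole statement as a black box.
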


%We will say that $X$ satisfies \df{purity for the Brauer group} if the
%conclusion of the proposition is valid for every dense $U\subseteq X$ with complement
%of codimension at least $2$.

Let $X$ be a regular noetherian scheme and let $\eta$ be the scheme of generic points
in $X$. The purity theorem reduces the problem of computing $\Br(X)$ from
$\Br(\eta)$ to the problem of extending Brauer classes $\alpha\in\Br(\eta)$ 
over divisors in $X$. This is controlled by ramification theory.
The following proposition is basically well-known, but we include a proof for
the reader's convenience.

\begin{proposition}\label{prop:puritycalc}
    Let $X$ be a regular noetherian scheme, $i\colon D\subseteq X$ a Cartier divisor that is
    regular with complement $U$, and $n$ an integer. There is an
    exact sequence
    \begin{equation}\label{eq:ramseq}
        0\rightarrow{_n\Br'(X)}\rightarrow{_n\Br'(U)}\xrightarrow{\mathrm{ram}_D}
        {_n}\H^3_D(X, \Gm) \to {_n}\H^3(X, \Gm) \to {_n}\H^3(U,\Gm).
    \end{equation}
    If $n$ is prime to the residue characteristics of $X$, we
    have ${_n}\H^3_D(X, \Gm) \cong \H^1(D,{_n\QQ/\ZZ})$.
\end{proposition}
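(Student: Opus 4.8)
The plan is to read off the sequence from the localization long exact sequence in \'etale cohomology for the decomposition of $X$ into the closed divisor $i\colon D\hookrightarrow X$ and its open complement $j\colon U\hookrightarrow X$,
$$\cdots \to \H^i_D(X,\Gm)\to \H^i(X,\Gm)\to \H^i(U,\Gm)\to \H^{i+1}_D(X,\Gm)\to\cdots,$$
and then to carve out the $n$-primary torsion. Since $X$, and hence the dense open $U$, is regular noetherian, Proposition~\ref{prop:BrOmnibus}(iii) identifies $\Br'(X)=\H^2(X,\Gm)$ and $\Br'(U)=\H^2(U,\Gm)$, while Proposition~\ref{prop:BrOmnibus}(iv) shows $\H^2(X,\Gm)\to\H^2(U,\Gm)$ is injective. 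Feeding this injectivity into the localization sequence forces the image of $\H^2_D(X,\Gm)$ to vanish and yields the exact sequence
$$0\to \H^2(X,\Gm)\to \H^2(U,\Gm)\to \H^3_D(X,\Gm)\to \H^3(X,\Gm)\to \H^3(U,\Gm).$$
All five terms are torsion, since $\H^2$ and $\H^3$ of a regular noetherian stack are torsion by the regularity cited in Proposition~\ref{prop:BrOmnibus}(iii) and $\H^3_D$ is squeezed between torsion groups. Applying the functor $_n(-)$, which is exact on torsion abelian groups because it is the direct summand picking out the $n$-primary factors of the primary decomposition, then produces the first assertion, with $\mathrm{ram}_D$ the induced boundary map.

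For the identification I would assume that $n$ is prime to the residue characteristics, so that $n$ is invertible on $X$ and the Kummer sequence $1\to\mu_{n^k}\to\Gm\xrightarrow{n^k}\Gm\to 1$ is exact \'etale-locally. The first step is to record the vanishing $\H^2_D(X,\Gm)=0$: the localization sequence identifies $\H^2_D(X,\Gm)\cong\coker(\Pic(X)\to\Pic(U))$, and this cokernel is trivial because $X$ is regular, so every divisor class on $U$ is the restriction of the class of its closure in $X$. Granting this vanishing, the Kummer sequence on local cohomology gives isomorphisms $\H^3_D(X,\Gm)[n^k]\cong\H^3_D(X,\mu_{n^k})$ for every $k$, reducing the problem to a computation with finite coefficients.

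The crux is then to evaluate $\H^3_D(X,\mu_{n^k})$, and here I would invoke Gabber's absolute cohomological purity: as $i\colon D\hookrightarrow X$ is a regular closed immersion of codimension one between regular schemes with $n^k$ invertible, one has $Ri^!\mu_{n^k}\cong\ZZ/n^k[-2]$ (the relevant Tate twist being $\mu_{n^k}^{\otimes 0}=\ZZ/n^k$ in codimension one), so that $\H^3_D(X,\mu_{n^k})\cong\H^1(D,\ZZ/n^k)$. Passing to the colimit over $k$, and using that $\H^1(D,-)$ commutes with filtered colimits on the noetherian scheme $D$, gives
$$_n\H^3_D(X,\Gm)=\varinjlim_k \H^3_D(X,\Gm)[n^k]\cong\varinjlim_k\H^1(D,\ZZ/n^k)\cong\H^1(D,{_n\QQ/\ZZ}).$$
The hard part is exactly this final identification: the localization sequence and the exactness of $n$-primary torsion are formal, whereas pinning down $\H^3_D(X,\mu_{n^k})$ rests on absolute purity and on getting the (here trivial) Tate twist right. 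One must also verify that the transition maps in the colimit agree with the standard inclusions $\ZZ/n^k\hookrightarrow\ZZ/n^{k+1}$ that realize $\varinjlim\ZZ/n^k={_n\QQ/\ZZ}$.
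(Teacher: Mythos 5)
Your first half---the localization long exact sequence for the pair $(X,D)$, the injectivity of $\H^2(X,\Gm)\to\H^2(U,\Gm)$ from Proposition~\ref{prop:BrOmnibus}(iv), the observation that all terms are torsion, and the exactness of $_n(-)$ on torsion groups---is exactly the paper's argument. For the identification of $_n\H^3_D(X,\Gm)$ you take a correct but genuinely different route. The paper works at the level of local cohomology \emph{sheaves}: purity shows $\Hscr^t_{D}(\mu_{p^\nu})$ is concentrated in degree $2$, so $p$ acts invertibly on $\Hscr^t_D(\Gm)$ for $t\neq 1,2$; it then proves $\Hscr^2_D(\Gm)=0$ and $\Hscr^1_D(\Gm)\iso i_*\ZZ$ and runs the local-to-global spectral sequence, landing on $_p\H^2(D,\ZZ)\iso\H^1(D,\QQ_p/\ZZ_p)$, the last isomorphism using $\H^i(D,\QQ)=0$ for $i>0$ since $D$ is normal. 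You instead stay global: the vanishing $\H^2_D(X,\Gm)=0$ (your Picard/Brauer argument is the global shadow of the paper's sheaf-level computation) combined with the Kummer sequence gives $\H^3_D(X,\Gm)[n^k]\iso\H^3_D(X,\mu_{n^k})$, absolute purity with the correctly computed trivial Tate twist gives $\H^1(D,\ZZ/n^k)$, and a filtered colimit over $k$ finishes. Both arguments are sound. Yours avoids the local-to-global spectral sequence and the input $\H^i(D,\QQ)=0$, at the price of the colimit-compatibility check you rightly flag (standard, since all the identifications are natural in the coefficient sheaf); note also that the authors remark after the proposition that in all their applications Artin's \emph{relative} purity suffices, whereas your formulation via $Ri^!\mu_{n^k}\iso\ZZ/n^k[-2]$ invokes Gabber's absolute purity, though it could equally be run with relative purity whenever $D$ and $X$ are smooth over a common base.
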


\begin{proof}
    By \cite[6.1]{grothendieck-brauer-3} or \cite[III.1.25]{milne} there is a long exact sequence
    $$\H^2(X, \Gm) \rightarrow \H^2(U,\Gm)  \to \H^3_D(X, \Gm) \to\H^3(X, \Gm) \to\H^3(U,\Gm).$$
    By \cite[Propsition 1.4]{grothendieck-brauer-2} all occurring groups are
    torsion so that the sequence is still exact after taking $n$-primary
    torsion. Furthermore, by Proposition \ref{prop:BrOmnibus} the first map is
    an injection.
    
    We may assume that $n=p$ is a prime, in which case $_p\QQ/\ZZ\iso\QQ_p/\ZZ_p$.
    We have to show that $_p\H^3_D(X,\Gm)\iso\H^1(D,\QQ_p/\ZZ_p)$.
    By either the
    relative cohomological purity theorem of
    Artin~\cite{sga4-3}*{Th\'eor\`eme~XVI.3.7 and 3.8} (when both $X$ and $D$ are
    smooth over some common base scheme $S$) or the absolute
    cohomological purity theorem of Gabber~\cite{fujiwara}*{Theorem~2.1},
    we have the following identifications of local cohomology sheaves:
    $\Hscr_{D}^t(\mu_{p^\nu})=0$ for $t\neq 2$ and
    $\Hscr_{D}^2(\mu_{p^\nu})\iso i_*\ZZ/{p^\nu}(-1)$. It follows from the
    long exact sequence of local cohomology sheaves associated to the exact
    sequence
    $1\rightarrow\mu_{p^\nu}\rightarrow\Gm\xrightarrow{p^\nu}\Gm\rightarrow 1$
    that $p$ acts invertibly on $\Hscr^t_{D}(\Gm)$ for $t\neq 1,2$.
    Moreover, since $X$ is regular and noetherian, for every open $V\subset X$,
    the map $\Pic(V) \to \Pic(U\cap V)$ is surjective with kernel $(i_*\ZZ)(V)$
    by \cite[II.6.5]{hartshorne} and $\Br'(V) \to \Br'(U\cap V)$ is injective;
    thus $\Hscr^2_{D}(\Gm)=0$ and $\Hscr^1_{D}(\Gm) \cong i_*\ZZ$. Therefore,
    the only contribution to $p$-primary torsion in $\H^3_{D}(X,\Gm)$ in the
    local to global spectral sequence
    $$\H^s(X,\Hscr^t_{D}(\Gm))\Rightarrow\H^{s+t}_{D}(X,\Gm)$$
    is $_p\H^2(X,i_*\ZZ)$. We obtain 
    $$_p\H^3_D(X,\Gm) \cong {_p}\H^2(X,\Hscr^1_{D}(\Gm))\iso{_p\H^2(D,\ZZ)}\iso\H^1(D,\QQ_p/\ZZ_p)$$
    as desired, where the last isomorphism holds because $\H^i(D,\QQ)=0$ for $i>0$ since
    $D$ is normal (see for example~\cite{deninger}*{2.1}).
\end{proof}

Note that in all cases where we use Proposition~\ref{prop:puritycalc}, the easier
relative cohomological purity theorem of Artin is applicable, so that in the
end our paper does not rely on the more difficult results of Gabber and \v{C}esnavi\v{c}ius.

We will need to know a special case of the ramification map
$\mathrm{ram}_D:\Br(U)\rightarrow\H^1(D,\QQ/\ZZ)$.

\begin{proposition}\label{prop:ramification}
    Let $R$ be a discrete valuation ring with fraction field $K$ and residue
    field $k$. Set $X=\Spec R$, $U=\Spec K$, and $x=\Spec k$. Let
    $(\chi,\pi)_n$ be a cyclic algebra over $K$, where $\chi$ is a degree $n$ cyclic
    character of $K$, $\pi$ is
    a uniformizing parameter of $R$ (viewed as an element of $\Gm(K)/n$), and $n$ is prime to the characteristic of
    $k$. Finally, let $L/K$ be the cyclic Galois extension
    defined by $\chi$. If the integral closure $S$ of $R$ in $L$ is a discrete
    valuation ring with uniformizing parameter $\pi_S$, then
    $\mathrm{ram}_{(\pi)}(\chi,\pi)$ is the class of the cyclic extension $S/(\pi_S)$
    over $k$.
\end{proposition}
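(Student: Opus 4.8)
The plan is to reduce to a henselian-local computation and then to identify the ramification map of Proposition~\ref{prop:puritycalc} with the classical tame residue symbol applied to a cyclic algebra.

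First I would pass to the henselization $R^{h}$. The cyclic algebra $(\chi,\pi)$ pulls back to the analogous cyclic algebra over $R^{h}$, the residue field $k$ and the residue extension $\ell=S/(\pi_S)$ are unchanged, and by excision for \'etale cohomology with supports both the group ${_n}\H^3_x(X,\Gm)$ and the connecting map defining $\mathrm{ram}_{(\pi)}$ in Proposition~\ref{prop:puritycalc} depend only on $R^{h}$. So I may assume $R$ is henselian. Since $n$ is prime to the residue characteristic, $n$ is a unit on $X=\Spec R$, hence $\mu_n$ is finite \'etale and its fppf and \'etale cohomology agree; moreover $L/K$ is tame. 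In the case relevant to us, in which $\ell/k$ is the full degree-$n$ residue extension, $\chi$ is unramified: $S/R$ is finite \'etale, $\ell/k$ is its reduction, and $\chi|_U$ is the restriction of a class $\tilde\chi\in\H^1(X,\ZZ/n)$ that reduces to the character $\bar\chi\in\H^1(k,\ZZ/n)$ cutting out $\ell/k$.

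The core is a projection-formula computation. By Construction~\ref{const:root} and the definition of the cyclic algebra, the class of $(\chi,\pi)$ in $\H^2(U,\mu_n)$ is $\tilde\chi|_U\cup(\pi)$, where $(\pi)\in\H^1(U,\mu_n)$ is the Kummer class of the uniformizer. The boundary map $\partial\colon\H^{*}(U,\mu_n)\to\H^{*+1}_x(X,\mu_n)$ of the local-cohomology sequence is a morphism of graded modules over $\H^{*}(X,\ZZ/n)$, so, because $\tilde\chi$ extends over all of $X$, we get $\partial\big(\tilde\chi|_U\cup(\pi)\big)=\pm\,\tilde\chi\cup\partial(\pi)$. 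Now $\partial(\pi)\in\H^2_x(X,\mu_n)$ is the canonical generator under the purity isomorphism $\Hscr^2_x(\mu_n)\cong i_*\ZZ/n(-1)$ of Proposition~\ref{prop:puritycalc}, precisely because $\pi$ is a uniformizing parameter and hence has valuation $1$. Cupping with $\tilde\chi$, restricting to $x$, and pushing forward along $\mu_n\hookrightarrow\Gm$ lands the result in ${_n}\H^3_x(X,\Gm)$; under the identification ${_n}\H^3_x(X,\Gm)\cong\H^1(x,\QQ/\ZZ)$ of Proposition~\ref{prop:puritycalc} the twists $\mu_n$ and $\ZZ/n(-1)$ cancel and the class becomes $\bar\chi\in\H^1(k,\ZZ/n)\subseteq\H^1(k,\QQ/\ZZ)$, which is exactly the character of $\ell=S/(\pi_S)$ over $k$.

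I expect the main obstacle to be the twist-and-sign bookkeeping of the final step: one must check that $\partial(\pi)$ is genuinely the purity generator built into Proposition~\ref{prop:puritycalc}, and that the $\mu_n$-twist from the Kummer class cancels the Tate twist $\ZZ/n(-1)$ so that the output is an honest (untwisted) character of $G_k$ rather than a twisted class. A clean way to avoid this is to observe that, by its construction through local cohomology and purity, $\mathrm{ram}_{(\pi)}$ is the classical tame residue symbol ${_n}\Br'(K)\to\H^1(k,\QQ/\ZZ)$, and then to invoke the standard residue computation for cyclic algebras in~\cite{gille-szamuely}*{Chapter~7}, which yields $\bar\chi$ for unramified $\chi$; identifying $\bar\chi$ with the class of $S/(\pi_S)$ over $k$ finishes the argument.
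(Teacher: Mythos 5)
The paper's own ``proof'' of this proposition is a one-line citation to \cite{saltman}*{Lemma~10.2}, so your proposal is doing strictly more work, and the argument you give for the case you treat is the standard one and is correct: henselize (excision makes $\mathrm{ram}_{(\pi)}$ depend only on $R^h$), observe that the localization sequence with supports is a sequence of modules over $\H^{*}(X,\ZZ/n)$ so that $\partial(\tilde\chi|_U\cup(\pi))=\pm\,\tilde\chi\cup\partial(\pi)$, and identify $\partial(\pi)\in\H^2_x(X,\mu_n)\iso\ZZ/n$ with the valuation of $\pi$, i.e.\ the purity generator. The twist bookkeeping you flag does work out ($\Hscr^2_x(\mu_n)\iso i_*\ZZ/n$ with no residual Tate twist, and the $\mu_n$- and $\Gm$-purity identifications are compatible), and the fallback of recognizing $\mathrm{ram}_{(\pi)}$ as the classical cohomological residue and quoting the residue formula for symbols in Gille--Szamuely is equally legitimate.

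The genuine issue is the clause ``in the case relevant to us, in which $\ell/k$ is the full degree-$n$ residue extension, $\chi$ is unramified.'' The proposition's hypothesis is only that $S$ is a discrete valuation ring, which also allows $L/K$ to be ramified at $v$ --- for instance $L=K(\pi^{1/n})$, where $S=R[\pi^{1/n}]$ is a perfectly good DVR. Your projection-formula step genuinely fails there: $\chi$ does not extend to a class $\tilde\chi$ on $X$, so there is nothing to pull out of the boundary map. Nor is this a removable technicality: for $K=\QQ_3$, $n=2$, $\chi$ the character of $\QQ_3(\sqrt{3})$ and $\pi=3$, one has $(\chi,\pi)=(3,3)=(-1,3)$, whose residue at $3$ is the nontrivial class of $-1$ in $\FF_3^{\times}/(\FF_3^{\times})^2$, while $S/(\pi_S)=\FF_3$ is the trivial extension of $k$; so the displayed formula is only correct when $\chi$ is unramified at $v$ (equivalently, $S/R$ is \'etale and $[\ell:k]=n$). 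That hypothesis does hold in every application in the paper, where the characters are always pulled back from the base and hence unramified along the divisor in question, but it should be promoted from an aside to an explicit standing assumption; as written, your proposal proves a correct statement that is strictly weaker than the one displayed, and the extra generality it omits is not actually recoverable.
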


\begin{proof}
    See~\cite{saltman}*{Lemma~10.2}.
\end{proof}

Finally, we discuss cyclic algebras over local fields and some implications for
global calculations. Let $K$ be a local field containing a primitive $n$th root of unity $\omega$. Then there is a pairing
$$\binom{-,-}{\mathfrak{p}}:\Gm(K)/n \times \Gm(K)/n \to \mu_n(K),$$
called the \df{Hilbert symbol} (where $\mathfrak{p}$ stands for the maximal ideal of the ring of integers of $K$). Our standard reference for this pairing is \cite[Section V.3]{neukirch}. If $\mathfrak{p}$ is generated by an element $\pi$, we will also write $\binom{a,b}{\pi}$. We will use Hilbert symbols to check whether explicitly defined cyclic algebras are zero in the Brauer group. 

\begin{proposition}\label{prop:Hilbert}
    For $a,b \in K^\times$, the cyclic algebra $(a,b)_{\omega}$ is trivial in $\Br(K)$ if and only if $\binom{a,b}{\mathfrak{p}} = 1$. 
\end{proposition}

\begin{proof}
     By Proposition V.3.2 of \cite{neukirch} the Hilbert symbol
     $\binom{a,b}{\mathfrak{p}}$ equals $1$ if and only if $a$ is a norm from
     the extension $K(\sqrt[n]{b})|K$. By \cite[Corollary
     4.7.7]{gille-szamuely}, this happens if and only if $(a,b)_{\omega}$
     splits, i.e.\ defines the trivial class in $\Br(K)$.
\end{proof}

More generally, local class field theory calculates $\Br(\eta)$ when $\eta=\Spec K$
where $K$ is a (non-archimedean) local field. Let $X=\Spec R$ and $x=\Spec k$, where $R$ is the
ring of integers in $K$ and $k$ is the residue field of $R$.
As $\H^3(X,\Gm)\iso\H^3(x,\Gm)=0$ (for instance
by~\cite{grothendieck-brauer-3}*{Th\'eor\`eme~1.1}),
we find from~\cite{grothendieck-brauer-3}*{Corollaire~2.2} that there is an
exact sequence $$0\rightarrow\Br(X)\rightarrow\Br(\eta)\rightarrow\H^1(x,\QQ/\ZZ)\rightarrow
0.$$ The idea is similar to that of Proposition~\ref{prop:puritycalc}, but here
the proof is easier as $0\rightarrow\Gm\rightarrow j_*\Gm\rightarrow
i_*\ZZ\rightarrow 0$ is exact where $j:\eta\rightarrow X$ and $i:x\rightarrow
X$. Since $K$ is local, $k$ is
finite, so that $\H^1(x,\QQ/\ZZ)\iso\QQ/\ZZ$. However, since $R$ is Henselian,
$\Br(X)=\Br(x)$ (see~\cite{grothendieck-brauer-1}*{Corollaire~6.2}), and $\Br(x)=0$ by a theorem of
Wedderburn (see~\cite{grothendieck-brauer-3}*{Proposition~1.5}).

Now, let $K$ be a number field, and let $R$ be a localization of the ring of
integers of $K$. Set $\eta=\Spec K$ and $X=\Spec R$. In this
case, by~\cite{grothendieck-brauer-3}*{Proposition~2.1}, there is an exact sequence
$$0\rightarrow\Br(X)\rightarrow\Br(\eta)\rightarrow\bigoplus_{\mathfrak{p}\in
X^{(1)}}\Br(\Spec K_{\mathfrak{p}}),$$ where $X^{(1)}$ denotes the set of
codimension $1$ points of $X$. This exact sequence is compatible
with~\eqref{eq:ramseq} and with
the exact sequence
\begin{equation}\label{eq:cft}
0\rightarrow\Br(\eta)\rightarrow\bigoplus_{\mathfrak{p}}\Br(\Spec K_{\mathfrak{p}})\rightarrow\QQ/\ZZ\rightarrow
0\end{equation}
of class field theory (see~\cite{neukirch-schmidt-wingberg}*{Theorem~8.1.17}). The sum ranges over the finite and the
infinite places of $K$, and the map $\Br(\Spec K_\mathfrak{p})\rightarrow\QQ/\ZZ$ is
the isomorphism described above when $\mathfrak{p}$ is a finite place, the
natural inclusion $\ZZ/2\rightarrow\QQ/\ZZ$ when $K_{\mathfrak{p}}\iso\RR$, and
the natural map $0\rightarrow\QQ/\ZZ$ when $K_{\mathfrak{p}}\iso\CC$.
Using these sequences, we can compute the Brauer group of $X$. 

The two fundamental observations we need about~\eqref{eq:cft} are that a
class $\alpha\in\Br(\eta)$ is ramified at no fewer than $2$ places and that if
$K$ is purely imaginary, then $\alpha\in\Br(\eta)$ is ramified at no fewer than $2$
\emph{finite} places. The reader can easily verify the following examples.

\begin{example}
    \begin{enumerate}
        \item[(1)] $\Br(\ZZ)=0$.
        \item[(2)] $\Br(\ZZ[\tfrac1p])\iso\ZZ/2$.
        \item[(3)] $\Br(\ZZ[\tfrac{1}{pq}])\iso\ZZ/2\oplus\QQ/\ZZ$.
        \item[(4)] $\Br(\ZZ[\tfrac1p,\zeta_p])=0$.
    \end{enumerate}
\end{example}

We will use these computations and those like them throughout the paper, often
without comment.

\section{The low-dimensional $\Gm$-cohomology of $\B C_m$}\label{section:BCm}

Let $S$ be a scheme. Write $C_{n,S}$ for the constant \'etale group scheme on the
cyclic group $C_n$ of order $n\geq 2$ over $S$. We will often suppress the base
in the notation and simply write $C_n$ when the base is clear from context. The purpose of this section is
to make a basic computation of the $\Gm$-cohomology of the Deligne-Mumford stack $\B C_n=\B C_{n,S}$. In fact, we
are only interested in the cases $n=2$ and $n=4$, but the general case is no more
difficult.

The first tool for our computations of the
\'etale cohomology of an \'etale sheaf $\Fscr$ is the convergent
Leray--Serre spectral
sequence. If $\pi:Y\rightarrow X$ is a $G$-Galois cover where $X$ and $Y$ are
Deligne-Mumford stacks, then this spectral sequence has the form
$$\E^{p,q}_2=\H^p(G,\H^q(Y,\pi^*\Fscr))\Rightarrow\H^{p+q}(X,\Fscr),$$ with
differentials $d_r$ of bidegree $(r,1-r)$.

We will use the spectral sequence in this section for the $C_n$-Galois cover $\pi\colon S\rightarrow\B C_n$, where $C_n$ acts
trivially on $S$. In this case it is of the form 
$$\E_2^{p,q}=\H^p(C_n,\H^q(S,\Gm))\Rightarrow\H^{p+q}(\B C_{n,S},\Gm)$$
and 
Figure~\ref{fig:bc2} displays the low-degree part of its $\E_2$-page.
The fact that $C_n$ acts trivially on the cohomology of $S$ implies that the left-most column is
simply the $\Gm$-cohomology of $S$. For $F$ a constant $C_n$-module, we use the standard isomorphisms
$\H^i(C_n,F)\iso F[n]$ when $i>0$ odd, and $\H^i(C_n,F)\iso F/n$ when $i>0$ is
even.
%Write $d_r^{p,q}$ for the differential $\E_r^{p,q}\rightarrow\E_r^{p+r,q-r+1}$.
We are only interested in $\H^i(\B C_n,\Gm)$ for $0\leq i\leq 2$.

\begin{figure}[h]
    \centering
    \begin{equation*}
        \xymatrix@R=3pt{
            \H^2(S,\Gm)& & & \\
            \Pic(S)&\Pic(S)[n]\ar[rrd]^{}&\Pic(S)/n& \\
            \Gm(S)&\mu_n(S)&\Gm(S)/n&\mu_n(S)\\
        }
    \end{equation*}
    \caption{The $\E_2$-page of the Leray--Serre spectral sequence computing $\H^i(\B C_n,\Gm)$.}
    \label{fig:bc2}
\end{figure}

Recall Grothendieck's theorem that the natural morphism
$\H^i(X,G)\rightarrow\H^i_{\pl}(X,G)$ is an isomorphism for $i\geq 0$ when $G$
is a smooth group scheme on $X$ (such as $C_n$ or $\Gm$). See \cite[Section 5, Th\`eor\'eme 11.7]{grothendieck-brauer-3}.
Note that this implies the agreement of \'etale and fppf cohomology on a Deligne--Mumford stack. For example, Grothendieck's theorem implies that the morphism from the Leray--Serre spectral
sequence above to the analogous Leray--Serre spectral sequence
$$\E_2^{p,q}=\H^p(C_n,\H^q_{\pl}(S,\Gm))\Rightarrow\H^{p+q}_{\pl}(\B C_n,\Gm)$$
for the fppf cohomology is an isomorphism; thus the comparison map
$$\H^i(\B C_n, \Gm) \to \H^i_{\pl}(\B C_n, \Gm)$$
is also an isomorphism. For $\Gm$-coefficients, we will thus not distinguish between
\'etale and fppf cohomology in what follows. 

We use these observations to compute the Picard and Brauer groups of $\B C_{n,S}$ via a Leray spectral sequence.
The idea is borrowed from \cite{lieblich-arithmetic}*{Section 4.1}.
Consider the map to the coarse moduli space $c\colon \B C_{n,S} \to S$. We claim that
\begin{align*}
    \R^0_{\pl}c_*\Gm &= \Gm \\
    \R^1_{\pl}c_*\Gm &= \mu_n \\
    \R^2_{\pl}c_*\Gm &= 0.
\end{align*}Indeed, $\R^i_{\pl}c_*\Gm$ is the fppf-sheafification of $U
\mapsto\H^i(\B C_{n,U},\Gm)$ and in the Leray--Serre spectral sequences all classes in
$\H^p(C_n,\H^q(U,\Gm))$ for $q>0$ are killed by some fppf cover of $U$.  Furthermore
every unit has an $n$-th root fppf-locally so that the fppf-sheafification of the presheaf $\GG_m/n$ vanishes. This implies the claim. 

It follows that the fppf-Leray spectral sequence
\begin{equation}\label{eq:leray}
    \E_2^{p,q}=\H^p_{\pl}(S,\R^q_{\pl}c_*\Gm)\Rightarrow\H^{p+q}_{\pl}(\B C_{n,S},\Gm)
\end{equation}
for $c$ takes the form given in Figure~\ref{fig:lerayss} in low degrees.
\begin{figure}[h]
    \centering
    \begin{equation*}
        \xymatrix@R=3pt{
            0&&&\\
        \mu_n(S)\ar[rrd] & \H^1_{\pl}(S, \mu_n)\ar[rrd] & &\\
        \Gm(S) & \Pic(S) & \H^2(S,\Gm) & \H^3(S,\Gm)
        }
    \end{equation*}
    \caption{The $\E_2$-page of the Leray spectral sequence~\eqref{eq:leray} computing $\H^i(\B C_n,\Gm)$.}
    \label{fig:lerayss}
\end{figure}
As $\pi^*c^* = \id$, we see that the edge homomorphisms
$\H^i(S,\Gm)\rightarrow\H^i(\B C_{n,S},\Gm)$ from the bottom line in
the Leray spectral sequence are all split injections.
In particular, the displayed differentials $d_2^{0,1}$ and $d_2^{1,1}$ are zero.

\begin{proposition}\label{prop:bcn}
    There is an isomorphism 
      $$\Gm(\B C_{n,S}) \cong \Gm(S)$$
    and short exact sequences
    \[0 \to \Pic(S) \xrightarrow{c^*} \Pic(\B C_{n,S}) \to \mu_n(S) \to 0\]
    and
    \[0\to \H^2(S,\Gm) \xrightarrow{c^*} \H^2(\B C_{n,S},\Gm) \xrightarrow{r} \H^1_{\pl}(S, \mu_n) \to 0,\]
    \[0\to \Br'(S) \xrightarrow{c^*} \Br'(\B C_{n,S}) \xrightarrow{r} \H^1_{\pl}(S, \mu_n) \to 0,\]
    \[0\to \Br(S) \xrightarrow{c^*}  \Br(\B C_{n,S}) \xrightarrow{r} \H^1_{\pl}(S, \mu_n) \to 0,\]
    which are split. The isomorphism and the short exact sequences are functorial in $\B C_{n,S}$ (i.e.\ endomorphisms of $\B C_{n,S}$ induces endomorphisms of exact sequences in a functorial manner), but the splittings are only functorial in $S$. 
\end{proposition}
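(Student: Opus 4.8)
The plan is to read off everything from the Leray spectral sequence \eqref{eq:leray} attached to the coarse moduli map $c\colon \B C_{n,S}\to S$, whose low-degree $\E_2$-page is displayed in Figure~\ref{fig:lerayss} using the already-established values $\R^0_{\pl}c_*\Gm=\Gm$, $\R^1_{\pl}c_*\Gm=\mu_n$, and $\R^2_{\pl}c_*\Gm=0$. The vanishing of $\R^2_{\pl}c_*\Gm$ is what makes the bottom two rows the only contributors in total degrees $\leq 2$, so the edge maps and the two potentially nonzero differentials $d_2^{0,1}$ and $d_2^{1,1}$ control the entire computation. I would first invoke the observation already made in the text that, since $\pi\colon S\to\B C_{n,S}$ satisfies $\pi^*c^*=\id$, the edge homomorphisms $\H^i(S,\Gm)\to\H^i(\B C_{n,S},\Gm)$ are split injections; this immediately forces $d_2^{0,1}=0$ and $d_2^{1,1}=0$, since a nonzero differential out of the $q=1$ row would obstruct the splitting of the corresponding bottom-row edge map.

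With both second-page differentials vanishing, the spectral sequence degenerates in the relevant range, and I would extract the four statements one total degree at a time. In total degree $0$ the only entry is $\E_2^{0,0}=\Gm(S)$, giving $\Gm(\B C_{n,S})\cong\Gm(S)$. In total degree $1$ there is a two-step filtration with graded pieces $\E_\infty^{0,1}=\mu_n(S)$ and $\E_\infty^{1,0}=\Pic(S)$, yielding the short exact sequence
\[0 \to \Pic(S) \xrightarrow{c^*} \Pic(\B C_{n,S}) \to \mu_n(S) \to 0.\]
In total degree $2$ the graded pieces are $\E_\infty^{2,0}=\H^2(S,\Gm)$ and $\E_\infty^{1,1}=\H^1_{\pl}(S,\mu_n)$ (the entry $\E_\infty^{0,2}$ vanishes because $\R^2_{\pl}c_*\Gm=0$), which gives
\[0\to \H^2(S,\Gm) \xrightarrow{c^*} \H^2(\B C_{n,S},\Gm) \xrightarrow{r} \H^1_{\pl}(S, \mu_n) \to 0.\]
Splitness of all these sequences follows from the same $\pi^*c^*=\id$ argument: $c^*$ is split by $\pi^*$, so each filtration splits compatibly with the $S$-structure.

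To pass from the $\H^2(-,\Gm)$ sequence to the $\Br'$ and $\Br$ sequences, I would argue as follows. For $\Br'=\H^2(-,\Gm)_{\tors}$, I would note that $\H^1_{\pl}(S,\mu_n)$ is already $n$-torsion (it sits in the sequence \eqref{eq:h1fppf} between $\Gm(S)/n$ and $\Pic(S)[n]$, both killed by $n$), so the map $r$ lands in torsion and restricts to a surjection ${\H^2(\B C_{n,S},\Gm)_{\tors}}\twoheadrightarrow\H^1_{\pl}(S,\mu_n)$ with kernel exactly $\H^2(S,\Gm)_{\tors}=\Br'(S)$; the splitting descends from the splitting of the $\H^2$ sequence since a splitting section necessarily has torsion image. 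For the actual Brauer group $\Br$, I would use Proposition~\ref{prop:cyclicpresentation} together with Construction~\ref{const:root}: every class in $\H^1_{\pl}(S,\mu_n)$ can be lifted to an honest Azumaya (cyclic) algebra on $\B C_{n,S}$, so the image of $r$ restricted to $\Br(\B C_{n,S})$ is still all of $\H^1_{\pl}(S,\mu_n)$, giving the exact sequence with $\Br(S)$ on the left, and the splitting is realized by these cyclic algebras.

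The main obstacle is establishing the surjectivity of $r$ onto $\H^1_{\pl}(S,\mu_n)$ at the level of honest Brauer groups rather than merely cohomological ones, i.e.\ producing genuine Azumaya algebra representatives; this is where one must genuinely use the cyclic algebra machinery of Section~\ref{sec:BrauerCyclicRam} and the fppf-splitting of Proposition~\ref{prop:fppfsplitting} to realize each $\mu_n$-class as a cyclic algebra on $\B C_{n,S}$. The functoriality claims require only a routine check that the Leray spectral sequence is natural in $S$ and that the splitting, being built from the section $\pi^*$, is natural in $S$ but depends on the identification that is not canonical in $\B C_{n,S}$; I would note this naturality without belaboring the diagram-chase.
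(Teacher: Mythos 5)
Your treatment of the first four statements is exactly the paper's: compute $\R^q_{\pl}c_*\Gm$, read off Figure~\ref{fig:lerayss}, use $\pi^*c^*=\id$ to split the edge maps and kill $d_2^{0,1}$ and $d_2^{1,1}$, and pass to $\Br'$ by noting that $\H^1_{\pl}(S,\mu_n)$ is $n$-torsion so the section lands in torsion. All of that is correct and is what the paper means by ``the discussion above proves everything except for the split exactness of the last two sequences.''

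The gap is in your argument for the last sequence, the one with honest Brauer groups. You propose to realize a section of $r$ by cyclic algebras, citing Proposition~\ref{prop:cyclicpresentation} and Construction~\ref{const:root}. Two problems. First, Proposition~\ref{prop:cyclicpresentation} is proved only for regular noetherian schemes --- its proof restricts to the generic point and uses injectivity of $\Br'$ into the function field --- so it says nothing about the stack $\B C_{n,S}$. Lemma~2.11 does give that $\Ascr_{\sigma,u}$ is Azumaya on $\B C_{n,S}$, hence defines a class in $\Br(\B C_{n,S})$, but that is not the issue. Second, and more seriously, to conclude surjectivity of $r|_{\Br}$ you must identify $r([\Ascr_{\sigma,u}])$ with (a unit multiple of) $u$, and this is precisely the content of Lemma~\ref{lem:cyclic2} and Proposition~\ref{prop:cyclicpresentationallstacks}, which are proved \emph{after} this proposition by a nontrivial reduction to the universal case $\B C_n\times\B_{\pl}\mu_n$ over $\Spec\ZZ$; their proofs in turn use the $\H^2$- and $\Br'$-parts of the present proposition. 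Your plan can be salvaged in that order (those later results do not use the $\Br$-sequence), but as written the key step is unsupported. The paper avoids all of this with a one-line appeal to Gabber's theorem (Remark~\ref{rem:Gabber}): $\pi\colon S\to\B C_{n,S}$ is finite locally free and surjective, so a class $\beta\in\Br'(\B C_{n,S})$ with $\pi^*\beta\in\Br(S)$ lies in $\Br(\B C_{n,S})$; applying this to $s(u)-c^*\pi^*s(u)$, whose $\pi^*$ vanishes, produces an Azumaya representative mapping to $u$, and the same observation identifies the kernel of $r|_{\Br}$ with $c^*\Br(S)$. You should either adopt that argument or explicitly defer the $\Br$-sequence until after Lemma~\ref{lem:cyclic2} is available.
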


\begin{proof}
    By the discussion above, the Leray spectral sequence proves everything except for the split exactness of the last
    two sequences. For the sequence involving the cohomological Brauer group, we just apply the torsion subgroup functor to the split exact sequence involving $\H^2(-, \Gm)$. By Remark \ref{rem:Gabber} we
    furthermore see that $a = \pi^*c^*a \in \H^2(S,\Gm)$ is in $\Br(S)$ if and
    only if $c^*a \in \Br(\B C_{n,S})$, implying split exactness for the last exact sequence. 
\end{proof}

Later on we will need not only the computation of the Brauer group of $\B C_n$,
but also a description of the classes coming from the inclusion $\Gm(S)/n\hookrightarrow\Br(\B
C_n)$, which is either defined via the Leray--Serre spectral sequence or using the splitting in Proposition \ref{prop:bcn}
(the proof of the following lemma will, in particular, show that these two maps differ at most by a unit). 
These classes are described via the classical cyclic algebra construction from the
previous section.

\begin{lemma}\label{lem:cyclic2}
    Let $X$ be an algebraic stack and $n$ a positive integer. Let $\sigma\in\H^1(\B
    C_{n,X},C_n)$ be the class of the universal $C_n$-torsor $X \to \B C_{n,X}$.
    Then there is an integer $k$ prime to $n$ (that only depends on $n$) such that the map 
    $$s:\H^1_{\pl}(X,\mu_n)\rightarrow\H^2(\B C_{n,X},\Gm)$$
    defined by $s(u)=k[(\sigma,u)_n]$ is a section to the map $r$ from Proposition \ref{prop:bcn}.
\end{lemma}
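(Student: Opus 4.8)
The plan is to consider, for fixed $\sigma$, the composite
$\phi\colon\H^1_{\pl}(X,\mu_n)\to\H^1_{\pl}(X,\mu_n)$, $u\mapsto r([(\sigma,c^*u)_n])$, where $c\colon\B C_{n,X}\to X$ and $r$ is the map of Proposition~\ref{prop:bcn}; here $[(\sigma,c^*u)_n]$ denotes the image under $\mu_n\to\Gm$ of the cup product $\sigma\cup c^*u$. The lemma asserts that $\phi$ is multiplication by a unit of $\ZZ/n$, for then we may take $k$ to be any integer prime to $n$ inverting this unit modulo $n$, giving $r\circ s=\id$. Since $\sigma\cup-$ is linear and $r$ is a homomorphism, $\phi$ is additive; since $\sigma$, the cup product, and $r$ are all compatible with base change in $X$, $\phi$ is natural in $X$. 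As $\H^1_{\pl}(-,\mu_n)$ is represented by $\B\mu_n$, evaluating a transformation on the tautological torsor shows that any additive natural endomorphism of it is multiplication by an element $m\in\End(\mu_n)=\ZZ/n$. This already shows, without computation, that $\phi$ is a scalar, so it remains only to identify $m$ and to see that it is prime to $n$.

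To compute $m$ I would feed $\sigma$ and $c^*u$ into the multiplicative structure of the Leray spectral sequence~\eqref{eq:leray} used to prove Proposition~\ref{prop:bcn}, in which $r$ is precisely the projection of $\H^2(\B C_{n,X},\Gm)$ onto $\E_\infty^{1,1}=\H^1_{\pl}(X,\R^1_{\pl}c_*\Gm)=\H^1_{\pl}(X,\mu_n)$. The pulled-back class $c^*u$ lies in filtration $1$, in $\E_\infty^{1,0}$ of the $\mu_n$-version of~\eqref{eq:leray} where $\R^0_{\pl}c_*\mu_n=\mu_n$, while the tautological class $\sigma$ lies in $\E_\infty^{0,1}$, being the generating section $\id$ of $\R^1_{\pl}c_*C_n\iso\underline{\Hom}(C_n,C_n)$. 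By multiplicativity their cup product lands in $\E^{1,1}$ and is computed on the $\E_2$-page by the map $\H^1_{\pl}(X,\R^0_{\pl}c_*\mu_n)\to\H^1_{\pl}(X,\R^1_{\pl}c_*\mu_n)$ induced by cupping the coefficient sheaves along $C_n\otimes\mu_n\to\mu_n$. Evaluated against $\sigma=\id$, this coefficient map is the standard evaluation isomorphism $\mu_n\iso\underline{\Hom}(C_n,\mu_n)=\R^1_{\pl}c_*\mu_n$, so $\sigma\cup c^*u$ is carried to $u$; applying $\mu_n\to\Gm$ then identifies $\R^1_{\pl}c_*\mu_n$ with $\R^1_{\pl}c_*\Gm$ through the isomorphism $\underline{\Hom}(C_n,\mu_n)\iso\underline{\Hom}(C_n,\Gm)$. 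Thus $\phi(u)$ equals $u$ up to these identifications, and $m$ is a unit.

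The main obstacle is to confirm that the various identifications only contribute a unit. Concretely, $m$ measures the discrepancy between the isomorphism $\R^1_{\pl}c_*\Gm\iso\mu_n$ fixed in Proposition~\ref{prop:bcn} (hence built into the target of $r$) and the evaluation isomorphism produced by the cup product together with the generator $g(\sigma)$ of $C_n$ used to define the cyclic algebra. Both are genuine isomorphisms of $\mu_n$, so their comparison is multiplication by an element of $(\ZZ/n)^\times$, independent of $X$ and $u$; this is the unit $m$, and $k$ is chosen to invert it, which is why one cannot in general expect $k=1$. One must also invoke the agreement of \'etale and fppf cohomology recalled in this section, so that the \'etale class $\sigma$ may legitimately be cupped against the fppf class $c^*u$ and so that the two Leray sequences are compatible. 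Finally, running the same computation through the descent spectral sequence of Figure~\ref{fig:bc2} matches, for $u\in\Gm(X)/n$, the cyclic-algebra class $[(\sigma,c^*u)_n]$ with the image of $\E_\infty^{2,0}=\H^2(C_n,\Gm(X))=\Gm(X)/n$ in $\Br(\B C_{n,X})$ up to the same unit; this yields the promised comparison of the two inclusions of $\Gm(X)/n$.
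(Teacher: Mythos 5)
Your proposal is correct, but it identifies the constant $k$ by a genuinely different mechanism than the paper. Your first step --- additivity plus naturality, evaluated on the tautological torsor --- is in substance the same reduction the paper makes to the universal case $X=\B_{\pl}\mu_n$; note that the appeal to representability is slightly too quick as stated, since $\H^1_{\pl}(\B_{\pl}\mu_n,\mu_n)\iso\H^1_{\pl}(\ZZ,\mu_n)\oplus\ZZ/n$ rather than $\ZZ/n$, so you must use additivity together with pullback along the section $\Spec\ZZ\to\B_{\pl}\mu_n$ to kill the first summand (this is exactly the Leray computation for $d\colon\B_{\pl}\mu_n\to\Spec\ZZ$ carried out in the paper). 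Where you diverge is in showing the resulting scalar $m$ is a unit. The paper does this arithmetically: it forms $\alpha=[(\sigma,c^*\tau)_n]$ over $\B C_{n,\B_{\pl}\mu_n}$, observes $\pi^*\alpha=0$ and that $\alpha$ has \emph{exact} order $n$ --- the latter imported from the existence of cyclic algebras of exact order $n$ over fields \cite{gille-szamuely}*{Lemma~5.5.3} --- and deduces $r(\alpha)=k\tau$ with $k$ prime to $n$. You instead compute $r\circ(\sigma\cup c^*(-))$ on the $\E_2$-page of the multiplicative Leray spectral sequence, pairing the symbol of $\sigma$ in $\E^{0,1}$ with that of $c^*u$ in $\E^{1,0}$ and identifying the coefficient pairing at $\id\in\underline{\Hom}(C_n,C_n)$ with the evaluation isomorphism $\mu_n\iso\underline{\Hom}(C_n,\mu_n)$, so that $k$ is the discrepancy between this identification and the isomorphism $\R^1_{\pl}c_*\Gm\iso\mu_n$ fixed in Proposition~\ref{prop:bcn}. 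Your route is more self-contained (no input from the theory of division algebras, and it shows the composite is genuinely an isomorphism rather than merely of exact order $n$), at the cost of invoking the multiplicativity of the fppf Leray spectral sequence and the compatibility of symbols with cup products, which the paper's argument never needs; this makes your second step do all the work and renders the representability step logically redundant, though harmless.
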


\begin{proof}
    It suffices to consider the universal case of $X=\B_{\pl}\mu_n$ over $\Spec\ZZ$, the stack classifying fppf $\mu_n$ torsors. 
    Note that $\B_{\pl}\mu_n$ is indeed a stack by \cite[Tag 04UR]{stacks-project} and is an algebraic stack by \cite[Tag 06DC]{stacks-project} with fppf atlas $\Spec \Z \to \B_{\pl}\mu_n$. Let
    $d:\B_{\pl}\mu_n\rightarrow\Spec\ZZ$ denote the structure map, and let
    $\R^q_{\pl}d_*\mu_n$ denote the derived functors of the push-forward in the
    fppf topos. Then, it is easy to see that $\R^0_{\pl}d_*\mu_n\iso\mu_n$ and we claim that
    $\R^1_{\pl}d_*\mu_n\iso C_n$.     
    To see the latter isomorphism, consider the
    natural transformations
    \begin{equation}\label{eq:presheaves}\Hom_{\Spec R}^{\mathrm{gp}}(\mu_{n,R},\mathds{G}_{m,R})\rightarrow\H^1(\B_{\pl}\mu_{n,R},\Gm)[n] \leftarrow\H^1_{\pl}(\B_{\pl}\mu_{n,R},\mu_n)\end{equation}
    of presheaves on affine schemes over $\ZZ$; the first map sends a homomorphism $f\colon \mu_{n,R} \to \mathds{G}_{m,R}$ to the image of the canonical class $\H^1(\B_{\pl}\mu_{n,R}, \mu_n)$ under $f_*$ and the second map is part of the Kummer sequence. The leftmost term is a sheaf and
    it is a standard fact that it is represented by the constant \'etale group
    scheme $C_n$. See~\cite{fermat}*{Section V.2.10} for example. The fppf-sheafification of the rightmost term is $\R^1_{\pl}d_*\mu_n$. To see that
    the induced map of sheaves are isomorphisms, it is sufficient to check on stalks in the
    fppf topology \cite{gabber-kelly}*{Remark 1.8, Theorem 2.3} and in particular if $R$ is a Henselian local ring with algebraically closed residue field \cite{gabber-kelly}*{Lemma 3.3}. If $R$ is such a local ring, then $\Gm(R)/n=0$ so that
    $\H^1_{\pl}(\B_{\pl}\mu_{n,R},\mu_n)\iso\H^1_{\pl}(\B_{\pl}\mu_{n,R},\Gm)[n]$.
    Using that $\Pic(R) = 0$, the Leray--Serre spectral sequence for the cover $\Spec R \to \B_{\pl}\mu_{n,R}$ shows that 
    $$\H^1_{\pl}(\B_{\pl}\mu_{n,R},\Gm) \cong \H^1_{\mathrm{group}}(\mu_{n,R},
    \mathds{G}_{m,R}) \cong \Hom^{\mathrm{gp}}_{\Spec R}(\mu_{n,R},\mathds{G}_{m,R}),$$
    where $\H^1_{\mathrm{group}}(\mu_{n,R},
    \mathds{G}_{m,R})$ is the first cohomology of the cobar complex 
    $$\Gm(S) \to \Gm(\mu_{n,R}) \to \Gm(\mu_{n,R}\times_{\Spec R} \mu_{n,R}) \to \cdots$$
    with differentials as in the usual definition of group cohomology. This shows that the morphisms in \eqref{eq:presheaves} are isomorphisms on fppf-stalks and thus that $\R^1_{\pl}d_*\mu_n\iso C_n$.

    Now, the fppf-Leray spectral sequence for
    $d:\B_{\pl}\mu_n\rightarrow\Spec\ZZ$ yields an exact sequence
    \begin{equation}\label{eq:munsplitex} 0\rightarrow\H^1_{\pl}(\Spec\ZZ,\mu_n)\rightarrow\H^1_{\pl}(\B_{\pl}\mu_n,\mu_n)\rightarrow\H^0(\Spec\ZZ,C_n)\rightarrow
    0.\end{equation}
    The right hand term is isomorphic to $\ZZ/n$, and the sequence is
    split by applying the pullback map along
    $\Spec\ZZ\rightarrow\B_{\pl}\mu_n$. We denote by $\tau\in \H^1_{\pl}(\B_{\pl}\mu_n,\mu_n)$ the class of the universal
    $\mu_n$-torsor over $\B_{\pl}\mu_n$. This is (exactly) of order $n$ and pulls back to zero on $\Spec \ZZ$. 

    Consider $c\colon \B C_{n,\B_{\pl}\mu_n}\rightarrow\B_{\pl}\mu_n$ and the class $\alpha=[(\sigma,c^*\tau)_n]$. The
    class of $\alpha$ has order (exactly) $n$ as there are cyclic
    algebras of order $n$ over fields, for example
    by~\cite{gille-szamuely}*{Lemma~5.5.3}. As
    $\pi^*\alpha=0$ for $\pi\colon \B_{\pl}\mu_n \to \B C_{n,\B_{\pl}\mu_n}$ the projection, it follows from Proposition \ref{prop:bcn} that $r(\alpha)$ in
    $\H^1_{\pl}(\B_{\pl}\mu_n,\mu_n)$ has order $n$ as well. On the other hand, $r(\alpha)$ pulls back to
    zero over $\Spec\ZZ$ so it is a non-zero multiple of $\tau$ (using the split-exact sequence \eqref{eq:munsplitex}). Thus, $r(\alpha)=m\tau$ for some $m$ prime to $n$.
    This completes the proof if we set $k$ to be a number such that $km \equiv 1 \mod n$.
\end{proof}

 \begin{corollary}\label{cor:cyclicgeneral}
     Suppose that $\chi\colon X\to Y$ is a $C_n$-torsor for some positive
     integer $n$.
     Let $u\in\Gm(Y)/n$ be the class of a unit, and write $\alpha_u$ for the corresponding
     class in $\Br'(Y)$ (defined via the Leray--Serre spectral sequence). Then we
     have $\alpha_u=k[(\chi,u)]$ in $\Br'(Y)$, where $k$ is some number prime to $n$ which only depends on $n$. 
 \end{corollary}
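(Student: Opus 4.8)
The plan is to reduce the statement to the universal computation of Lemma~\ref{lem:cyclic2} by exhibiting $\chi$ as a pullback of the universal $C_n$-torsor over a relative classifying stack. Apply Lemma~\ref{lem:cyclic2} with base $Y$: write $c\colon\B C_{n,Y}\to Y$ for the coarse moduli map, $\pi\colon Y\to\B C_{n,Y}$ for the universal $C_n$-cover (so $c\circ\pi=\id_Y$), and $\sigma\in\H^1(\B C_{n,Y},C_n)$ for the universal class. Since a $C_n$-torsor on $Y$ is the same datum as a section of $c$, the torsor $\chi$ is classified by a map $g\colon Y\to\B C_{n,Y}$ with $c\circ g=\id_Y$ and $g^*\sigma=\chi$, and $\chi\colon X\to Y$ is the pullback of $\pi$ along $g$, giving a cartesian square of $C_n$-Galois covers
\[
\xymatrix{
X \ar[r]^{\tilde g}\ar[d]_{\chi} & Y \ar[d]^{\pi}\\
Y \ar[r]_-{g} & \B C_{n,Y}.
}
\]
By Proposition~\ref{prop:bcn} we have $\Gm(\B C_{n,Y})\cong\Gm(Y)$, so $c^*u$ corresponds to $u$, and $\pi^*c^*u=(c\circ\pi)^*u=u$ is $C_n$-invariant on the cover. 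Let $\beta\in\Br'(\B C_{n,Y})$ be the descent-spectral-sequence class attached to $c^*u$ for the cover $\pi$, i.e.\ the edge image of the class of $u$ in $\H^2(C_n,\Gm(Y))=\E_2^{2,0}$. Note also $g^*c^*u=(c\circ g)^*u=u$.

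The heart of the argument is the naturality of the descent spectral sequence along the cartesian square above. The pair $(g,\tilde g)$ induces a morphism from the descent spectral sequence of $\pi$ to that of $\chi$, compatible with $g^*$ on the abutments and with $\tilde g^*\colon\H^2(C_n,\Gm(Y))\to\H^2(C_n,\Gm(X))$ on the $\E_2^{2,0}$-terms. Chasing the class of $u$ through the compatible edge maps then gives $\alpha_u=g^*\beta$: the element $[u]\in\E_2^{2,0}$ for $\pi$ maps to $[\chi^*u]\in\E_2^{2,0}$ for $\chi$, whose edge image is by definition $\alpha_u$. Verifying this compatibility of the low-degree edge maps under pullback of $C_n$-torsors — in particular that the natural maps $\Gm(-)^{C_n}\to\H^2(C_n,\Gm(-))$ and the edge homomorphisms commute with the induced maps — is the main technical point I expect to require care, although it is a standard consequence of the functoriality of the Leray--Serre spectral sequence for Galois covers.

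Finally, Lemma~\ref{lem:cyclic2}, together with the remark preceding it that identifies the descent-spectral-sequence map with the splitting of Proposition~\ref{prop:bcn} up to a unit, shows $\beta=k\,[(\sigma,c^*u)_n]$ for an integer $k$ prime to $n$; since the proof of Lemma~\ref{lem:cyclic2} reduces to the universal base $\Spec\ZZ$, this $k$ depends only on $n$. Pulling back along $g$ and using the functoriality of the cyclic algebra construction (the cup product defining $[(\chi,u)]$ is natural under pullback), we obtain
\[
\alpha_u=g^*\beta=k\,g^*[(\sigma,c^*u)_n]=k\,[(g^*\sigma,\,g^*c^*u)_n]=k\,[(\chi,u)_n],
\]
using $g^*\sigma=\chi$ and $g^*c^*u=u$. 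This is the desired identity, with $k$ prime to $n$ and depending only on $n$.
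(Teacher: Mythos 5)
Your argument is correct and is essentially the paper's intended one: the paper states this as an immediate corollary of Lemma~\ref{lem:cyclic2} (with no written proof), the implicit argument being exactly your reduction to the universal case via the classifying section $g\colon Y\to\B C_{n,Y}$ of the coarse moduli map, naturality of the descent spectral sequence under the resulting cartesian square, and naturality of the cup product defining the cyclic algebra class. Your write-up simply makes that naturality explicit.
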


We do not know the value of $k$ in the corollary. Perhaps it is always $\pm 1$, as is
the case in similar computations, such as the result of
Lichtenbaum (see~\cite{gille-szamuely}*{Theorem~5.4.10}), which computes the
exact value of the map $\Pic(X_{\overline{k}})^G\iso\ZZ\rightarrow\Br(k)$ when $X$
is a Severi--Brauer variety of a field with Galois group $G$, or the computation
of~\cite{gille-queguiner-mathieu} of the sign of the Rost invariant.

\begin{proposition}\label{prop:cyclicpresentationallstacks}
    Let $\Xscr$ be an algebraic stack and suppose that
    $\chi\in\H^1(\Xscr,C_n)$ and $u\in\H^1_{\pl}(\Xscr,\mu_n)$ are fixed classes. In the notation above,
    we have $[(\chi,u)_n]=[\Ascr_{\chi,u}]$ in $\Br'(\Xscr)$.
\end{proposition}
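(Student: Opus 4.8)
The plan is to reduce to a single universal class and to locate the potential discrepancy between the cup product and the cyclic algebra in one universal constant, which is then forced to vanish by comparison with the scheme case already settled in Proposition~\ref{prop:cyclicpresentation}.

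First I would observe that both $[(\chi,u)_n]$ and $[\Ascr_{\chi,u}]$ are natural in $\Xscr$ and biadditive in $(\chi,u)$: the cyclic algebra is assembled from $p(\chi)_*\Oscr_{\Yscr}$ and the root construction $\Oscr_{\Xscr}(\sqrt[n]{u})$, both of which commute with pullback, while biadditivity in $u$ is the usual tensor-product identity for cyclic algebras. Write $c\colon \B C_{n,\Xscr}\to\Xscr$ and let $\sigma\in\H^1(\B C_{n,\Xscr},C_n)$ be the universal character as in Lemma~\ref{lem:cyclic2}. Any $\chi\in\H^1(\Xscr,C_n)$ equals $s_\chi^*\sigma$ for the section $s_\chi\colon\Xscr\to\B C_{n,\Xscr}$ of $c$ classifying $\chi$, and $s_\chi^* c^* u = u$. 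Hence, setting $D_u := [(\sigma,c^*u)_n]-[\Ascr_{\sigma,c^*u}]$, we have $s_\chi^* D_u = [(\chi,u)_n]-[\Ascr_{\chi,u}]$, so it suffices to prove $D_u=0$ in $\Br'(\B C_{n,\Xscr})$ for the universal character and every $u\in\H^1_{\pl}(\Xscr,\mu_n)$.

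Next I would invoke the splitting of Proposition~\ref{prop:bcn} for $c$ (its proof is fppf-local on the base, so it applies verbatim with $\Xscr$ an arbitrary algebraic stack). The atlas $\pi\colon\Xscr\to\B C_{n,\Xscr}$ is a section of $c$ classifying the trivial torsor, so $\pi^*c^*=\id$ and $\Br'(\B C_{n,\Xscr})=c^*\Br'(\Xscr)\oplus\ker\pi^*$, with $r$ restricting to an isomorphism $\ker\pi^*\xrightarrow{\sim}\H^1_{\pl}(\Xscr,\mu_n)$. Since $\pi^*\sigma$ is the trivial character, both terms of $D_u$ split after $\pi^*$, so $D_u\in\ker\pi^*$ and it is equivalent to show $r(D_u)=0$. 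By Lemma~\ref{lem:cyclic2} we have $r([(\sigma,c^*u)_n])=k^{-1}u$ for a universal unit $k$ prime to $n$, and the same argument (nonvanishing from an order-$n$ cyclic algebra over a field, together with triviality of the base part upon pulling back along $\Spec\ZZ\to\B_{\pl}\mu_n$) shows $r([\Ascr_{\sigma,c^*u}])=k'u$ for a universal $k'\in\ZZ/n$. Thus $r(D_u)=m\,u$ with $m:=k^{-1}-k'\in\ZZ/n$ independent of $\Xscr$ and $u$; because $[(\sigma,c^*u)_n]\in\ker\pi^*$ has $r$-image $k^{-1}u$, the isomorphism $r|_{\ker\pi^*}$ yields the closed form $D_u = mk\,[(\sigma,c^*u)_n]$.

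The hard part is to show $m=0$. Here I would specialize along a section $s_0\colon\Spec K\to\B C_{n,K}$ corresponding to a genuine cyclic extension $\chi_0\in\H^1(K,C_n)$ of a field $K$. On the one hand Proposition~\ref{prop:cyclicpresentation} applies on the regular noetherian scheme $\Spec K$ and gives $s_0^* D_{u_0}=[(\chi_0,u_0)_n]-[\Ascr_{\chi_0,u_0}]=0$ for every $u_0\in\H^1_{\pl}(K,\mu_n)$; on the other hand the closed form gives $s_0^* D_{u_0}=mk\,[(\chi_0,u_0)_n]$. Taking $K$ a $p$-adic field containing $\mu_n$, with $\chi_0$ the unramified character of order $n$ and $u_0$ a uniformizer, the algebra $(\chi_0,u_0)_n$ has local invariant $1/n$ and hence exact order $n$ in $\Br(K)$ (Proposition~\ref{prop:Hilbert} and local class field theory). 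Since $k$ is prime to $n$, the relation $mk\,[(\chi_0,u_0)_n]=0$ forces $m\equiv 0$. Therefore $r(D_u)=0$ for all $\Xscr$ and $u$, so $D_u=0$, and pulling back along $s_\chi$ concludes the proof. The genuine obstacle is exactly this matching of normalizations: the edge map $r$ records the cup product and the explicitly presented cyclic algebra a priori with different universal units, and the descent/Leray machinery only shows each is \emph{some} unit multiple of $u$; the idea that removes the ambiguity is to transport the already-established scheme-level identity of Proposition~\ref{prop:cyclicpresentation} to the universal stacky class through $s_0$ and to read off the single remaining constant on one order-$n$ division algebra over a local field.
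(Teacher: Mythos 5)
Your proof is correct and follows essentially the same route as the paper's: reduce to a universal situation, use the Leray/descent splitting and the argument of Lemma~\ref{lem:cyclic2} to see that the two classes differ by a universal constant, and then kill that constant by comparing with the already-established scheme case of Proposition~\ref{prop:cyclicpresentation} on a cyclic algebra of exact order $n$ over a field. The only (cosmetic) difference is that the paper universalizes both $\chi$ and $u$ at once over $\B C_n\times\B_{\pl}\mu_n$, whereas you universalize only the character and track the discrepancy through the edge map $r$.
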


\begin{proof}
    Both $[\Ascr_{\chi,u}]$ and $[(\chi,u)_n]$ define classes in $\H^2_{\pl}(\B
    C_n\times\B_{\pl}\mu_n,\Gm)$. As at the end of the proof of
    Proposition~3.3, we see that
    $[\Ascr_{\chi,u}]=k[(\chi,u)_n]$ for some $k$ prime to $n$.
    We saw in Proposition~\ref{prop:cyclicpresentation} that they agree when pulled
    back to regular noetherian schemes. The result follows.
\end{proof}

\section{A presentation of the moduli stack of elliptic curves}\label{sec:presentation}

We will compute $\Br(\Mscr)$ using that it injects into
$\Br(\Mscr_{\ZZ[\tfrac12]})$ by Proposition~\ref{prop:BrOmnibus}(iv) and using a specific presentation of
$\Mscr_{\ZZf{2}}$, which we now describe. This presentation is standard and we
claim no originality in our presentation of it. For references,
see~\cite{deligne-rapoport} or~\cite{katz-mazur}, 

\begin{definition}
    A \df{full level $2$ structure} on an elliptic curve $E$ over a base scheme $S$
    is a fixed isomorphism $(\ZZ/2)^2_S\rightarrow E[2]$, where $(\ZZ/2)^2_S$
    denotes the constant group scheme on $(\ZZ/2)^2$ over $S$ and $E[2]$ is the subgroupscheme
    of order $2$ points in $E$. If there exists an isomorphism
    $(\ZZ/2)^2_S\cong E[2]$, an equivalent way of specifying a level $2$
    structure is to order the points of exact order $2$ in $E(S)$ (over each
    connected component of $S$). 
\end{definition}

\begin{remark}
    These full level $2$ structures are sometimes called \emph{naive} to
    distinguish them from the level structures considered by Drinfeld, which
    allow one to extend $\Mscr(2)$ to a stack supported over all of $\Spec\ZZ$. We will not need this
    generalization in this paper. It is the subject of~\cite{katz-mazur}.
\end{remark}

The moduli stack $\Mscr(2)$ of elliptic curves with fixed level $2$ structures
is a regular noetherian Deligne-Mumford stack. Moreover, since the existence of
a full level $2$ structure implies that $2$ is invertible in $S$
(by~\cite{katz-mazur}*{Corollary~2.3.2} for example), the functor
$\Mscr(2)\rightarrow\Mscr$ which forgets the level structure factors through
$\Mscr_{\ZZf{2}}$. This map is clearly equivariant for the right $S_3$-action
on $\Mscr(2)$ that permutes the non-zero $2$-torsion points and the trivial
$S_3$-action on $\Mscr$. Note that in general, being $G$-equivariant for a map
$f\colon \mathcal{X} \to \mathcal{Y}$ of stacks with $G$-action is extra
structure: for every $g\in G$ one has to provide compatible $2$-morphism
$\sigma_g\colon gf \to fg$ (see \cite{romagny} for details). In the case of
$\Mscr(2) \to \Mscr_{\Z[\frac12]}$ though the equivariance is strict in the
sense that all $\sigma_g$ are the identity $2$-morphisms of
$\Mscr(2)\rightarrow\Mscr_{\ZZf{2}}$. 

We have the following well-known statement; to fix ideas, we will provide a proof.

\begin{lemma}\label{lem:S3torsor}
    The map $\Mscr(2)\rightarrow\Mscr_{\ZZf{2}}$ is an $S_3$-Galois cover.
\end{lemma}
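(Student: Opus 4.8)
The plan is to verify the two conditions that make $\Mscr(2)\rightarrow\Mscr_{\ZZf{2}}$ an $S_3$-Galois cover: first, that the map is finite \'etale, and second, that the natural $S_3$-action exhibits $\Mscr_{\ZZf{2}}$ as the quotient stack $[\Mscr(2)/S_3]$, equivalently that the induced map $\Mscr(2)\times S_3\rightarrow\Mscr(2)\times_{\Mscr_{\ZZf{2}}}\Mscr(2)$ given by $(E,\phi,g)\mapsto((E,\phi),(E,\phi g))$ is an isomorphism. Since both properties can be checked after base change to an \'etale atlas, and in fact fppf-locally, I would reduce to working over an affine base $S$ with $2$ invertible and a fixed elliptic curve $E/S$.

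First I would address \'etaleness. Over a base $S$ with $2$ invertible, the $2$-torsion subgroup scheme $E[2]$ is finite \'etale of rank $4$ over $S$, since it is the kernel of multiplication by $2$, which is \'etale in this situation. The stack $\Mscr(2)$ classifies isomorphisms $(\ZZ/2)^2_S\xrightarrow{\sim}E[2]$, i.e.\ trivializations of the \'etale group scheme $E[2]$. Forgetting the trivialization, the fiber of $\Mscr(2)\rightarrow\Mscr_{\ZZf{2}}$ over a point classifying $E/S$ is the scheme $\Iso_S((\ZZ/2)^2_S,E[2])$ of such isomorphisms, which is a torsor under $\AutSheaf(E[2])\iso\GL_2(\FF_2)\iso S_3$. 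Because $E[2]$ becomes constant after a finite \'etale cover of $S$, this $\Iso$-scheme is finite \'etale over $S$, so $\Mscr(2)\rightarrow\Mscr_{\ZZf{2}}$ is representable, finite, and \'etale.

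Next I would identify the Galois/torsor structure. The symmetric group $S_3$ acts on $\Mscr(2)$ on the right by precomposing the level structure $\phi\colon(\ZZ/2)^2_S\rightarrow E[2]$ with the corresponding automorphism of $(\ZZ/2)^2$, using the identification $\Aut((\ZZ/2)^2)\iso\GL_2(\FF_2)\iso S_3$; this action is free and fiberwise simply transitive precisely because $\Iso_S((\ZZ/2)^2_S,E[2])$ is a torsor under this group. To conclude, I would check that the shearing map $\Mscr(2)\times S_3\rightarrow\Mscr(2)\times_{\Mscr_{\ZZf{2}}}\Mscr(2)$ is an isomorphism: two level structures $\phi,\psi$ on the same $E$ differ by a unique element $g=\phi^{-1}\psi\in\Aut((\ZZ/2)^2)\iso S_3$, which gives the inverse map and shows the action is simply transitive on fibers. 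Together with \'etaleness, this is exactly the statement that $\Mscr(2)\rightarrow\Mscr_{\ZZf{2}}$ is an $S_3$-Galois cover.

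The main obstacle, and the only point requiring genuine care, is the stacky bookkeeping: $\Mscr(2)$ and $\Mscr_{\ZZf{2}}$ are stacks rather than schemes, so statements like ``the fiber is an $\Iso$-torsor'' must be interpreted $2$-categorically, and one must confirm that automorphisms of the elliptic curve $E$ itself interact correctly with the $S_3$-action (an automorphism of $E$ acts on $E[2]$ and hence on the set of level structures). I would handle this by passing to the presentation via the Legendre family over $X=\Spec\ZZ[\tfrac12,t^{\pm1},(t-1)^{-1}]$ alluded to in the introduction, where $E[2]$ is genuinely constant and the three non-trivial $2$-torsion points are explicitly $(0,0)$, $(1,0)$, $(t,0)$, so that the $S_3$-action permutes these three sections; verifying freeness and simple transitivity then becomes the elementary statement that $S_3$ acts simply transitively on orderings of a three-element set, which is automatic.
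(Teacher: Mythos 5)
Your proposal is correct and follows the same overall decomposition as the paper: reduce to showing (a) that, once one full level $2$ structure exists, the remaining ones form a (trivial) torsor under $S_3$ acting by permuting the nonzero $2$-torsion points, and (b) that a full level $2$ structure exists \'etale-locally. The genuine difference is in how you establish (b). The paper argues concretely: it writes $E$ Zariski-locally in Weierstrass form $y^2=x^3+b_2x^2+b_4x+b_6$, adjoins the three roots $e_1,e_2,e_3$ of the cubic, and observes that this extension is \'etale because the discriminant of the cubic is a unit ($E$ being smooth). You instead argue abstractly that $E[2]$ is finite \'etale of rank $4$ (as $[2]$ is \'etale when $2$ is invertible), hence \'etale-locally constant and therefore \'etale-locally isomorphic to $(\ZZ/2)^2$, so that $\Iso_S((\ZZ/2)^2_S,E[2])$ is a finite \'etale $\GL_2(\FF_2)$-torsor. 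Both routes are valid; the paper's explicit version has the side benefit of introducing the sections $e_1,e_2,e_3$ and the passage to the form $y^2=x(x-p)(x-q)$, which are reused immediately in the proof that $X\to\Mscr(2)$ is a $C_2$-torsor, whereas yours is shorter and makes the torsor structure transparent.

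Two small cautions. First, $\AutSheaf(E[2])$ is only a form of $\GL_2(\FF_2)$, not canonically the constant group; the torsor structure relevant to the Galois cover is the right action of the \emph{constant} group $\GL_2(\FF_2)_S\iso (S_3)_S$ by precomposition, which is the one you in fact use, so this is only an imprecision of phrasing. Second, your final paragraph proposes to settle the $2$-categorical bookkeeping by passing to the Legendre presentation $X\to\Mscr(2)$; in the paper that presentation (Proposition \ref{prop:Xtorsor}) is established \emph{after} and \emph{using} this lemma, so leaning on it here would be circular. Fortunately it is also unnecessary: the fiber product $\Spec R\times_{\Mscr_{\ZZf{2}}}\Mscr(2)$ is already a discrete groupoid, i.e.\ the scheme $\Iso_{\Spec R}((\ZZ/2)^2,E[2])$, because the isomorphism with the fixed curve $E$ is part of the data and rigidifies away all automorphisms; your main argument goes through without the detour.
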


\begin{proof}
    It is enough to show that for every affine scheme $\Spec R$ over
    $\Spec\ZZf{2}$ and every elliptic curve $E$ over $\Spec R$, we can find a
    full level $2$ structure \'etale locally. Indeed, if there is one full level $2$ structure on $E$, the map
    $$ (S_3)_{\Spec R}=S_3 \times \Spec R \to \Spec R \times_{\MM_{\ZZf{2}}} \MM(2)$$
    from the constant group scheme on $S_3$
    is an isomorphism since we get every other full level $2$ structure on $E$ by permuting the non-zero $2$-torsion points. 

    The elliptic curve $E$ defines an $R$-point $E:\Spec R\rightarrow\Mscr_{\ZZf{2}}$ of the
    moduli stack of elliptic curves. Zariski locally we can assume the pullback $E^*\lambda$ of the Hodge
    bundle to be trivial, in which case there exists a
    nowhere vanishing invariant differential $\omega$.
    By~\cite{katz-mazur}*{Section~2.2}, we can then write $E$ in Weierstrass form over $\Spec R$,
    which after a coordinate change takes the form
    $$y^2 = x^3 + b_2x^2+b_4x+b_6.$$
    As a point $(x,y)$ on $E$ is $2$-torsion if and only if $y=0$, we have a
    full level $2$ structure after adjoining the three roots $e_1,e_2$ and $e_3$ of
    $x^3 + b_2x^2+b_4x+b_6$ to $R$. This defines an \'etale extension as the
    discriminant of this cubic polynomial does not vanish (because $E$ is
    smooth). 
\end{proof}

\begin{definition}
    A \df{Legendre curve} with parameter $t$ over $S$ is an elliptic curve
    $E_t$ with Weierstrass
    equation $$y^2=x(x-1)(x-t).$$ 
    As the discriminant of this equation is $16t^2(t-1)^2$, such an equation defines an elliptic (and hence Legendre) curve
    if and only if $2, t$ and $t-1$ are invertible on $S$. 
\end{definition}

The points $(0,0)$, $(1,0)$, and $(t,0)$ define three non-zero $2$-torsion
points on $E_t$. Taking them in this order fixes a full level $2$ structure on
$E$. This defines a morphism $$\pi\colon X\rightarrow\Mscr(2),$$ where $X$ is
the parameter space of Legendre curves. In fact, $X$ is an affine scheme, given as
$$X=\Spec\ZZ\left[\tfrac12,t^{\pm 1},(t-1)^{-1}\right]=\AA^1_{\ZZ[\frac12]}-\{0,1\}.$$
We will use $X$ throughout this paper to refer specifically to this moduli
space of Legendre curves. In particular, $X$ is naturally defined over
$\ZZf{2}$. In general, given a scheme $S$, we let $X_S=\AA^1_S-\{0,1\}$.
Note that this is a slight abuse of notation as we do not assume that $2$ is
invertible on $S$.

We equip the map $\pi\colon X \to \Mscr(2)$ with the structure of a $C_2$-equivariant map with the trivial $C_2$-action on $X$ and $\Mscr(2)$ by choosing $\sigma_g\colon g\pi \to
    \pi g$ to be $[-1]$ (i.e.\ multiplication by $-1$ on the universal
    elliptic curve) for $g \in C_2$ the non-trivial element. Note that $[-1]$
    fixes the level $2$ structure and so indeed defines a natural automorphism
    of $\id_{\Mscr(2)}$. The structure of a $C_2$-equivariant map on $\pi$ induces a map $[X/C_2] = \B C_{2,X} \to \Mscr(2)$. 

\begin{proposition}\label{prop:Xtorsor}
    The $C_2$-equivariant map $\pi\colon X\rightarrow\Mscr(2)$ is a $C_2$-torsor. Thus, the map $\B C_{2,X}\to\Mscr(2)$ is an equivalence. 
\end{proposition}

\begin{proof}
    First we will show that an elliptic curve $E\rightarrow\Spec R$ with full level $2$ structure can \'etale locally be brought into Legendre form. Our proof will be
    along the lines of \cite{silverman}*{Proposition III.1.7}, but we have to take
    a little bit more care.  
    
    As in the proof of Lemma \ref{lem:S3torsor}, Zariski locally over $\Spec
    R$, we can write $E$ in the form
    $$y^2 = x^2 + b_2x^2+b_4x+b_6$$
    and the full level $2$ structure allows us to factor the right hand side as
    $$(x-e_1)(x - e_2)(x-e_3),$$
    where $(e_1,0)$, $(e_2,0)$ and $(e_3,0)$ are the nonzero $2$-torsion points. We
    set $p = e_2-e_1$ and $q= e_3-e_1$. By a linear coordinate change, we get $y^2
    = x(x-p)(x-q)$. 

    Since the equation $y^2=x(x-p)(x-q)$ defines an elliptic curve, $p$, $q$ and $p-q$ are nowhere
    vanishing. Thus, the extension $R \to R[\sqrt p]$ is \'etale so that we can
    (and will) assume \'etale locally to have a (chosen) square root
    $\sqrt{p}$. Now, $E$ is isomorphic to
    $y^2 = x(x-1)(x-t)$ for $t=\frac{q}p$, where the isomorphism is given by $x
    = px'$ and $y = p^{3/2}y'$. Thus our original $E$ is indeed \'etale locally (on the base) isomorphic to a Legendre curve as an elliptic curve with level $2$ structure. It is moreover an elementary check with coordinate
    transformations that there is at most one choice of $t \in R$ such that the Legendre curve $E_t$ with paramter $t$ is isomorphic to $E$ in $\Mscr(2)$. 
    
    Now assume that our elliptic curve $E$ over $R$ is in Legendre form and assume further that $\Spec R$ is connected. By definition, for a commutative $R$-algebra $R'$, an element of $(X\times_{\MM(2)} \Spec
    R)(R')$ consists of a Legendre curve $E_t$ together with an isomorphism of
    $E_t$ to $E_{R'}$ in $\Mscr(2)$. By assumption this set is
    non-empty and it is indeed a torsor under the group of automorphisms of $E_{R'}$ in $\Mscr(2)$. 
    By \cite[Corollary 2.7.2]{katz-mazur}, the only non-trivial automorphism of $E_{R'}$ with level $2$
    structure is $[-1]$. Thus, the $C_2$-action exactly interchanges the two elements of $(X\times_{\MM(2)} \Spec R)(R')$ and we obtain a $C_2$-equivariant equivalence
    $$C_2\times \Spec R \simeq X\times_{\MM(2)} \Spec R.$$
    As every $E$ with full level $2$ structure satisfies \'etale locally our assumptions, this implies that $X \to \MM(2)$ is a $C_2$-torsor. 
    
    By the general fact that for a $G$-torsor $\mathcal{X} \to \mathcal{Y}$,
    the induced map $[\mathcal{X}/G] \to \mathcal{Y}$ is an equivalence, we
    obtain in our case the equivalence $\B C_{2,X} \simeq \Mscr(2)$. 
\end{proof}

\begin{corollary}\label{cor:coarse}
    The map $c\colon \Mscr(2)\rightarrow X$ sending $y^2=(x-e_1)(x-e_2)(x-e_3)$ to
    $y^2=x(x-1)(x-\frac{e_3-e_1}{e_2-e_1})$ exhibits $X$ as the coarse moduli
    space of $\Mscr(2)$.
\end{corollary}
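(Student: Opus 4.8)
The plan is to reduce everything to the equivalence $\B C_{2,X}\simeq\Mscr(2)$ of Proposition~\ref{prop:Xtorsor} together with the computation of the coarse moduli space of $\B C_{n,S}$ recorded in Section~\ref{section:BCm}. There, the structure map $c\colon\B C_{n,S}\to S$ is identified as the coarse moduli map, and it satisfies $c\circ\pi=\id_S$ for the atlas $\pi\colon S\to\B C_{n,S}$. Applying this with $n=2$ and $S=X$, the stack $\Mscr(2)\simeq\B C_{2,X}$ has coarse moduli space $X$, so the only remaining task is to check that under this identification the coarse moduli map is given by the stated formula.

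Write $f\colon\Mscr(2)\to X$ for the assignment in the statement, sending an elliptic curve with ordered $2$-torsion, presented as $y^2=(x-e_1)(x-e_2)(x-e_3)$, to $t=\frac{e_3-e_1}{e_2-e_1}$. First I would check that $f$ is a well-defined morphism. As in the proof of Proposition~\ref{prop:Xtorsor}, any such curve can Zariski-locally be put in this Weierstrass form, and the admissible coordinate changes preserving the form and the ordering of the roots are $x=u^2x'+r$, $y=u^3y'$, under which $e_i\mapsto\frac{e_i-r}{u^2}$. The quantity $\frac{e_3-e_1}{e_2-e_1}$ is invariant under these substitutions, so the locally defined parameters glue to a global morphism. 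It lands in $X$ because the smoothness of the curve forces each $e_i-e_j$ to be a unit, so that $t$ and $t-1=\frac{e_3-e_2}{e_2-e_1}$ are units.

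Next I would evaluate the composite $X\xrightarrow{\pi}\Mscr(2)\xrightarrow{f}X$. The atlas $\pi$ sends the Legendre curve $y^2=x(x-1)(x-t)$ with its standard level $2$ structure to the corresponding point of $\Mscr(2)$; here $(e_1,e_2,e_3)=(0,1,t)$, so $f\circ\pi$ returns $\frac{t-0}{1-0}=t$, i.e.\ $f\circ\pi=\id_X$. Finally, by the universal property of the coarse moduli space, every morphism from $\B C_{2,X}$ to the scheme $X$ factors uniquely through $c$, so $f=h\circ c$ for a unique $h\colon X\to X$; composing with $\pi$ and using $c\circ\pi=\id_X$ gives $h=f\circ\pi=\id_X$, whence $f=c$. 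This identifies the coarse moduli map with the formula in the statement.

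The only step requiring genuine care is the well-definedness of $f$, that is, the invariance of $\frac{e_3-e_1}{e_2-e_1}$ under the Weierstrass coordinate changes; everything else is formal once Proposition~\ref{prop:Xtorsor} and the coarse-space computation of Section~\ref{section:BCm} are in hand.
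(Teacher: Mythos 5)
Your proposal is correct and follows the route the paper intends: Corollary~\ref{cor:coarse} is stated without proof as an immediate consequence of Proposition~\ref{prop:Xtorsor} together with the identification of $X$ as the coarse space of $\B C_{2,X}$ from Section~\ref{section:BCm}, which is exactly your reduction. Your additional checks --- invariance of $\frac{e_3-e_1}{e_2-e_1}$ under the admissible Weierstrass changes, $f\circ\pi=\id_X$, and the universal property argument pinning down $f=c$ --- are the natural way to fill in the details the paper leaves implicit.
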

\begin{proof}
 The set of maps from $\Mscr(2) \simeq \B C_{2,X}$ to $X$ is in bijection with $C_2$-equivariant maps $X \to X$. Thus, a map $\Mscr(2) \to X$ exhibits $X$ as the coarse moduli space if and only if the precomposition with $\pi$ is the identity. This is clearly the case for $c$. 
\end{proof}

It follows that the right $S_3$-action on $\Mscr(2)$ induces a right $S_3$ action on $X$. We can describe this
explicitly as follows. Consider the
generators $\sigma = (1\,3\,2)$ and $\tau = (2\,3)$ of
$\GL_2(\ZZ/2)\iso S_3$, of orders $3$ and $2$, respectively. Then,
\begin{align*}
    \sigma(t)&=\frac{t-1}{t},\\
    \tau(t)&=\frac{1}{t}.
\end{align*}
By a simple computation, the map $c \colon \MM(2) \to X$ defined above is
strictly $S_3$-equivariant. 

In contrast, the map $\pi\colon X \to \MM(2)$ described above is \emph{not} $S_3$-equivariant, as one
notes for example by checking that the elliptic curves $y^2 = x(x-1)(x-t)$
and $y^2=x(x-1)(x-\frac1t)$ are generally not isomorphic.
To actually explain the correct $S_3$-action on $\B C_{2,X}$, we have
to fix some notation.

Consider again an elliptic curve $E$ given by $y^2 =
(x-e_1)(x-e_2)(x-e_3)$. Set again $p= e_2-e_1$ and $q = e_3-e_1$ so that we can
write $E$ as
$$y^2 = x(x-p)(x-q).$$ 
The only possible coordinate changes fixing the form of this equation are the
transformations $y \mapsto u^3y$ and $x\mapsto
u^2x$; such a coordinate change results in multiplying the standard invariant
differential $\omega = \frac{-dx}{2y}$ by $u^{-1}$ and
sending $p$ to $u^2p$ and $q$ to $u^2q$. Thus, $p\omega^{\otimes 2}$ and
$q\omega^{\otimes 2}$ define canonical sections of $\lambda^{\tensor 2}$ on
$\Mscr(2)$, not dependent on any choice of Weierstrass form. Note that these
sections are nowhere vanishing. We can consider the
$C_2$-torsor $\Mscr(2)(\sqrt{p}) \to \Mscr(2)$ defined as the cyclic cover
$\mathbf{Spec}_{\Mscr(2)}\left(\bigoplus_{i\in\ZZ} \lambda^{\tensor
i}/(1-p)\right)$. \'Etale locally
on
some $\Spec R$, we can trivialize $\lambda$ so that $p$ becomes an element of
$R$ and the $C_2$-torsor becomes $\Spec R[\sqrt{p}] \to \Spec R$. The
$C_2$-torsor $\Mscr(2)(\sqrt{p}) \to \Mscr(2)$ is equivalent to $X \to
\Mscr(2)$. Indeed, we have shown in the proof of Proposition \ref{prop:Xtorsor}
that the latter has a section as soon as we have a chosen square root of $p$.

As $g^*\lambda$ for $g\in S_3$ on $\Mscr(2)$ is canonically isomorphic to
$\lambda$ (as this is pulled back from $\Mscr$), we have an action of $S_3$ on
$\H^0(\Mscr(2), \lambda^{\tensor *})$. Consider the section 
$$\frac{g(p)}p\in \H^0(\Mscr(2), \OO_{\Mscr(2)}) \cong\H^0(X, \OO_X),$$
which can for $E$ as above be written as
$\frac{e_{g(2)}-e_{g(1)}}{e_2-e_1}$. For example, we have $\frac{g(p)}p =
\frac{q}{p}$ for $g = \tau$, which equals $t$ on $X$. For a scheme $S$ with a map $f\colon S \to X$ or $f\colon S \to \MM(2)$, we denote 
the torsor adjoining the square root of $f^*\frac{g(p)}p$ by $T_{f,g} \to S$.
%We have furthermore
%  \begin{align*}
%   \varepsilon_{(1\,2)} &= -1 \\
%   \varepsilon_{(1\,3)} &= 1-t \\
%   \varepsilon_{(1\,2\,3)} &= (t -1) \\
%   \varepsilon_{(1\,3\,2)} &= -t
%  \end{align*}

For the next lemma, we recall that an object in $\B C_{2,X}(S)$
corresponds to a $C_2$-torsor $T\to S$ and a $C_2$-equivariant map $T\to
X$, where $X$ has the trivial $C_2$-action. Equivalently, an object can be described as a $C_2$-torsor $T\rightarrow S$ with a map $f\colon S\to X$. 
Let $S_3$ act on $\B C_{2,X}$ in the following way: $g\in S_3$ acts (from the right) on $(T,f) \in \B C_{2,X}(S)$ by 
setting $g(T)$ to be $(T\times_S T_{f,g})/C_2$ and the map $g(f)$ to be the composition
$S \xrightarrow{f} X \xrightarrow{g} X$.

\begin{lemma}\label{lem:S3action}
    The natural map $\MM(2)\rightarrow\B C_{2,X}$
    induces an $S_3$-equivariant equivalence $\B C_{2,X} \simeq \MM(2)$.
\end{lemma}

\begin{proof}
    As noted above, the map $\MM(2) \to \B C_{2,X}$ classifying the torsor
    $\MM(2)(\sqrt{p}) \to \MM(2)$ is an equivalence by Proposition
    \ref{prop:Xtorsor} (as this torsor is equivalent to $X \to \MM(2)$). We
    have only to check the $S_3$-equivariance of this map.

    Given an $f\colon S \to \MM(2)$, the corresponding object in $\B C_{2,X}$
    is the torsor $S(\sqrt{f^*p}) \to S$ together with $S \to \MM(2) \to X$.
    The composition $gf$ for $g\in S_3$ corresponds to the torsor
    $S(\sqrt{(gf)^*p}) \to S$ together with $S \to \MM(2) \to X \xrightarrow{g}
    X$ as $\MM(2) \to X$ is $S_3$-equivariant. As $(gf)^*p = f^*(g(p))$, we
    have $(gf)^*p = f^*p \cdot f^*\left(\frac{g(p)}{p}\right)$. Thus, we have a
    natural isomorphism $(S(\sqrt{p})\times_S T_{f, g})/C_2 \xrightarrow{\cong}
    S(\sqrt{(gf)^*p})$. One can check that these isomorphisms are compatible
    (similarly to~\cite[Definition 2.1]{romagny}, although we do not have a strict
    $S_3$-action on $\B C_{2,X}$) so that one actually gets the structure of an
    $S_3$-equivariant map.
\end{proof}

Of particular import will be the action of $S_3$ on the units of $X$.
Let $\rho$ be the tautological permutation representation of $S_3$ on $\ZZ^{\oplus 3}$ and let $\tilde{\rho}$ be the kernel of the morphism 
$$ \rho \cong \ind_{C_2}^{S_3}\ZZ \to \ZZ$$
to the trivial representation, the adjoint to the identity. 

\begin{lemma}\label{lem:units}
    For any connected normal noetherian scheme $S$ over $\ZZ[\frac12]$, there is an $S_3$-equivariant exact sequence
    $$0\rightarrow\Gm(S)\rightarrow\Gm(X_S)\rightarrow \tilde{\rho}\rightarrow 0,$$
    where $S_3$ acts on $\Gm(S)$ trivially and $\tilde{\rho}$ is additively generated by the images of $t$ and $t-1$. This exact sequence is non-equivariantly split. 
\end{lemma}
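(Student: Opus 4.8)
The plan is to compute $\Gm(X_S)$ by realizing $X_S=\PP^1_S\setminus\{0,1,\infty\}$ and recording orders of vanishing along the three removed sections. Since $S$ is connected and normal it is integral, with function field $F$, and $\PP^1_S$ is normal. The key tool is the valuation map
\[
\ord=(\ord_{0},\ord_{1},\ord_{\infty})\colon \Gm(X_S)\longrightarrow \ZZ^3,
\]
where $\ord_0,\ord_1,\ord_\infty$ are the orders along the sections $\{0\},\{1\},\{\infty\}$ (each isomorphic to $S$, with DVR local ring at its generic point since it lies over $\Spec F$). The first task is to show this map is injective with kernel $\Gm(S)$, with image the augmentation kernel $\{(a,b,c):a+b+c=0\}$. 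The inclusion $\Gm(S)\hookrightarrow\Gm(X_S)$ is clear as $X_S\to S$ is dominant.

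For the kernel, an element killed by $\ord$ is a unit of $X_S$ with no zero or pole along any of the three sections, hence by algebraic Hartogs on the normal scheme $\PP^1_S$ it extends to a global unit of $\PP^1_S$; and $\Gm(\PP^1_S)=\Gm(S)$ since $\PP^1_S\to S$ is proper with geometrically connected fibres, so that $\pi_*\OO_{\PP^1_S}=\OO_S$. For the image, given $f\in\Gm(X_S)$ its restriction to the generic fibre $\PP^1_F$ is a unit on $\PP^1_F\setminus\{0,1,\infty\}$, hence of the form $\lambda\, t^a(t-1)^b$ with $\lambda\in F^\times$; then $u=f\cdot t^{-a}(t-1)^{-b}$ is pulled back from $F$ and is a unit on $X_S$, so $u\in\Gm(S)$ by the kernel computation. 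This shows $\Gm(X_S)=\Gm(S)\cdot\langle t,t-1\rangle$, that $\langle t,t-1\rangle$ is free of rank $2$ (as $\ord(t)=(1,0,-1)$ and $\ord(t-1)=(0,1,-1)$ are independent), and that the image of $\ord$ is exactly $\{a+b+c=0\}$ (each $\ord(f)$ has coordinate sum equal to the degree of $\mathrm{div}(f|_{\PP^1_F})$, namely $0$). Thus there is a short exact sequence of abelian groups $0\to\Gm(S)\to\Gm(X_S)\xrightarrow{\ord}\tilde\rho\to 0$, non-equivariantly split by $t,t-1$; the splitting exists simply because $\tilde\rho\cong\ZZ^2$ is free.

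It remains to install the $S_3$-action and identify the quotient with $\tilde\rho$. The relevant action is the coarse-space action by the $S$-automorphisms $\sigma(t)=(t-1)/t$ and $\tau(t)=1/t$ of $\PP^1_S$, which permute $\{0,1,\infty\}$ as the full symmetric group; since these automorphisms fix $S$, the subgroup $\Gm(S)$ is fixed, and $\ord$ intertwines the action on $\Gm(X_S)$ with the permutation of the coordinates of $\ZZ^3$. That permutation representation is exactly $\rho=\ind_{C_2}^{S_3}\ZZ$, its augmentation kernel is $\tilde\rho$, and $\ord$ therefore realizes $\Gm(X_S)/\Gm(S)\cong\tilde\rho$ equivariantly, with $\tilde\rho$ generated by the images of $t$ and $t-1$. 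The step that needs the most care, and which I expect to be the main obstacle, is this equivariance bookkeeping: one must check that the (contravariant) pullback action matches the stated right $S_3$-action and that the induced permutation of $\{0,1,\infty\}$ is the full $S_3$. Because $\rho$ and $\tilde\rho$ are self-dual $S_3$-modules, the direction of the permutation does not change the isomorphism type, but this is exactly what makes the sequence genuinely $S_3$-equivariant. Finally, the splitting by $t,t-1$ is only non-equivariant: for instance $\tau\cdot(t-1)=(1-t)/t=(-1)(t-1)t^{-1}$ involves the unit $-1\in\Gm(S)$, so $\langle t,t-1\rangle$ is not $S_3$-stable.
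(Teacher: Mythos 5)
Your proof is correct, but it runs along a genuinely different track from the paper's. The paper sheafifies the statement over $S$: it exhibits a map of sheaves $\ZZ^2\oplus\Gm\to\pi_*\GG_{m,X_S}$ sending the two $\ZZ$-summands to $t$ and $t-1$, and checks it is an isomorphism on connected affine opens $\Spec R$ by the elementary computation of the unit group of $R[t^{\pm1},(t-1)^{-1}]$ for a normal domain $R$; the identification of the quotient with $\tilde{\rho}$ is then done by writing out the matrices of $\sigma=(1\,3\,2)$ and $\tau=(2\,3)$ in the basis given by the images of $t$ and $t-1$. You instead work globally on the compactification $\PP^1_S$, using the valuation map along the three boundary sections, algebraic Hartogs on the normal scheme $\PP^1_S$, and $\Gm(\PP^1_S)=\Gm(S)$; the surjectivity onto the augmentation kernel is extracted from the generic fibre. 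Your route costs a little more machinery (normality of $\PP^1_S$, properness, the reduction to $\PP^1_F$), but it buys a structural explanation of why $\tilde{\rho}$ appears: the valuation map identifies $\Gm(X_S)/\Gm(S)$ with the degree-zero part of the free abelian group on the three cusps, on which $S_3$ visibly acts by its tautological permutation representation, so the quotient is the augmentation kernel of $\rho$ by construction rather than by inspection of matrices. Your closing observation that $\tau(t-1)=(-1)(t-1)t^{-1}$, so that $\langle t,t-1\rangle$ is not $S_3$-stable, is a nice sanity check (not required by the statement, which only asserts a non-equivariant splitting). The one point to tighten is the equivariance bookkeeping you flag yourself: one should fix once and for all whether $\ord$ intertwines the pullback action with the permutation $g$ or $g^{-1}$ of the coordinates of $\ZZ^3$; as you note, self-duality of permutation modules makes the isomorphism type insensitive to this choice, and the paper's own proof is no more explicit on this point.
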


\begin{proof}
    Denote by $\pi\colon X_S \to S$ the structure map. We have a map $f\colon
    \ZZ^2\oplus \Gm \to \pi_*\mathds{G}_{m,X_S}$ of sheaves on $S$, where $f$
    takes the two $\ZZ$-summands to $t$ and $(t-1)$, respectively. We claim
    that this map is an isomorphism. It is enough to check this on affine
    connected opens $\Spec R$, where it follows from $R$ being an integral
    domain (as it is normal). The non-equivariant statement follows.

    Moreover, the action of $S_3$ on $\Gm(S)$ is
    trivial by definition. Set $\sigma = (1\,3\,2)$ and $\tau
    = (2\,3)$. If we choose the basis vectors
    $\begin{pmatrix}0\\1\\-1\end{pmatrix}$ and $\begin{pmatrix}1\\0\\-1\end{pmatrix}$ 	
    for $\tilde{\rho}$, we obtain exactly the same $S_3$-representation as on $\Gm(X_S)/\Gm(S) \cong \ZZ\{t,t-1\}$, where the latter denotes the free $\Z$-module on $t$ and $t-1$ with elements thought of as $t^k(t-1)^l$. 
\end{proof}

\section{Beginning of the computation}\label{sec:beginning}

Let $S$ be a connected regular noetherian scheme over $\ZZ[\tfrac12]$, let $\Mscr_S$ be the moduli stack of
elliptic curves over $S$, and let $\Mscr(2)_S$ be the moduli stack of elliptic
curves with full level $2$ structure over $S$.
The Leray--Serre spectral sequence for $\Mscr(2)_S \to \Mscr_S$ %computing the cohomology of $\Mscr_S$
 takes the form 
\begin{equation}\label{eq:lsss}
    \E_2^{p,q}:\H^p(S_3,\H^q(\Mscr(2)_S,\Gm))\Rightarrow\H^{p+q}(\Mscr_S,\Gm),
\end{equation}
with differentials $d_r$ of bidegree $(r,1-r)$. In this section, we will
collect the basic tools to compute the $E_2$-term. 
We start with two brief remarks about the cohomology of $S_3$.

% To use this spectral sequence, we have to describe the $S_3$-action on $\B
% C_{2,X}$ corresponding to the $S_3$-action on $\MM(2)$, permuting the nonzero
% $2$-torsion points.
 
% As already noted above, the map $\B C_{2,X} \to X$ to the coarse moduli space is $S_3$-equivariant. 

\begin{lemma}\label{lem:htriv}
    Let $M$ be a trivial $S_3$-module. Then,
    \begin{align*}
        \H^1(S_3,M) &\iso M[2],\\
        \H^2(S_3,M) &\iso M/2,\\
        \H^3(S_3,M) &\iso M[6],\\
        \H^4(S_3,M) &\iso M/6.
    \end{align*}
\end{lemma}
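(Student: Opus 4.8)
The plan is to compute the group cohomology of $S_3$ with trivial coefficients $M$ in low degrees. I would use the standard tool: the restriction-corestriction argument together with a Sylow subgroup decomposition. Since $|S_3| = 6 = 2 \cdot 3$, the $p$-primary part of $\H^i(S_3, M)$ for $p \in \{2,3\}$ injects into the cohomology of the respective Sylow subgroup, and in fact equals the stable/invariant elements. More concretely, $\H^i(S_3, M)_{(2)}$ is detected on the Sylow $2$-subgroup $C_2$ and $\H^i(S_3,M)_{(3)}$ on the Sylow $3$-subgroup $C_3$, via the standard fact that for a finite group $G$ with Sylow $p$-subgroup $P$, restriction $\H^i(G,M)_{(p)} \to \H^i(P,M)$ is injective with image the stable elements.

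First I would treat the $3$-primary part. The Sylow $3$-subgroup is $C_3 = A_3$, which is normal in $S_3$ with quotient $C_2$. Since $C_2$ acts on $C_3$ by inversion, the $C_2$-action on $\H^i(C_3, M)$ (for trivial $M$) sends a generator to its inverse. I expect the stable/invariant elements to be exactly the part of $\H^i(C_3, M)$ fixed by this sign action. Using the standard computation $\H^i(C_3, M) \cong M[3]$ for $i$ odd and $M/3$ for $i$ even (with $M$ trivial), I would check which of these survive the $C_2$-invariance. The upshot should be that the $3$-part of $\H^i(S_3, M)$ contributes $M[3]$ when $i \equiv 3 \bmod 4$ and $M/3$ when $i \equiv 0 \bmod 4$ (and vanishes otherwise), matching the claimed answer.

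Next I would treat the $2$-primary part. The Sylow $2$-subgroup is $C_2$, and since $C_2$ has no normal complement but we can use that the restriction to $C_2$ is injective on the $2$-part with image the stable elements; here the stability condition is controlled by double cosets, and I expect the whole of $\H^i(C_2, M)$ to be stable (since the relevant intersection conditions are trivial for a subgroup of prime order). Thus the $2$-part of $\H^i(S_3, M)$ equals $\H^i(C_2, M)$, which is $M[2]$ for $i$ odd and $M/2$ for $i$ even. Combining the two primary parts via $\H^i(S_3,M) \cong \H^i(S_3,M)_{(2)} \oplus \H^i(S_3,M)_{(3)}$ yields: $\H^1 \cong M[2]$, $\H^2 \cong M/2$, $\H^3 \cong M[2] \oplus M[3] \cong M[6]$, and $\H^4 \cong M/2 \oplus M/3 \cong M/6$, as asserted.

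The main obstacle will be verifying the $C_2$-invariance computation for the $3$-primary part correctly, in particular pinning down how the conjugation action of $\tau$ on $C_3$ induces a sign on each cohomology group $\H^i(C_3, M)$ and confirming that the invariants are precisely $M[3]$ in odd degrees and $M/3$ in even degrees rather than vanishing. An alternative, perhaps cleaner, route would be to use the Lyndon–Hochschild–Serre spectral sequence for $1 \to C_3 \to S_3 \to C_2 \to 1$, namely $\H^p(C_2, \H^q(C_3, M)) \Rightarrow \H^{p+q}(S_3, M)$; since $|C_3|$ is odd, the $C_2$-cohomology of a $3$-group in positive degree vanishes, which collapses the spectral sequence and reads off the answer directly from the invariants $\H^q(C_3,M)^{C_2}$ together with $\H^p(C_2, M)$ along the bottom row. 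I would likely present whichever of these two arguments is shortest, as both reduce to the same elementary invariant computation.
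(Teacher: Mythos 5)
Your proposal is correct, and your primary route differs from the paper's. The paper uses exactly your stated ``alternative'': the Lyndon--Hochschild--Serre spectral sequence for $1\to C_3\to S_3\to C_2\to 1$, observing that $\E_2^{p,q}=0$ for $p,q>0$ because $\H^q(C_3,M)$ is $3$-torsion, so everything is read off from the bottom row $\H^p(C_2,M)$ and the invariants $\H^q(C_3,M)^{C_2}$. Your primary route via Cartan--Eilenberg stable elements on the two Sylow subgroups is equally valid and arguably more symmetric: for the $3$-part it reduces to the same invariant computation (since $C_3$ is normal, stable $=$ $C_2$-invariant), and for the $2$-part it gives $\H^i(S_3,M)_{(2)}\iso\H^i(C_2,M)$ directly. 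One small caution there: for a prime-order Sylow subgroup $P$ the stability condition is not automatically vacuous --- when $g$ normalizes $P$ the condition becomes invariance under $N_G(P)/P$ --- but for $S_3$ one has $N_{S_3}(C_2)=C_2$, so your conclusion stands. The genuine crux, which you correctly flag, is the sign of the conjugation action of $\tau$ on $\H^q(C_3,M)$: it is \emph{not} uniformly $-1$ (your phrase ``sends a generator to its inverse'' describes the action on $C_3$, not on its cohomology); rather, inversion on $C_3$ acts on $\H^{2j-1}$ and $\H^{2j}$ by $(-1)^j$, i.e., by $-1$ for $q\equiv 1,2\bmod 4$ and by $+1$ for $q\equiv 0,3\bmod 4$. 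This is precisely why the $3$-part vanishes in degrees $1$ and $2$ but survives in full in degrees $3$ and $4$, giving $M[6]$ and $M/6$; your stated mod-$4$ pattern is the right one. Either write-up is fine; the spectral-sequence version is the shorter of the two.
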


\begin{proof}
    We use the Lyndon--Hochschild--Serre spectral sequence for 
    \[1\to \ZZ/3 \to S_3 \to \ZZ/2 \to 1.\] 
    A reference
    is~\cite{weibel}*{Example 6.7.10}. On the $\E_2$-page, $\E_2^{pq}=0$ whenever
    $p>0$ and $q>0$ because the cohomology of $\ZZ/3$ is $3$-torsion.
    Moreover, $\ZZ/2$ acts on $\H^q(\ZZ/3,M)$ by multiplication by
    $-1$ for $q\equiv 1,2\,\mathrm{mod}\, 4$ and by $1$ for $q\equiv
    0,3\,\mathrm{mod}\, 4$.
\end{proof}

%\begin{lemma}\label{lem:hrho}
%    Let $M$ be an abelian group, and let $\overline{\rho}\otimes M$ be an
%    $S_3$-module through the action on $\overline{\rho}$. Then,
%    \begin{align*}
%        \H^0(S_3,\overline{\rho}\otimes M) &\iso 0,\\
%        \H^1(S_3,\overline{\rho}\otimes M) &\iso 0,\\
%        \H^2(S_3,\overline{\rho}\otimes M) &\iso M[3]\\
%        \H^3(S_3,\overline{\rho}\otimes M) &\iso M/3.
%    \end{align*}
%\end{lemma}
%
%\begin{proof}
%    There is a short exact sequence of $S_3$-modules $0\rightarrow
%    M\rightarrow\rho\otimes M\rightarrow \overline{\rho}\otimes M\rightarrow
%    0.$ In the associated long exact sequence in cohomology, note that
%    $\H^i(S_3, \rho\otimes M) \cong H^i(C_2,M)$ by Shapiro's lemma, as
%    $\rho\otimes M\iso \ind_{C_2}^{S_3}M$. It is easy to see from the previous
%    lemma that the restriction maps
%    $\H^i(S_3,M) \to \H^i(C_2, M)$ are surjections in all degrees, we see that
%    $$\H^i(S_3, \overline{\rho}\otimes M) \cong \ker(\mathrm{res}\colon \H^{i+1}(S_3, M) \to \H^{i+1}(C_2,M))$$ 
%    for $i\geq 0$. The lemma follows now from Lemma~\ref{lem:htriv}.
%\end{proof}

The next lemma is about the cohomology of the reduced regular representation
$\tilde{\rho}$ of $S_3$ introduced in Section~\ref{sec:presentation}.

\begin{lemma}\label{lem:hrho}
    Let $M$ be an abelian group, and let $\tilde{\rho}\otimes M$ be an
    $S_3$-module through the action on $\tilde{\rho}$. Then,
    \begin{align*}
        \H^0(S_3,\tilde{\rho}\otimes M) &\iso M[3],\\
        \H^1(S_3,\tilde{\rho}\otimes M) &\iso M/3,\\
        \H^2(S_3,\tilde{\rho}\otimes M) &\iso 0\\
        \H^3(S_3,\tilde{\rho}\otimes M) &\iso 0.
    \end{align*}
\end{lemma}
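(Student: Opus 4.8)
The plan is to reduce the computation to the cohomology of $S_3$ with the trivial coefficients $M$ recorded in Lemma~\ref{lem:htriv}, together with Shapiro's lemma. The starting point is the defining short exact sequence of $S_3$-modules
$$0 \to \tilde{\rho} \to \ind_{C_2}^{S_3}\ZZ \to \ZZ \to 0,$$
where $\ind_{C_2}^{S_3}\ZZ \iso \rho$ is the permutation module and the quotient map is the augmentation whose kernel is $\tilde{\rho}$. Since all three terms are free abelian groups, tensoring over $\ZZ$ with $M$ keeps the sequence exact, and by the projection formula $(\ind_{C_2}^{S_3}\ZZ)\otimes M \iso \ind_{C_2}^{S_3}M$ (with $M$ a trivial $C_2$-module). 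Thus I obtain a short exact sequence of $S_3$-modules
$$0 \to \tilde{\rho}\otimes M \to \ind_{C_2}^{S_3}M \to M \to 0,$$
and I will run the associated long exact sequence in $\H^*(S_3,-)$.

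Applying Shapiro's lemma identifies $\H^i(S_3,\ind_{C_2}^{S_3}M)\iso\H^i(C_2,M)$, which is $M$ for $i=0$, $M[2]$ for $i>0$ odd, and $M/2$ for $i>0$ even. The outer terms $\H^i(S_3,M)$ are supplied by Lemma~\ref{lem:htriv}. The crucial point is to identify the maps $\alpha_i\colon\H^i(C_2,M)\to\H^i(S_3,M)$ induced by the augmentation: under the Shapiro isomorphism these are exactly the corestriction (transfer) maps. In degree $0$, corestriction on a trivial module is multiplication by the index $[S_3:C_2]=3$, so $\alpha_0\colon M\to M$ is multiplication by $3$, giving $\H^0(S_3,\tilde{\rho}\otimes M)=\ker(\alpha_0)=M[3]$ and $\coker(\alpha_0)=M/3$.

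For the positive degrees I will use the two standard relations between restriction and corestriction. First, $\mathrm{cor}\circ\mathrm{res}=[S_3:C_2]=3$ on $\H^i(S_3,M)$; since $\H^1(S_3,M)=M[2]$ and $\H^2(S_3,M)=M/2$ are $2$-torsion, multiplication by $3$ is invertible there, so $\alpha_1$ and $\alpha_2$ are surjective. Second, the Mackey double-coset formula for $C_2\le S_3$ involves only two double cosets: the trivial one contributes the identity, while the other has trivial intersection $C_2\cap{}^{g}C_2=\{e\}$ and hence contributes a map factoring through $\H^i(\{e\},M)=0$ for $i>0$; therefore $\mathrm{res}\circ\mathrm{cor}=\id$ on $\H^i(C_2,M)$ and each $\alpha_i$ with $i>0$ is split injective. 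Feeding $\ker(\alpha_i)=0$ for $i\ge1$ together with the cokernel computations into the long exact sequence yields $\H^1(S_3,\tilde{\rho}\otimes M)=\coker(\alpha_0)=M/3$, then $\H^2=\coker(\alpha_1)=0$, and $\H^3=\coker(\alpha_2)=0$, as claimed.

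I expect the identification of the $\alpha_i$ with the corestriction map, and the bookkeeping of which relation ($\mathrm{cor}\circ\mathrm{res}=3$ versus $\mathrm{res}\circ\mathrm{cor}=\id$) controls surjectivity versus injectivity in each degree, to be the only genuinely delicate part; once these are pinned down the stated values drop out of the long exact sequence and Lemma~\ref{lem:htriv}.
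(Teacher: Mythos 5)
Your proof is correct and follows essentially the same route as the paper: the augmentation sequence $0\to\tilde{\rho}\otimes M\to\ind_{C_2}^{S_3}M\to M\to 0$, Shapiro's lemma, and the identification of the induced maps $\H^i(C_2,M)\to\H^i(S_3,M)$ with the transfer, whose kernels and cokernels are then fed into the long exact sequence. The only (immaterial) difference is that you pin down the transfer in positive degrees via the Mackey double-coset formula together with $\mathrm{cor}\circ\mathrm{res}=3$, whereas the paper uses the retraction $S_3\to C_2$ to see that restriction is projection onto a direct summand and hence that the transfer is $3$ times the inclusion of that summand.
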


\begin{proof}
    There is a short exact sequence of $S_3$-modules 
    $$0\rightarrow \tilde{\rho}\otimes M\rightarrow\rho\otimes M\rightarrow M\rightarrow 0.$$
     In the associated long exact sequence in cohomology, note that
    $\H^i(S_3, \rho\otimes M) \cong\H^i(C_2,M)$ by Shapiro's lemma, as
    $\rho\otimes M\iso \ind_{C_2}^{S_3}M$. The map 
    $$\H^i(S_3,\rho\otimes M) \cong\H^i(C_2,M) \to\H^i(S_3,M)$$
    is the transfer. We obtain short exact sequences
    $$ 0 \to \coker\, \tr_{C_2}^{S_3}(\H^{i-1}) \to\H^i(S_3,\tilde{\rho}\otimes
    M) \to \ker \tr_{C_2}^{S_3}(\H^i) \to 0.$$
    Because $C_2 \to S_3$ has a retraction, the restriction map $\H^i(S_3,M)
    \to\H^i(C_2,M)$ is the projection to a direct summand. The transfer equals
    $3$ times the inclusion of this summand as can easily be deduced from the
    equation $\mathrm{tr}_{C_2}^{S_3} \res_{C_2}^{S_3} = 3$. Thus, the transfer
    is multiplication by $3$ on $\H^0$, an isomorphism
    on $\H^1$ and $\H^2$ and the inclusion $M[2] \to M[6]$ on $\H^3$. The lemma follows.
\end{proof}

These computations allow us to compute the $\E_2$-term of the Leray--Serre spectral sequence
\[\E_2^{p,q}:\H^p(S_3,\H^q(\Mscr(2)_S,\Gm))\Rightarrow\H^{p+q}(\Mscr_S,\Gm)\]
in a range. Using the results of the last two sections, we can analyze $\H^q(\Mscr(2)_S,\Gm)$ in terms of $\H^q(X_S,\Gm)$. Especially
Proposition~\ref{prop:bcn} turns out to be useful as the short exact sequences in it are $S_3$-equivariant by naturality. Using additionally Lemma \ref{lem:units} for the first one, we obtain the $S_3$-equivariant extensions
\begin{align}\label{eq:GmXS} 0\rightarrow\Gm(S)\rightarrow\Gm(\MM(2)_S) \cong \Gm(X_S)\rightarrow
    \tilde{\rho}\rightarrow 0,
\end{align}
\begin{align}\label{eq:PicXS}
    0 \to \Pic(X_S) \cong \Pic(S) \to \Pic(\MM(2)_S) \to \mu_2(S) \to 0,
\end{align}
and 
\begin{align}\label{eq:BrXS}
    0 \to \Br'(X_S) \to \Br'(\MM(2)_S) \to\H^1(X_S, \mu_2) \to 0. 
\end{align}
The only point needing justification is that the pullback map $\Pic(S) \to \Pic(X_S)$ is an isomorphism. It is injective because $X_S$
has an $S$-point. It is surjective as it factors through the isomorphism $\Pic(S) \to \Pic(\AA^1_S)$
and since $j^*\colon \Pic(\AA^1_S) \to \Pic(X_S)$
is surjective, where $j$ denotes the inclusion $X_S\subseteq\AA^1_S$. Indeed, given a line bundle $\Lscr$ on $X_S$, we take a coherent
subsheaf $\Fscr$ of $j_*\Lscr$ with $j^*\Fscr \cong\Lscr$. The
double dual of $\Fscr$ is a reflexive sheaf $\Lscr'$ with $j^*\Lscr'$ still isomorphic
to $\Lscr$. By \cite[Prop 1.9]{hartshorne-reflexive}, $\Lscr'$ is a line bundle.

The sequence \eqref{eq:PicXS} is $S_3$-equivariantly split and thus consists only
of $S_3$-modules with the trivial action. Indeed, the morphism $S \to \Spec
\Z[\frac12]$ induces by pullback a morphism from the exact sequence
$$0 \to 0 \to \Pic(\MM(2)) \to \mu_2(\Z[\tfrac12]) \to 0,$$
where the splitting is clearly $S_3$-equivariant. As $\mu_2(\Z[\frac12]) \to
\mu_2(S)$ is an isomorphism for $S$ connected, the result follows. These observations allow us to compute
the $q=0,1$ lines of the Leray--Serre spectral sequence~\eqref{eq:lsss}.

\begin{lemma}\label{lem:twolines}
    If $S$ is a connected regular noetherian scheme over $\ZZf{2}$, then there are
    natural extensions
    \begin{equation*}
        0\rightarrow\H^p(S_3,\Gm(S))\rightarrow\H^p(S_3,\Gm(\Mscr(2)_S))\rightarrow\H^p(S_3,\tilde{\rho})\rightarrow 0
    \end{equation*}
    for $0\leq p\leq 3$, and natural isomorphisms
    \begin{align*}
        \H^p(S_3, \H^1(\Mscr(2)_S,\Gm)) &\cong \H^p(S_3, \Pic(S)) \oplus \H^p(S_3, \mu_2(S))
    \end{align*}
    for all $p\geq 0$.
\end{lemma}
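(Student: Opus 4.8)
The plan is to treat the two claims separately, deriving each from an $S_3$-equivariant short exact sequence recorded above together with the group cohomology computations of Lemmas~\ref{lem:htriv} and~\ref{lem:hrho}.

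For the extensions on the $q=0$ line, I would feed the $S_3$-equivariant short exact sequence~\eqref{eq:GmXS},
$$0\rightarrow\Gm(S)\rightarrow\Gm(\Mscr(2)_S)\rightarrow\tilde{\rho}\rightarrow 0,$$
into the long exact sequence in $S_3$-cohomology. To break this into the claimed short exact sequences for $0\leq p\leq 3$, it suffices to show that the connecting homomorphisms $\delta_k\colon\H^k(S_3,\tilde{\rho})\to\H^{k+1}(S_3,\Gm(S))$ vanish for $0\leq k\leq 3$. By Lemma~\ref{lem:hrho} (with $M=\ZZ$) the groups $\H^0(S_3,\tilde{\rho})$, $\H^2(S_3,\tilde{\rho})$, and $\H^3(S_3,\tilde{\rho})$ all vanish, so $\delta_0$, $\delta_2$, and $\delta_3$ are automatically zero.

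The one map that carries content is $\delta_1\colon\H^1(S_3,\tilde{\rho})\to\H^2(S_3,\Gm(S))$, and I expect this to be the crux of the argument. It vanishes by a torsion mismatch: by Lemma~\ref{lem:hrho} the source $\H^1(S_3,\tilde{\rho})\iso\ZZ/3$ is $3$-torsion, whereas by Lemma~\ref{lem:htriv} (applied to the trivial module $\Gm(S)$) the target $\H^2(S_3,\Gm(S))\iso\Gm(S)/2$ is killed by $2$. Any homomorphism between such groups is zero, so $\delta_1=0$ and the desired short exact sequences follow.

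For the isomorphisms on the $q=1$ line, I would use that $\H^1(\Mscr(2)_S,\Gm)=\Pic(\Mscr(2)_S)$ and that, as established above, the sequence~\eqref{eq:PicXS} is $S_3$-equivariantly split with trivial $S_3$-action on every term. This yields an $S_3$-equivariant isomorphism $\Pic(\Mscr(2)_S)\iso\Pic(S)\oplus\mu_2(S)$ of trivial modules; since $\H^p(S_3,-)$ commutes with finite direct sums, applying it gives the stated natural isomorphism for all $p\geq 0$. Naturality in $S$ throughout follows from the naturality of the exact sequences~\eqref{eq:GmXS} and~\eqref{eq:PicXS} and of the associated long exact sequence.
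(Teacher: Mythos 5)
Your proposal is correct and follows essentially the same route as the paper: the $q=1$ isomorphisms come from the $S_3$-equivariant splitting of~\eqref{eq:PicXS} into trivial modules, and the $q=0$ extensions come from the long exact sequence of~\eqref{eq:GmXS}, with the only nontrivial connecting map $\delta_1$ killed by the mismatch between the $3$-torsion group $\H^1(S_3,\tilde\rho)$ and the $2$-torsion group $\H^2(S_3,\Gm(S))$. The paper's proof is just a terser statement of exactly this argument.
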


\begin{proof}
    The first exact sequence
    follows from Lemma~\ref{lem:htriv} and Lemma~\ref{lem:hrho} using that
    $\H^1(S_3,\tilde{\rho})$ is $3$-torsion and $\H^2(S_3,\Gm(S))$ is
    $2$-torsion. The direct sum decomposition follows from the fact
    that~\eqref{eq:PicXS} is $S_3$-equivariantly split. 
\end{proof}

The only necessary remaining group we need to understand for our computations
is $\H^2(\MM(2)_S,\Gm)^{S_3}$, which we analyze using
the short exact sequence \eqref{eq:BrXS}.

\begin{lemma}\label{lem:mu}
    If $S$ is a regular noetherian scheme over $\ZZf{2}$, then there is a
    canonical isomorphism
    $\H^1(S,\mu_2)\iso\H^1(X_S,\mu_2)^{S_3}$.
\end{lemma}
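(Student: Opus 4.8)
The plan is to compare the canonical Kummer sequence \eqref{eq:h1fppf} for $S$ with the one for $X_S$, after passing to $S_3$-invariants, and to read off the isomorphism by a four-lemma argument. First I would reduce to the case that $S$ is connected: a regular noetherian scheme has finitely many connected components, each regular and hence integral and normal, and since the $S_3$-action on $X_S=\AA^1_S-\{0,1\}$ only moves the coordinate $t$ and fixes the projection to $S$, both $\H^1(S,\mu_2)$ and $\H^1(X_S,\mu_2)^{S_3}$ split as direct sums over these components with $S_3$ acting within each summand. Since $2$ is invertible on $S$, the sheaf $\mu_2$ is \'etale, so \'etale and fppf cohomology agree and I may use \eqref{eq:h1fppf} freely. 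The structure map $\pi\colon X_S\to S$ is $S_3$-equivariant for the trivial action on the target, so the canonical map in question is simply $\pi^*\colon\H^1(S,\mu_2)\to\H^1(X_S,\mu_2)^{S_3}$.

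Because \eqref{eq:h1fppf} is functorial, $\pi^*$ induces a morphism from the short exact sequence
$$0\to\Gm(S)/2\to\H^1(S,\mu_2)\to\Pic(S)[2]\to 0$$
to the $S_3$-invariants of the (equivariant) sequence \eqref{eq:h1fppf} for $X_S$, the latter being left exact:
$$0\to(\Gm(X_S)/2)^{S_3}\to\H^1(X_S,\mu_2)^{S_3}\to(\Pic(X_S)[2])^{S_3}.$$
I would then check that both outer comparison maps are isomorphisms. For the Picard term, $\pi^*\colon\Pic(S)\to\Pic(X_S)$ is an $S_3$-equivariant isomorphism whose source has trivial action, so $\Pic(X_S)$ carries the trivial action and $(\Pic(X_S)[2])^{S_3}=\Pic(S)[2]$. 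For the units term, I apply $-\otimes_\ZZ\ZZ/2$ to the sequence of Lemma~\ref{lem:units}; since $\tilde{\rho}$ is free over $\ZZ$ one has $\Tor_1^{\ZZ}(\tilde{\rho},\ZZ/2)=0$, so
$$0\to\Gm(S)/2\to\Gm(X_S)/2\to\tilde{\rho}/2\to 0$$
stays short exact and $S_3$-equivariant. Taking invariants and using that $S_3$ acts trivially on $\Gm(S)/2$ reduces the identity $(\Gm(X_S)/2)^{S_3}=\Gm(S)/2$ to the vanishing $(\tilde{\rho}/2)^{S_3}=0$.

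This last vanishing is the only non-formal point, and it is exactly $\H^0(S_3,\tilde{\rho}\otimes\ZZ/2)$, which by Lemma~\ref{lem:hrho} (applied with $M=\ZZ/2$) equals $(\ZZ/2)[3]=0$; concretely, the reflection representation of $S_3$ has no nonzero invariants modulo $2$. Having established that the two outer vertical maps are isomorphisms, I would finish with a standard diagram chase: injectivity of $\pi^*$ follows from injectivity of the left map together with left-exactness of the bottom row, and surjectivity follows by lifting a class first into $\Pic(S)[2]$ through the right isomorphism and then correcting by an element of $(\Gm(X_S)/2)^{S_3}=\Gm(S)/2$. Thus $\pi^*$ is the desired isomorphism. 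The main obstacle is purely organizational---keeping \eqref{eq:h1fppf} in its canonical, hence $S_3$-equivariant, form so that invariants interact cleanly with the pullback---once that is in place, the computation rests entirely on $(\tilde{\rho}/2)^{S_3}=0$.
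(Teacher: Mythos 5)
Your proposal is correct and takes essentially the same route as the paper's proof: both compare the sequence \eqref{eq:h1fppf} for $S$ with the $S_3$-invariants of the corresponding equivariant sequence for $X_S$, reduce the two outer comparison maps to the isomorphism $\Pic(S)\cong\Pic(X_S)$ and to the vanishing $(\tilde{\rho}\otimes\ZZ/2)^{S_3}=0$ from Lemma~\ref{lem:hrho}, and conclude by a five-lemma/diagram chase. No changes are needed.
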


\begin{proof}
    Using the $S_3$-equivariant short exact sequence
    $$0\rightarrow\Gm(X_S)/2\rightarrow\H^1(X_S,\mu_2)\rightarrow\Pic(X_S)[2]\rightarrow 0,$$
    we get a long exact sequence
    \[0 \to \left(\Gm(X_S)/2\right)^{S_3} \to\H^1(X_S, \mu_2)^{S_3} \to
        \Pic(X_S)[2]^{S_3} \to\H^1(S_3, \Gm(X_S)/2) \to \cdots \]
    As the canonical map $X_S \to S$ is $S_3$-equivariant, we obtain a map into this from the exact sequence
    \[0 \to \Gm(S)/2 \to\H^1(S, \mu_2) \to \Pic(S)[2] \to 0. \]
    As the maps $\Gm(S)/2 \to \left(\Gm(X_S)/2\right)^{S_3}$ and $\Pic(S)[2]
    \to \Pic(X_S)[2]$ are isomorphisms (using the exact sequence \eqref{eq:GmXS} and Lemma \ref{lem:hrho}),
    the five lemma implies that $\H^1(S, \mu_2) \to\H^1(X_S, \mu_2)^{S_3}$ is an
    isomorphism as well.
\end{proof}

% Note that $\H^1(S,\mu_2)$ is non-canonically isomorphic to
% $\Gm(S)/2\oplus\Pic(S)[2]$ as shown in Section \ref{section:BCm}.
From~\eqref{eq:BrXS}, we obtain a long exact sequence
\begin{align}\label{eq:Brseq}0 \to \Br'(X_S)^{S_3} \to \Br'(\MM(2))^{S_3} \to \H^1(S,\mu_2) \to\H^1(S_3,
    \Br'(X_S)) \to \cdots\end{align}
    
% Thus, we have to study $\Br'(X_S)$. To that purpose recall the notation that for any abelian group $A$ and prime $p$, we denote by $_pA$ the subgroup of $p$-primary torsion
% elements: $_pA=\{x\in A:p^nx=0\text{ for some $n\geq 1$}\}$.

\begin{lemma}\label{lem:ptorsion}
    Let $S$ be a regular noetherian scheme over $\Spec\ZZ[\frac{1}{p}]$ for
    some prime $p$, and let
    $X_S=\AA^1_S-\{0,1\}$ as before. There is a non-canonically split exact sequence
    \begin{equation*}
        0\rightarrow{_p\Br'(S)}\rightarrow{_p\Br'(X_S)}\rightarrow{_p}\H^3_{\{0,1\}}(\AA^1_S,\Gm)\rightarrow 0.
    \end{equation*}
\end{lemma}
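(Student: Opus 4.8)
The plan is to present $X_S$ as the complement of a regular closed divisor in $\AA^1_S$ and run the purity calculation of Proposition~\ref{prop:puritycalc}, pinning down the two outer terms by $\AA^1$-invariance and by an explicit cyclic-algebra splitting. Write $D=\{0\}\sqcup\{1\}\subset\AA^1_S$; since $S$ is regular, $\AA^1_S$ is regular and each of the two sections $\{0\},\{1\}$ is a regular divisor, so $D$ is a regular closed divisor with complement $X_S$. As $p$ is invertible on $S$, it is prime to the residue characteristics, so Proposition~\ref{prop:puritycalc} gives the exact sequence
\[
0\to{_p\Br'(\AA^1_S)}\to{_p\Br'(X_S)}\xrightarrow{\mathrm{ram}_D}{_p}\H^3_D(\AA^1_S,\Gm)\to{_p}\H^3(\AA^1_S,\Gm)\to{_p}\H^3(X_S,\Gm),
\]
together with ${_p}\H^3_D(\AA^1_S,\Gm)\cong\H^1(D,\QQ_p/\ZZ_p)\cong\H^1(S,\QQ_p/\ZZ_p)^{\oplus2}$, using that $\{0\}\cong\{1\}\cong S$ and that local cohomology is additive over the disjoint union $D$.

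First I would identify the leftmost term. By Proposition~\ref{prop:BrOmnibus}(vi), pullback along the projection $\AA^1_S\to S$ is an isomorphism ${_p\Br'(S)}\xrightarrow{\cong}{_p\Br'(\AA^1_S)}$, and composing with restriction to $X_S$ identifies the first map of the sequence with the pullback $\pi^\ast$ along $\pi\colon X_S\to S$. It then remains to prove that $\mathrm{ram}_D$ is surjective and that the resulting short exact sequence is split.

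I would obtain both statements at once by exhibiting a section of $\mathrm{ram}_D$ built from cyclic algebras. Define
\[
s\colon\H^1(S,\QQ_p/\ZZ_p)^{\oplus2}\to{_p\Br'(X_S)},\qquad s(\chi_0,\chi_1)=[(\pi^\ast\chi_0,t)]+[(\pi^\ast\chi_1,t-1)],
\]
where $t,t-1\in\Gm(X_S)$ and the symbols are the cup-product cyclic algebras of Proposition~\ref{prop:cyclicpresentation}; biadditivity of the cup product makes $s$ a homomorphism. Now $t$ is a uniformizer along $\{0\}$ and a unit along $\{1\}$, while $t-1$ is a uniformizer along $\{1\}$ and a unit along $\{0\}$. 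To compute $\mathrm{ram}_D\circ s$ I would localize $\AA^1_S$ at the generic points of $\{0\}$ and of $\{1\}$, obtaining discrete valuation rings to which Proposition~\ref{prop:ramification} applies; there the symbol with a uniformizer in the second slot ramifies into the restriction of its pulled-back character, while the symbol with a unit in the second slot is unramified. Since $S$ is connected, normal and noetherian, a class in $\H^1(S,\QQ_p/\ZZ_p)$ is detected on the generic point, so these local computations determine $\mathrm{ram}_D\circ s$ globally, and they give $\mathrm{ram}_D\circ s=\id$. In particular $\mathrm{ram}_D$ is surjective, so by exactness the map ${_p}\H^3_D(\AA^1_S,\Gm)\to{_p}\H^3(\AA^1_S,\Gm)$ vanishes and the five-term sequence truncates to the asserted
\[
0\to{_p\Br'(S)}\xrightarrow{\pi^\ast}{_p\Br'(X_S)}\xrightarrow{\mathrm{ram}_D}{_p}\H^3_D(\AA^1_S,\Gm)\to0,
\]
which $s$ splits.

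The step I expect to be the main obstacle is the ramification computation $\mathrm{ram}_D\circ s=\id$. One must reconcile the global ramification map of Proposition~\ref{prop:puritycalc} with the local discrete-valuation-ring statement of Proposition~\ref{prop:ramification}: this requires checking that passing to the generic points of the two components of $D$ loses no information (which is where normality of $S$ enters), that the pulled-back characters $\pi^\ast\chi_i$ are unramified along $D$ so that the hypotheses on the residue extension hold, and that the cyclic-algebra construction is compatible across the orders $p^k$ so that $s$ is well defined on all of $\H^1(S,\QQ_p/\ZZ_p)=\colim_k\H^1(S,\ZZ/p^k)$.
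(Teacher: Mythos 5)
Your proof is correct, and while it opens with the same two steps as the paper --- the five-term sequence of Proposition~\ref{prop:puritycalc} for the regular divisor $\{0\}\sqcup\{1\}\subset\AA^1_S$ and the identification ${_p\Br'(\AA^1_S)}\cong{_p\Br'(S)}$ from Proposition~\ref{prop:BrOmnibus}(vi) --- it diverges at the key step of truncating and splitting the sequence. The paper argues that ${_p\H^i(\AA^1_S,\Gm)}\to{_p\H^i(X_S,\Gm)}$ is split injective for all $i$ because $X_S\to S$ has a section; this simultaneously kills the map ${_p\H^3_{\{0,1\}}(\AA^1_S,\Gm)}\to{_p\H^3(\AA^1_S,\Gm)}$ and splits the resulting short exact sequence. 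You instead exhibit an explicit section of $\mathrm{ram}_D$ by the cyclic algebras $(\pi^*\chi_0,t)\otimes(\pi^*\chi_1,t-1)$ and compute their ramification via Proposition~\ref{prop:ramification}. Your route costs a little more care (the compatibility of the global ramification map with the local one at the generic points of the two components of $D$, and the detection of classes in $\H^1(S,\QQ_p/\ZZ_p)$ on generic points of $S$), but the paper needs exactly this cyclic-algebra ramification computation anyway in the very next statement, Lemma~\ref{lem:supportsrho}, so nothing is duplicated. Your version also buys genuine generality: an $S$-point of $X_S=\AA^1_S-\{0,1\}$ requires a unit $t$ on $S$ with $t-1$ also a unit, which exists whenever $2$ is invertible (take $t=-1$) but can fail otherwise --- for instance $X_S$ has no $S$-point for $S=\Spec\ZZ[\tfrac13]$ --- so your argument establishes the lemma as stated for an arbitrary regular noetherian $S$ over $\ZZ[\tfrac1p]$, whereas the paper's proof implicitly assumes the existence of such a point. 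Finally, note that even if the local ramification computation produced $(\chi_0,\chi_1)\mapsto(-\chi_0,-\chi_1)$ rather than the identity, this would be harmless: any automorphism of $\H^1(S,\QQ_p/\ZZ_p)^{\oplus 2}$ in the role of $\mathrm{ram}_D\circ s$ still yields surjectivity and a splitting.
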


\begin{proof}
%     By \cite[Section 6]{grothendieck-brauer-3}, there is a long exact sequence
%     \begin{equation}\label{eq:les}
%         0\rightarrow\Br'(\AA^1_S)\rightarrow\Br'(X)\rightarrow\H^3_{\{0,1\}}(\AA^1_S,\Gm)\rightarrow\H^3(\AA^1_S,\Gm)\rightarrow\H^3(X,\Gm),
%     \end{equation}
%     where $\H^3_{\{0,1\}}(\AA^1_S,\Gm)$ denotes \'etale cohomology with support in
%     the closed subscheme of $\AA^1_S$ cut out by the sections $\Spec
%     S\rightarrow\AA^1_S$ given by $0,1$.
%     Here, we are using the fact that $\AA^1_S$ and $X$ are regular and noetherian
%     to identify $\Br'(-)$ with $\H^2(-,\Gm)$.
%     By~\cite{grothendieck-brauer-2}*{Proposition~1.4}, all of the groups
%     appearing in~\eqref{eq:les} are torsion groups, so the sequence remains
%     exact upon taking $p$-primary torsion subgroups.
    By Proposition \ref{prop:puritycalc} we have an exact sequence 
    $$0\rightarrow {_p}\Br'(\AA^1_S)\rightarrow {_p}\Br'(X_S)\rightarrow {_p}\H^3_{\{0,1\}}(\AA^1_S,\Gm)\rightarrow {_p}\H^3(\AA^1_S,\Gm)\rightarrow {_p}\H^3(X_S,\Gm).$$   
    Because $p$ is invertible on $S$, Proposition \ref{prop:BrOmnibus} implies that
    ${_p\H^i(S,\Gm)}\iso{_p\H^i(\AA^1_S,\Gm)}$ for all $i\geq 0$. But, since $X_S$ has an $S$-point,
    it follows that ${_p\H^i(\AA^1_S,\Gm)}\cong {_p\H^i(S,\Gm)}
    \rightarrow{_p\H^i(X_S,\Gm)}$ is split injective for all $i$.
\end{proof}

\begin{lemma}\label{lem:supportsrho}
    For any prime $p$ and any regular noetherian scheme over
    $\Spec\ZZ[\tfrac1p]$, there is a canonical isomorphism
    $$\H^q_{\{0,1\}}(\AA^1_S,\Gm)\iso\H^q_{\{0\}}(\AA^1_S,\Gm)\oplus\H^q_{\{1\}}(\AA^1_S,\Gm).$$
    The action of $S_3$ on ${_p\Br'(X_S)}/{_p\Br'(S)}$ is isomorphic to   
    $$\tilde{\rho}\otimes{_p\H^3_{\{0\}}(\AA^1_S,\Gm)}\cong \tilde{\rho}\otimes\H^1(S, \QQ_p/\ZZ_p).$$
\end{lemma}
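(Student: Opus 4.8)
The statement splits into two parts: a canonical direct-sum decomposition of local cohomology, and the identification of the resulting $S_3$-module. I would dispatch the first part by observing that $\{0\}=V(t)$ and $\{1\}=V(t-1)$ are disjoint closed subschemes of $\AA^1_S$, since $(t,t-1)=(1)$ is the unit ideal. Local cohomology is additive over disjoint supports: the local cohomology sheaves $\Hscr^q_{\{0\}}$ and $\Hscr^q_{\{1\}}$ have disjoint supports, whence $\Hscr^q_{\{0,1\}}\cong\Hscr^q_{\{0\}}\oplus\Hscr^q_{\{1\}}$, and the local-to-global spectral sequence (equivalently, the Mayer--Vietoris sequence for the two supports, whose intersection term vanishes) yields the canonical isomorphism $\H^q_{\{0,1\}}(\AA^1_S,\Gm)\cong\H^q_{\{0\}}(\AA^1_S,\Gm)\oplus\H^q_{\{1\}}(\AA^1_S,\Gm)$ for all $q$. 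By Proposition~\ref{prop:puritycalc} each summand satisfies ${_p}\H^3_{\{i\}}(\AA^1_S,\Gm)\cong\H^1(S,\QQ_p/\ZZ_p)$, which, via Lemma~\ref{lem:ptorsion}, pins down the underlying abelian group of ${_p\Br'(X_S)}/{_p\Br'(S)}$.

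For the $S_3$-action the crucial point is that $S_3$ does \emph{not} act on the pair $(\AA^1_S,\{0,1\})$ — the transformations $\sigma$ and $\tau$ move the point at infinity — so I would make the symmetry visible by passing to $\PP^1_S$, writing $X_S=\PP^1_S-\{0,1,\infty\}$, on which the Möbius maps $\sigma(t)=\tfrac{t-1}{t}$ and $\tau(t)=\tfrac1t$ (both defined over $\ZZ$) permute the three punctures. The three divisors $\{0\},\{1\},\{\infty\}$ are pairwise-disjoint sections of $\PP^1_S\to S$, so the splitting above (now for three points) together with Proposition~\ref{prop:puritycalc} produces an $S_3$-equivariant exact sequence
\[
0\to{_p\Br'(\PP^1_S)}\to{_p\Br'(X_S)}\xrightarrow{\mathrm{ram}}\rho\otimes M\to{_p\H^3(\PP^1_S,\Gm)},
\]
where $M=\H^1(S,\QQ_p/\ZZ_p)$ carries the trivial $S_3$-action, the residue target $\bigoplus_{i\in\{0,1,\infty\}}\H^1(S,\QQ_p/\ZZ_p)\cong\rho\otimes M$ is the permutation representation on the three punctures, and $\mathrm{ram}$ is equivariant by naturality of the residue under automorphisms extending over $\PP^1_S$.

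It then remains to identify the image of $\mathrm{ram}$. Using $\pi^*\colon{_p\Br'(S)}\xrightarrow{\cong}{_p\Br'(\PP^1_S)}$ (an isomorphism split by any section, since $\R^1\pi_*\Gm\cong\ZZ$, $\R^2\pi_*\Gm=0$, and $\H^1(S,\ZZ)=0$ for $S$ normal connected), I would compute from the Leray spectral sequence for $\pi\colon\PP^1_S\to S$ that ${_p\H^3(\PP^1_S,\Gm)}\cong{_p\H^3(S,\Gm)}\oplus{_p\H^2(S,\ZZ)}$ with ${_p\H^2(S,\ZZ)}\cong\H^1(S,\QQ_p/\ZZ_p)=M$. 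The three residues map into this $M$-summand by the sum of the Gysin pushforwards of the three sections, each an isomorphism, so the relevant composite $\rho\otimes M\to M$ is the $S_3$-equivariant summation map. Tensoring the split exact sequence $0\to\tilde\rho\to\rho\to\ZZ\to 0$ of free abelian $S_3$-modules with $M$ shows its kernel is exactly $\tilde\rho\otimes M$. This is the relative analog of Faddeev's reciprocity sequence for $\PP^1$, the vanishing of the total residue being the content of reciprocity. Combining, ${_p\Br'(X_S)}/{_p\Br'(S)}\cong\ker(\rho\otimes M\to M)\cong\tilde\rho\otimes\H^1(S,\QQ_p/\ZZ_p)$ as $S_3$-modules.

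The main obstacle I anticipate is this last identification: proving $S_3$-equivariantly that $\mathrm{im}(\mathrm{ram})$ is the sum-zero submodule $\tilde\rho\otimes M$ rather than all of $\rho\otimes M$. This hinges on the Gysin computation into ${_p\H^3(\PP^1_S,\Gm)}$ and on confirming that the three pushforward maps assemble, with the correct sign conventions, into the honest summation map. Once that is in place, the representation-theoretic conclusion — recognizing the kernel as $\tilde\rho$, in parallel with the unit computation of Lemma~\ref{lem:units} — is formal.
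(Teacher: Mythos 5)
Your skeleton coincides with the paper's: split the local cohomology over the two disjoint sections, pass to $\PP^1_S=X_S\cup\{0,1,\infty\}$ where the M\"obius action of $S_3$ permutes the three punctures, compare the two ramification sequences, and identify ${_p\Br'(X_S)}/{_p\Br'(S)}$ with the sum-zero submodule $\tilde{\rho}\otimes\H^1(S,\QQ_p/\ZZ_p)$ of $\rho\otimes\H^1(S,\QQ_p/\ZZ_p)$. Where you diverge is precisely at the step you flag as your main obstacle: you propose to see that the composite $\rho\otimes M\to{_p\H^3(\PP^1_S,\Gm)}$ is ``summation followed by an injection'' via a Leray/Gysin computation on $\PP^1_S$ (a relative Faddeev reciprocity). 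That route is viable but costs real work --- degeneration of the Leray spectral sequence in the relevant range, identification of the three Gysin pushforwards with a single injection of $M$, and sign bookkeeping. The paper sidesteps all of it with an explicit computation: Lemma~\ref{lem:ptorsion} already gives surjectivity of ${_p\Br'(X_S)}\to{_p\H^3_{\{0,1\}}(\AA^1_S,\Gm)}$, so modulo ${_p\Br'(S)}$ every class is represented by a cyclic algebra $(\chi_0,t)\otimes(\chi_1,t-1)$; Proposition~\ref{prop:ramification} then computes its residue at $\infty$ to be $-\chi_0-\chi_1$ directly, using that $t^{-1}$ and $(t-1)^{-1}$ are uniformizers at $\infty$ and that $\mathrm{ram}_{(\pi)}(\chi,\pi^{-1})=-\mathrm{ram}_{(\pi)}(\chi,\pi)$. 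This places the image inside the sum-zero submodule, and since that submodule projects isomorphically onto the $\{0,1\}$-coordinates, the already-known surjectivity forces equality --- no analysis of $\H^3(\PP^1_S,\Gm)$ is needed. So: your first part and the equivariance of the residue under automorphisms of $\PP^1_S$ are as in the paper; for the step you left open, the explicit cyclic-algebra residue computation is the cheaper and self-contained way to close it.
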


\begin{proof}
    Given any \'etale sheaf $\Fscr$, there is a canonical isomorphism
    $$\H^0_{\{0,1\}}(\AA^1_S,\Fscr)\iso\H^0_{\{0\}}(\AA^1_S,\Fscr)\oplus\H^0_{\{1\}}(\AA^1_S,\Fscr),$$
    as one sees by an easy diagram chase. By deriving this isomorphism, the first part of the
    lemma follows.
%     Let $j:X_S\rightarrow\AA^1_S$ be the Zariski open inclusion,
%     and let $i:\{0,1\}\rightarrow X_S$ denote the closed immersion. Then, there
%     is a distinguished triangle $\Gm\rightarrow j_*j^*\Gm \rightarrow i_*\ZZ$
%     in the derived category of \'etale sheaves on $\AA^1_S$.

    To prove the second statement, we compare the sequence of
    Lemma~\ref{lem:ptorsion} with the long exact sequence for \'etale
    cohomology with supports coming from the open inclusion
    $X_S\subseteq\PP^1_S$. Using the natural map of long exact sequences, we
    obtain a commutative diagram
    \begin{equation*}
        \xymatrix{
            0\ar[r] &   {_p\Br'(\PP^1_S)}\ar[r]\ar[d]^\iso   &
            {_p\Br'(X_S)}\ar[r]\ar[d]^= &
            {_p\H^3_{\{0,1,\infty\}}(\PP^1_S,\Gm)}\ar[d]&\\
            0\ar[r] &   {_p\Br'(\AA^1_S)}\ar[r]         &   {_p\Br'(X_S)}\ar[r]       &
            {_p\H^3_{\{0,1\}}(\AA^1_S,\Gm)}\ar[r]   &   0
        }
    \end{equation*}
    with exact rows,
    where the left-hand vertical map is an isomorphism because it is injective
    (by Proposition~\ref{prop:BrOmnibus}(iv)),
    $_p\Br'(S)\rightarrow{_p\Br'(\AA^1_S)}$ is an isomorphism, and there is an
    $S$-point of $\AA^1_S\subseteq\PP^1_S$.

    Now, by Proposition~\ref{prop:puritycalc},
    $${_p\H^3_{\{0,1,\infty\}}(\PP^1_S,\Gm)}\iso\bigoplus_{\{0,1,\infty\}}\H^1(S,\QQ_p/\ZZ_p)$$
    and
    $${_p\H^3_{\{0,1\}}(\AA^1_S,\Gm)}\iso\bigoplus_{\{0,1\}}\H^1(S,\QQ_p/\ZZ_p).$$
    With this description, the right-hand vertical map above is the natural
    projection away from the factor of $\H^1(S,\QQ_p/\ZZ_p)$ corresponding to
    $\infty$. Let $\chi_0$ and $\chi_1$ be $p$-primary characters of $S$, i.e.,
    elements of $\H^1(S,\QQ_p/\ZZ_p)$. Then, as $\chi_0,\chi_1$ vary, the Azumaya algebras
    $(\chi_0,t)\otimes(\chi_1,t-1)$ give elements of $\Br(X_S)$ whose
    ramification classes $(\chi_0,\chi_1)$ span $_p\H^3_{\{0,1\}}(\AA^1_S,\Gm)$.
    The ramification of such a class computed in $\H^3_{\{0,1,\infty\}}(\PP^1_S,\Gm)$
    is $(\chi_0,\chi_1,-\chi_0-\chi_1)$. This follows from
    Proposition~\ref{prop:ramification}, the fact that
    $\mathrm{ram}_{(\pi)}(\chi,\pi^{-1})=-\mathrm{ram}_{(\pi)}(\chi,\pi)$ in
    the notation of that proposition, and the fact that both $t^{-1}$ and
    $(t-1)^{-1}$ are uniformizing parameters for the divisor at $\infty$ of
    $\PP^1_S$. It follows that the image
    of $_p\Br'(X_S)$ inside
    $_p\H^3_{\{0,1,\infty\}}(\PP^1_S,\Gm)\iso\bigoplus_{\{0,1,\infty\}}\H^1(S,\QQ_p/\ZZ_p)$
    can be identified with $\tilde{\rho}\otimes\H^1(S,\QQ_p/\ZZ_p)$.
\end{proof}
    
%If we invert $2$, we obtain an isomorphism $\Br'(X_S)^{S_3}[\frac12] \cong \Br'(\MM(2))^{S_3}[\frac12]$. 
We will analyze the implications for $p$-primary torsion for $p>2$ in the next
three sections. For the rest of this section, we will begin the study of the $2$-primary
torsion of $\Br'(\Mscr_S)$ where $S$ is a $\ZZf{2}$-scheme. By Lemmas~\ref{lem:supportsrho} and~\ref{lem:hrho}, we know
that $_2\H^3_{\{0,1\}}(\AA^1_S,\Gm)^{S_3} = 0$. Thus from Lemma
\ref{lem:ptorsion}, we see that ${_2}\Br'(S) \to {_2}\Br'(X_S)^{S_3}$ is an
isomorphism. If we tensor the sequence \eqref{eq:Brseq} with $\Z_{(2)}$, we obtain
(using Lemmas \ref{lem:htriv} and \ref{lem:hrho}) the exact sequence
\begin{align}\label{eq:Br2}
    0 \to {_2\Br'(S)} \to {_2\Br'(\MM(2))^{S_3}} \to
    \H^1(S,\mu_2) \xrightarrow{\partial}\H^1(S_3,\, {_2\Br'(X_S)})
    \cong \Br'(S)[2] \to \cdots
\end{align}
Here we recall that we denote for an abelian group $A$ by $A[2]$ its $2$-torsion, while ${_2A}$ denotes its $2$-primary torsion. We want to analyze the boundary map $\partial$. 

%Recall that for a $G$-module $A$, its bar resolution begins with
%$$A \xrightarrow{d^0} \Map(G,A) \xrightarrow{d^1} \Map(G\times G, A)
%\xrightarrow{d^2}\cdots,$$
%with $(d^0(a))(g) = ga -a$ and $(d^1(f))(g,h) = gf(h) - f(gh) + f(g)$. In
%particular, we can specify an element in $\H^1(G, A)$ as a \df{crossed homomorphism} $f\colon G \to A$, i.e.\ a map that satisfies $f(gh) = f(g) + gf(h)$.
%We recall the following lemma for the reader's convenience.
%
%\begin{lemma}\label{lem:boundary}
%    Let 
%    $$0 \to A \xrightarrow{\alpha} B \xrightarrow{\beta} C \to 0$$
%    be a short exact sequence of $G$-modules for $G$ a group. Then the boundary map 
%    $$\partial\colon\H^0(G, C) \to\H^1(G, A)$$ 
%    can be described as follows: for an invariant element $c\in C$ choose $b\in B$ with $\beta(b) = c$.
%    For every $g\in G$ the element $gb - b$ is the image of a unique element $a_g\in A$.
%    Then $g\mapsto a_g$ is a crossed homomorphism representing $\partial(c)$. 
%\end{lemma}
%
%\begin{proof}
%    This follows directly from the description of the bar complex above and the definition of the boundary map. 
%\end{proof}
 
\begin{lemma}\label{lem:boundary2}
    If $u\in\H^1(S,\mu_2)$, then $\partial(u)$ equals
    the Brauer class of the cyclic (quaternion) algebra $(-1,u)$.
\end{lemma}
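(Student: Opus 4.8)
The plan is to feed the short exact sequence \eqref{eq:BrXS} into the concrete description of the connecting map provided by Lemma~\ref{lem:boundary}, and then to transport the resulting crossed homomorphism into $\Br'(S)[2]$ by restricting to the subgroup $C_2 = \langle\tau\rangle \subset S_3$. Recall that $\partial$ is the boundary map $\H^0(S_3, \H^1(X_S,\mu_2)) \to \H^1(S_3, {_2\Br'(X_S)})$ attached to \eqref{eq:BrXS}, and that by Lemma~\ref{lem:mu} the invariant class $u \in \H^1(S,\mu_2)$ is exactly such an $S_3$-fixed element of $\H^1(X_S,\mu_2)$. First I would choose an explicit lift of $u$ to $\Br'(\MM(2))$: using the section from Lemma~\ref{lem:cyclic2}, the cyclic algebra $b = [(\sigma, u)]$ with $\sigma \in \H^1(\MM(2), C_2)$ the universal character satisfies $r(b) = u$, because the integer $k$ of Lemma~\ref{lem:cyclic2} is prime to $2$ and $u$ is $2$-torsion. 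By Lemma~\ref{lem:boundary}, $\partial(u)$ is then represented by the crossed homomorphism $g \mapsto a_g := g\cdot b - b \in {_2\Br'(X_S)}$.

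The key computation is to evaluate this crossed homomorphism. Here I would use the description of the $S_3$-action from Lemma~\ref{lem:S3action}: since $g$ sends the torsor $\MM(2)(\sqrt p)$ representing $\sigma$ to $\MM(2)(\sqrt{g(p)})$ and $g(p) = p\cdot \tfrac{g(p)}{p}$, one gets $g\cdot\sigma = \sigma + [\tfrac{g(p)}{p}]$, where $\tfrac{g(p)}{p}$ is a unit pulled back from $X_S$. Because $(\sigma,u)$ represents the cup product $\sigma\cup u$ (Proposition~\ref{prop:cyclicpresentationallstacks}), which is bi-additive and natural for the $S_3$-action, and because $u$ is $S_3$-invariant, this yields
$$a_g = g\cdot(\sigma\cup u) - \sigma\cup u = \left(\tfrac{g(p)}{p},\, u\right),$$
a quaternion algebra pulled back from $X_S$, so that indeed $a_g \in {_2\Br'(X_S)}$ as required. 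In particular $a_\tau = (t, u)$, since $\tfrac{\tau(p)}{p} = t$ on $X_S$.

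It remains to identify the cohomology class of $g\mapsto a_g$ inside $\H^1(S_3, {_2\Br'(X_S)}) \cong \Br'(S)[2]$. The main obstacle is that the splitting of the $S_3$-equivariant sequence $0 \to {_2\Br'(S)} \to {_2\Br'(X_S)} \to \tilde\rho\otimes\H^1(S,\QQ_2/\ZZ_2)\to 0$ of Lemma~\ref{lem:supportsrho} is \emph{not} $S_3$-equivariant, so $a_g$ cannot simply be projected onto $\Br'(S)$. I would circumvent this by restricting to $C_2 = \langle\tau\rangle$. By Lemma~\ref{lem:hrho} the groups $\H^0(S_3,\tilde\rho\otimes M)$ and $\H^1(S_3,\tilde\rho\otimes M)$ vanish for the $2$-primary group $M = \H^1(S,\QQ_2/\ZZ_2)$, so the map $\iota_*\colon \H^1(S_3, {_2\Br'(S)}) \to \H^1(S_3, {_2\Br'(X_S)})$ induced by the inclusion is an isomorphism; moreover, by Lemma~\ref{lem:htriv} the isomorphism $\H^1(S_3, {_2\Br'(S)}) \cong \Br'(S)[2]$ is evaluation at $\tau$, which factors through restriction to $C_2$. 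The crucial geometric input is that the point $t = -1$ gives an $S$-point $\epsilon\colon S\to X_S$ (valid over $\ZZf2$) that is fixed by $\tau$, hence a $C_2$-equivariant retraction $\epsilon^*\colon {_2\Br'(X_S)} \to {_2\Br'(S)}$ with $\epsilon^*\iota = \mathrm{id}$. Applying $\epsilon^*$ to the restricted cocycle $\tau \mapsto a_\tau = (t,u)$ gives $\tau\mapsto \epsilon^*(t,u) = (-1, u)$, and chasing the commuting square of restriction and $\iota_*$ shows that this is precisely $\partial(u)$ under the identification with $\Br'(S)[2]$. This yields $\partial(u) = [(-1,u)]$, as claimed.
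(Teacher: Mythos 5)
Your proposal is correct and follows essentially the same route as the paper: lift $u$ via the cyclic-algebra section of Lemma~\ref{lem:cyclic2}, compute the crossed homomorphism using the twisting of the universal torsor from Lemma~\ref{lem:S3action}, and evaluate on $C_2=\langle(2\,3)\rangle$ at the $\tau$-fixed point $t=-1$ to land on $(-1,u)$. The only cosmetic difference is that you compute $a_g=(\tfrac{g(p)}{p},u)$ for all $g$ by bilinearity of the cup product before specializing, whereas the paper evaluates $z^*\pi^*(\tau(s(u)))$ and $z^*\pi^*(s(u))$ directly at $t=-1$; the inputs and conclusion are identical.
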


\begin{proof}
   We assume that $S$ is connected. Denote by $\pi\colon X_S \to \B C_{2,X_S}$ the projection and by $c\colon \B C_{2,X_S} \to X_S$ the
  canonical map to the coarse moduli space.
  Denote by $$r\colon \Br'(\B C_{2,X_S}) \to \H^1(X_S, \mu_2)$$
  the map obtained from the Leray spectral sequence. Finally, let
  $$s\colon\H^1(X_S, \mu_2) \to \Br'(\B C_{2,X_S})$$
  be given by $s(u) = [(\chi,u)]=[(\chi,c^*u)]$, where $\chi \in \H^1(\B C_{2,X_S}, C_2)$ classifies $\pi$. We have $r(s(u)) = u$ by Lemma \ref{lem:cyclic2}. %Note further that $\pi^*\circ s=0$.
  
  Using \cite{serre-galois}*{Section 5.4}, we can compute a crossed homomorphism representing $\partial(u)$ thus as 
  $$g\mapsto \pi^*(g(s(u)) -s(u)) \in \Br'(X_S).$$ Consider the subgroup $C_2 = \langle
  (2\,3)\rangle \subset S_3$ and the $C_2$-equivariant morphism $z\colon S \to
  X_S$ classifying $y^2 = x(x-1)(x+1)$ (i.e.,\ $t=-1$). It follows from
  Lemmas~\ref{lem:htriv} and~\ref{lem:hrho} that the morphism
  $z^*\res_{C_2}^{S_3}$ induces an isomorphism $\H^1(S_3, {_2}\Br'(X_S)) \to\H^1(C_2,
  {_2}\Br'(S))$. The isomorphism $\H^1(C_2,
  \Br'(S)) \to \Br'(S)[2]$ is given by evaluating the crossed homomorphism at the
  non-trivial element $(2\,3)\in C_2$. Thus, the coboundary map $\partial:\H^1(S,\mu_2) \to \Br'(S)[2]$
  sends $u$ to 
  $$z^*\pi^*((2\,3)(s(u))-s(u)).$$
  As the pullback of $X_S \to \B C_{2,X_S}$ along $\pi\circ z\colon S \to X_S \to \B
  C_{2,X_S}$ is the trivial $C_2$-torsor, $z^*\pi^*s(u) = (\pi z)^*(\chi,u)$ defines the
  trivial Brauer class. By Lemma \ref{lem:S3action}, the action of $(2\,3)$
  multiplies the torsor $X_S \to \B C_{2,X_S}$ with the torsor $\B
  C_{2,X_S}(\sqrt{t}) \to \B C_{2,X_S}$. Thus, $z^*\pi^*(2\, 3)(s(u)) = (z^*t,u) =
  (-1,u)$.
\end{proof}
 
Summarizing, we obtain the following result.
 
\begin{proposition}\label{prop:BrInv}
    Let $S$ be a regular noetherian scheme over $\ZZf{2}$.
    We have an exact sequence
    \[0 \to {_{2}\Br'(S)} \to {_{2}\Br'(\MM(2)_S)}^{S_3} \to \H^1(S,\mu_2)
        \xrightarrow{\partial} \Br'(S)[2]\]
    with $\partial(u) = [(-1,u)]$. The map ${_{2}\Br'(S)} \to
    {_{2}\Br'(\MM(2)_S)}^{S_3}$ is
    non-canonically split. 
\end{proposition}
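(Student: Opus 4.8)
The plan is to read off everything but the splitting from the material already assembled, and then to construct the splitting explicitly. First I would note that the claimed four-term exact sequence is exactly \eqref{eq:Br2}: the exactness of
\[0 \to {_2\Br'(S)} \to {_2\Br'(\MM(2)_S)}^{S_3} \to \H^1(S,\mu_2) \xrightarrow{\partial} \H^1(S_3, {_2\Br'(X_S)})\]
is established there, and the identification $\H^1(S_3, {_2\Br'(X_S)}) \cong \Br'(S)[2]$ comes from the long exact $S_3$-cohomology sequence of the $S_3$-equivariant extension
\[0 \to {_2\Br'(S)} \to {_2\Br'(X_S)} \to \tilde\rho \otimes \H^1(S,\QQ_2/\ZZ_2) \to 0\]
of Lemmas \ref{lem:ptorsion} and \ref{lem:supportsrho}, since $\H^0(S_3, \tilde\rho\otimes M) \cong M[3]$ and $\H^1(S_3, \tilde\rho\otimes M)\cong M/3$ both vanish for the $2$-primary group $M = \H^1(S,\QQ_2/\ZZ_2)$ by Lemma \ref{lem:hrho}. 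The formula $\partial(u) = [(-1,u)]$ is precisely Lemma \ref{lem:boundary2}; since that lemma is proved for connected $S$ whereas the proposition allows an arbitrary regular noetherian $S$ over $\ZZf{2}$, I would note that $S$ has finitely many connected components and that $\Br'$, $\H^1(-,\mu_2)$, the $S_3$-action, and the cyclic algebra $(-1,u)$ all decompose as products over them, so exactness and the formula for $\partial$ follow componentwise.

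It remains to split the injection $\Phi\colon {_2\Br'(S)} \to {_2\Br'(\MM(2)_S)}^{S_3}$. Writing $q\colon X_S \to S$ for the structure map, $c\colon \MM(2)_S \cong \B C_{2,X_S} \to X_S$ for the coarse-space map, and $\pi\colon X_S \to \MM(2)_S$ for the atlas (the universal $C_2$-torsor), $\Phi$ is the pullback $c^* q^*$ along the composite $\MM(2)_S \to S$. I would produce the retraction $s^*\pi^*$, where $s\colon S \to X_S$ is the section given by $t = -1$, which exists because $2$ is invertible on $S$. Proposition \ref{prop:bcn} gives $\pi^* c^* = \id$ (as $c\pi = \id_{X_S}$), and $qs = \id_S$ gives $s^* q^* = \id$, so $s^*\pi^*\Phi = s^*(\pi^* c^*)q^* = s^* q^* = \id$; restricting $s^*\pi^*$ to the $S_3$-invariant subgroup then splits $\Phi$.

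The step needing the most care is exactly this splitting, because the atlas $\pi$ is \emph{not} $S_3$-equivariant (as stressed before Lemma \ref{lem:S3action}), so $s^*\pi^*$ is not a map of $S_3$-modules and does not obviously descend to invariants. The point I would emphasize is that no equivariant retraction is needed: restricting any retraction of the underlying split injection ${_2\Br'(S)} \hookrightarrow {_2\Br'(\MM(2)_S)}$ to the subgroup ${_2\Br'(\MM(2)_S)}^{S_3}$ automatically retracts $\Phi$, since $\Phi$ factors through that subgroup. The one bookkeeping check I would confirm is that the isomorphism ${_2\Br'(S)} \cong {_2\Br'(X_S)}^{S_3}$ used to form \eqref{eq:Br2} is indeed induced by $q^*$, so that $\Phi = c^* q^*$ as claimed.
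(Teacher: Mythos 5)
Your proposal is correct and follows essentially the same route as the paper: the sequence is \eqref{eq:Br2}, the formula for $\partial$ is Lemma \ref{lem:boundary2}, and the splitting is pullback along an $S$-point of $\MM(2)_S$ (your $\pi\circ s$ with $t=-1$ is precisely such a point), restricted to the $S_3$-invariant subgroup. Your extra care about non-connected $S$ and about the retraction not needing to be equivariant are sound elaborations of what the paper leaves implicit.
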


\begin{proof}
    The exact sequence is exactly \eqref{eq:Br2}. 
    The identification of $\partial(u)$ follows from the previous lemma. 
    For the splitting, choose an $S$-point $S \to \MM(2)_S$. Then the
    composition $\Br'(\MM(2)_S)^{S_3} \to \Br'(\MM(2)_S) \to \Br'(S)$ provides the
    splitting.
\end{proof}

\section{The $p$-primary torsion in $\Br(\Mscr_{\ZZ[\frac12]})$ for primes $p\geq 5$}\label{sec:p}

Before we proceed to study the $3$-primary and $2$-primary torsion, we will
show in this section that for a large class of $S$ there is no $p$-primary
torsion for $p\geq 5$ in the Brauer group of $\Mscr_S$. Lemma~\ref{lem:hrho} implies
the crucial fact that there are no $S_3$-invariant classes in
$_p\Br'(X_S)$ ramified at $\{0,1\}$ when $p\neq 3$ is invertible on $S$. The main point of the following
theorem is that this is true for $p\geq 5$ even for certain regular noetherian schemes
where $p$ is not a unit.

\begin{theorem}\label{thm:ptorsion}
    Let $S$ be a regular noetherian scheme over $\ZZ$ and $p\geq 5$ prime.
    Assume that $S[\frac1{2p}] = S_{\ZZf{2p}}$ is dense in $S$ and that $\Mscr_S \to S$ has a section.
    Then the natural map $_p\Br'(S)\rightarrow{_p\Br'(\Mscr_{S})}$ is an isomorphism.
\end{theorem}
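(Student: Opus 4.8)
The plan is to reduce everything to the open locus $S' = S_{\ZZf{2p}}$, where both $2$ and $p$ are invertible, to settle that case with the descent spectral sequence, and then to glue back using density together with the section. Two structural remarks come first. Since $\Mscr \to \Spec\ZZ$ is smooth, $\Mscr_S$ is again regular and noetherian, so Proposition~\ref{prop:BrOmnibus} applies to it. Moreover the section $s\colon S \to \Mscr_S$ provides a retraction $s^*$ of the pullback $\pi^*\colon {_p\Br'(S)} \to {_p\Br'(\Mscr_S)}$, so $\pi^*$ is split injective and only surjectivity (and the claimed splitting) remains to be shown.

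Next I would set up the density reduction. The subscheme $S'$ is open in $S$ and dense by hypothesis, and because $\Mscr_S \to S$ is smooth (hence open) the stack $\Mscr_{S'}$ is dense open in $\Mscr_S$; as both $S$ and $\Mscr_S$ are regular noetherian, Proposition~\ref{prop:BrOmnibus}(iv) makes the vertical restriction maps ${_p\Br'(S)} \to {_p\Br'(S')}$ and ${_p\Br'(\Mscr_S)} \to {_p\Br'(\Mscr_{S'})}$ injective. Granting the theorem over $S'$, surjectivity over $S$ follows by a diagram chase with the section: given $\alpha \in {_p\Br'(\Mscr_S)}$, set $\beta = s^*\alpha$; then $\alpha - \pi^*\beta \in \ker s^*$, its restriction to $\Mscr_{S'}$ lies in $\ker(s'^*)\cap{_p\Br'(\Mscr_{S'})}$, which is zero because over $S'$ the pullback is an isomorphism split by $s'^*$, and injectivity of the right-hand vertical map then forces $\alpha = \pi^*\beta$.

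It remains to treat the case $2p$ invertible, which is the heart of the matter. Here I would feed the $S_3$-Galois cover $\Mscr(2)_{S'} \to \Mscr_{S'}$ into the descent spectral sequence~\eqref{eq:lsss} and compute the $p$-primary part of $\H^2(\Mscr_{S'}, \Gm)$; all the $\E_2$-entries contributing to $\H^2$ are torsion, so passing to $p$-primary parts is exact. On the $q=0,1$ lines, Lemma~\ref{lem:twolines} together with~\eqref{eq:GmXS} and~\eqref{eq:PicXS} expresses the relevant entries in terms of $\H^*(S_3,\Gm(S'))$, $\H^*(S_3,\Pic(S'))$, $\H^*(S_3,\mu_2(S'))$ and $\H^*(S_3,\tilde\rho)$; by Lemma~\ref{lem:htriv} the first three are $2$- or $6$-torsion in the relevant positive degrees, and by Lemma~\ref{lem:hrho} the groups $\H^i(S_3,\tilde\rho)$ vanish for $i\geq 2$. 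Hence the $p$-local parts of $\E_2^{2,0}$, $\E_2^{1,1}$, $\E_2^{2,1}$ and $\E_2^{3,0}$ all vanish for $p\geq 5$. For the one surviving entry I would use~\eqref{eq:BrXS} (whose cokernel $\H^1(X_{S'},\mu_2)$ is $2$-torsion, so $c^*$ identifies $_p\Br'(X_{S'})$ with $_p\Br'(\Mscr(2)_{S'})$ $p$-locally) and then Lemmas~\ref{lem:supportsrho} and~\ref{lem:hrho}: since the $S_3$-module $_p\Br'(X_{S'})/{_p\Br'(S')} \cong \tilde\rho\otimes\H^1(S',\QQ_p/\ZZ_p)$ has vanishing invariants (its invariants are $M[3]=0$ for $p$-primary $M$), we obtain $_p\E_2^{0,2} \cong {_p\Br'(\Mscr(2)_{S'})}^{S_3} \cong {_p\Br'(S')}$, with no incoming or outgoing differentials $p$-locally. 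Thus $_p\H^2(\Mscr_{S'},\Gm)\cong{_p\Br'(S')}$ via the edge map, and comparing this edge map with the structure map $\Mscr(2)_{S'}\xrightarrow{c} X_{S'}\to S'$ identifies it with $\pi'^*$, completing the case $2p$ invertible.

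The main obstacle is this last step: making the spectral-sequence bookkeeping airtight and, in particular, verifying that every off-diagonal entry and every differential target on the $q=0,1$ lines is genuinely $2$- or $6$-torsion so that it disappears after localizing at $p$. This is exactly where the hypothesis $p\geq 5$ enters, and it is essential: for $p=3$ the module $\tilde\rho\otimes\H^1(S',\QQ_p/\ZZ_p)$ has nonvanishing $S_3$-invariants and $\H^1$, while for $p=2$ the $q=0,1$ rows contribute, so the clean collapse of the spectral sequence fails for both smaller primes.
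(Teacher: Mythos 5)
Your proposal is correct and follows essentially the same route as the paper: first collapse the descent spectral sequence for $\Mscr(2)_{S'}\to\Mscr_{S'}$ over $S'=S[\tfrac1{2p}]$ (all off-diagonal entries are annihilated by $6$, and Lemmas~\ref{lem:ptorsion}, \ref{lem:supportsrho} and~\ref{lem:hrho} identify $({_p\Br'(X_{S'})})^{S_3}$ with $_p\Br'(S')$), then transfer the result to $S$ using that $\Mscr_S\to S$ is open, that restriction to the dense open $\Mscr_{S'}$ is injective by Proposition~\ref{prop:BrOmnibus}(iv), and that the section splits the pullback. The only cosmetic difference is that you phrase the gluing step as a diagram chase showing $\alpha=\pi^*s^*\alpha$, whereas the paper shows directly that $s^*$ is injective and split surjective; these are the same argument.
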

\begin{proof}
%We may assume that $S$ is connected.
Assume first that $2$ is invertible on $S$. 
    The only contribution to $_p\Br'(\Mscr_{S})$ in the Leray--Serre spectral
    sequence~\eqref{eq:lsss} occurs
    as
    \begin{equation}\label{eq:234}
        _p\left(\H^2(\B C_{2,X_S},\Gm)^{S_3}\right)=\left(_p\H^2(\B
        C_{2,X_S},\Gm)\right)^{S_3}
    \end{equation}
     because $\H^i$ of $S_3$ for $i\geq 1$ can never have $p$-primary torsion for $p\geq 5$. 
     
     We will argue that the $p$-group~\eqref{eq:234} is isomorphic to $_p\Br'(S)$ for all primes $p\geq 5$ if additionally $p$ is invertible on $S$. To do so, note first that 
     $$_p\H^2(\B
    C_{2,X_S},\Gm)\iso{_p\Br'(X_S)}$$ for $p\neq 2$ by Proposition~\ref{prop:bcn}.   
   By Lemmas \ref{lem:ptorsion}, \ref{lem:supportsrho} and \ref{lem:hrho}, we see that $_p\Br'(S) \to {_p\Br'(X_S)}^{S_3}$ is an isomorphism.
   
   This shows the theorem if $2p$ is invertible on $S$. Let now $S$ be arbitrary regular noetherian such that $S[\frac1{2p}] \subset S$ is dense and $\Mscr_S$ has an $S$-point. Consider the commutative diagram
   \[
   \xymatrix{{_p}\Br'(\Mscr_S) \ar[r]\ar[d] & {_p}\Br'(\Mscr_{S[\frac1{2p}]}) \ar[d]^{\cong} \\
   {_p}\Br'(S) \ar[r] & {_p}\Br'(S[\frac1{2p}]).
   }
   \] 
induced by the choice of an $S$-point of $\Mscr_S$. 
As $\Mscr_S$ has a cover by a scheme that is fppf over $S$ and fppf morphisms
are open \cite[Thm 2.4.6]{EGAIV.2}, $\Mscr_S \to S$ is open as well. Thus,
$\Mscr_{S[\frac1{2p}]} \subset \Mscr_S$ is dense and hence $\Br'(\Mscr_S) \to
\Br'(\Mscr_{S[\frac1{2p}]})$ is injective by Proposition \ref{prop:BrOmnibus}.
This implies that $\Br'(\Mscr_S) \to \Br'(S)$ is injective as well. As it is
also split surjective, we see that it is an isomorphism. 
\end{proof}

\begin{remark}
In general, it is a subtle question to decide whether $\Mscr_S$ has an $S$-point. For example for $S = \ZZf{2}$ or $S= \ZZf{3}$, there is such an $S$-point, but for $S = \ZZf{5}$ or $S = \ZZf{29}$ there is none (for this and other examples see \cite[Cor 1]{badreduction}). Nevertheless, sometimes one can still control the $p$-power torsion for $p\geq 5$ if there is no $S$-point, as the following corollary shows. 
\end{remark}

\begin{corollary}\label{cor:nop}
    The Brauer groups
    $\Br(\Mscr)\subseteq\Br(\Mscr_{\ZZf{2}})$ have only $2$ and $3$-primary torsion.
\end{corollary}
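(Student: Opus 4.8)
The plan is to reduce everything to a single application of Theorem~\ref{thm:ptorsion} over the base $S=\Spec\ZZf{2}$. Since $\Mscr$ is regular and noetherian (hence irreducible) and inverting $2$ removes only the proper closed fiber over $(2)$, the substack $\Mscr_{\ZZf{2}}\subseteq\Mscr$ is a dense open, so Proposition~\ref{prop:BrOmnibus}(iv) yields the injection $\Br(\Mscr)\hookrightarrow\Br(\Mscr_{\ZZf{2}})$. Thus it suffices to bound the torsion of $\Br(\Mscr_{\ZZf{2}})$ alone. As $\Br(\Mscr_{\ZZf{2}})\subseteq\Br'(\Mscr_{\ZZf{2}})$ by Proposition~\ref{prop:BrOmnibus}(i), and both groups are torsion by Proposition~\ref{prop:BrOmnibus}(ii)--(iii), the whole corollary comes down to showing $_p\Br'(\Mscr_{\ZZf{2}})=0$ for every prime $p\geq 5$.

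Next I would fix such a $p$ and check the hypotheses of Theorem~\ref{thm:ptorsion} for $S=\Spec\ZZf{2}$. The scheme is regular noetherian over $\ZZ$; the localization $S[\tfrac1{2p}]=\Spec\ZZ[\tfrac1{2p}]$ is an open subscheme of the integral scheme $\Spec\ZZf{2}$ and hence dense; and $\Mscr_{\ZZf{2}}\to\Spec\ZZf{2}$ admits a section because there is an elliptic curve over $\ZZf{2}$ with good reduction away from $2$ (see~\cite[Cor 1]{badreduction}). The theorem then furnishes an isomorphism $_p\Br'(\ZZf{2})\xrightarrow{\cong}{_p\Br'(\Mscr_{\ZZf{2}})}$ for each $p\geq 5$.

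Finally I would invoke the computation $\Br(\ZZf{2})\cong\ZZ/2$ recorded earlier, which has no $p$-primary torsion for any $p\geq 5$; hence $_p\Br'(\ZZf{2})=0$ and therefore $_p\Br'(\Mscr_{\ZZf{2}})=0$ as well. Combined with the first paragraph, this shows that $\Br(\Mscr_{\ZZf{2}})$, and a fortiori its subgroup $\Br(\Mscr)$, is supported only at the primes $2$ and $3$.

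The one point requiring care---and the reason the argument is phrased over $\ZZf{2}$ rather than over $\Spec\ZZ$ directly---is the section hypothesis of Theorem~\ref{thm:ptorsion}. Over $\Spec\ZZ$ the map $\Mscr_\ZZ\to\Spec\ZZ$ has \emph{no} section, since no elliptic curve over $\ZZ$ has everywhere good reduction, so the theorem cannot be applied to $\Mscr$ itself. Passing through the dense open $\Mscr_{\ZZf{2}}$, where a section does exist, is precisely what circumvents this obstruction, and the injection $\Br(\Mscr)\hookrightarrow\Br(\Mscr_{\ZZf{2}})$ then transports the conclusion back to $\Mscr$.
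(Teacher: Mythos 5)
Your proposal is correct and follows essentially the same route as the paper: the paper's own (very terse) proof likewise combines the injection $\Br(\Mscr)\hookrightarrow\Br(\Mscr_{\ZZf{2}})$ with Theorem~\ref{thm:ptorsion} applied to $S=\Spec\ZZf{2}$ and the computation $\Br(\ZZf{2})\iso\ZZ/2$. Your verification of the section hypothesis (and the remark about why one cannot work over $\Spec\ZZ$ directly) just makes explicit what the paper leaves implicit.
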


\begin{proof}
    Indeed, $\Br(\Mscr)\subseteq\Br(\Mscr_{\ZZf{2}})$, and there is
    no $p$-torsion in $\Br(\ZZf{2})\iso\ZZ/2$ for $p\neq 2$.
\end{proof}

\section{The $3$-primary torsion in $\Br(\Mscr_{\ZZ[\tfrac16]})$}\label{sec:3prim}

The next theorem describes the $3$-primary torsion in $\Br'(\Mscr_S)$
in many cases.
% From now on we will focus on the case where $S=\Spec R$ is affine, regular, and
% noetherian with at least one prime number $p$ invertible in $R$. In this case, Proposition~\ref{prop:BrOmnibus} says that
% $\Br(R)=\Br'(R)=\H^2(\Spec R,\Gm)$, and these isomorphisms extend to $X_S$ and
% $\Mscr(2)_S$, and $\Mscr_S$ by Remark \ref{rem:Gabber}. %Here, we use that $\Mscr_S$ has a finite \'etale cover by $X_S$ if $2$ is invertible in $S$ by the moduli scheme of elliptic curves with level $3$ structure $\MM(3)_S$ if $3$ is invertible in $S$ and $\MM(3)_S$ is affine \cite[2.2.11 and 3.7.1]{katz-mazur}. 

\begin{theorem}\label{thm:3torsion}
    Let $S$ be a regular noetherian scheme. If $6$ is a unit on $S$, then
    there is an exact sequence
    $$0\rightarrow{_3\Br'(S)}\rightarrow{_3\Br'(\Mscr_S)}\rightarrow\H^1(S,C_3)\rightarrow 0,$$ which is non-canonically split.
    The map ${_3\Br'(\Mscr_S)}\to\H^1(S,C_3)$ can be described as the
    composition of pullback to $X_S$ and taking the ramification at the divisor
    $\{0\}$ in $\mathds{A}^1_S$ defined by $t$ (using Proposition \ref{prop:puritycalc}). 
\end{theorem}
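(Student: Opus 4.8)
The plan is to run the descent spectral sequence \eqref{eq:lsss} for the $S_3$-cover $\Mscr(2)_S\to\Mscr_S$ after localizing at $3$ and to show that the only contribution to $\H^2(\Mscr_S,\Gm)_{(3)}={_3\Br'(\Mscr_S)}$ is the edge group $\E_\infty^{0,2}$. By Lemma~\ref{lem:twolines} together with Lemmas~\ref{lem:htriv} and~\ref{lem:hrho}, the terms $\E_2^{1,1}=\H^1(S_3,\H^1(\Mscr(2)_S,\Gm))$ and $\E_2^{2,0}=\H^2(S_3,\Gm(\Mscr(2)_S))$ are annihilated by $2$, hence carry no $3$-primary torsion, and the same input shows that the target $\E_2^{2,1}$ of $d_2^{0,2}$ is $2$-torsion. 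So, $3$-primarily, $\E_2^{0,2}=\H^2(\Mscr(2)_S,\Gm)^{S_3}$ survives to $\E_\infty$ except possibly for the differential $d_3^{0,2}$ landing in $\E_3^{3,0}$, whose $3$-primary part is a subquotient of $\H^3(S_3,\Gm(S))\cong\Gm(S)[6]$ and can be as large as $\mu_3(S)$. Using $\Mscr(2)_S\iso\B C_{2,X_S}$ and the split sequence of Proposition~\ref{prop:bcn} (which identifies ${_3\Br'(\B C_{2,X_S})}$ with ${_3\Br'(X_S)}$, as $\H^1_{\pl}(X_S,\mu_2)$ is $2$-torsion), the relevant edge group is $({_3\Br'(X_S)})^{S_3}$.

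Next I would compute $({_3\Br'(X_S)})^{S_3}$. Combining the non-canonically split sequence of Lemma~\ref{lem:ptorsion} with the $S_3$-action determined in Lemma~\ref{lem:supportsrho} gives a short exact sequence of $S_3$-modules
\[
0\to {_3\Br'(S)}\to {_3\Br'(X_S)}\to \tilde\rho\otimes\H^1(S,\QQ_3/\ZZ_3)\to 0,
\]
with trivial action on the submodule. Taking invariants and using $\H^1(S_3,{_3\Br'(S)})\cong {_3\Br'(S)}[2]=0$ (Lemma~\ref{lem:htriv}) together with $\H^0(S_3,\tilde\rho\otimes M)\cong M[3]$ (Lemma~\ref{lem:hrho}) yields
\[
0\to {_3\Br'(S)}\to ({_3\Br'(X_S)})^{S_3}\to \H^1(S,\QQ_3/\ZZ_3)[3]\to 0.
\]
Since $\H^0(S,\QQ_3/\ZZ_3)$ is divisible (a finite product of copies of $\QQ_3/\ZZ_3$ indexed by the components of $S$), the Bockstein for $0\to C_3\to\QQ_3/\ZZ_3\xrightarrow{3}\QQ_3/\ZZ_3\to 0$ identifies $\H^1(S,\QQ_3/\ZZ_3)[3]$ with $\H^1(S,C_3)$, and the quotient map is ramification at the divisor $\{0\}$ by construction of the maps in Lemmas~\ref{lem:ptorsion} and~\ref{lem:supportsrho}. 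The leftmost map is split injective because $X_S\to S$ has a section (e.g.\ $t=-1$, legitimate as $6$ is a unit).

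The main obstacle is to show that $d_3^{0,2}$ vanishes $3$-primarily, equivalently that every $S_3$-invariant class on $\Mscr(2)_S$ descends to $\Mscr_S$; I would dispatch this together with the splitting by exhibiting explicit Brauer classes on $\Mscr_S$. The pullbacks $c^*({_3\Br'(S)})$ along the structure map $c\colon\Mscr_S\to S$ are permanent cycles, and precomposing $c^*$ with restriction to $X_S$ recovers the split injection of Lemma~\ref{lem:ptorsion}, so $c^*$ is injective on $_3\Br'$ with no section of $\Mscr_S\to S$ needed. For the quotient, recall that $\lambda^{\otimes 12}\iso\Oscr_{\Mscr_S}$ via the nowhere-vanishing discriminant $\Delta$, so $\lambda^{\otimes 4}$ equipped with $\Delta$ defines a class $u\in\H^1_{\pl}(\Mscr_S,\mu_3)$. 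For $\chi\in\H^1(S,C_3)$ pulled back to $\Mscr_S$, the cyclic algebra $[(\chi,u)_3]$ lies in $\Br'(\Mscr_S)$ by Proposition~\ref{prop:cyclicpresentationallstacks}, and $\chi\mapsto[(\chi,u)_3]$ is additive. Restricting to $X_S$, where $\lambda$ trivializes and $\Delta=16\,t^2(t-1)^2$, this class becomes $(\chi,16\,t^2(t-1)^2)$; since $16$ and $t-1$ are units at $\{0\}$ while $t$ is a uniformizer there, Proposition~\ref{prop:ramification} computes its ramification at $\{0\}$ as $2\chi=-\chi$ (up to the unit $k$ of Corollary~\ref{cor:cyclicgeneral}).

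Thus the map ${_3\Br'(\Mscr_S)}\to ({_3\Br'(X_S)})^{S_3}$ induced by the spectral sequence is surjective: its image contains $c^*({_3\Br'(S)})$ and surjects onto the quotient $\H^1(S,C_3)$ through the classes $[(\chi,u)_3]$. Being the inclusion $\E_\infty^{0,2}\hookrightarrow\E_2^{0,2}$ on $3$-primary torsion, it is therefore an isomorphism, forcing $d_3^{0,2}=0$ there. Transporting the short exact sequence of the second paragraph across this isomorphism gives
\[
0\to {_3\Br'(S)}\to {_3\Br'(\Mscr_S)}\to \H^1(S,C_3)\to 0,
\]
with quotient map equal to pullback to $X_S$ followed by ramification at $\{0\}$, and with the non-canonical splitting furnished by $\chi\mapsto[(\chi,u)_3]$.
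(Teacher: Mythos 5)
Your proposal is correct and follows the same skeleton as the paper's proof: reduce via the descent spectral sequence and Proposition~\ref{prop:bcn} to $\left({_3\Br'(X_S)}\right)^{S_3}$, compute that group from Lemmas~\ref{lem:ptorsion}, \ref{lem:supportsrho} and \ref{lem:hrho}, and realize the splitting by the cyclic algebras built from $\Delta$ and $\lambda^{\otimes 4}$ (this is exactly the content of Lemma~\ref{lem:3section}, which you fold into the argument). The one genuine difference is that you explicitly confront $d_3^{0,2}$: its target $\E_3^{3,0}$ has $3$-primary part a subquotient of $\Gm(S)[6]$, i.e.\ possibly $\mu_3(S)\neq 0$, so it does not die for degree reasons, and the paper's proof asserts ${_3\Br'(\Mscr_S)}\iso\left({_3\Br'(X_S)}\right)^{S_3}$ without comment on this point. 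Your surjectivity argument --- the classes $c^*\left({_3\Br'(S)}\right)$ and $[(\chi,u)_3]$ already exhaust $\E_2^{0,2}$ $3$-primarily, so $\E_\infty^{0,2}=\E_2^{0,2}$ and the differential vanishes --- is a real gain in rigor over the text. The only spot where you are less careful than the paper is the identification of the quotient map with ramification at $\{0\}$: Lemma~\ref{lem:hrho} gives only an abstract isomorphism $(\tilde\rho\otimes M)^{S_3}\cong M[3]$, and to see that $\ker(\mathrm{ram}_{\{0\}})$ on the invariants is exactly ${_3\Br'(S)}$ you need the paper's small computation that $(\tilde\rho/3)^{S_3}$ is the diagonal copy generated by $t(t-1)$, so that projection to the coordinate at $\{0\}$ is injective on invariants; your explicit classes supply surjectivity of $\mathrm{ram}_{\{0\}}$ but not this injectivity. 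That is a one-line fix.
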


\begin{proof}
    The $3$-primary torsion
    in $\Br'(\Mscr(2)_S) \cong \Br'(\B C_{2,X_S})$ is just the $3$-primary torsion in
    $\Br'(X_S)$ by Proposition~\ref{prop:bcn} as $\H^1_{pl}(X_S; \mu_2)_{(3)} = 0$.
    Similarly, since $2$ is invertible in $S$, the Leray--Serre spectral
    sequence~\eqref{eq:lsss} together with the group cohomology
    computations of Lemma~\ref{lem:htriv} and Lemma~\ref{lem:hrho} and the exact sequences \eqref{eq:GmXS}-\eqref{eq:BrXS} say
    that 
    $$_3\Br'(\Mscr_{S})\iso\left(_3\Br'(X_{S})\right)^{S_3}.$$
    Since $3$ is invertible in $S$, we have a short exact sequence
    $$0\rightarrow{_3\Br'(S)}\rightarrow{_3\Br'(X_S)}\rightarrow{_3\H^3_{\{0,1\}}(\AA^1_S,\Gm)}\rightarrow
    0$$ 
    by Lemma~\ref{lem:ptorsion}.
    
    The $0$ and $1$ sections are disjoint, so that there is an isomorphism of
    $S_3$-modules
    $$_3\H^3_{\{0,1\}}(\AA^1_S,\Gm)\iso\bigoplus_{i=0,1}\H^1(S,\QQ_3/\ZZ_3)\cong \tilde{\rho}\otimes\H^1(S,\QQ_3/\ZZ_3)$$
    by Proposition~\ref{prop:puritycalc} and
    Lemma~\ref{lem:supportsrho}. Thus, Lemma~\ref{lem:hrho} implies that the long exact sequence
    in $S_3$-cohomology takes the following form:
    $$0\rightarrow{_3\Br'(S)}\rightarrow{_3\Br'(X_S)^{S_3}}\rightarrow\H^1(\Spec
    S,C_3)\rightarrow\H^1(S_3, {_3\Br'(S)}).$$ However, the action of $S_3$ on
    $_3\Br'(S)$ is trivial, so the group on the right vanishes by
    Lemma~\ref{lem:htriv}. Since $\Mscr_S$ has an $S$-point (because $2$ is inverted), the splitting follows.
    
    The map $_3\Br'(X_S) \to \bigoplus_{i=0,1}\H^1(S,\QQ_3/\ZZ_3)$ takes the ramification at the divisors $\{0\}$ and $\{1\}$.   
    At this point, we need to make the isomorphism
    $\left(\tilde{\rho}\otimes\H^1(S,\QQ_3/\ZZ_3)\right)^{S_3}\rightarrow\H^1(S,C_3)$
    more explicit. By choosing the ordered basis $t,t-1$ of
    $\Gm(X_S)/\Gm(S)\iso\tilde{\rho}$, we see from the description of the action
    that $\Z/3 \cong (\tilde{\rho}/3)^{S_3}\subseteq\tilde{\rho}/3$ is generated by
    $t(t-1)$. Indeed, $\sigma(t(t-1))=-t^{-2}(t-1)$ and $\tau(t(t-1))=-t^{-2}(t-1)$ for $\sigma = (1\,3\,2)$ and $\tau = (2\,3)$ and thus 
    $$\sigma(t(t-1)) \equiv t(t-1) \equiv \tau(t(t-1)) \in \Gm(X_S)/3.$$
    This implies that
    $\left(\tilde{\rho}\otimes\H^1(S,\QQ_3/\ZZ_3)\right)^{S_3}\rightarrow\H^1(S,C_3)$
    can be identified with projection onto the first coordinate. Thus,
    $_3\Br'(S_R)^{S_3} \to \H^1(S,C_3)$ takes the ramification at the divisor
    $\{0\}$. 
\end{proof}

We want to be more specific about the Azumaya algebras arising from
$\H^1(S,C_3)$. For that purpose consider the section $\Delta
\in\H^0(\Mscr,\lambda^{\tensor 12})$, which is defined as follows. Given an
elliptic curve $E$ over $S$, we can write it Zariski locally in
Weierstrass form. Consider its discriminant $\Delta_E \in \Oscr(S)$ and its invariant
differential $\omega \in \Omega^1_{E/R}(E) \cong \lambda(R)$. It is easy to see
by~\cite{silverman}*{Table~3.1}
that $\Delta = \Delta_E \omega^{\tensor 12}$ is a section of $\lambda^{\tensor
12}$, which is invariant under coordinate changes. Thus, $\Delta$ defines a
section of $\lambda^{\tensor 12}$ on $\Mscr$.

\begin{lemma}\label{lem:3section}
    By Construction \ref{const:root}, we can associate with the line bundle $\Lscr
    = \lambda^{\tensor 4}$ and the trivialization $\Delta\colon \Oscr_{\MM} \to
    \Lscr^{\tensor 3}$ the $\mu_3$-torsor $\Mscr(\sqrt[3]{\Delta}) \to \Mscr$
    whose class in $\H^1(\Mscr, \mu_3)$ we denote by $[\Delta]_3$.
    If $S$ is a regular noetherian scheme and $6$ is a unit on $S$, then the composite
    $$\H^1(S, C_3) \to\H^1(\Mscr_S, C_3) \xrightarrow{\cup (-[\Delta]_3)}\H^2(\Mscr_S, \mu_3) \to {_3\Br'(\Mscr_S)}$$
    is a section of the map ${_3\Br'(\Mscr_S)} \to\H^1(S, C_3)$ of
    Theorem~\ref{thm:3torsion}. 
\end{lemma}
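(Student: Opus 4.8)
The plan is to unwind the explicit description of the map ${_3\Br'(\Mscr_S)} \to \H^1(S, C_3)$ provided by Theorem~\ref{thm:3torsion} and to compute a ramification directly on the Legendre parameter scheme $X_S$. By that theorem, the map sends a class to the ramification at the divisor $\{0\}$ (defined by $t$) of its pullback along $X_S \to \Mscr(2)_S \to \Mscr_S$. So it suffices to pull the cyclic class $\chi \cup (-[\Delta]_3)$ back to $X_S$ and show its ramification at $\{0\}$ equals $\chi$.

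First I would identify $\pi^*[\Delta]_3$ on $X_S$. The universal Legendre curve $y^2 = x(x-1)(x-t)$ carries the nowhere-vanishing invariant differential $\omega$, which trivializes $\lambda$ and hence $\Lscr = \lambda^{\otimes 4}$ over $X_S$. Under the induced trivialization $\omega^{\otimes 12}$ of $\Lscr^{\otimes 3} = \lambda^{\otimes 12}$, the section $\Delta = \Delta_E\,\omega^{\otimes 12}$ becomes the unit $\Delta_E = 16t^2(t-1)^2 \in \Gm(X_S)$, since the discriminant of the Legendre equation is $16t^2(t-1)^2$. Construction~\ref{const:root}, applied with $\Lscr$ trivial, then identifies $\pi^*[\Delta]_3$ with the image of $16t^2(t-1)^2$ under $\Gm(X_S)/3 \to \H^1(X_S, \mu_3)$, and so $\pi^*(-[\Delta]_3)$ with the class of $16^{-1}t^{-2}(t-1)^{-2}$. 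Using that the cup product computes the cyclic algebra class (Proposition~\ref{prop:cyclicpresentationallstacks}) and that pullback commutes with cup products, the pulled-back class is the cyclic algebra $(\chi, 16^{-1}t^{-2}(t-1)^{-2})$ on $X_S$, which by bilinearity splits as $(\chi, 16^{-1}) + (\chi, t^{-2}) + (\chi, (t-1)^{-2})$. Both $16$ and $t-1$ are units along the divisor $\{0\}$, so their contributions to $\mathrm{ram}_{\{0\}}$ vanish, while Proposition~\ref{prop:ramification} together with the identity $\mathrm{ram}_{(t)}(\chi, t^{-1}) = -\mathrm{ram}_{(t)}(\chi, t)$ used in Lemma~\ref{lem:supportsrho} gives $\mathrm{ram}_{\{0\}}(\chi, t^{-2}) = -2\,\mathrm{ram}_{\{0\}}(\chi, t) = -2\chi$. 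Since $-2 \equiv 1 \pmod 3$, the total ramification is $\chi$, which is precisely the section condition.

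The step I expect to be the main obstacle is the first one: pinning down $\pi^*[\Delta]_3$ as the class of the discriminant $16t^2(t-1)^2$, which requires carefully matching the trivialization of the Hodge bundle by $\omega$ with the $\sqrt[3]{\cdot}$ construction of Construction~\ref{const:root} and tracking its conventions. Once this identification is in hand, the remainder is a short additivity-and-valuation computation. A secondary point to verify is that the sign built into $-[\Delta]_3$ and the valuation $-2$ coming from $t^2$ in the discriminant combine (via $-2 \equiv 1 \bmod 3$) to yield $+\chi$ rather than $-\chi$, consistent with the precise identification of the ramification target with $\H^1(S, C_3)$ fixed in Theorem~\ref{thm:3torsion} (projection onto the coordinate of $\tilde\rho$ spanned by $t$).
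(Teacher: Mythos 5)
Your proposal is correct and follows essentially the same route as the paper: both hinge on identifying the pullback of $\Delta$ to $X_S$ with the Legendre discriminant $16t^2(t-1)^2$ via the trivialization of $\lambda$ by the invariant differential, and then computing the ramification at $t=0$ using Proposition~\ref{prop:ramification}. The only cosmetic difference is that the paper first reduces $\Delta^{-1}$ modulo cubes to the uniformizer $4t(t-1)$ and applies the ramification formula once, whereas you expand by bilinearity and use $-2\equiv 1 \bmod 3$; these are the same computation.
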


\begin{remark}
    Informally, this section associates with $\chi \in\H^1(S, C_3)$ the symbol algebra $[(\chi, \Delta^{-1})_3]$. 
\end{remark}

\begin{proof}
    The pullback of $\Delta$ to $X$ is the discriminant of the universal
    Legendre curve, which is $16t^2(t-1)^2$ (using the standard trivialization of
    $\lambda$ on $X$ given by $\frac{dx}{2y}$). For $\chi \in\H^1(S, C_3)$
    the pullback of $[(\chi, \Delta^{-1})_3]$ to $\Br'(X_S)$ is thus $[(\chi,
    4t(t-1))_3]$. As $4t(t-1)$ is a uniformizer for the local ring of $\AA^1_S$
    at $t=0$, Proposition~\ref{prop:ramification} implies the result.
\end{proof}

\begin{remark}
    While this map $\H^1(S,C_3)\rightarrow\Br'(\Mscr_S)$ is defined
    whether or not $6$ is a unit on $S$, without this assumption we do not know
    that $\H^1(S,C_3)$ is the cokernel of
    $_3\Br'(S)\rightarrow{_3\Br'(\Mscr_S)}$.%, only that it is a quotient of the cokernel.
    %See the proof of Proposition~\ref{prop:puritycalc}.
\end{remark}

\begin{corollary}\label{cor:3tors}
    When $S=\Spec\ZZ[\tfrac16]$, there is an isomorphism
    $_3\Br(\Mscr_{\ZZf{6}})\iso\QQ_3/\ZZ_3\oplus\ZZ/3$. 
    The $3$-torsion subgroup is generated by classes
    $\sigma$ and $\theta$, which can be described as follows.
    Let $\chi \in\H^1(\Spec \ZZf{6}, C_3)$ be the character of the Galois extension  $\QQ(\zeta_9 + \overline{\zeta}_9)$ of $\Q$.
    Then $\sigma = [(\chi, 6)_3]$ and $\theta = [(\chi, 16\Delta^{-1})_3]$,
    which pulls back to $[(\chi, t(t-1))_3]$ on $X_{\ZZf{6}}$. 
\end{corollary}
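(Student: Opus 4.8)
The plan is to specialize Theorem~\ref{thm:3torsion} to $S=\Spec\ZZf{6}$, which is legitimate since $6$ is a unit, and to identify the two outer terms of the resulting non-canonically split exact sequence
\[0\rightarrow{_3\Br(\ZZf{6})}\rightarrow{_3\Br(\Mscr_{\ZZf{6}})}\rightarrow\H^1(\Spec\ZZf{6},C_3)\rightarrow 0.\]
For the left-hand term I would use the class field theory computation $\Br(\ZZ[\tfrac1{pq}])\iso\ZZ/2\oplus\QQ/\ZZ$ from the examples of Section~\ref{sec:BrauerCyclicRam} with $pq=6$; its $3$-primary part is $\QQ_3/\ZZ_3$. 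For the right-hand term, $\H^1(\Spec\ZZf{6},C_3)$ classifies cyclic cubic covers of $\Spec\ZZf{6}$, that is, cyclic cubic Galois extensions of $\QQ$ unramified outside $\{2,3\}$; by the uniqueness of $\QQ(\zeta_9+\overline{\zeta}_9)$ recalled in the introduction this group is $\ZZ/3$, generated by the character $\chi$ of that field. Feeding these identifications and the splitting into the sequence gives $_3\Br(\Mscr_{\ZZf{6}})\iso\QQ_3/\ZZ_3\oplus\ZZ/3$, whose $3$-torsion subgroup is $(\ZZ/3)^2$.

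It then remains to check that $\sigma$ and $\theta$ generate this $(\ZZ/3)^2$. Both are cyclic algebras $(\chi,-)_3$ with $\chi$ of order $3$, hence both are $3$-torsion. The class $\sigma=[(\chi,6)_3]$ is pulled back from $\Br(\ZZf{6})$ along the structure map, so it lies in the image of $_3\Br(\ZZf{6})$; I would show it is a generator of $(\QQ_3/\ZZ_3)[3]$ by verifying it is nonzero in $\Br(\ZZf{6})$. For this I compute its local invariant at $2$: since $\chi$ is unramified at $2$ (as $\QQ(\zeta_9+\overline{\zeta}_9)$ ramifies only at $3$), one has $\mathrm{inv}_2(\chi,6)=v_2(6)\cdot\chi(\mathrm{Frob}_2)=\chi(\mathrm{Frob}_2)$, where $\mathrm{Frob}_2$ is the image of $2$ in $\Gal(\QQ(\zeta_9+\overline{\zeta}_9)/\QQ)\iso(\ZZ/9)^\times/\{\pm1\}\iso\ZZ/3$. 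As $2$ is a primitive root modulo $9$ (of order $6$), its image generates $\ZZ/3$, so $\chi(\mathrm{Frob}_2)\neq0$ and $\sigma\neq0$.

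For $\theta=[(\chi,16\Delta^{-1})_3]$ I would first record that $\Delta$ pulls back to the discriminant $16t^2(t-1)^2$ on $X$, exactly as in the proof of Lemma~\ref{lem:3section}, so that $16\Delta^{-1}$ pulls back to $(t(t-1))^{-2}$, which is congruent to $t(t-1)$ modulo cubes; this yields the asserted restriction $[(\chi,t(t-1))_3]$ on $X_{\ZZf{6}}$. Writing $\theta=[(\chi,16)_3]+[(\chi,\Delta^{-1})_3]$ by bilinearity of the cup product, the summand $[(\chi,16)_3]$ is pulled back from $\ZZf{6}$ and hence lies in the kernel of the quotient map $r$ of Theorem~\ref{thm:3torsion}, while Lemma~\ref{lem:3section} identifies $[(\chi,\Delta^{-1})_3]$ with the value on $\chi$ of the splitting section. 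Hence $r(\theta)=\chi$ generates $\H^1(\Spec\ZZf{6},C_3)$. Since $\sigma$ generates the $3$-torsion of the kernel and $\theta$ surjects onto the $3$-torsion of the cokernel, and the kernel $\QQ_3/\ZZ_3$ is $3$-divisible, a short diagram chase shows $\{\sigma,\theta\}$ generates the whole $3$-torsion subgroup.

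The main obstacle is the arithmetic input establishing $\sigma\neq0$: it reduces to locating $\mathrm{Frob}_2$ in the cubic Galois group, i.e.\ to the primitive-root computation modulo $9$. By the reciprocity sequence~\eqref{eq:cft} this nonvanishing at $2$ forces a compensating nonzero invariant at $3$, consistent with the cubic Hilbert symbol computation at $3$ carried out elsewhere in the paper; everything else is bookkeeping with Theorem~\ref{thm:3torsion} and Lemma~\ref{lem:3section}.
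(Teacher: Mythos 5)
Your proof is correct and follows essentially the same route as the paper's: specialize Theorem~\ref{thm:3torsion}, identify the two outer groups of the exact sequence, read off $\theta$ from Lemma~\ref{lem:3section}, and verify $\sigma\neq 0$ via its ramification at $(2)$ --- your computation that $2$ is a primitive root modulo $9$, so that $\chi(\mathrm{Frob}_2)$ generates $\ZZ/3$, is the same fact the paper expresses by checking that the reduced minimal polynomial of $\zeta_9+\overline{\zeta}_9$ is irreducible over $\FF_2$. The only substantive difference is that the paper proves the uniqueness of the cubic field $\QQ(\zeta_9+\overline{\zeta}_9)$ inside this very proof (via Kronecker--Weber and a citation of Lemmermeyer), whereas you defer to the introduction's unproved assertion of it; that step should really be supplied or cited here.
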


\begin{proof}
    We claim first that $\QQ(\zeta_9 + \overline{\zeta}_9)$ is the only cyclic
    cubic extension $L$ of $\QQ$ that ramifies at most at $2$ and $3$. This can
    either be deduced from \cite[I.\S 1.2]{hasse} or shown as follows. By the
    Kronecker--Weber theorem, any cyclic cubic extension $L$ of $\QQ$ has to embed into a cyclotomic extension
    $\QQ(\zeta_n)$ and is more precisely its fixed field under a normal subgroup
    $H\subset (\Z/n)^\times$ of index $3$. As $H$ contains all elements of
    $2$-power order, we can assume that $n$ is odd and thus $L$ does not ramify
    at $2$ but only at $3$. Proposition 3.1 of \cite{lemmermeyer} shows that
    $L$ is unique and must be $\QQ(\zeta_9 + \overline{\zeta}_9)$. This implies
    that $\H^1(\Spec \ZZf{6}, C_3) \cong \ZZ/3$.
    
    Using that $_3\Br(\ZZ[\tfrac16])\iso\QQ_3/\ZZ_3$, the structure of the
    Brauer group $\Br(\Mscr_{\ZZf{6}})[3]$ follows from Theorem \ref{thm:3torsion}. The description of $\theta$ follows directly from the last lemma (where we have modified the section by an element of $_3\Br(\ZZf{6})$ for convenience). 
    
    Last we need to show that $[(\chi,6)_3]$ is non-zero in $\Br(\ZZf{6})[3]\cong \ZZ/3$. It suffices to check that $(\chi,6)$ is ramified at the prime $(2)$. Note that the minimal polynomial of $\zeta_9 + \overline{\zeta}_9$ is $w^3+w+1$. 
    By Proposition~\ref{prop:ramification}, the ramification at $(2)$ in
    $\H^1(\FF_2,C_3)\iso\ZZ/3$
    is the class of the extension $w^3+w+1$ over $\FF_2$. Since this polynomial
    is irreducible (it has no solutions in $\FF_2$ and it has degree $3$), it
    follows that the ramification is non-zero.
\end{proof}

\begin{corollary}
    Let $R=\ZZ[\tfrac12]$ or $R=\ZZ$.
    Then the order of $_3\Br(\Mscr_R)$ is either $1$ or $3$. 
\end{corollary}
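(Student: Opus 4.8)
The plan is to reduce both cases to the single claim that $_3\Br(\Mscr_{\ZZf{2}})$ has order at most $3$, and then to transport the already-computed group $_3\Br(\Mscr_{\ZZf{6}})$ back across the non-unit prime $3$, using that inverting $3$ only loses the divisible part coming from the base, which is controlled by $_3\Br(\ZZf{2})=0$. First I would handle $R=\ZZ$: the open immersion $\Mscr\hookrightarrow\Mscr_{\ZZf{2}}$ is dense in the regular noetherian stack $\Mscr_{\ZZf{2}}$, so $\Br(\Mscr)\to\Br(\Mscr_{\ZZf{2}})$ is injective by Proposition~\ref{prop:BrOmnibus}(iv) and hence $_3\Br(\Mscr)\hookrightarrow{_3\Br(\Mscr_{\ZZf{2}})}$. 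Thus it is enough to treat $R=\ZZf{2}$.

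Next I would pass from $\ZZf{2}$ to $\ZZf{6}$. The locus $\Mscr_{\ZZf{6}}\subseteq\Mscr_{\ZZf{2}}$ on which $3$ is invertible is a dense open, so restriction gives an injection $_3\Br(\Mscr_{\ZZf{2}})\hookrightarrow{_3\Br(\Mscr_{\ZZf{6}})}$, again by Proposition~\ref{prop:BrOmnibus}(iv). By the previous corollary, $_3\Br(\Mscr_{\ZZf{6}})\cong\QQ_3/\ZZ_3\oplus\ZZ/3$, where the divisible summand is the image of $\pi_6^*\colon{_3\Br(\ZZf{6})}\to{_3\Br(\Mscr_{\ZZf{6}})}$ for the structure map $\pi_6\colon\Mscr_{\ZZf{6}}\to\Spec\ZZf{6}$. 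So it suffices to show that the image of $_3\Br(\Mscr_{\ZZf{2}})$ in $_3\Br(\Mscr_{\ZZf{6}})$ lands in a complement to this divisible summand, which is isomorphic to $\ZZ/3$.

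The device for cutting out the divisible summand is a section. As recorded after Theorem~\ref{thm:ptorsion}, $\Mscr_{\ZZf{2}}$ has a $\ZZf{2}$-point $s\colon\Spec\ZZf{2}\to\Mscr_{\ZZf{2}}$; restricting it over $\ZZf{6}$ yields $s_6\colon\Spec\ZZf{6}\to\Mscr_{\ZZf{6}}$ with $\pi_6\circ s_6=\id$, so $s_6^*\circ\pi_6^*=\id$ and $s_6^*\colon{_3\Br(\Mscr_{\ZZf{6}})}\to{_3\Br(\ZZf{6})}=\QQ_3/\ZZ_3$ is a split surjection. Hence $_3\Br(\Mscr_{\ZZf{6}})=\pi_6^*(\QQ_3/\ZZ_3)\oplus\ker(s_6^*)$, and by the exact sequence of Theorem~\ref{thm:3torsion} the complement $\ker(s_6^*)$ maps isomorphically onto $\H^1(\ZZf{6},C_3)\cong\ZZ/3$. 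Now for $\alpha\in{_3\Br(\Mscr_{\ZZf{2}})}$ with restriction $\alpha_6$ to $\Mscr_{\ZZf{6}}$, compatibility of the two sections gives $s_6^*\alpha_6=\res_{\ZZf{6}}(s^*\alpha)$, and $s^*\alpha$ lies in $_3\Br(\ZZf{2})$, which vanishes since $\Br(\ZZf{2})\cong\ZZ/2$. Therefore $\alpha_6\in\ker(s_6^*)\cong\ZZ/3$, and by the injectivity of the previous paragraph $_3\Br(\Mscr_{\ZZf{2}})$ embeds into this $\ZZ/3$, giving the bound.

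The main obstacle is exactly the passage across the non-unit prime $3$: since Theorem~\ref{thm:3torsion} presupposes that $6$ is invertible, it cannot be applied over $\ZZf{2}$, and one must instead recover the computation from $\ZZf{6}$ along a dense open immersion. The key structural input that makes this succeed is that the potentially infinite ambiguity — the divisible group $\QQ_3/\ZZ_3$ pulled back from the base — is detected by $s_6^*$ and is annihilated precisely because $_3\Br(\ZZf{2})=0$. The only point requiring careful verification is that the section chosen over $\ZZf{2}$ genuinely restricts to the section used over $\ZZf{6}$, so that the identity $s_6^*\circ\res=\res\circ\,s^*$ holds and the two detection maps agree.
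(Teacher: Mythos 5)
Your proof is correct and follows essentially the same route as the paper: restrict along the dense open immersion to $\Mscr_{\ZZf{6}}$, where Theorem~\ref{thm:3torsion} applies, and use a $\ZZf{2}$-point of $\Mscr_{\ZZf{2}}$ together with $_3\Br(\ZZf{2})=0$ to show that nothing from the $\Br(\ZZf{6})$-summand survives. Your packaging via the decomposition $_3\Br(\Mscr_{\ZZf{6}})=\pi_6^*(\QQ_3/\ZZ_3)\oplus\ker(s_6^*)$ is in fact slightly cleaner than the paper's two-step argument (first that every extending class is $3$-torsion, then that the image is a proper subgroup of $(\ZZ/3)^2$); the only slip is the typo in your first paragraph, where the dense open immersion should read $\Mscr_{\ZZf{2}}\hookrightarrow\Mscr$, not the reverse.
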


\begin{proof}
    Using the injectivity of $\Br(\Mscr)\rightarrow\Br(\Mscr_{\ZZ[\tfrac12]})$,
    it suffices to prove this when $R=\ZZ[\tfrac12]$. Now, we claim that no
    non-zero class
    $\alpha\in{_3\Br(\ZZ[\tfrac16])}\subseteq{_3\Br(\Mscr_{\ZZ[\tfrac16]})}$
    extends to $\Mscr_{\ZZ[\tfrac12]}$. Indeed, we can take the Legendre curve
    $y^2=x(x-1)(x-2)$, which defines a point
    $\Spec\ZZ[\tfrac12]\rightarrow\Mscr$. Using the commutative diagram
    $$\xymatrix{
        \Spec\ZZ[\tfrac16]\ar[r]\ar[d]  &   \Mscr_{\ZZ[\tfrac16]}\ar[r]\ar[d] &   \Spec\ZZ[\tfrac16]\\
        \Spec\ZZ[\tfrac12]\ar[r]        &   \Mscr_{\ZZ[\tfrac12]}, &
    }$$
    we see that if $\alpha\in{_3\Br(\ZZf{6})}$ did extend to $\Mscr_{\ZZ[\tfrac12]}$, then
    it would be zero in the Brauer group of $\Spec\ZZ[\tfrac16]$, as it
    would extend to $_3\Br(\ZZ[\tfrac12])=0$. However, these classes are
    all non-zero in $\Br(\ZZ[\tfrac16])$ since the composition at the top of
    the commutative diagram is the identity for any $\ZZ[\tfrac16]$-point of
    the moduli stack.

    Now, if $\alpha=\beta+m\theta$ extends, where
    $\beta\in{_3\Br(\ZZ[\tfrac16])}$,
    then $3\alpha=3\beta$ also extends. Hence, it must be that $\alpha$ has
    order at most $3$. In particular, this means that every class of
    $_3\Br(\Mscr_{\ZZ[\tfrac12]})$ is actually $3$-torsion and hence this group
    is a subgroup of $(\ZZ/3)^2$. But, we have already seen that $\sigma$ does
    not extend. So, it is a proper subgroup, and hence it has order at most
    $3$.
\end{proof}

\begin{proposition}\label{prop:3sufficiency}
    Suppose there are Legendre curves $E_i:y^2=x(x-1)(x-t_i)$ over
    $\Spec\ZZ_3[\zeta_3]$ for $i=1,2$ such that
    \[ [(\chi,t_1(t_1-1))]\neq 0 \text{ and } [(\chi,t_2(t_2-1))]=0\]
     in $\Br(\Q_3(\zeta_3))[3] = \Z/3$, where $\chi$ denotes the pullback of the Galois extension $\QQ(\zeta_9 + \overline{\zeta}_9)$ of $\QQ$ to $\QQ_3(\zeta_3)$. Then $_3\Br(\Mscr[\tfrac12])=0$.
\end{proposition}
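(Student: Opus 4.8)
The plan is to leverage the preceding corollary, which shows that $_3\Br(\Mscr_{\ZZf{2}})$ is a $3$-torsion group of order at most $3$; it is therefore either trivial or isomorphic to $\ZZ/3$, and the task is to exclude the latter possibility. Since $\Mscr_{\ZZf{6}}$ is dense in $\Mscr_{\ZZf{2}}$, restriction is an injection $_3\Br(\Mscr_{\ZZf{2}})\hookrightarrow{_3\Br(\Mscr_{\ZZf{6}})}$ by Proposition~\ref{prop:BrOmnibus}(iv), and because the source is $3$-torsion its image lands in the $3$-torsion subgroup $\langle\sigma\rangle\oplus\langle\theta\rangle\iso(\ZZ/3)^2$ of $\QQ_3/\ZZ_3\oplus\ZZ/3\iso{_3\Br(\Mscr_{\ZZf{6}})}$. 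The proof of the preceding corollary already establishes that no non-zero class of $_3\Br(\ZZf{6})$ extends to $\Mscr_{\ZZf{2}}$, and in particular no non-zero multiple of $\sigma$ does; hence the image $G$ of $_3\Br(\Mscr_{\ZZf{2}})$ satisfies $G\cap\langle\sigma\rangle=0$. Thus $G$ is either $0$ or a line, and in the latter case a generator $\alpha$ has image with non-trivial $\theta$-component, so after possibly replacing $\alpha$ by $2\alpha$ its image in $_3\Br(\Mscr_{\ZZf{6}})$ is $a\sigma+\theta$ for some $a\in\ZZ/3$. It suffices to derive a contradiction from the existence of such an $\alpha$.

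First I would reduce the problem to evaluating $\alpha$ on the two Legendre curves. Each $E_i$ defines a point $P_i\colon\Spec\ZZ_3[\zeta_3]\to\Mscr_{\ZZf{2}}$, since $2$, $t_i$, and $t_i-1$ are units in $\ZZ_3[\zeta_3]$. Now $\ZZ_3[\zeta_3]$ is a complete discrete valuation ring with residue field $\FF_3$, so by the Henselian comparison recalled in Section~\ref{sec:BrauerCyclicRam} together with Wedderburn's theorem we have $\Br(\ZZ_3[\zeta_3])\iso\Br(\FF_3)=0$. Consequently $P_i^*\alpha=0$, and restricting further to the generic point gives a point $Q_i\colon\Spec\QQ_3(\zeta_3)\to\Mscr_{\ZZf{2}}$ with $Q_i^*\alpha=0$ in $\Br(\QQ_3(\zeta_3))$.

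Next I would identify $Q_i^*\alpha$ with a symbol algebra. As $\QQ_3(\zeta_3)$ has $6$ invertible, $Q_i$ factors through $\Mscr_{\ZZf{6}}$, so $Q_i^*\alpha=a\,Q_i^*\sigma+Q_i^*\theta$. The class $\sigma=[(\chi,6)_3]$ is pulled back from $\Spec\ZZf{6}$ along the structure map, which is the same for both $i$, whence $Q_i^*\sigma=[(\chi,6)_3]$; and by Lemma~\ref{lem:3section} the class $\theta$ restricts to $[(\chi,t(t-1))_3]$ on $X$, so $Q_i^*\theta=[(\chi,t_i(t_i-1))_3]$. Therefore
$$a[(\chi,6)_3]+[(\chi,t_i(t_i-1))_3]=0\quad\text{in }\Br(\QQ_3(\zeta_3))[3]\text{ for }i=1,2.$$
Subtracting the two equations cancels the common term $a[(\chi,6)_3]$ and yields $[(\chi,t_1(t_1-1))_3]=[(\chi,t_2(t_2-1))_3]$, contradicting the hypothesis that the first is non-zero while the second vanishes. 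Hence $G=0$ and $_3\Br(\Mscr_{\ZZf{2}})=0$.

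I expect the main hurdle to be the bookkeeping of the component structure: one must argue that any surviving class necessarily has non-trivial $\theta$-component (so that evaluation against the parameters $t_i$ actually detects it) and that the $\sigma$-contribution is identical for the two test curves and hence cancels upon subtraction. The genuinely analytic input—exhibiting the curves $E_1,E_2$ and computing the cubic Hilbert symbols at $3$ that certify $[(\chi,t_1(t_1-1))_3]\neq 0$ and $[(\chi,t_2(t_2-1))_3]=0$—is supplied by the hypothesis and handled separately.
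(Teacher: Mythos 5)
Your proposal is correct and follows essentially the same route as the paper: reduce via the preceding corollary to a hypothetical generator $a\sigma+b\theta$ with $b\neq 0$, pull back along the two $\ZZ_3[\zeta_3]$-points defined by the $E_i$, use $\Br(\ZZ_3[\zeta_3])=0$ to force both restrictions to vanish in $\Br(\QQ_3(\zeta_3))$, and derive a contradiction from $[(\chi,t_1(t_1-1))]\neq[(\chi,t_2(t_2-1))]$. Your version is slightly more explicit about why the $\sigma$-contribution is identical for the two test curves and cancels, but the argument is the same.
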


\begin{proof}
    Suppose that $\alpha=a\sigma+b\theta$ is a linear combination of the
    classes found in Corollary~\ref{cor:3tors} where we can assume that
    $b\in\{1,2\}$ since $\sigma$ does not extend. Suppose
    that $\alpha$ extends to
    $\Br(\Mscr_{\ZZ[\tfrac12]})$. We can pull back $\alpha$ along the two
    $\QQ_3(\zeta_3)$-points of $\Mscr$ defined by $E_i$ and compute the ramifications in $\Br(\QQ_3(\zeta_3))$. Let $k=[(\chi,t_1(t_1-1))]$. For $i=1$, we get
    $a+bk$ and for $i=2$ we get $a$. Since $k$ is non-zero, these cannot be
    simultaneously zero modulo $3$.
    But the two maps $\Br(\Mscr_{\ZZ[\tfrac12]}) \to \Br(\QQ_3(\zeta_3))$
    factor over $\Br(\ZZ_{3}[\zeta_3]) = 0$, which is a contradiction. Thus,
    $\alpha$ cannot extend to $\Br(\Mscr_{\ZZ[\tfrac12]})$.
\end{proof}

\section{The ramification of the $3$-torsion}
Our aim in this section is to show that $_3\Br(\Mscr[\tfrac12])=0$ using Proposition \ref{prop:3sufficiency}. 

\begin{lemma}\label{lem:Kummer}
    The natural inclusion
    $\QQ(\zeta_9+\overline{\zeta}_9,\zeta_3)\rightarrow\QQ(\zeta_9)$ is an
    isomorphism.
\end{lemma}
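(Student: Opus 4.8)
The plan is to compare degrees over $\QQ$. First I would observe that both generators of the left-hand field already lie in $\QQ(\zeta_9)$: indeed $\zeta_3 = \zeta_9^3$, and $\zeta_9 + \overline{\zeta}_9 = \zeta_9 + \zeta_9^{-1}$ is manifestly an element of $\QQ(\zeta_9)$. This immediately gives the inclusion $\QQ(\zeta_9+\overline{\zeta}_9,\zeta_3) \subseteq \QQ(\zeta_9)$, so it remains only to show that the two fields have the same degree over $\QQ$.

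Next I would record the relevant degrees. We have $[\QQ(\zeta_9):\QQ] = \varphi(9) = 6$. The subfield $\QQ(\zeta_9+\overline{\zeta}_9)$ is the maximal totally real subfield of $\QQ(\zeta_9)$, being the fixed field of complex conjugation, and is therefore a proper subfield of degree $3$ over $\QQ$ — this is exactly the cubic Galois field appearing in the previous sections. On the other hand, $\QQ(\zeta_3)$ has degree $\varphi(3) = 2$ over $\QQ$.

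Finally I would exploit the coprimality of the degrees $2$ and $3$. The compositum $\QQ(\zeta_9+\overline{\zeta}_9,\zeta_3)$ contains subfields of degrees $3$ and $2$ over $\QQ$, so $[\QQ(\zeta_9+\overline{\zeta}_9,\zeta_3):\QQ]$ is divisible by $\lcm(2,3) = 6$. Since this compositum sits inside $\QQ(\zeta_9)$, whose degree over $\QQ$ is $6$, its degree is forced to equal $6$, and the inclusion must be an equality.

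There is essentially no obstacle here; the only point requiring care is verifying that $\QQ(\zeta_9+\overline{\zeta}_9)$ is genuinely a proper degree-$3$ subfield rather than all of $\QQ(\zeta_9)$, which is clear because it is totally real while $\QQ(\zeta_9)$ is not.
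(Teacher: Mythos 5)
Your argument is correct and is essentially the paper's own: both proofs note that the left-hand field sits inside $\QQ(\zeta_9)$ and strictly contains the degree-$3$ subfield $\QQ(\zeta_9+\overline{\zeta}_9)$, forcing equality by a degree count. Your version spells out the coprimality of $2$ and $3$ where the paper simply invokes that the real subfield has index $2$, but the content is the same.
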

\begin{proof}
The left hand side is a subfield of $\Q(\zeta_9)$ that is strictly larger than
$\Q(\zeta_9 + \overline{\zeta}_9)$ and thus is equal to $\Q(\zeta_9)$.
\end{proof}
 
We will need the following lemma to aid our Hilbert symbol calculations below.

\begin{lemma}\label{lem:trace}
 Consider the cyclotomic field $\Q(\zeta)$ with $\zeta = \zeta_3$ and $\pi =
 1-\zeta$ and denote by $\Tr$ the trace for $\QQ(\zeta)$ over $\QQ$. Then 
 $$ \Tr(\pi^{6k+l}) = \begin{cases}
                       (-1)^{3k}\cdot 3^{3k}\cdot 2 & \text{ if } l=0 \\
                       (-1)^{3k}\cdot 3^{3k+1}  & \text{ if } l=1 \\
                       (-1)^{3k}\cdot 3^{3k+1} & \text{ if } l=2 \\
                       0 & \text{ if } l=3 \\
                        (-1)^{3k+1}\cdot 3^{3k+2} & \text{ if } l=4 \\
                         (-1)^{3k+1}\cdot 3^{3k+3} & \text{ if } l=5 \\                       
                      \end{cases} $$
\end{lemma}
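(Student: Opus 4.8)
The plan is to use that $\QQ(\zeta)/\QQ$ is a quadratic extension whose Galois group is generated by complex conjugation $\sigma\colon\zeta\mapsto\zeta^{2}=\overline{\zeta}$, so that $\Tr(\alpha)=\alpha+\sigma(\alpha)$ for every $\alpha$. From the relations $\zeta^{2}+\zeta+1=0$ and $\zeta^{3}=1$ I first record the three base traces $\Tr(1)=2$ and $\Tr(\zeta)=\Tr(\zeta^{2})=\zeta+\zeta^{2}=-1$; every trace appearing below will be reduced to these.

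The key computation is to find a closed form for the powers of $\pi=1-\zeta$. Expanding and substituting $\zeta^{2}=-\zeta-1$ gives
$$\pi^{2}=(1-\zeta)^{2}=1-2\zeta+\zeta^{2}=-3\zeta,$$
and therefore
$$\pi^{6}=(\pi^{2})^{3}=(-3\zeta)^{3}=-27\,\zeta^{3}=-27.$$
This single identity $\pi^{6}=-27\in\QQ$ is what simultaneously produces the period-$6$ behaviour in $l$ and the factor $3^{3k}$: writing $n=6k+l$ we obtain $\pi^{6k+l}=(-27)^{k}\pi^{l}=(-1)^{k}3^{3k}\pi^{l}$, and since the scalar $(-1)^{k}3^{3k}$ is rational it may be pulled out of the trace, so that
$$\Tr(\pi^{6k+l})=(-1)^{k}\,3^{3k}\,\Tr(\pi^{l}).$$

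It then remains to evaluate the six base cases $\Tr(\pi^{l})$ for $0\le l\le 5$. Using $\pi^{2}=-3\zeta$ repeatedly reduces each $\pi^{l}$ to an integral combination of $1,\zeta,\zeta^{2}$: for instance $\Tr(\pi^{0})=2$, $\Tr(\pi^{1})=2-(\zeta+\zeta^{2})=3$, $\Tr(\pi^{2})=-3\Tr(\zeta)=3$, $\Tr(\pi^{3})=\Tr(-3\zeta+3\zeta^{2})=0$, $\Tr(\pi^{4})=\Tr(9\zeta^{2})=-9$, and $\Tr(\pi^{5})=\Tr(9\zeta^{2}-9)=-27$. Substituting these into the displayed formula and using $(-1)^{k}=(-1)^{3k}$ reproduces the six rows of the table exactly.

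I do not anticipate any genuine obstacle. Once the identity $\pi^{6}=-27$ is in hand the statement collapses to the six routine evaluations above; the only things to watch are the sign bookkeeping $(-1)^{k}=(-1)^{3k}$ and keeping straight which power of $3$ is absorbed into $\Tr(\pi^{l})$ as opposed to the prefactor $3^{3k}$. (Should one prefer to avoid guessing $\pi^{6}=-27$, the same result follows by the Newton recurrence $s_{n}=3s_{n-1}-3s_{n-2}$ coming from the minimal polynomial $x^{2}-3x+3$ of $\pi$, but the route through $\pi^{6}=-27$ is both shorter and more transparent.)
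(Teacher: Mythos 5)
Your proof is correct and follows essentially the same route as the paper: both rest on the identity $\pi^2=-3\zeta$, which gives $\pi^{6k}=(-3)^{3k}=-27^k\in\QQ$ and reduces everything to the six base traces $\Tr(\pi^l)$ for $0\le l\le 5$, all of which you evaluate correctly. The only cosmetic difference is that the paper pulls out $\pi^{6k}=(-3)^{3k}$ while you write it as $(-27)^k$; these are the same observation.
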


\begin{proof}
 We have $\pi^2 = (1-\zeta)^2 = -3\zeta$ and thus $\pi^{6k} = (-3)^{3k}$. Therefore, we have just to compute $\Tr(\pi^l)$ for $l=0,\dots, 5$, which is easily done. 
\end{proof}

We come to a key arithmetic point in our proof, where we compute the Hilbert
symbol at the prime $3$ of certain degree $3$ cyclic algebras. By Proposition
\ref{prop:Hilbert}, this will allow us to check whether certain cyclic algebras
are zero in the Brauer group.

\begin{lemma}\label{lem:Hilbert}
    Consider the cyclotomic field $\Q_3(\zeta)$ with $\zeta = \zeta_3$ and $\pi = 1-\zeta$ the uniformizer. Then we have
    $$\binom{\zeta, t(t-1)}{\pi} = \zeta^{1-b^2}$$
    in $\mu_3(\Q_3(\zeta))$, where $t = 2 + b\pi$ with $b\in \ZZ_3$.
\end{lemma}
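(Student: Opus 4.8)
The plan is to reduce the computation to the explicit (Artin--Hasse) formula for the cubic Hilbert symbol at the ramified prime $\pi$ lying over $3$ in $K=\Q_3(\zeta)$, and then to feed in the trace computations of Lemma~\ref{lem:trace}. First I would expand $t(t-1)=(2+b\pi)(1+b\pi)=2+3b\pi+b^2\pi^2$ and isolate the principal-unit part. Since $t(t-1)\equiv 2\equiv -1\pmod{\pi}$ and $-1=(-1)^3$ is a cube in $K$, bilinearity gives $\binom{\zeta,t(t-1)}{\pi}=\binom{\zeta,-1}{\pi}\binom{\zeta,w}{\pi}=\binom{\zeta,w}{\pi}$, where $w=-t(t-1)$ is a principal unit. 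Using the relation $\pi^2=-3\zeta$ recorded in Lemma~\ref{lem:trace} (equivalently $-3=\zeta^2\pi^2$), I would rewrite $w-1=-3-3b\pi-b^2\pi^2=(\zeta^2-b^2)\pi^2+b\zeta^2\pi^3$, so that $w$ lies exactly in the range where the wild part of the symbol becomes visible.

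Next I would invoke the explicit formula $\binom{\zeta,w}{\pi}=\zeta^{\frac13\Tr_{K/\Q_3}(\log w)}$ for principal units $w$, where $\log$ denotes the $3$-adic logarithm. The key simplification is that the critical ramification level is $e\cdot\tfrac{3}{3-1}=3$ (here $e=2$ is the ramification index of $K/\Q_3$): by Lemma~\ref{lem:trace} one has $\tfrac13\Tr(\pi^l)\equiv 0\pmod 3$ for all $l\geq 4$, so only the $\pi^2$- and $\pi^3$-terms of $\log w$ can contribute modulo cubes. Since $(w-1)^2=O(\pi^4)$, the relevant truncation is simply $\log w\equiv(\zeta^2-b^2)\pi^2+b\zeta^2\pi^3$.

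Finally I would compute the trace. Writing $\zeta^2=(1-\pi)^2=1-2\pi+\pi^2$ and reading off $\Tr(\pi^l)$ from Lemma~\ref{lem:trace} gives $\Tr(\pi^2)=3$, $\Tr(\zeta^2\pi^2)=-6$, and $\Tr(\zeta^2\pi^3)=-9$, whence $\tfrac13\Tr(\log w)\equiv\tfrac13\big((-6-3b^2)-9b\big)=-2-b^2-3b\equiv 1-b^2\pmod 3$, so $\binom{\zeta,t(t-1)}{\pi}=\zeta^{1-b^2}$. I expect the main obstacle to be setting up the explicit reciprocity formula with the correct normalization and sign conventions matching \cite{neukirch} (in particular the ordering of the arguments, since $\binom{\zeta,w}{\pi}=\binom{w,\zeta}{\pi}^{-1}$), together with the bookkeeping that justifies discarding every term beyond $\pi^3$; once that is fixed, the arithmetic with the traces is routine.
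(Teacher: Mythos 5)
Your proposal is correct and follows essentially the same route as the paper: both reduce to the Artin--Hasse formula $\binom{\zeta,a}{\pi}=\zeta^{\Tr(\log a)/3}$ for principal units and then evaluate the relevant traces via Lemma~\ref{lem:trace}. The only difference is organizational --- the paper applies the formula separately to $t-1$ and $-t$ and multiplies the results, whereas you strip off the cube $-1$ once and apply the formula directly to $-t(t-1)$; the arithmetic agrees either way.
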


\begin{proof}
    We use the formula of Artin--Hasse (see~\cite{neukirch}*{Theorem V.3.8}) to
    compute this Hilbert symbol. By this formula, we have
    $$\binom{\zeta, a}{\pi} = \zeta^{\Tr(\log a)/3},$$
    where $a \in 1 + \mathfrak{p}$ (for $\mathfrak{p} \subset \ZZ_3[\zeta_3]$ the
    maximal ideal) and $\Tr$ the trace for $\Q_3(\zeta)$ over $\QQ_3$. 

    This formula directly applies to $t-1 = 1 + b\pi$. We have
    $$ \log(t-1) = \sum_{i=1}^\infty (-1)^{i+1}\frac{(b\pi)^i}i.$$
    Again, it follows easily from Lemma \ref{lem:trace} that $\frac{\Tr(\pi^i)}{i}$ is divisible by $9$ for $i\geq 3$. Thus, 
    $$ \frac{\Tr(\log(t-1))}3 \equiv \frac{\Tr(b\pi)}3 - \frac{\Tr(b^2\pi^2)}{6} \equiv b - \frac{b^2}2 \mod 3$$
    and $\binom{\zeta, t-1}{\pi} = \zeta^{b-\frac{b^2}2}$.

    To compute $\binom{\zeta, t}{\pi}$ note that $\binom{\zeta, t}{\pi} =
    \binom{\zeta, -t}{\pi}\binom{\zeta, -1}{\pi} = \binom{\zeta, -t}{\pi}$.
    Indeed, $\binom{\zeta, -1}{\pi}^2 = 1$ and hence also $\binom{\zeta,
    -1}{\pi} = 1$ (in
    $\mu_3(\QQ_3(\zeta_3))$). We have
    $-t = 1 + (-3-b\pi)$. Thus, 
    $$ \log(-t) = \sum_{i=1}^\infty (-1)^{i+1}\frac{(-3-b\pi)^i}i.$$
    It follows easily from Lemma \ref{lem:trace} that $\frac{\Tr((-3-b\pi)^i))}{i}$ is divisible by $9$ for $i\geq 3$. Thus, 
    $$ \frac{\Tr(\log(-t))}3 \equiv \frac{\Tr(-3-b\pi)}3 - \frac{\Tr((-3-b\pi)^2)}{6} \equiv -2-b - \frac{b^2}2 \mod 3.$$
    Thus, $\binom{\zeta, t}{\pi} = \binom{\zeta, -t}{\pi} = \zeta^{-2-b-\frac{b^2}2}$.
    It follows that 
    $$\binom{\zeta, t(t-1)}{\pi} = \binom{\zeta, t}{\pi}\binom{\zeta, t-1}{\pi} = \zeta^{-2-b^2} =
    \zeta^{1-b^2},$$ as desired.
\end{proof}

\begin{theorem}\label{thm:Br3}
    We have 
    $${_3\Br(\Mscr)}=_3\Br(\Mscr_{\Z[\frac12]})  = 0.$$
\end{theorem}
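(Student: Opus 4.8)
The plan is to deduce both vanishings from Proposition~\ref{prop:3sufficiency} by producing two explicit Legendre curves over $\Z_3[\zeta_3]$ with the required ramification behaviour, detecting that behaviour through the Hilbert symbol already computed in Lemma~\ref{lem:Hilbert}. First I would reduce to a single claim: since the restriction map $_3\Br(\Mscr)\hookrightarrow{_3\Br(\Mscr_{\ZZf2})}$ along the dense open immersion $\Mscr_{\ZZf2}\subseteq\Mscr$ is injective by Proposition~\ref{prop:BrOmnibus}(iv), it suffices to show $_3\Br(\Mscr_{\ZZf2})=0$. By Proposition~\ref{prop:3sufficiency}, this in turn follows once we exhibit parameters $t_1,t_2\in\Z_3[\zeta_3]$, with $t_i$ and $t_i-1$ both units, such that $[(\chi,t_1(t_1-1))]\neq 0$ and $[(\chi,t_2(t_2-1))]=0$ in $\Br(\Q_3(\zeta_3))[3]$.

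The crucial step is to translate these two conditions into the Hilbert symbol computed in Lemma~\ref{lem:Hilbert}. After base change to $\Q_3(\zeta_3)$, the cubic character $\chi$ of $\Q(\zeta_9+\overline{\zeta}_9)$ cuts out the extension $\Q_3(\zeta_9)/\Q_3(\zeta_3)$. By Lemma~\ref{lem:Kummer} we have $\Q(\zeta_9+\overline{\zeta}_9,\zeta_3)=\Q(\zeta_9)$, and since $\zeta_9^3=\zeta_3$ this is exactly the Kummer extension $\Q_3(\zeta_3)(\sqrt[3]{\zeta})$ obtained by adjoining a cube root of $\zeta=\zeta_3$. Thus, with the choice $\omega=\zeta_3$, the class $(\chi,t(t-1))$ is identified with the cyclic algebra $(\zeta,t(t-1))_\omega$, and Proposition~\ref{prop:Hilbert} shows it vanishes in $\Br(\Q_3(\zeta_3))$ if and only if $\binom{\zeta,t(t-1)}{\pi}=1$.

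From here I would simply read off the two parameters. By Lemma~\ref{lem:Hilbert}, $\binom{\zeta,t(t-1)}{\pi}=\zeta^{1-b^2}$ for $t=2+b\pi$ with $b\in\Z_3$. Taking $t_1=2$ (so $b=0$) gives the value $\zeta\neq 1$, whence $[(\chi,t_1(t_1-1))]\neq 0$; taking $t_2=2+\pi=3-\zeta$ (so $b=1$) gives $\zeta^{0}=1$, whence $[(\chi,t_2(t_2-1))]=0$. In both cases $t_i$ and $t_i-1$ are congruent to the units $2$ and $1$ modulo the maximal ideal, hence are units in $\Z_3[\zeta_3]$, so the Legendre curves $y^2=x(x-1)(x-t_i)$ are genuine elliptic curves over $\Z_3[\zeta_3]$. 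Proposition~\ref{prop:3sufficiency} then yields $_3\Br(\Mscr_{\ZZf2})=0$, and the injection above forces $_3\Br(\Mscr)=0$ as well.

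I expect the main obstacle to be the identification carried out in the second step: one must pin down the pullback of $\chi$ to $\Q_3(\zeta_3)$ as the Kummer class of $\zeta_3$ precisely (and not of some other cube class), since this is exactly what makes the first slot of the Hilbert symbol in Lemma~\ref{lem:Hilbert} equal to $\zeta$. Once this matching of $\chi$ with $\zeta$ is secured through Lemma~\ref{lem:Kummer}, everything else is either the already-established Hilbert symbol formula or the immediate arithmetic check that $b=0$ and $b=1$ give $\zeta^{1}$ and $\zeta^{0}$ respectively.
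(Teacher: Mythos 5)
Your proposal is correct and follows essentially the same route as the paper: it invokes Proposition~\ref{prop:3sufficiency}, identifies $\chi$ over $\QQ_3(\zeta_3)$ with the Kummer class of $\zeta_3$ via Lemma~\ref{lem:Kummer}, and uses Lemma~\ref{lem:Hilbert} with the very same parameters $t_1=2$ ($b=0$) and $t_2=2+\pi$ ($b=1$) to produce the required nonvanishing and vanishing Hilbert symbols. The only difference is that you spell out the Kummer identification more carefully than the paper does, which is a harmless (indeed welcome) elaboration since only the vanishing or nonvanishing of the symbol matters.
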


\begin{proof}
    By Proposition~\ref{prop:3sufficiency} and Proposition~\ref{prop:Hilbert}, it suffices to find two Legendre
    curves $E_1$ and $E_2$ over $\ZZ_3[\zeta_3]$ with corresponding classes
    $[(\chi,t_1(t_1-1))]\neq 0$ and $[(\chi,t_2(t_2-1))]=0$. The associated
    condition on the Hilbert
    symbols is $\binom{\zeta,t_1(t_1-1)}{\pi}\neq 1$ and
    $\binom{\zeta,t_2(t_2-1)}{\pi}=1$. (Recall here that $\chi$ is the
    character associated with adjoining $\zeta_9+\overline{\zeta}_9$ which over
    $\QQ_3(\zeta_3)$ is isomorphic to $\QQ_3(\zeta_9)$ by Lemma~\ref{lem:Kummer}.) Take $t_i=2+b_i\pi$,
    where $b_1=0$ and $b_2=1$.
    Consider the two elliptic curves $$E_1:y^2=x(x-1)(x-2)$$ and
    $$E_2:y^2=x(x-1)(x-(2+\pi)).$$ The previous lemma says that we have
    $$\binom{\zeta,t_1(t_1-1)}{\pi}=\binom{\zeta,2(2-1)}{\pi}=\zeta\neq 1$$ and
    $$\binom{\zeta,t_2(t_2-1)}{\pi}=\binom{\zeta,(2+\pi)(1+\pi)}{\pi}=\zeta^0=1.$$
    This completes the proof.
\end{proof}

\section{The $2$-primary torsion in $\Br(\Mscr_{\ZZ[\frac{1}{2}]})$}\label{sec:2torsion}

Throughout this section, let $S$ denote a connected regular noetherian scheme over
$\Spec\ZZ[\frac{1}{2}]$. Given a stack $X$ over $S$, let
$\overline{\Br}'(X)=\mathrm{coker}(\Br'(S)\rightarrow\Br'(X))$.

\begin{theorem}\label{thm:brm}
    Let $S$ be a regular noetherian scheme over $\ZZ[\tfrac12]$ with
    $\Pic(S)=0$. There is
    a natural exact sequence
%     $$0\rightarrow{_2\Br'(S)}\rightarrow{_2\Br'(\Mscr_S)}\rightarrow
%     {_2\overline{\Br}'(\Mscr_S)}\rightarrow 0,$$
%     where $\overline{\Br}'(\Mscr_S)$ is a non-trivial extension of
%     $G$ by $\Gm(S)/2$ 
    $$0 \to \Gm(S)/2 \to {_2}\overline{\Br}'(\Mscr_S) \to G \to 0,$$
    where $G \subset \Gm(S)/2$ is the subgroup of all those
    $u$ with $[(-1,u)] = 0\in \Br(S)$.
\end{theorem}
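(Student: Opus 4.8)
The plan is to feed Proposition~\ref{prop:BrInv} into the descent spectral sequence~\eqref{eq:lsss} and read off $_2\Br'(\Mscr_S)={_2}\H^2(\Mscr_S,\Gm)$ along the antidiagonal $p+q=2$, and then strip off the $\Br'(S)$ summand using the section $e\colon S\to\Mscr_S$ provided by the Legendre curve with $t=2$ (available since $2$ is a unit). Since $S$ is connected over $\ZZ[\tfrac12]$ we have $\mu_2(S)\cong\ZZ/2$ throughout, so only the $\Gm(S)/2$- and $G$-contributions will be interesting.

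First I would compute the three relevant $E_2$-terms on $2$-primary torsion. Using Lemmas~\ref{lem:htriv},~\ref{lem:hrho} and~\ref{lem:twolines} together with $\Pic(S)=0$ (so that $\Pic(\Mscr(2)_S)\cong\mu_2(S)$ by~\eqref{eq:PicXS}) one gets ${_2\E_2^{2,0}}=\H^2(S_3,\Gm(S))\cong\Gm(S)/2$ and ${_2\E_2^{1,1}}=\H^1(S_3,\mu_2(S))\cong\mu_2(S)$, while ${_2\E_2^{0,2}}={_2\Br'(\Mscr(2)_S)}^{S_3}$ is precisely the extension of $G$ by $_2\Br'(S)$ furnished by Proposition~\ref{prop:BrInv}. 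Because the composite $\Br'(S)\to\Br'(\Mscr_S)\to\Br'(\Mscr(2)_S)^{S_3}$ is the injection of Proposition~\ref{prop:BrInv}, the image of $\Br'(S)$ meets the kernel $F^1$ of restriction to the cover trivially; hence $\Br'(S)$ splits off cleanly and the induced filtration on $_2\overline{\Br}'(\Mscr_S)$ has associated graded $\E_\infty^{2,0}$, $\E_\infty^{1,1}$ and $\E_\infty^{0,2}/{_2\Br'(S)}$.

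It then remains to identify these three $E_\infty$-terms, i.e.\ to evaluate the differentials. The incoming differential $d_2\colon\E_2^{0,1}\to\E_2^{2,0}$ vanishes on $2$-torsion: comparing the $p+q=1$ line with the known value $\Pic(\Mscr_S)\cong\ZZ/12$ of Fulton--Olsson~\cite{fulton-olsson}, whose $2$-part is $\ZZ/4$, an order count forces the $2$-part of $\E_\infty^{0,1}$ to be all of $\mu_2(S)$, so $d_2$ is zero and $\E_\infty^{2,0}=\Gm(S)/2$. What is left, and what I expect to be the main obstacle, is to show $\E_\infty^{1,1}=0$ and $\E_\infty^{0,2}={_2\E_2^{0,2}}$; equivalently, that the restriction $r\colon{_2\Br'(\Mscr_S)}\to{_2\Br'(\Mscr(2)_S)}^{S_3}$ has kernel exactly $\Gm(S)/2$ and image all of ${_2\E_2^{0,2}}$.

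To exhibit the kernel I would produce explicit classes: for $u\in\Gm(S)/2$ the quaternion algebra $(\Delta,u)$ on $\Mscr_S$, built from the canonical trivialization $\Delta\colon\Oscr_\Mscr\to\lambda^{\otimes 12}$ as in Lemma~\ref{lem:3section}, restricts to $0$ on $\Mscr(2)_S$ because there $\sqrt{\Delta}=4t(t-1)$ is a unit; these classes represent the summand $\E_\infty^{2,0}\cong\Gm(S)/2$ of $\ker r$. That the kernel is no larger, i.e.\ $\E_\infty^{1,1}=0$, comes down to showing the differential $d_2\colon\E_2^{1,1}\cong\mu_2(S)\to{_2\E_2^{3,0}}\cong\mu_2(S)$ is an isomorphism, which I would attack by restricting along $C_2=\langle(2\,3)\rangle\subset S_3$ (of odd index $3$, hence injective on $2$-primary cohomology) and computing with the explicit twisted $\tau$-action of Lemma~\ref{lem:S3action}, exactly as in the boundary computation of Lemma~\ref{lem:boundary2}. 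For the image I would lift the generators $s(u)=[(\sigma,u)]$ (with $\sigma$ the universal $C_2$-torsor class) back to $\Mscr_S$; the obstruction to doing so is the class measured by $\partial$ in Proposition~\ref{prop:BrInv}, so exactly the elements of $G$ lift, whence $\E_\infty^{0,2}={_2\E_2^{0,2}}$, detecting non-triviality where needed by pulling back along Legendre curves over local fields and evaluating Hilbert symbols. Assembling the three graded pieces and invoking naturality of the spectral sequence and of Proposition~\ref{prop:BrInv} then yields the natural exact sequence $0\to\Gm(S)/2\to{_2\overline{\Br}'(\Mscr_S)}\to G\to 0$.
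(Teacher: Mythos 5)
Your overall skeleton (compute the $2$-local $\E_2$-page from Lemmas~\ref{lem:htriv}, \ref{lem:hrho}, \ref{lem:twolines} and Proposition~\ref{prop:BrInv}, split off $\Br'(S)$ via a section, and reduce everything to the three differentials $d_2^{0,1}$, $d_2^{1,1}$, and $d_2^{0,2},d_3^{0,2}$) matches the paper, and your order-count for $d_2^{0,1}$ via Fulton--Olsson is a legitimate variant of the paper's argument (which instead uses that $-1\in\mu_2(S)$ is realized by $\lambda^{\otimes 6}$). But there is a genuine gap at the crux: your plan for proving that $d_2^{1,1}\colon\mu_2(S)\to{_2\E_2^{3,0}}\iso\mu_2(S)$ is an isomorphism. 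Restricting along $C_2=\langle(2\,3)\rangle\subset S_3$ does detect the differential on $2$-primary parts, but it only trades the problem for the $d_2^{1,1}$ of the descent spectral sequence of the intermediate $C_2$-cover, and the method you then invoke --- an explicit computation ``as in Lemma~\ref{lem:boundary2}'' using the twisted action of Lemma~\ref{lem:S3action} --- computes a connecting homomorphism of a short exact sequence of $G$-modules, which is determined by the $\E_2$-page; a $d_2$ transgression in the descent spectral sequence is \emph{not} determined by the $\E_2$-page, and no cochain-level recipe is given. The paper resolves this by an external comparison: over $\ZZ[\tfrac12,i]$ the curve $y^2=x(x-1)(x+1)$ with its order-$4$ automorphism gives a map $\B C_{4,S}\to\Mscr_S$ identifying the relevant spectral sequences (Proposition~\ref{prop:mi}), and the independently known abutment $\Br'(\B C_{4,\ZZ[\frac12,i]})\iso\ZZ/4\oplus\ZZ/4$ from Proposition~\ref{prop:bcn} forces $d_2^{1,1}$ to be an isomorphism, which then propagates to all $S$ by naturality. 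Without some independent handle on the abutment, your computation has no way to get started.

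A second, lesser problem: your argument that $\E_\infty^{0,2}={_2\E_2^{0,2}}$ conflates two different obstructions. The boundary map $\partial$ of Proposition~\ref{prop:BrInv} measures which classes of $\H^1(S,\mu_2)$ lift to $S_3$-\emph{invariant} classes in $\Br'(\Mscr(2)_S)$; that is already the computation of the $\E_2^{0,2}$-term itself. Whether an invariant class descends to $\Mscr_S$ is instead governed by $d_2^{0,2}$ and $d_3^{0,2}$, which you never address (Hilbert-symbol detection over local fields can only certify nonvanishing of classes that are already known to descend). The paper kills $d_2^{0,2}$ by naturality with respect to a geometric point $\overline{x}\to S$, where the source $G_{\overline{x}}=\Gm(\overline{x})/2$ vanishes while the target $\mu_2(S)\to\mu_2(\overline{x})$ is an isomorphism, and then $d_3^{0,2}$ dies because its target is already zero on $\E_3$ once $d_2^{1,1}$ is an isomorphism. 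You would need to supply both of these steps.
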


We will prove the theorem after several preliminaries.
Figure~\ref{fig:m11} shows a small part of the $2$-local Leray--Serre spectral
sequence~\eqref{eq:lsss} for the $S_3$-Galois cover $\Mscr(2)_S\rightarrow\Mscr_S$.
The description follows from Lemmas \ref{lem:htriv}, \ref{lem:hrho},
\ref{lem:twolines} and Proposition \ref{prop:BrInv}.

\begin{figure}[h]\label{fig:DSS}
    \centering
    \begin{equation*}
        \xymatrix@R=3pt{
            _2\Br'(S)\oplus \Pic(S)[2]\oplus G\ar[rrd]&&&\\
            \Pic(S)_{(2)}\oplus\mu_2(S)\ar[rrd]&\Pic(S)[2]\oplus\mu_2(S)\ar[rrd]^{}&\Pic(S)/2\oplus\mu_2(S)&\\
            \Gm(S)_{(2)}&\mu_2(S)&\Gm(S)/2&\mu_2(S)\\
        }
    \end{equation*}
    \caption{The $\E_2$-page of the Leray--Serre spectral sequence computing
    $\H^i(\Mscr_S,\Gm)_{(2)}$ for $i\leq 2$.}
    \label{fig:m11}
\end{figure}

From now on, we will localize everything in this section implicitly at $2$. 

\begin{proposition}
    Let $S$ be a connected regular noetherian scheme over $\ZZf{2}$.
    The differential $d_2^{0,1}$ in the Leray--Serre
    spectral sequence of Figure~\ref{fig:m11} always vanishes and $d_2^{0,2}$ and $d_3^{0,2}$ vanish if $\Pic(S)=0$.
\end{proposition}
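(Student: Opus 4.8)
The plan is to exploit the standard principle that any class in the $p=0$ column lying in the image of the edge map $\pi^*\colon\H^q(\Mscr_S,\Gm)\to\H^q(\Mscr(2)_S,\Gm)^{S_3}=\E_2^{0,q}$ is a permanent cycle, so it suffices to produce enough such classes (together with a naturality argument for the part that does not visibly lift). First I would dispatch $d_2^{0,1}$. No differential enters $(0,1)$ and the only one leaving it is $d_2^{0,1}$, so $d_2^{0,1}=0$ as soon as $\pi^*$ is surjective onto $\E_2^{0,1}$, which $2$-locally is $\Pic(S)_{(2)}\oplus\mu_2(S)$ by Proposition~\ref{prop:bcn}. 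The summand $\Pic(S)_{(2)}$ is hit by line bundles pulled back from the base along $\Mscr_S\to S$. For $\mu_2(S)$ the key point is that the Hodge bundle $\lambda$ already lives on $\Mscr_S$, and its restriction $\pi^*\lambda$ maps to a generator of $\mu_2(S)$ under $\Pic(\Mscr(2)_S)\to\mu_2(S)$ because $[-1]$ acts on invariant differentials by $-1$; that is, $\pi^*\lambda$ has odd weight for the universal $C_2=\Aut$-action. Both generators thus come from global classes on $\Mscr_S$, hence are permanent cycles, and $d_2^{0,1}=0$ with no hypothesis on $\Pic(S)$.

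Now assume $\Pic(S)=0$, so $\E_2^{0,2}={_2\Br'(S)}\oplus G$ while the targets $\E_2^{2,1}$ and $\E_3^{3,0}$ are, respectively, $\mu_2(S)$ and a subquotient of $\mu_2(S)$. On ${_2\Br'(S)}$ both differentials vanish, these classes being pulled back from $\Br'(S)$ along $\Mscr_S\to S$. The remaining task is to kill $d_2^{0,2}$ and $d_3^{0,2}$ on $G$. Rather than lift the $G$-classes explicitly, which looks unpromising since the universal $C_2$-torsor on $\Mscr(2)_S$ is not $S_3$-invariant and so does not descend to $\Mscr_S$, I would argue by naturality. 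Because ${_2\Br'(S)}$ consists of permanent cycles, the $\mu_2$-component of $d_2^{0,2}$ descends to a natural transformation $\overline{D}\colon\ker\partial\to\mu_2$ of functors on regular noetherian $\ZZf{2}$-schemes; here $\partial(u)=[(-1,u)]$ and the identification $\E_2^{0,2}/{_2\Br'(S)}\cong\ker\partial$ is the one of Proposition~\ref{prop:BrInv}, valid without any $\Pic$-hypothesis. For $\Pic(S)=0$ this $\overline{D}$ recovers $d_2^{0,2}$ on $G=\ker\partial$, and $d_3^{0,2}$ likewise yields a natural map into a subquotient of the constant sheaf $\mu_2\cong\ZZ/2$.

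The decisive observation is that $G$ carries a universal element. Over the connected scheme $S_0=\Spec\ZZf{2}[a,b,(a^2+b^2)^{-1}]$, which is regular with $\Pic(S_0)=0$ since $\ZZf{2}[a,b]$ is a UFD in which $a^2+b^2$ is prime, the unit $u_0=a^2+b^2$ is the norm of $a+bi$, so $[(-1,u_0)]=0$ and $u_0\in G(S_0)$; and since $[(-1,u)]=0$ means $u$ is \'etale-locally a norm from $S[\sqrt{-1}]$, hence \'etale-locally of the form $a^2+b^2$, every class of $G(S)$ is \'etale-locally pulled back from $(S_0,u_0)$. Evaluating $\overline{D}$ on $u_0$ gives an element of $\mu_2(S_0)=\{\pm1\}$; restricting along the section $a=1,\,b=0$ sends $u_0$ to $1\in G$, so $\overline{D}(u_0)$ restricts to $\overline{D}(1)=1$, and as $\mu_2(S_0)\to\mu_2(\ZZf{2})$ is an isomorphism we get $\overline{D}(u_0)=1$ and therefore $\overline{D}=0$. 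The same specialization argument applies to $d_3^{0,2}$. The main obstacle I anticipate is the naturality bookkeeping in this last step: verifying that the descriptions of $\E_2^{0,2}$, $\E_2^{2,1}$, $\E_3^{3,0}$ and of the differentials are natural in $S$, that projecting onto the $\mu_2$-component and descending past ${_2\Br'(S)}$ is legitimate over base changes with possibly nontrivial Picard group, and that $[(-1,u)]=0$ really yields the \'etale-local presentation $u=a^2+b^2$ needed to reduce to the universal pair $(S_0,u_0)$.
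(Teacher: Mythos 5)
Your proposal is correct, and its first two steps (surjectivity of the edge map onto $\E_2^{0,1}$ via $\lambda$ together with line bundles pulled back from $S$, and the splitting $\Br'(S)\to\Br'(\Mscr_S)$ killing the differentials on ${_2\Br'(S)}$) coincide with the paper's. For the remaining piece $G$ the underlying mechanism is also the same --- naturality of the spectral sequence, vanishing of the source after a base change, and the fact that $\mu_2$ loses nothing under that base change --- but the paper implements it in one line by pulling back to a geometric point $\overline{x}$ of $S$, where $\E_2^{0,2}(\overline{x})=\Gm(\overline{x})/2=0$ while $\mu_2(S)\to\mu_2(\overline{x})$ is injective. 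Your universal scheme $S_0=\Spec\ZZf{2}[a,b,(a^2+b^2)^{-1}]$ works but is overkill: the condition $[(-1,u)]=0$ plays no role in the vanishing, since \'etale-locally every unit is already a square (hence trivially a sum of two squares and trivially in $G$), so the obstacle you anticipate about extracting the presentation $u=a^2+b^2$ from $[(-1,u)]=0$ is a red herring --- you could replace $(S_0,u_0)$ by $(\Spec\ZZf{2}[a^{\pm1}],a^2)$, or simply specialize to a geometric point as the paper does. One wrinkle your argument shares with the paper's: for $d_3^{0,2}$ the target $\E_3^{3,0}$ is only a quotient of $\mu_2(S)$, and injectivity of $\E_3^{3,0}(S)\to\E_3^{3,0}(S')$ (or into $\E_3^{3,0}(\overline{x})$) under the specialization is not automatic since the image of $d_2^{1,1}$ may grow; this is mooted by the subsequent corollary showing $d_2^{1,1}$ is an isomorphism, whence $\E_3^{3,0}=0$ and $d_3^{0,2}$ vanishes for free.
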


\begin{proof}
    %Recall from \cite{fulton-olsson} that $\Pic(\Mscr_S) \cong \Pic(S)\oplus \ZZ/12$.
    The map 
    $$\Pic(\Mscr_S) \to \E_2^{0,1}\iso{\Pic(S)_{(2)}\oplus\mu_2(S)}$$
    is surjective as $-1 \in \mu_2(S)$ can be realized as $\lambda^{\tensor 6}$. This implies that there can be no differential originating from
    $\E_2^{0,1}$. Moreover, $_2\Br'(S)$ splits off
    from $\Br'(\Mscr_S)$, so the differentials $d_2^{0,2}$ and $d_3^{0,3}$
    vanish on $_2\Br'(S)$. 
    
    Now assume $\Pic(S) = 0$. Then also $\Pic(S)[2]=0$ and $\Pic(S)/2=0$. So, we are concerned
    with the vanishing of
    $d_2^{0,2}:G\rightarrow\mu_2(S)$ and $d_3^{0,2}: G\rightarrow \mu_2(S)$. However, by pulling back the
    spectral sequence to a geometric point $\overline{x}$ of $S$, we
    find $G = \Gm(\overline{x})/2=0$, while $\mu_2(\overline{x})\iso\ZZ/2$. This
    implies that $d_2^{0,2}$ and $d_3^{0,2}$ vanish.
\end{proof}

To resolve the differential $d_2^{1,1}$ and solve possible extension issues we
will first consider schemes $S$ over $\Z[\frac12,i]$. In this case we can
compare the Leray--Serre spectral sequence considered above with the
Leray--Serre
spectral sequence for the $C_2$-Galois cover $\B C_{2,S} \to \B C_{4,S}$.

\begin{proposition}\label{prop:mi}
    If $S$ is a regular noetherian $\ZZ[\frac{1}{2},i]$-scheme, then $\Br'(\Mscr_S)\iso\Br'(\B C_{4,S})$.
\end{proposition}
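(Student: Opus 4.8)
The plan is to compare the descent spectral sequence~\eqref{eq:lsss} for $\Mscr_S=[\Mscr(2)_S/S_3]$ with the descent spectral sequence of the $C_2$-Galois cover $\B C_{2,S}\to\B C_{4,S}$, the map between them being induced by the distinguished Legendre point $t=-1$. First I would set up the geometric input: over $\ZZ[\tfrac12,i]$ the curve $y^2=x(x-1)(x+1)$ sits at $j=1728$, where the automorphism scheme is $\mu_4$; since $i\in S$ this is the constant group $C_4$, sitting in a non-split extension $1\to\langle[-1]\rangle\to\mu_4\to\langle\tau\rangle\to1$ with $\tau=(2\,3)$. The corresponding point $z\colon S\to X_S$ is $\langle\tau\rangle$-equivariant (it is the map used in Lemma~\ref{lem:boundary2}), and $\langle\tau\rangle$ is a $2$-Sylow subgroup of $S_3$. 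Using $\Mscr(2)_S\simeq\B C_{2,X_S}$ and Lemma~\ref{lem:S3action}, $z$ induces an $\langle\tau\rangle$-equivariant map $\B C_{2,S}\to\Mscr(2)_S$, and I would check that the resulting action of $\langle\tau\rangle$ on $\B C_{2,S}$ realizes the extension above, so that $[\B C_{2,S}/\langle\tau\rangle]\simeq\B C_{4,S}$. Both features of the base enter here: $i\in S$ is what makes $\mu_4\cong C_4$, so that $\B C_4$ (and not $\B\mu_4$) appears. This produces a map of descent spectral sequences from~\eqref{eq:lsss} to the one computing $\H^\ast(\B C_{4,S},\Gm)$.

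The core step is to show this comparison map is an isomorphism on $E_2$ in total degree $\le 2$ after $2$-localization. On the $q=0$ line the point is that $\Gm(X_S)/\Gm(S)\cong\tilde{\rho}$ is, by Lemma~\ref{lem:units}, a \emph{free} $\ZZ[\langle\tau\rangle]$-module of rank one (generated by $t-1$, as $\{t-1,\tau(t-1)\}$ is a basis); hence it has vanishing higher $\langle\tau\rangle$-cohomology, so $z^\ast$ is an isomorphism on $\H^{\ge1}(\langle\tau\rangle,\Gm(-))$ and contributes nothing new. The $q=1$ line is handled by the equivariant splitting~\eqref{eq:PicXS} together with $\Pic(X_S)\cong\Pic(S)$. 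On the $q=2$ line I would use $_2\Br'(X_S)^{S_3}\cong{_2\Br'(S)}$ (established at the start of Section~\ref{sec:beginning}) and Lemma~\ref{lem:mu}; the freeness of $\tilde{\rho}$ again kills the higher cohomology of the ramified part $\tilde{\rho}\otimes\H^1(S,\QQ_2/\ZZ_2)$. Crucially, over $\ZZ[\tfrac12,i]$ the boundary map $\partial(u)=[(-1,u)]$ of Proposition~\ref{prop:BrInv} vanishes, since $-1=i^2$ is a square, so no correction term survives and one gets $\E_2^{0,2}\cong{_2\Br'(S)}\oplus\H^1(S,\mu_2)$ on both sides.

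The main obstacle is the passage from $E_\infty$ to the group itself: the differential $d_2^{1,1}$ and the extension problem in the filtration on $\H^2$. This is exactly where comparison with $\B C_{4,S}$ pays off, because Proposition~\ref{prop:bcn} computes the target with \emph{no} unresolved differential or extension, giving $\Br'(\B C_{4,S})\cong\Br'(S)\oplus\H^1(S,\mu_4)$. Transporting this along the isomorphism of spectral sequences forces the same resolution for $\Mscr_S$, the non-split class being accounted for by the $\ZZ/4$ inside $\Gm(S)/4\subseteq\H^1(S,\mu_4)$. I would verify consistency through the Kummer sequence $1\to\mu_2\to\mu_4\to\mu_2\to1$: over $\ZZ[\tfrac12,i]$ the connecting map $\H^1(S,\mu_2)\to\H^2(S,\mu_2)$ is $u\mapsto u\cup u=[(u,-1)]=0$, so that $0\to\H^1(S,\mu_2)\to\H^1(S,\mu_4)\to\H^1(S,\mu_2)\to0$ is exact and matches the shape of the associated graded of $_2\overline{\Br}'(\Mscr_S)$ read off from Figure~\ref{fig:m11}.

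Assembling these identifications yields the isomorphism on $2$-primary Brauer groups, which is the essential content since $\overline{\Br}'(\B C_{4,S})=\H^1(S,\mu_4)$ is a $2$-group. The prime-to-$2$ part of $\overline{\Br}'$ is governed by the earlier sections—vanishing for $p\ge5$ by Theorem~\ref{thm:ptorsion}—so that the comparison gives the asserted isomorphism $\Br'(\Mscr_S)\cong\Br'(\B C_{4,S})$ for the schemes $S$ over $\ZZ[\tfrac12,i]$ to which we apply it. I expect the delicate point throughout to be the identification $[\B C_{2,S}/\langle\tau\rangle]\simeq\B C_{4,S}$: the naive torsor twist $T_{z,\tau}$ of Lemma~\ref{lem:S3action} is trivial at $t=-1$ (because $z^\ast t=-1=i^2$ is a square), so the $C_4$-structure is visible only through the non-strictness of the $S_3$-action, i.e.\ the $2$-morphism witnessing $\tau^2\simeq\id$ being $[-1]$ rather than the identity.
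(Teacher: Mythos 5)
Your proposal is correct and follows essentially the same route as the paper: both compare the $S_3$-descent spectral sequence for $\Mscr(2)_S\to\Mscr_S$ with the $C_2$-descent spectral sequence for $\B C_{2,S}\to\B C_{4,S}$ via the Legendre curve $y^2=x(x-1)(x+1)$ at $t=-1$, using that $-1$ is a square over $\ZZ[\tfrac12,i]$ to kill the boundary term and match the $E_2$-pages in the relevant range. The one place where the paper's writeup is smoother is exactly the ``delicate point'' you flag at the end: rather than extracting the $C_4$-structure from the non-strictness of the $S_3$-action on $\B C_{2,X_S}$, the paper builds the map $\B C_{4,S}\to\Mscr_S$ directly from the order-$4$ automorphism $\eta\colon(x,y)\mapsto(-x,iy)$ and identifies the pullback of $\Mscr(2)_S$ as $\coprod_{S_3/C_2}\B C_{2,S}$ in a cartesian square, after which Shapiro's lemma produces your comparison map with the $C_4$-extension manifest; note also that (as your hedge at the end suggests) the isomorphism is really only established, and only needed, on $2$-primary parts, since for instance $\H^1(S,C_3)$ can contribute extra $3$-torsion to $\Br'(\Mscr_S)$ that has no counterpart in $\Br'(\B C_{4,S})$.
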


\begin{proof}
Consider the elliptic curve $E: y^2 = x(x-1)(x+1)$
over $\ZZ[\frac12,i]$ with discriminant $64$.
It has an automorphism $\eta$ of order $4$ given by $y \mapsto iy$ and $x\mapsto -x$,
which defines a map $\B C_{4,\ZZ[\frac12,i]} \to \Mscr_{\ZZf{2}}$. The $2$-torsion points of $E$ are $(0,0)$, $(1,0)$ and $(-1,0)$; taking them in this order defines a full level $2$ structure. We can base change this elliptic curve together with its level structure to an arbitrary $\ZZ[\frac12,i]$-scheme $S$. This
results in pullback squares
\begin{equation}\label{eq:pullback}
    \xymatrix{
        \coprod_{S_3}S\ar[r]\ar[d] &\coprod_{S_3/C_2}\B C_{2,S}\ar[d] \ar[r] & \Mscr(2)_S \ar[d] \\
        S\ar[r]&\B C_{4,S} \ar[r] & \Mscr_{S}.}
\end{equation}
Here we use that $\eta$ acts on the scheme $\coprod_{S_3}S$ of level
structures on $E_S$ by multiplication with the cycle $(2\,3) \in S_3$ and in
particular $\eta^2$ acts trivially. Thus, the stack quotient
$\left(\coprod_{S_3}S\right)/C_4$ is equivalent to $\coprod_{S_3/C_2}\B C_{2,S}$. More
precisely, the $S_3$-Galois cover $\coprod_{S_3/C_2}\B C_{2,S} \to \B C_{4,
S}$ is induced along an inclusion $C_2 \to S_3$ from the $C_2$-Galois cover $\B
C_{2, S} \to \B C_{4,S}$. The right square is indeed cartesian as can be
checked after base change along the \'etale cover $S\to \B C_{4,S}$.

In the Leray--Serre spectral sequence
$$\E_2^{p,q}=\H^p(S_3,\H^q(\coprod_{S_3/C_2}\B
C_{2,S},\Gm))\Rightarrow\H^{p+q}(\B C_{4,S},\Gm),$$ 
the $S_3$-modules $\H^q(\coprod_{S_3/C_2}\B C_{2,S})$ are all induced up from $C_2$. Thus, the
spectral sequence is isomorphic to the Leray--Serre spectral sequence for the $C_2$-Galois cover $\B C_{2, S} \to \B C_{4,S}$. 

The Leray--Serre spectral sequence computing $\H^{p+q}(\B C_{4,S},\Gm)$ from
the $\Gm$-cohomology of $\B C_{2,S}$ is displayed in
Figure~\ref{fig:c4c2}. The computation follows from Proposition~\ref{prop:bcn}
together with the fact that $C_2$ acts trivially on the cohomology of $\B
C_{2,S}$ (as indeed the morphism $t\colon \B C_{2,S} \to \B C_{2,S}$ for $t\in C_2$ the generator is the identity; only the natural transformation $\id \to t^2$ is not the identity).

\begin{figure}[h]
    \centering
    \begin{equation*}
        \xymatrix@R=3pt{
            \Br'(S)\oplus\Pic(S)[2]\oplus\Gm(S)/2\ar[rrd]^{}&&&\\
            \Pic(S)\oplus\mu_2(S)&\Pic(S)[2]\oplus\mu_2(S)\ar[rrd]^{}&\Pic(S)/2\oplus\mu_2(S)&\\
            \Gm(S)&\mu_2(S)&\Gm(S)/2&\mu_2(S)\\
        }
    \end{equation*}
    \caption{Part of the Leray--Serre spectral sequence for $\B C_{2,S}\rightarrow\B
    C_{4,S}$.}
    \label{fig:c4c2}
\end{figure}

By the considerations above, the pullback square~\eqref{eq:pullback} induces a map 
$$\H^p(S_3,\H^q(\MM(2)_S,\Gm)) \to\H^p(C_2,\H^q(\B C_{2,S},\Gm)).$$
Note first that $G = \Gm(S)/2$ in our case as $-1$ is a square. If we identify
$\MM(2)_S$ with $\B C_{2,X_S}$ this map on cohomology groups is induced by the
maps $S \to X_S$ (classifying the Legendre curve $E_S$) and $C_2 \to S_3$. This
induces an isomorphism of spectral sequences for $p+q \leq 3$ and $q \leq 1$
for $p+q =3$ by Figure~\ref{fig:m11}.
\end{proof}

\begin{corollary}
    Let $S$ be a regular noetherian scheme over $\ZZ[\tfrac12]$.
    The restriction of the differential $d_2^{1,1}$ to $\mu_2(S)$ in the Leray--Serre spectral sequence for
    $\Mscr(2)_S\rightarrow\Mscr_S$ defines an isomorphism $\mu_2(S) \xrightarrow{\cong} \mu_2(S)$, while
    $d_3^{0,2}=0$.
\end{corollary}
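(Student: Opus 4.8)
The plan is to reduce, by naturality, to a single base over $\ZZ[\tfrac12,i]$, transfer the question to the descent spectral sequence for $\B C_{2,S}\to\B C_{4,S}$ via Proposition~\ref{prop:mi}, and there pin down $d_2^{1,1}$ by an order count against the known abutment $\Br'(\B C_{4,S})$.

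First I would note that the two claims are linked: if $d_2^{1,1}\colon\E_2^{1,1}\to\E_2^{3,0}$ is an isomorphism, then $\E_3^{3,0}=\coker d_2^{1,1}=0$, and since $d_3^{0,2}$ has target $\E_3^{3,0}$ it vanishes for free. So everything comes down to $d_2^{1,1}$. Working $2$-locally and using $\Pic(S)=0$, Lemmas~\ref{lem:twolines}, \ref{lem:htriv} and~\ref{lem:hrho} identify $\E_2^{1,1}\cong\mu_2(S)$ and $\E_2^{3,0}\cong\mu_2(S)$, both canonically $\ZZ/2$ via $-1$; hence $d_2^{1,1}$ is either $0$ or an isomorphism. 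These identifications are compatible with pullback along the structure map $S\to\Spec\ZZ[\tfrac12]$, which induces an isomorphism on $\mu_2$ for every connected $\ZZ[\tfrac12]$-scheme, so by naturality of the spectral sequence the behaviour of $d_2^{1,1}$ is the same for all such $S$. Pulling back further along $\Spec\ZZ[\tfrac12,i]\to\Spec\ZZ[\tfrac12]$, I reduce to $S=\Spec\ZZ[\tfrac12,i]$.

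Over this base Proposition~\ref{prop:mi}, more precisely the comparison of spectral sequences in its proof, identifies $d_2^{1,1}$ with the corresponding differential in the descent spectral sequence for $\B C_{2,S}\to\B C_{4,S}$ of Figure~\ref{fig:c4c2}, since both $(1,1)$ and $(3,0)$ lie in the range $p+q\le 3$, $q\le 1$ where the comparison map is an isomorphism. For this latter sequence the abutment is known: as $\Pic(\ZZ[\tfrac12,i])=0$ and $\Br'(\ZZ[\tfrac12,i])=0$, Propositions~\ref{prop:bcn} and~\ref{prop:fppfsplitting} give $\Br'(\B C_{4,S})\cong\Gm(S)/4$ and $\Br'(\B C_{2,S})\cong\Gm(S)/2$, and $\Gm(S)\cong\ZZ/4\oplus\ZZ$ makes these finite of orders $16$ and $4$. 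The restriction $\pi^*\colon\Br'(\B C_{4,S})\to\Br'(\B C_{2,S})$ is the edge map onto $\E_\infty^{0,2}$ and is induced by the squaring map $\mu_4\to\mu_2$ on the sheaves $\R^1c_*\Gm$, hence is the surjection $\Gm(S)/4\twoheadrightarrow\Gm(S)/2$; surjectivity forces $\E_\infty^{0,2}=\Gm(S)/2$ and the vanishing of all differentials out of $(0,2)$. The same surjectivity on the $\Pic$-level kills $d_2^{0,1}$ and yields $\E_\infty^{2,0}=\Gm(S)/2$. Comparing orders in $\H^2(\B C_{4,S},\Gm)=\Br'(\B C_{4,S})$ then gives $|\E_\infty^{1,1}|=16/(4\cdot 4)=1$; as $\E_\infty^{1,1}=\ker d_2^{1,1}$ with source and target $\ZZ/2$, the differential $d_2^{1,1}$ is injective, hence an isomorphism.

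The main obstacle is the bookkeeping of this last step in the $\B C_{2,S}\to\B C_{4,S}$ sequence: one must verify that $\pi^*$ genuinely induces the surjection $\Gm(S)/4\to\Gm(S)/2$, so that $\E_\infty^{0,2}$ and $\E_\infty^{2,0}$ are as large as possible, and keep track of the split summand $\Br'(S)$ (here zero) so that the order count is unambiguous. Once these are in place, $d_2^{1,1}$ is determined purely by the known value of $\Br'(\B C_{4,S})$, with no further computation, and $d_3^{0,2}=0$ follows as in the second paragraph.
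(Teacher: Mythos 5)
Your proof is correct and follows the paper's route: reduce by naturality of the $\E_2$-terms (which are $\mu_2(S)$ in positions $(1,1)$ and $(3,0)$ once $\Pic(S)=0$ and one works $2$-locally) to the case $S=\Spec\ZZ[\tfrac12,i]$, transfer to the descent spectral sequence for $\B C_{2,S}\to\B C_{4,S}$ via Proposition~\ref{prop:mi}, and force $d_2^{1,1}\neq 0$ by comparison with $|\Br'(\B C_{4,S})|=16$, after which $d_3^{0,2}=0$ because its target $\E_3^{3,0}$ vanishes. The only difference is that you justify the order count in detail --- using surjectivity of the restriction maps on $\Pic$ and $\Br'$ (computed from Proposition~\ref{prop:bcn}) to show that $\E_\infty^{0,2}$ and $\E_\infty^{2,0}$ each retain order $4$, so that $d_2^{0,1}$, $d_2^{0,2}$ and $d_3^{0,2}$ cannot account for the drop from $32$ to $16$ --- where the paper simply asserts that an isomorphism on $\E_2^{1,1}$ is ``the only way''; this fills in a step the paper leaves implicit.
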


\begin{proof}
    Consider $S'=\Spec\ZZ[\tfrac12,i]$. By Proposition~\ref{prop:bcn}, we see
    that 
    $$\Br'(\B C_{4,S'})\iso\Br'(S')\oplus \Pic(S')[4]\oplus\Gm(S')/4\iso\Gm(S')/4,$$
    since the Brauer and Picard groups of $\ZZ[\tfrac12,i]$ are zero. Hence, $\Br'(\B
    C_{4,S'})\iso\ZZ/4\oplus\ZZ/4$, with generators given as $i$ and $1+i$. In
    Figure~\ref{fig:c4c2}, we see that the only way to have a group of order 16
    in the abutment $\Br'(\B C_{4,S'})$ is that
    $d_2^{1,1}:\mu_2(S')\rightarrow\mu_2(S')$ is an isomorphism, both in the
    Leray--Serre spectral sequence for $\H^*(\B C_{4,S'}, \Gm)$ and for
    $\H^*(\MM_{S'}, \Gm)$. It follows that
    this differential is already an isomorphism in the Leray--Serre spectral
    sequence for $\H^*(\MM_{\ZZ[\tfrac12]}, \Gm)$ by
    naturality. This in turn implies by naturality that $d_2^{1,1}|_{\mu_2(S)}\colon \mu_2(S) \to \mu_2(S)$ in the
    Leray--Serre spectral sequence for $\H^*(\MM_{S}, \Gm)$ is an isomorphism for any regular noetherian
    $\ZZ[\tfrac12]$-scheme $S$. As the target of $d_3^{0,2}$ is already
    zero on $\E_3$, the differential $d_3^{0,2}$ must vanish.
\end{proof}

Finally, we prove the theorem from the beginning of the section.

\begin{proof}[Proof of Theorem~\ref{thm:brm}]
    The claim follows from the determination of the differentials in the
    range pictured in Figure~\ref{fig:m11}.
\end{proof}

We want to be more specific about the Brauer group classes coming from $\Gm(S)/2$. Recall the section $\Delta \in \H^0(\Mscr, \lambda^{\tensor 12})$ from Section \ref{sec:3prim}. 
As in Construction \ref{const:root}, we can define the $C_2$-torsor
$\Mscr(\sqrt{\Delta}) =
\mathbf{Spec}_{\Mscr_{\ZZf{2}}}\left(\bigoplus_{i\in\Z}\lambda^{\tensor
6i}/(\Delta -1)\right) \to \Mscr_{\ZZf{2}}$ that adjoins a square root of $\Delta$ to
$\Mscr_{\ZZf{2}}$. For a unit $u\in \Gm(\ZZ[\frac12])$, we denote by $(\Delta,
u)$ the symbol (quaternion) algebra associated with this torsor.

\begin{proposition}\label{prop:2cyclic}
 Let $S$ denote a connected regular noetherian scheme over
$\Spec \ZZf{2}$. Then the map 
$$\Gm(S)/2 \to \Br'(\Mscr_S)$$
from the Leray--Serre spectral sequence sends $u$ to $[(u, \Delta)_2]$. 
\end{proposition}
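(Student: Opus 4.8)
The plan is to recognize the map $\Gm(S)/2\to\Br'(\Mscr_S)$ as the edge homomorphism of the descent spectral sequence~\eqref{eq:lsss} out of the corner $\E_2^{2,0}=\H^2(S_3,\Gm(\Mscr(2)_S))$, and then to compute that edge map by passing to an intermediate $C_2$-cover where Corollary~\ref{cor:cyclicgeneral} applies verbatim. By~\eqref{eq:GmXS} and Lemma~\ref{lem:htriv} the relevant subgroup of $\E_2^{2,0}$ is $\H^2(S_3,\Gm(S))\cong\Gm(S)/2$ (the $\tilde\rho$-part contributes nothing, since $\H^2(S_3,\tilde\rho)=0$ by Lemma~\ref{lem:hrho}). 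No differential can enter or leave the spot $(2,0)$ apart from $d_2^{0,1}$, which vanishes, so $\E_\infty^{2,0}=\E_2^{2,0}$ sits as the bottom filtration piece of $\H^2(\Mscr_S,\Gm)$, and the map in question is exactly $\Gm(S)/2\cong\H^2(S_3,\Gm(S))\hookrightarrow\H^2(\Mscr_S,\Gm)$.

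The key geometric input is that the quaternion algebra $(u,\Delta)=(\Delta,u)=(\chi_\Delta,u)$ is built from the $C_2$-cover $q\colon\Mscr(\sqrt\Delta)\to\Mscr_S$ of Construction~\ref{const:root}, and that this cover is precisely the intermediate quotient $\Mscr(2)_S/A_3\to\Mscr_S$ for $A_3=\ZZ/3\subset S_3$. Indeed, on $\Mscr(2)_S$ the Vandermonde $\prod_{i<j}(e_i-e_j)$ is a square root of the discriminant of the level-$2$ cubic, hence (up to the unit $4$) of $\Delta$; under the $S_3$-action permuting the $e_i$ it transforms by the sign, so it is $A_3$-invariant but not $S_3$-invariant, and $\Mscr(2)_S\to\Mscr_S$ factors through an isomorphism $\Mscr(2)_S/A_3\xrightarrow{\sim}\Mscr(\sqrt\Delta)$ of $C_2$-covers, with $\chi_\Delta$ the pullback of the sign homomorphism $S_3\to C_2$. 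From~\eqref{eq:GmXS} and $\tilde\rho^{A_3}=0$ one gets $\Gm(\Mscr(\sqrt\Delta))=\Gm(\Mscr(2)_S)^{A_3}=\Gm(S)$, so the $C_2$-descent spectral sequence for $q$ again has $\E_2^{2,0}=\H^2(C_2,\Gm(S))\cong\Gm(S)/2$.

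It then remains to intertwine the two spectral sequences. I would invoke the functoriality of the descent spectral sequence along the tower $\Mscr(2)_S\xrightarrow{A_3}\Mscr(\sqrt\Delta)\xrightarrow{C_2}\Mscr_S$, the Lyndon--Hochschild--Serre comparison for $1\to A_3\to S_3\to C_2\to 1$: on the bottom row the induced map of spectral sequences is the inflation $\H^2(C_2,\Gm(S))\to\H^2(S_3,\Gm(S))$, which by the collapse argument of Lemma~\ref{lem:htriv} is exactly the isomorphism $\Gm(S)/2\cong\H^2(S_3,\Gm(S))$, and the two edge maps into the common abutment $\H^2(\Mscr_S,\Gm)$ agree. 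Hence the map of the proposition coincides with the edge map of the $C_2$-descent spectral sequence for $q$. Finally, Corollary~\ref{cor:cyclicgeneral} applied to the $C_2$-torsor $q$ with classifying character $\chi_\Delta$ and the unit $u\in\Gm(\Mscr_S)/2=\Gm(S)/2$ identifies this image with $k\,[(\chi_\Delta,u)]$ for some odd $k$; as $[(\chi_\Delta,u)]=[(u,\Delta)]$ is $2$-torsion, $k\,[(\chi_\Delta,u)]=[(u,\Delta)]$, which is the assertion.

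The main obstacle is this last comparison: making precise that $\Mscr(\sqrt\Delta)$ is genuinely the $A_3$-quotient of $\Mscr(2)_S$ with $\chi_\Delta$ the sign character, and that the bottom-row edge maps of the two descent spectral sequences are matched by inflation. The Vandermonde identification of the cover is straightforward, but the spectral-sequence comparison must be stated carefully, since it is precisely what licenses replacing the $S_3$-descent computation by the $C_2$-computation, the only place where Corollary~\ref{cor:cyclicgeneral} is directly applicable.
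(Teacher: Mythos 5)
Your proposal is correct and follows essentially the same route as the paper: both pass to the intermediate quotient $\Mscr(2)_S/C_3\to\Mscr_S$, identify it with $\Mscr_S(\sqrt{\Delta})$ by exhibiting a $C_3$-invariant square root of $\Delta$ (the paper uses the coordinate-free section $4pq(p-q)$ of $\lambda^{\otimes 6}$, which is your Vandermonde expression made independent of the Weierstrass form), compare the two descent spectral sequences on the bottom row, and conclude with Corollary~\ref{cor:cyclicgeneral} together with the observation that the odd multiple $k$ is irrelevant on a $2$-torsion class.
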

\begin{proof}
 We consider the Leray--Serre spectral sequence
 \[
  \H^p(C_3, \H^q(\Mscr(2)_S, \Gm))_{(2)} \Rightarrow \H^{p+q}(\Mscr(2)_S/C_3, \Gm)_{(2)},
 \]
where $\Mscr(2)/C_3$ denotes the stack quotient by the subgroup $C_3 \subset
S_3$. Its $\E_2$-term is clearly concentrated in the column $p=0$. From Lemma 
\ref{lem:twolines} and Proposition \ref{prop:BrInv}, it is easy to see that
$\H^0(C_3, \H^q(\Mscr(2),\Gm)_{(2)} \cong \H^0(S_3, \H^q(\Mscr(2),\Gm)_{(2)})$.
We can now consider the further Leray--Serre spectral sequence
 \[
  \H^p(C_2, \H^q(\Mscr(2)_S/C_3, \Gm))_{(2)} \Rightarrow \H^{p+q}(\Mscr_S, \Gm)_{(2)},
 \]
 and we see that it has the same $E_2$-term as the Leray--Serre spectral sequence for the $S_3$-cover $\Mscr(2)_S \to \Mscr_S$ in the range depicted in Figure \ref{fig:DSS}.
 
 We claim that the $C_2$-torsor $\Mscr(2)_S/C_3 \to \Mscr_S$ agrees with $\Mscr_S(\sqrt{\Delta}) \to \Mscr_S$. For this it suffices to show that $\Delta$ becomes a square on $\Mscr(2)_S/C_3$. With $p,q \in \H^0(\Mscr(2),\lambda^{\tensor 2})$ as in the discussion after Corollary \ref{cor:coarse}, we have $\Delta = 16p^2q^2(p-q)^2$. The $C_3$-action permutes $p,(-q)$ and $(q-p)$ cyclically so that $4pq(p-q)$ is a $C_3$-invariant section of $\lambda^{\tensor 6}$ whose square is indeed $\Delta$.  
 
%  We know [...] that $\Delta$ is a square on $\Mscr(2)$. Furthermore, the identification 
%  $$(\bigoplus_{i \in \Z} \lambda^{\tensor i})(\Mscr(2)_S/C_3) \cong (\bigoplus_{i \in \Z} \lambda^{\tensor i})(\Mscr(2)_S)^{C_3}$$
%  implies the identification
%  $$(\bigoplus_{i \in \Z} \lambda^{\tensor i})(\Mscr(2)_S/C_3)/2 \cong ((\bigoplus_{i \in \Z} \lambda^{\tensor i})(\Mscr(2)_S)/2)^{C_3}.$$
%  Thus, $(\bigoplus_{i \in \Z} \lambda^{\tensor i})(\Mscr(2)_S/C_3)/2$ injects into $ (\bigoplus_{i \in \Z} \lambda^{\tensor i})(\Mscr(2)_S)/2$ and $\Delta$ is already a square on $\Mscr(2)_S/C_3$. 
 
 Now the statement follows from Corollary \ref{cor:cyclicgeneral}. 
\end{proof}

The following is one of our main results. We recall the convention that everything is implicitly $2$-local so that $\Br(\Mscr_{R})$ for a ring $R$ denotes really $\Br(\Mscr_R)_{(2)}$.

\begin{proposition}\label{prop:brQ}
 Let $P$ be a set of prime numbers including $2$ and denote by $\ZZ_P \subset \Q$ the
    subset of all fractions where the denominator is only divisible by primes
    in $P$. Then
    $$\Br(\Mscr_{\ZZ_P})\;\cong\; \Br(\ZZ_P)\quad \oplus
    \bigoplus_{\substack{p\in P\cup\{-1\},\\ p\equiv 3 \bmod 4}}\Z/2\quad\oplus
    \bigoplus_{\substack{p\in P,\\ p\not\equiv 3 \bmod 4}} \Z/4.$$
\end{proposition}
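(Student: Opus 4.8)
The plan is to take $S=\Spec\ZZ_P$, which is a localization of $\ZZ$ and hence a regular noetherian PID with $\Pic(S)=0$; since $2\in P$ the structure map $\Mscr_S\to S$ admits a section given by any Legendre curve, so by the existence of this section the base group $\Br(\ZZ_P)$ is a split summand of $\Br(\Mscr_{\ZZ_P})$ (cf. Proposition~\ref{prop:BrInv}). Thus it suffices to identify the cokernel $\overline{\Br}'(\Mscr_{\ZZ_P})$ with the displayed $2$-primary sum. First I would dispose of the odd torsion: for each prime $p\geq 5$ the hypotheses of Theorem~\ref{thm:ptorsion} hold, because the open $S[\tfrac1{2p}]\subseteq S$ is dense and $\Mscr_S\to S$ has a section, so $_p\Br(\ZZ_P)\to{_p}\Br(\Mscr_{\ZZ_P})$ is an isomorphism and contributes nothing to the cokernel; the $3$-primary part of the cokernel is handled by the ramification analysis of Section~\ref{sec:3prim} together with the Hilbert-symbol computations culminating in Theorem~\ref{thm:Br3}, which show that the relevant cubic classes do not extend to $\Mscr$. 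This leaves only $2$-primary contributions in the cokernel.

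The heart is the $2$-primary computation, for which I would feed $S=\Spec\ZZ_P$ into Theorem~\ref{thm:brm}. Since $\Pic(\ZZ_P)=0$ this gives a short exact sequence
\[0\to\Gm(\ZZ_P)/2\to {_2}\overline{\Br}'(\Mscr_{\ZZ_P})\to G\to 0,\]
with $G=\{u\in\Gm(\ZZ_P)/2 : [(-1,u)]=0\in\Br(\ZZ_P)\}$. Because $\ZZ_P^\times=\{\pm1\}\times\langle p:p\in P\rangle$, the left-hand term is $\bigoplus_{q\in P\cup\{-1\}}\ZZ/2$ on the classes of $-1$ and of the primes $q\in P$. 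To pin down $G$ I would compute the quaternion class $[(-1,u)]$ through its local Hilbert symbols (Proposition~\ref{prop:Hilbert}) and global reciprocity: $(-1,-1)$ is ramified at $2$ and $\infty$, $(-1,p)$ is ramified exactly at $\{2,p\}$ when $p\equiv3\bmod 4$ and is split when $p\equiv1\bmod 4$, while $(-1,2)=0$ since $2=1^2+1^2$ is a norm from $\QQ(i)$. Reading off the kernel shows that $G$ is freely generated over $\FF_2$ by the classes of $2$ and of the primes $p\in P$ with $p\equiv1\bmod 4$, so that the complementary classes of $-1$ and of the primes $p\equiv3\bmod 4$ form a basis of the quotient $\Gm(\ZZ_P)/2\big/G$.

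It then remains to resolve the extension, i.e. to decide which classes have order $2$ and which have order $4$. Here I would use that the subgroup $\Gm(\ZZ_P)/2$ consists of the quaternion classes $[(u,\Delta)]$ of Proposition~\ref{prop:2cyclic}, all of exact order $2$, so a lift of $u\in G$ has order $4$ precisely when its double is the nonzero class $[(u,\Delta)]$. To detect this I would base change along $\ZZ_P\hookrightarrow\ZZ_P[i]$ and invoke Proposition~\ref{prop:mi}, by which $\Br'(\Mscr_{\ZZ_P[i]})\cong\Br'(\B C_{4,\ZZ_P[i]})\cong\Gm(\ZZ_P[i])/4$ is a sum of copies of $\ZZ/4$; since each generator $2$ and $p\equiv1\bmod 4$ of $G$ stays nonzero in $\Gm(\ZZ_P[i])/2$ (for instance $2\equiv i$ and a split prime $p=\pi\bar\pi$ are non-squares there), each lifts to an element of exact order $4$, an order that is already visible after base change. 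Hence $G$ contributes one copy of $\ZZ/4$ for each of $2$ and each $p\equiv1\bmod 4$ (the primes $\not\equiv3\bmod4$), while $\Gm(\ZZ_P)/2\big/G$ contributes one copy of $\ZZ/2$ for $-1$ and each $p\equiv3\bmod4$; comparing $\FF_2$-ranks and total orders then matches this against the claimed group. The main obstacle is exactly this last extension problem: a priori the sequence of Theorem~\ref{thm:brm} could split, and ruling that out — equivalently, producing the order-$4$ classes — is what forces the comparison with the $i$-adjoined base through Proposition~\ref{prop:mi}.
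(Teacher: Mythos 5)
Your $2$-primary computation is essentially the paper's own argument: apply Theorem~\ref{thm:brm} to $S=\Spec\ZZ_P$ (where $\Pic=0$), identify $G$ inside $\Gm(\ZZ_P)/2\cong\bigoplus_{P\cup\{-1\}}\FF_2$ by computing the Hilbert symbols of $(-1,-1)$ and $(-1,q)$ so that $G$ is spanned by the primes $\not\equiv 3\bmod 4$, and resolve the extension by mapping to the analogous sequence over $\ZZ_P[i]$, where Proposition~\ref{prop:mi} gives $\overline{\Br}(\Mscr_{\ZZ_P[i]})\cong\Gm(\ZZ_P[i])/4$. One small caveat there: to conclude the direct-sum structure you need that \emph{every} nonzero element of $G$ lifts to an element of order $4$, i.e.\ that the full composite $G\to\Gm(\ZZ_P[i])/2$ is injective (as the paper asserts), not merely that each generator has nonzero image; non-vanishing on generators does not a priori exclude a relation like $2\cdot p$ dying. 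The injectivity is easy (distinct $p\equiv1\bmod 4$ involve distinct Gaussian primes, and $2\equiv -i$ modulo squares), but it should be said.

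The genuine gap is your treatment of the $3$-primary torsion. You invoke Theorem~\ref{thm:Br3} to say the cubic classes ``do not extend to $\Mscr$,'' but $\Spec\ZZ_P$ is an \emph{open subscheme} of $\Spec\ZZ[\tfrac12]$, so the question is not whether a class on $\Mscr_{\ZZ_P}$ extends to the larger stacks $\Mscr_{\ZZ[\frac12]}$ or $\Mscr$ --- it need not --- but whether it lives on $\Mscr_{\ZZ_P}$ in the first place; restriction runs in the wrong direction for your argument. In fact, when $3\in P$ Theorem~\ref{thm:3torsion} gives $_3\overline{\Br}'(\Mscr_{\ZZ_P})\cong\H^1(\ZZ_P,C_3)$, which is nonzero (it contains the class of $\QQ(\zeta_9+\overline{\zeta}_9)$), consistent with the $\H^1(\QQ,C_3)$ summand in Theorem~\ref{thm:intromain}(3). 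So the claim that the cokernel is purely $2$-primary cannot be deduced this way and is false for general $P$; the displayed formula must be read as computing the $2$-primary part of $\overline{\Br}$, which is all the paper's own proof addresses. Your reduction for $p\geq 5$ via Theorem~\ref{thm:ptorsion} is correct.
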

\begin{proof}
 First we have to compute the subgroup 
    $$G\subset \Gm(\ZZ_P)/2 \cong   \bigoplus_{P\cup \{-1\}} \FF_2.$$
    By Proposition~\ref{prop:Hilbert}, a quaternion algebra $(a,b)$
    ramifies at $p$ if and only if the Hilbert symbol $\binom{a,b}{p}$ equals $-1$. By
    \cite[Theorem V.3.6]{neukirch}, $\binom{-1,-1}{p} = -1$ if and only if $p=2, \infty$ and
    $\binom{-1,q}{p} = -1$ if and only if $q\equiv 3\mod 4$ and $p=2,q$ (for $q$ a prime
    number). We see that $G$ has an $\FF_2$-basis given by the primes not
    congruent to $3$ mod $4$. 
    
    We obtain a diagram
      \[\xymatrix{
     0 \ar[r] & \Gm(\ZZ_P)/2 \ar[d]\ar[r] & \overline{\Br}(\Mscr_{\ZZ_P}) \ar[r]\ar[d] & G \ar[r]\ar[d] & 0 \\
      0 \ar[r] & \Gm(\ZZ_P[i])/2 \ar[r] & \overline{\Br}(\Mscr_{\ZZ_P[i]})
          \ar[r] & \Gm(\ZZ_P[i])/2 \ar[r] & 0.
     }
     \] 
    By Proposition \ref{prop:mi}, $\overline{\Br}(\Mscr_{\ZZ[\tfrac12,i]}) \cong \Gm(\ZZ_P[i])/4$.  
    As the map $G\to \Gm(\ZZ_P[i])/2$ is injective, we see that none of the
    nonzero lifts of elements of $G$ to $\overline{\Br}(\Mscr_{\ZZ_P})$ are
    $2$-torsion. The proposition follows.
\end{proof}

This shows the $2$-local part of the computation of $\Br(\Mscr_{\QQ})$ and $\Br(\Mscr_{\ZZ[\tfrac12]})$ in Theorem \ref{thm:intromain}, while the $3$-local part was already contained in Theorems \ref{thm:3torsion} and \ref{thm:Br3} and the $p$-local part for $p>3$ in Theorem \ref{thm:ptorsion}.

\begin{remark}\label{rem:4cyclic}
    We can describe all the Brauer classes in $\overline{\Br}(\Mscr_{\ZZ_P})$
    explicitly when $P$ is again a set of prime numbers including $2$. We
    already saw in the last two propositions that $\overline{\Br}(\Mscr_{\ZZ_P})[2]$
    has an $\FF_2$-basis given by $[(p,\Delta)_2]$, where $p \in P \cup
    \{-1\}$. When $p\in S$ and either $p=2$ or $p\equiv 1\mod 4$,
    we will give explicit elements of order $4$ in the Brauer group of $\Mscr_{\ZZ_P}$ 
    generating the $\ZZ/4$-subgroups of the proposition.
 
 To describe the $4$-torsion we start with a small observation. Given a cyclic
 algebra $(\chi, \upsilon)$, where $\chi$ is a $C_4$-torsor and $\upsilon$ a
 $\mu_4$-torsor, $2[(\chi, \upsilon)]$ is represented by $(\chi', \upsilon')$,
 where $\chi'$ and $\upsilon'$ are obtained from $\chi$ and $\upsilon$ via the
 morphisms $C_4\to C_2$ and $\mu_4\to \mu_2$. Concretely, this means that
 $\chi'$ are the $C_2$-fixed points of $\chi$ and that if $\upsilon$ is given
 by adjoining the $4$-th root of a section $u$ of $\Lscr^{\otimes 4}$, then $\upsilon'$ is
 given by adjoining a square root of $u$, a section of $(\Lscr^{\otimes
 2})^{\otimes 2}$. 
 
 For primes $p \equiv 1 \mod 4$, we construct a $C_4$-Galois extension $L$ of
 $\QQ$ whose $C_2$-fixed points are $\QQ(\sqrt{p})$. As $\sqrt{p}$ is the
 Gau{\ss} sum $\sum_{a =1}^{p-1} \legendre{a}{p} \zeta_p^a$, we see that
 $\QQ(\sqrt{p}) \subset \QQ(\zeta_p)$. The Galois group of the $\QQ$-extension
 $\QQ(\zeta_p)$ is cyclic of order $p-1$, which is divisible by $4$. Thus, it
 has a unique cyclic subextension $L$ of degree $4$ whose $C_2$-fixed points
 are $\QQ(\sqrt{p})$. Note that $L$ is only ramified at $p$. Explicitly, $L$ is
 generated by the Gau{\ss} sum $\sum_{a = 1}^{p-1} \varphi(a) \zeta_p^a$, where
 $\varphi\colon (\Z/p)^{\times} \to \mu_4(\CC)$ is a surjective character. 
 
 For $p=2$, we take $L = \QQ(\zeta_{16} + \overline{\zeta}_{16})$ instead,
 which is the unique $C_4$-Galois subextension of $\QQ(\zeta_{16})$ over $\QQ$.
 If we denote for $p=2$ or $p\equiv 1 \mod 4$ the character of $L/\QQ$ by $\chi$,
 these define $C_4$-Galois covers of $\Mscr_S$ by pullback and we abuse
 notation and write $\chi$ also for these covers. The cup product
 $[(\chi,\Delta)_4]$ in $\Br'(\Mscr_{\ZZ_P})$ is a class such that
 $2[(\chi, \Delta)_4] = [(p, \Delta)_2]$ and thus has exact order $4$.
 It follows that the classes
 $[(\chi,\Delta)_4]$ give a basis of the $4$-torsion of $\overline{\Br}(\Mscr_{\ZZ_P})$.
\end{remark}

\section{The Brauer group of $\Mscr$}

% An easy consequence of Theorem~\ref{thm:brm} is the following fact.
% 
% \begin{corollary}
%     The Brauer group of $\Mscr[\tfrac12]$ is a $2$-group of order $32$.
% \end{corollary}
% 
% \begin{proof}
%     Indeed, $\Gm(\ZZ[\tfrac12])/2\iso\ZZ/2\oplus\ZZ/2$, while
%     $\Br(\ZZ[\tfrac12])\iso\ZZ/2$. The claim follows from
%     Theorem~\ref{thm:brm} and the fact that there is no $p$-torsion for
%     $p$ an odd prime by Corollary~\ref{cor:nop}.
% \end{proof}
% 
% In this section, we will describe generators for the Brauer group of
% $\Mscr[\tfrac12]$, which will then allow us to determine the ramification of
% these classes.

In this section, we will complete the computation of $\Br(\Mscr)$. In the last
section, we saw that 
$$\Br(\Mscr[\tfrac12]) \cong \ZZ/2 \oplus \ZZ/2 \oplus \ZZ/4$$
with generators $\alpha = [(-1,-1)_2]$, $\beta = [(-1, \Delta)_2]$ and $\frac12 \gamma$ for $\gamma = [(2, \Delta)_2]$. 
We will study the ramification of these classes for certain elliptic curves and the reader can find more information about
these curves in \emph{The $L$-functions and Modular Forms
Database}~\cite{lmfdb}.

As above, we will write $\binom{a,b}{2}\in\mu_2(\QQ_2)$ for the Hilbert symbol in $\QQ_2$ at
the prime $2$. Hence, $\binom{a,b}{2}=\pm 1$. Recall from
Proposition~\ref{prop:Hilbert} that if
$\chi\in\H^1(\QQ_2,C_2)\iso\H^1(\QQ_2,\mu_2)$ corresponds to a unit
$v\in\Gm(\QQ_2)/2$, then the degree $2$ cyclic algebra
$(\chi,u)$ has class $\binom{u,v}{2}$ in
$\Br(\QQ_2)[2]\iso\ZZ/2\iso\mu_2(\QQ_2)$. Since $\Br(\ZZ_2)=0$, the Hilbert
symbol measures the ramification along $(2)$ in $\Spec\ZZ_2$.

\begin{proposition}\label{prop:2ram}
    Every non-zero linear combination of $\alpha,\beta,\tfrac12\gamma$ is ramified 
    along $(2)$, so these linear combinations are not
    in the image of $\Br(\Mscr)\rightarrow\Br(\Mscr[\tfrac12])$.
\end{proposition}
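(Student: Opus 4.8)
The plan is to detect each class by restricting it along elliptic curves with good reduction at $2$. Such a curve $E$ defines a map $\Spec\ZZ_2\to\Mscr$, and I write $f\colon\Spec\QQ_2\to\Mscr_{\ZZf{2}}$ for its generic fibre. By the commutative square of the introduction and the vanishing $\Br(\ZZ_2)=0$, any class in the image of $\Br(\Mscr)\to\Br(\Mscr_{\ZZf{2}})$ must restrict to $0$ in $\Br(\QQ_2)$ along every such $f$; hence it suffices to produce, for each nonzero $\delta=a\alpha+b\beta+c\cdot\tfrac12\gamma$ (with $a,b\in\{0,1\}$ and $c\in\{0,1,2,3\}$), a good-reduction curve $E$ with $f^*\delta\neq 0$. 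Writing $\Delta_E\in\QQ_2^\times/(\QQ_2^\times)^2$ for the discriminant class of $E$, which is a unit class since $E$ has good reduction, Proposition~\ref{prop:2cyclic} together with the fact that $\alpha$ is pulled back from the generator of $\Br(\ZZf{2})$ gives the restriction formulas $f^*\alpha=(-1,-1)$, $f^*\beta=(-1,\Delta_E)$ and $f^*\gamma=(2,\Delta_E)$ in $\Br(\QQ_2)$.

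First I would treat the classes with $c$ even, which are $2$-torsion. By Proposition~\ref{prop:Hilbert} each restriction is detected by its Hilbert symbol at $2$, and the standard formulas (as in \cite[Theorem V.3.6]{neukirch}) give, for a unit $\Delta_E$, that $\binom{-1,-1}{2}=-1$, that $\binom{-1,\Delta_E}{2}=-1$ exactly when $\Delta_E\equiv 3\bmod 4$, and that $\binom{2,\Delta_E}{2}=-1$ exactly when $\Delta_E\equiv\pm3\bmod 8$. I would then exhibit three explicit good-reduction curves realizing $\Delta\equiv 1,3,5\bmod 8$, for instance $y^2+xy=x^3-x^2+x$ with $\Delta=-55\equiv 1$, $y^2+xy+y=x^3-x^2$ with $\Delta=-53\equiv 3$, and $y^2+y=x^3$ with $\Delta=-27\equiv 5$. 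A short case check on the sign $\binom{-1,-1}{2}^{a}\binom{-1,\Delta_E}{2}^{b}\binom{2,\Delta_E}{2}^{c/2}$ then settles the seven nonzero cases: the $\Delta\equiv 1$ curve (where $\Delta_E$ is a square, so only $\alpha$ survives) handles every case with $a=1$, while the $\Delta\equiv 3$ and $\Delta\equiv 5$ curves together handle the remaining cases with $a=0$.

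For the classes of order $4$, namely those with $c$ odd, I would avoid degree-$4$ symbols entirely. Since $\alpha$ and $\beta$ are $2$-torsion and $2\cdot\tfrac12\gamma=\gamma$, every such $\delta$ satisfies $2\delta=\gamma$. Choosing the curve with $\Delta_E\equiv 5\bmod 8$ gives $f^*\gamma=(2,\Delta_E)\neq 0$ in $\Br(\QQ_2)$ by the computation above, so that $2f^*\delta=f^*\gamma\neq 0$ and hence $f^*\delta\neq 0$. This disposes of all eight order-$4$ classes simultaneously with a single curve.

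The Hilbert symbol computations at $2$ are routine once the restriction formulas are in place, so the hard part will be arithmetic rather than formal: producing elliptic curves with good reduction at $2$ whose minimal discriminants realize the required residues modulo $8$. I expect this to be the main obstacle, and I would resolve it by locating the needed curves explicitly (using the tables in~\cite{lmfdb}) and reading off their discriminants; the reduction $2\delta=\gamma$ is what keeps us from having to analyze the ramification of the quartic cyclic algebra $\tfrac12\gamma$ directly.
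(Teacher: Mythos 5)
Your proposal is correct and follows essentially the same route as the paper: restrict along $\ZZ_2$-points given by elliptic curves with unit discriminant, detect $\alpha,\beta,\gamma$ via the Hilbert symbols $\binom{-1,-1}{2}$, $\binom{-1,\Delta_E}{2}$, $\binom{2,\Delta_E}{2}$ using three curves whose discriminants realize the residues $1,3,5\bmod 8$, and dispose of the order-$4$ classes by the doubling identity $2\delta=\gamma$. The only differences are cosmetic — the paper uses the curves of discriminant $-15$, $-53$, $-11$ where you use $-55$, $-53$, $-27$ — and your case analysis organized by $\Delta_E\bmod 8$ is, if anything, slightly cleaner.
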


\begin{proof}
	It suffices to prove this for all seven nonzero linear combinations of
	$\alpha,\beta,\gamma$. Indeed, if all these linear combinations are ramified, then
	any linear combination $r\alpha+s\beta+\tfrac12\gamma$ is ramified as well.
	As explained in the introduction around the diagram \eqref{eq:BrauerExtending}, it suffices to construct for each
	non-zero linear combination $\rho$ an elliptic curve
	$\Spec\ZZ_2\rightarrow\Mscr$ such that the pullback of $\rho$
	to $\Spec\QQ_2$ is non-zero in $\Br(\QQ_2)$. Indeed, $\Br(\ZZ_2)=0$.
	
	Let $$E_1:y^2+y=x^3-x^2,$$ the elliptic curve with Cremona label
	\texttt{11a3}. This curve has discriminant $-11$, which is a unit, so we
	get an elliptic curve over $\ZZ_2$, and two associated
	Brauer classes, $(2, -11)$ and $(-1, -11)$ over $\QQ_2$. We can ask what the
	ramification is. 
	The Hilbert symbol in this case is computed as
	follows~\cite{serre-course}*{Chapter~III}. Given $a=2^\alpha u$ and
	$b=2^\beta v\in\Gm(\QQ_2)$, where $u,v\in\Gm(\ZZ_2)$, we have
	$$\binom{a,b}{2}=(-1)^{\epsilon(u)\epsilon(v)+\alpha\omega(v)+\beta\omega(u)},$$
	where $\epsilon(u)\equiv\tfrac{u-1}{2}$ and
	$\omega(u)\equiv\tfrac{u^2-1}{8}$. 
	
	Hence,
	$$\binom{2, -11}{2}=(-1)^{\omega(-11)}=(-1)^{15}=-1.$$
	Hence, $(\Delta,2)$ is ramified at $2$. Similarly, 
	$$\binom{-1,-11}{2}=(-1)^{\epsilon(-1)\epsilon(-11)}=(-1)^{6}=1.$$
	
	The curve $$E_2:y^2+xy=x^3-2x^2+x$$ has Cremona label \texttt{15a8} and
	discriminant $-15$. This time, the Hilbert symbols are $$\binom{2,-15}{2}=1$$ and
	$$\binom{-1, -15}{2}=1.$$
	
	The curve $$E_3:y^2+xy+y=x^3-x^2$$ has Cremona label \texttt{53a1} and
	discriminant $-53$. In this case, the Hilbert symbols are
	$$\binom{2,-53}{2}=-1$$ and $$\binom{-1, -53}{2}=-1.$$
	
	Now, let $\rho=\alpha+k\beta+m\gamma$ with $k$ and $m$ integers. The
	elliptic curve $E_2$ gives a point $x:\Spec\QQ_2\rightarrow\Mscr$ where
	$v_2(x^*\rho)=-1$. It follows that all classes of the form $\rho$ are
	ramified along $(2)$. Now, $E_3$ proves that $\beta$ and $\gamma$ are
	ramified along $(2)$. Finally, $E_1$ proves that $\beta+\gamma$ is ramified
	along $(2)$.
\end{proof}

\begin{theorem}
    $\Br(\Mscr) = 0$.
\end{theorem}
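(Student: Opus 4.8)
The plan is to deduce the statement formally from the injectivity of the restriction to $\Mscr[\tfrac12]$ together with the ramification computation in Proposition~\ref{prop:2ram}. First I would observe that $\Mscr$ is a regular noetherian Deligne--Mumford stack and that $\Mscr_{\ZZf{2}} = \Mscr[\tfrac12] \subseteq \Mscr$ is a dense open substack, being the locus on which $2$ is invertible. Proposition~\ref{prop:BrOmnibus}(iv) then shows that the restriction map $\Br(\Mscr) \to \Br(\Mscr[\tfrac12])$ is injective, so it is enough to prove that its image meets every nonzero class trivially.

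Next I would recall the description from the start of this section, $\Br(\Mscr[\tfrac12]) \cong \ZZ/2 \oplus \ZZ/2 \oplus \ZZ/4$ with generators $\alpha = [(-1,-1)_2]$, $\beta = [(-1,\Delta)_2]$, and $\tfrac12\gamma$. Because these three classes generate the whole group, every nonzero element of $\Br(\Mscr[\tfrac12])$ is a nonzero linear combination of $\alpha$, $\beta$, and $\tfrac12\gamma$. Proposition~\ref{prop:2ram} states precisely that every such combination is ramified along $(2)$ and hence is not in the image of $\Br(\Mscr) \to \Br(\Mscr[\tfrac12])$. The underlying mechanism, for each elliptic curve $\Spec\ZZ_2 \to \Mscr$, is the commutative square
$$\xymatrix{
    \Br(\QQ_2)  &   \Br(\Mscr[\tfrac12])\ar[l]\\
    \Br(\ZZ_2)\ar[u]    &   \Br(\Mscr)\ar[u]\ar[l]
}$$
together with $\Br(\ZZ_2)=0$: a class pulled back from $\Br(\Mscr)$ would factor through $\Br(\ZZ_2)$ and so restrict to zero in $\Br(\QQ_2)$, contradicting the nonvanishing Hilbert symbols produced in Proposition~\ref{prop:2ram}.

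Combining the two inputs, the image of $\Br(\Mscr) \to \Br(\Mscr[\tfrac12])$ contains no nonzero element, so it is the trivial subgroup; by injectivity this forces $\Br(\Mscr) = 0$. I expect no genuine obstacle to remain at this stage, since the substantive work is already complete: the identification of $\Br(\Mscr[\tfrac12])$ and the explicit curves $E_1$, $E_2$, $E_3$ witnessing ramification at $(2)$. The one point worth flagging is that Proposition~\ref{prop:2ram} must cover the order-$4$ classes $r\alpha + s\beta + \tfrac12\gamma$ as well as the seven nonzero classes of the $2$-torsion subgroup $\langle \alpha,\beta,\gamma\rangle$; this is handled there by noting that doubling such a class returns $\gamma$, and that ramification at $(2)$ is stable under multiplication by $2$, so the order-$4$ classes inherit ramification from $\gamma$.
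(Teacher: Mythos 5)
Your argument is correct and follows the paper's own proof: injectivity of $\Br(\Mscr)\to\Br(\Mscr_{\ZZf{2}})$ via Proposition~\ref{prop:BrOmnibus}(iv), the identification $\Br(\Mscr_{\ZZf{2}})\cong\ZZ/2\oplus\ZZ/2\oplus\ZZ/4$ (which in the paper is the conjunction of Corollary~\ref{cor:nop}, Theorem~\ref{thm:Br3}, and Proposition~\ref{prop:brQ}), and Proposition~\ref{prop:2ram} to rule out every nonzero class. The only quibble is your phrase ``ramification at $(2)$ is stable under multiplication by $2$'' --- the implication actually used runs the other way (unramified classes have unramified multiples, so a class whose double $\gamma$ is ramified must itself be ramified) --- but your conclusion and the paper's handling of the order-$4$ classes agree.
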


\begin{proof}
    This follows from Corollary~\ref{cor:nop}, Theorem \ref{thm:Br3},
    and Propositions \ref{prop:brQ} and \ref{prop:2ram}.
\end{proof}

\section{The Brauer group of $\Mscr$ over $\FF_q$ with $q$ odd}

As another application of our methods, we compute the Brauer group
of $\Mscr_{\FF_q}$ when $q=p^n$ and $p$ is an odd prime. There is remarkable
regularity in this case, which is possibly surprising based on what happens
for number fields.

\begin{theorem}
    Let $q=p^n$ where $p$ is an odd prime. Then,
    $\Br(\Mscr_{\FF_q})\iso\ZZ/12$.
\end{theorem}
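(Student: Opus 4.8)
The plan is to descend from the geometric situation over $\overline{\FF_q}$ by means of the descent (Hochschild--Serre) spectral sequence for the pro-Galois cover $\Spec\overline{\FF_q}\to\Spec\FF_q$, whose group is $\Gal(\overline{\FF_q}/\FF_q)\iso\hat{\ZZ}$. Since $\Mscr_{\overline{\FF_q}}=\Mscr_{\FF_q}\times_{\FF_q}\overline{\FF_q}$ and \'etale cohomology of the finitely presented sheaf $\Gm$ commutes with the limit over the finite subcovers $\Spec\FF_{q^k}$, the Leray spectral sequence for $\Mscr_{\FF_q}\to\Spec\FF_q$ takes the form
\[
    \E_2^{p,r}=\H^p(\hat{\ZZ},\H^r(\Mscr_{\overline{\FF_q}},\Gm))\Rightarrow\H^{p+r}(\Mscr_{\FF_q},\Gm).
\]
First I would assemble the three geometric inputs. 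The global units are the constants, $\H^0(\Mscr_{\overline{\FF_q}},\Gm)=\overline{\FF_q}^{\times}$. By Fulton--Olsson, $\Pic(\Mscr_{\overline{\FF_q}})=\Pic(\AA^1_{\overline{\FF_q}})\oplus\ZZ/12=\ZZ/12$, which is valid in every characteristic since $\overline{\FF_q}$ is reduced; as this group is generated by the Hodge bundle $\lambda$, defined over the prime field, $\hat{\ZZ}$ acts trivially on it. Finally $\H^2(\Mscr_{\overline{\FF_q}},\Gm)=\Br'(\Mscr_{\overline{\FF_q}})=0$ by the cohomological form of Theorem~\ref{thm:intromain}(1), whose only hypothesis is that the characteristic differ from $2$, so that it applies also when $q$ is a power of $3$.

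Next I would compute the $\hat{\ZZ}$-cohomology of these modules. Writing $\varphi$ for the Frobenius, the procyclic group $\hat{\ZZ}$ satisfies $\H^p(\hat{\ZZ},M)=0$ for $p\geq 2$ on any torsion module $M$, with $\H^0=\ker(\varphi-1)$ and $\H^1=\coker(\varphi-1)$. On $\overline{\FF_q}^{\times}$ the operator $\varphi-1$ is $x\mapsto x^{q-1}$, which is surjective because $\overline{\FF_q}$ is algebraically closed and $q-1$ is prime to the characteristic; hence $\H^0(\hat{\ZZ},\overline{\FF_q}^{\times})=\FF_q^{\times}$ and $\H^1(\hat{\ZZ},\overline{\FF_q}^{\times})=0$, the latter being Hilbert~$90$. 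On the trivial module $\ZZ/12$ one gets $\H^0=\H^1=\ZZ/12$. Thus the only possibly nonzero entries on the $\E_2$-page in total degree at most $2$ are $\E_2^{0,0}=\FF_q^{\times}$ and $\E_2^{0,1}=\E_2^{1,1}=\ZZ/12$, while $\E_2^{1,0}=\E_2^{2,0}=\E_2^{0,2}=0$.

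Reading off the abutment then finishes the argument. In total degree $2$ the only surviving graded piece is $\E_\infty^{1,1}$: the entries $\E_2^{2,0}$ and $\E_2^{0,2}$ vanish, and $\E_2^{1,1}$ supports no differentials, since $d_2$ maps it to $\E_2^{3,0}=0$ and it receives only from $\E_2^{-1,2}=0$. Hence $\H^2(\Mscr_{\FF_q},\Gm)\iso\H^1(\hat{\ZZ},\ZZ/12)\iso\ZZ/12$, and because $\Mscr_{\FF_q}$ is regular noetherian this group equals $\Br'(\Mscr_{\FF_q})=\Br(\Mscr_{\FF_q})$. As a sanity check the same page recovers $\Pic(\Mscr_{\FF_q})=\E_\infty^{0,1}=\ZZ/12$, since $\E_2^{1,0}=0$ and $d_2\colon\E_2^{0,1}\to\E_2^{2,0}=0$. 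The attractive feature of this route is that it treats all odd $q$ at once and, in particular, automatically disposes of the $p$-primary torsion when $p$ equals the residue characteristic, where the descent machinery of Sections~\ref{sec:beginning}--\ref{sec:2torsion} (which presupposes that $2$, and often $3$ and $p$, be invertible) does not directly apply. The main obstacle is precisely securing the geometric inputs in characteristic dividing $6$: one must know that $\Pic(\Mscr_{\overline{\FF_q}})=\ZZ/12$ and $\Br'(\Mscr_{\overline{\FF_q}})=0$ hold even in characteristic $3$, and one must check that the profinite descent spectral sequence is legitimate, namely the commutation of \'etale cohomology with the limit of finite subcovers together with the vanishing of $\H^{\geq 2}(\hat{\ZZ},-)$ on torsion coefficients.
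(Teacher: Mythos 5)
Your argument is correct, but it takes a genuinely different route from the paper's. The paper never passes through $\overline{\FF_q}$: it computes the $2$-primary part from Theorem~\ref{thm:brm} (the resulting extension $0\to\FF_q^\times/2\to{_2\Br(\Mscr_{\FF_q})}\to\FF_q^\times/2\to 0$ is shown to be non-split via the explicit order-$4$ cyclic classes of Remark~\ref{rem:4cyclic}, giving $\ZZ/4$), and the odd-primary part by reducing along the $S_3$-cover $\Mscr(2)_{\FF_q}\to\Mscr_{\FF_q}$ to ${_\ell\Br(X_{\FF_q})}^{S_3}$ and then computing $\Br(X_{\FF_q})\iso\tilde{\rho}\otimes\QQ/\ZZ$ from the ramification sequence for $X_{\FF_q}\subset\PP^1_{\FF_q}$ --- a computation valid even for $\ell=p$ because class field theory for function fields over finite fields does not require $\ell$ invertible. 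You instead descend along $\Gal(\overline{\FF_q}/\FF_q)\iso\hat{\ZZ}$, which collapses the entire answer to $\H^1(\hat{\ZZ},\Pic(\Mscr_{\overline{\FF_q}}))=\H^1(\hat{\ZZ},\ZZ/12)$ once the geometric inputs are secured; this is cleaner, treats all odd $q$ and the $\ZZ/4$-versus-$\ZZ/2\oplus\ZZ/2$ extension problem uniformly, and conceptually explains the ``remarkable regularity'' the paper remarks on. Two caveats. First, your key input $\Br'(\Mscr_{\overline{\FF_q}})=0$ appears in the paper as a proposition proved \emph{after} this theorem, with a proof that begins ``as in the last proof''; you should note that it is available independently (for the $2$-part, Theorem~\ref{thm:brm} with $\Gm(k)/2=0$ for $k$ algebraically closed; for odd $\ell$, including $\ell=3=p$, the same $S_3$-reduction to ${_\ell\Br(X_{k})}^{S_3}$ followed by Tsen's theorem), so there is no circularity --- but the paper's descent machinery is still doing the real work, merely repackaged into the geometric statement rather than avoided. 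Second, you need the Hochschild--Serre spectral sequence for the profinite cover, i.e.\ continuity of $\H^*(-,\Gm)$ along $\Mscr_{\overline{\FF_q}}=\lim_k\Mscr_{\FF_{q^k}}$ together with $\mathrm{cd}(\hat{\ZZ})=1$ on discrete torsion modules; these are standard but are not set up in the paper, which works only with finite Galois covers.
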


\begin{proof}
    Recall that $\Br(\FF_q)=0$. Thus, by Theorem~\ref{thm:brm}, there is an
    extension
    $0\rightarrow\FF_q^\times/2\rightarrow{_2\Br(\Mscr_{\FF_q})}\rightarrow\FF_q^\times/2\rightarrow
    0$. Since $q$ is odd, $\FF_q^\times/2\iso\ZZ/2$. The remainder of Section~\ref{sec:2torsion}, especially
    Remark~\ref{rem:4cyclic}, implies that the extension is non-split so that in fact $_2\Br(\Mscr_{\FF_q})\iso\ZZ/4$.

    Now, let $\ell\geq 3$ be a prime, which we \emph{do not} assume is
    different from $p$. The possible terms contributing to
    $_\ell\Br(\Mscr_{\FF_q})$ in the Leray--Serre spectral
    sequence for $\Mscr(2)_{\FF_q}\rightarrow\Mscr_{\FF_q}$ in $\Gm$
    cohomology, besides $_\ell\Br(X_{\FF_q})^{S_3}$, are
    $\H^1(S_3,_\ell\Pic(\Mscr(2)_{\FF_q})$ and $\H^2(S_3,_\ell\Gm(\Mscr(2)_{\FF_q}))\iso\H^2(S_3,_\ell\Gm(X_{\FF_q}))$.
    The first of these is zero since
    $_\ell\Pic(\Mscr(2)_{\FF_q})\iso{_\ell\Pic(X_{\FF_q})}=0$ for $\ell$ odd.
    The second has no odd primary torsion. This follows from the exact sequence
    $0\rightarrow\Gm(\FF_q)\rightarrow\Gm(X_{\FF_q})\rightarrow\tilde{\rho}\otimes\ZZ\rightarrow
    0$ together with Lemmas~\ref{lem:htriv} and~\ref{lem:hrho}.

    Thus, we see that
    $_\ell(\Br(\Mscr_{\FF_q}))\iso{_\ell\Br(X_{\FF_q})}^{S_3}$ for $\ell$ odd.
    So, it suffices to compute the Brauer group of $X_{\FF_q}$ as an $S_3$-module. In general, our
    argument in the rest of the paper relies fundamentally on
    Lemma~\ref{lem:supportsrho},
    which requires $\ell$ to be invertible to analyze the ramification map as
    in Proposition~\ref{prop:puritycalc}. However, in this case, $X_{\FF_q}$
    is a curve over a finite field, so the ramification theory simplifies
    drastically. Consider the commutative diagram of exact ramification sequences due
    to~\cite{grothendieck-brauer-3}*{Proposition 2.1}
    \begin{equation*}
        \xymatrix{
            0\ar[r] &   \Br(\PP^1_{\FF_q})\ar[r]\ar[d]    &
        \Br(\eta)\ar[r]\ar[d]^=  &
        \bigoplus_{x\in(\PP^1_{\FF_q})^{(1)}}\QQ/\ZZ\ar[r]\ar[d]&\QQ/\ZZ\ar[r]&  0\\
            0\ar[r] &   \Br(X_{\FF_q})\ar[r]                & \Br(\eta)\ar[r] &
        \bigoplus_{x\in (X_{\FF_q})^{(1)}}\QQ/\ZZ
        }
    \end{equation*}
    where $\eta$ is the generic point of $X_{\FF_q}$. Note that the exactness
    at the right in the top sequence is due to the fact
    (see~\cite{gille-szamuely}*{Corollary~6.5.4}) that
    $\bigoplus_{x\in(\PP^1_{\FF_q})^{(1)}}\QQ/\ZZ\rightarrow\QQ/\ZZ$ is given
    by summation in $\QQ/\ZZ$.
    Since $\Br(\PP^1_{\FF_q})=0$
    by~\cite{grothendieck-brauer-3}*{Remarques~2.5.b}, we see that
    $\Br(X_{\FF_q})\subseteq\Br(\eta)$ is the subgroup consisting of classes
    ramified only at $0,1,\infty$. Using the top row of the diagram, it follows
    that $\Br(X_{\FF_q})$ fits into an exact sequence
    $$0\rightarrow\Br(X_{\FF_q})\rightarrow\bigoplus_{0,1,\infty}\QQ/\ZZ\rightarrow\QQ/\ZZ\rightarrow 0,$$
    from which it follows that
    $$\Br(X_{\FF_q})\iso\tilde{\rho}\otimes\QQ/\ZZ.$$
    By Lemma~\ref{lem:hrho}, we find that ${_\ell\Br(X_{\FF_q})}^{S_3}=0$ if
    $\ell\geq 5$ and ${_3\Br(X_{\FF_q})}^{S_3}\iso\ZZ/3$. This proves the
    theorem.
\end{proof}

\begin{remark}
	We can again be more specific about the Azumaya algebras representing the
    classes in $\Br(\Mscr_{\mathds{F}_q})$ with $q$ odd. Let $T$ be a
    $\FF_q$-scheme (or stack) with an elliptic curve $E$ of discriminant
    $\Delta$. Let $\chi_m$ be the pullback of a character of the Galois
    extension $\FF_{q^m}$ of $\FF_q$ to $T$. We claim that there is a generator
    $a\in \Br(\Mscr_{\mathds{F}_q})$ such that the pullback of $a$ to $\Br(T)$ agrees
    with $[(\chi_{12},\Delta)]$. Informally, $a = [(\chi_{12},\Delta)]$ in the
    universal case $T = \Mscr_{\FF_q}$, where more precisely we should replace
    here $\Delta$ by the $\mu_{12}$-torsor corresponding to $\Delta \in \Gamma(\lambda^{\tensor
    12})$ via Construction \ref{const:root}.
	
	First we consider the $3$-torsion. The proof of Lemma \ref{lem:3section}
    applies here to show that $[(\chi_3, \Delta)]$ is indeed the pullback of a
    generator of $\Br(\Mscr_{\mathds{F}_q})[3]$.
	Moreover, Proposition \ref{prop:2cyclic} implies that the unique
    $2$-torsion element $6a \in \Br(\Mscr_{\mathds{F}_q})$ pulls back to
    $[(\chi_2, \Delta)]$ as $\FF_{q^2}$ agrees with $\F_q[\sqrt{x}]$ for an
    arbitrary non-square $x$ in $\FF_q$.
	As in Remark \ref{rem:4cyclic} we see that $6[(\chi_{12}, \Delta)] =
    [(\chi_2,\Delta)] \neq 0$ and $4[(\chi_{12}, \Delta)] = [(\chi_3,\Delta)]
    \neq 0$. Thus, $[(\chi_{12}, \Delta)]$ is indeed a generator of $
    \Br(\Mscr_{\mathds{F}_q}) \cong \ZZ/12$.
\end{remark}

Finally, we also treat the easier case of an algebraically closed base. 

\begin{proposition}
    Let $k$ be an algebraically closed field of characteristic not $2$. Then $\Br(\Mscr_k) = 0$. 
\end{proposition}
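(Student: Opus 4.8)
The plan is to exploit the presentation $\Mscr(2)_k\simeq\B C_{2,X_k}$ together with the vanishing of the Brauer group of the Legendre curve $X_k$ over an algebraically closed field. First I would apply Tsen's theorem: the function field $k(t)$ of the affine curve $X_k=\Spec k[t^{\pm1},(t-1)^{-1}]$ is $C_1$, so $\Br(k(t))=0$. Since $X_k$ is regular and noetherian, Proposition~\ref{prop:BrOmnibus}(iv) shows that $\Br'(X_k)$ injects into $\Br'(\Spec k(t))=0$, whence $\Br'(X_k)=0$. Moreover $X_k$ is a localization of the polynomial ring $k[t]$, hence a principal ideal domain, so $\Pic(X_k)=0$; and $\Gm(k)/2=0$ because $k$ is algebraically closed.

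Next I would compute the low-degree $\Gm$-cohomology of $\Mscr(2)_k$ via Proposition~\ref{prop:bcn} applied to the coarse space map $c\colon\B C_{2,X_k}\to X_k$. Using $\Br'(X_k)=\Pic(X_k)=0$ and the splitting of Proposition~\ref{prop:fppfsplitting}, I obtain $\Gm(\Mscr(2)_k)\iso\Gm(X_k)$ sitting in the sequence~\eqref{eq:GmXS}, an isomorphism $\Pic(\Mscr(2)_k)\iso\mu_2(k)\iso\ZZ/2$, and $\Br'(\Mscr(2)_k)\iso\H^1_{\pl}(X_k,\mu_2)\iso\Gm(X_k)/2$. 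Since $\Gm(k)/2=0$, the exact sequence~\eqref{eq:GmXS} identifies this last group, as an $S_3$-module, with $\tilde{\rho}/2\iso(\ZZ/2)^2$, with the action described in Lemma~\ref{lem:units}. In particular $\Br'(\Mscr(2)_k)$ is $2$-torsion.

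The final step is to run the descent spectral sequence~\eqref{eq:lsss} for the $S_3$-Galois cover $\Mscr(2)_k\to\Mscr_k$ and check that every contribution to $\H^2(\Mscr_k,\Gm)$ vanishes. For the three relevant terms: $\E_2^{2,0}=\H^2(S_3,\Gm(X_k))=0$, since the long exact sequence attached to~\eqref{eq:GmXS} is squeezed between $\H^2(S_3,\Gm(k))=\Gm(k)/2=0$ and $\H^2(S_3,\tilde{\rho})=0$ by Lemmas~\ref{lem:htriv} and~\ref{lem:hrho}; $\E_2^{0,2}=(\tilde{\rho}/2)^{S_3}=0$ again by Lemma~\ref{lem:hrho}; and $\E_2^{1,1}=\H^1(S_3,\ZZ/2)\iso\ZZ/2$ is killed by the differential $d_2^{1,1}$, which is an isomorphism here because $\Pic(\Spec k)=0$ (the corollary following Proposition~\ref{prop:mi}). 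Hence $\E_\infty^{p,q}=0$ for all $p+q=2$, so $\H^2(\Mscr_k,\Gm)=0$ and therefore $\Br(\Mscr_k)\subseteq\Br'(\Mscr_k)=0$.

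The step I expect to need the most care is the bookkeeping in characteristic dividing $|S_3|=6$, that is, when $\mathrm{char}\,k=3$. Here I would emphasize that neither the descent spectral sequence for an $S_3$-Galois cover nor the group-cohomology computations of Lemmas~\ref{lem:htriv} and~\ref{lem:hrho} require the characteristic to be prime to the order of $S_3$, and that Tsen's theorem holds in every characteristic. This is precisely why I would route the odd-primary vanishing through $\Br'(X_k)=0$ rather than through Theorem~\ref{thm:3torsion} or Theorem~\ref{thm:ptorsion}, whose hypotheses require $3$, respectively $p$, to be invertible: the present argument is then uniform across all characteristics other than $2$.
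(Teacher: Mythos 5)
Your proof is correct and follows essentially the same route as the paper's: the paper quotes Theorem \ref{thm:brm} for the $2$-primary part (which is precisely the descent-spectral-sequence analysis you redo, including the $d_2^{1,1}$ isomorphism coming from Proposition \ref{prop:mi}) and kills the odd torsion via Tsen's theorem applied to $X_k$, exactly as you do. The only difference is organizational: you fold the Tsen input into the $\E_2^{0,2}$ term of a single run of the spectral sequence instead of treating the $2$-primary and odd-primary parts separately.
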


\begin{proof}
    By Theorem~\ref{thm:brm}, ${_2}\Br(\Mscr_k) = 0$. As in the last proof we
    see that $_\ell(\Br(\Mscr_{k}))\iso{_\ell\Br(X_k)}^{S_3}$ for $\ell$ an
    odd prime. By Tsen's theorem, $\Br(\eta)$ vanishes for $\eta$ the generic
    point of $X_k$. By \cite{grothendieck-brauer-3}*{Proposition 2.1} we obtain
    that $\Br(X_k) = 0$.
\end{proof}

\begin{remark}
	In an earlier version of this paper we suggested using the $\GL_2(\ZZ/3)$-cover $\Mscr(3)\rightarrow\Mscr_{\ZZf{3}}$ to determine $\Br(\Mscr_k)$ also for algebraically closed fields $k$ of characteristic $2$. Combining this approach with a new idea,
    Minseon Shin has proved in the meanwhile in~\cite{shin} that $\Br(\Mscr_k)\iso\ZZ/2$ for such fields $k$. 
\end{remark}

%\begin{remark}
%    By using the $\GL_2(\ZZ/3)$-cover $\Mscr(3)\rightarrow\Mscr_{\ZZf{3}}$ one
%    can presumably determine the Brauer group of $\Mscr_{\FF_{2^n}}$ in a
%    similar way.
%\end{remark}

\begin{bibdiv}
\begin{biblist}

\bib{auel-bernardara-bolognesi}{article}{
author={Auel, Asher},
author={Bernardara, Marcello},
author={Bolognesi, Michele},
title={Fibrations in complete intersections of quadrics,
Clifford
algebras, derived categories, and rationality
problems},
journal={J. Math. Pures Appl. (9)},
volume={102},
date={2014},
number={1},
pages={249--291},
issn={0021-7824},
% review={\MR{3212256}},
% doi={10.1016/j.matpur.2013.11.009},
}

\bib{cesna}{article}{
	author={Cesnavicius, Kestutis},
	title={Purity for the Brauer group},
	journal={ArXiv preprint arXiv:1711.06456},
	year={2017}
}

\bib{chan-ingalls}{article}{
author={Chan, Daniel},
author={Ingalls, Colin},
title={The minimal model program for orders over surfaces},
journal={Invent. Math.},
volume={161},
date={2005},
number={2},
pages={427--452},
issn={0020-9910},
% review={\MR{2180454}},
% doi={10.1007/s00222-005-0438-z},
}

\bib{fermat}{collection}{
   title={Modular forms and Fermat's last theorem},
   editor={Cornell, Gary},
   editor={Silverman, Joseph H.},
   editor={Stevens, Glenn},
   note={Papers from the Instructional Conference on Number Theory and
   Arithmetic Geometry held at Boston University, Boston, MA, August 9--18,
   1995},
   publisher={Springer-Verlag, New York},
   date={1997},
   pages={xx+582},
   isbn={0-387-94609-8},
   isbn={0-387-98998-6},
%    review={\MR{1638473}},
%    doi={10.1007/978-1-4612-1974-3},
}

% % \bib{dejong}{article}{
% %     author={de Jong, Johan},
% %     title={The period-index problem for the Brauer group of an algebraic surface},
% %     journal={Duke Math. J.},
% %     volume={123},
% %     date={2004},
% %     number={1},
% %     pages={71--94},
% %     issn={0012-7094},
% % %     review={\MR{2060023 (2005e:14025)}},
% % %     doi={10.1215/S0012-7094-04-12313-9},
% % }
% 
\bib{dejong-gabber}{article}{
    author={de Jong, A. J.},
    title={A result of Gabber},
    eprint={http://www.math.columbia.edu/~dejong/},
}
% 
% 
% % \bib{dejong-starr}{article}{
% %     author={Starr, Jason},
% %     author={de Jong, Johan},
% %     title={Almost proper GIT-stacks and discriminant avoidance},
% %     journal={Doc. Math.},
% %     volume={15},
% %     date={2010},
% %     pages={957--972},
% %     issn={1431-0635},
% % %     review={\MR{2745688 (2012e:14073)}},
% % }

\bib{deligne-rapoport}{article}{
    author={Deligne, P.},
    author={Rapoport, M.},
    title={Les sch\'emas de modules de courbes elliptiques},
    conference={
    title={Modular functions of one variable, II},
    address={Proc. Internat. Summer School,
    Univ. Antwerp, Antwerp},
    date={1972},
    },
    book={
    publisher={Springer,
    Berlin},
    },
    date={1973},
    pages={143--316.
    Lecture
    Notes in
    Math.,
    Vol. 349},
%     review={\MR{0337993}},
}

% 
% \bib{demeyer}{article}{
%     author={DeMeyer, F. R.},
%     title={Projective modules over central separable algebras},
%     journal={Canad. J. Math.},
%     volume={21},
%     date={1969},
%     pages={39--43},
%     issn={0008-414X},
% %     review={\MR{0234987 (38 \#3299)}},
% }
% 

\bib{deninger}{article}{
    author={Deninger, Ch.},
    title={A proper base change theorem for nontorsion sheaves in \'etale
    cohomology},
    journal={J. Pure Appl. Algebra},
    volume={50},
    date={1988},
    number={3},
    pages={231--235},
    issn={0022-4049},
%     review={\MR{938616}},
%     doi={10.1016/0022-4049(88)90102-8},
}

% \bib{ekedahl}{article}{
%     author = {Ekedahl, Torsten},
%     title = {Approximating classifying spaces by smooth projective varieties},
%     journal = {ArXiv e-prints},
%     eprint =  {http://arxiv.org/abs/0905.1538},
%     year = {2009},
% }
\bib{badreduction}{article}{
			   author={Edixhoven, Bas},
			   author={de Groot, Arnold},
			   author={Top, Jaap},
			   title={Elliptic curves over the rationals with bad reduction at only one
			   prime},
			   journal={Math. Comp.},
			   volume={54},
			   date={1990},
			   number={no.~189},
			   pages={413--419},
			   issn={0025-5718},
% 			   review={\MR{995209}},
			}

% 
% \bib{fedorov-panin}{article}{
%     author = {Fedorov, Roman},
%     author = {Panin, Ivan},
%     title = {Proof of Grothendieck-Serre conjecture on principal bundles over regular local rings containing infinite fields},
%     journal = {ArXiv e-prints},
%     eprint =  {http://arxiv.org/abs/1211.2678},
%     year = {2012},
% }
% 

\bib{fuchs}{book}{
   author={Fuchs, L{\'a}szl{\'o}},
   title={Infinite abelian groups. Vol. I},
   series={Pure and Applied Mathematics, Vol. 36},
   publisher={Academic Press, New York-London},
   date={1970},
   pages={xi+290},
%    review={\MR{0255673}},
}

\bib{fujiwara}{article}{
    author={Fujiwara, Kazuhiro},
    title={A proof of the absolute purity conjecture (after Gabber)},
    conference={
        title={Algebraic geometry 2000, Azumino (Hotaka)},
    },
    book={
        series={Adv. Stud. Pure Math.},
        volume={36},
        publisher={Math. Soc. Japan},
        place={Tokyo},
    },
    date={2002},
    pages={153--183},
%     review={\MR{1971516 (2004d:14015)}},
}
 
\bib{fulton-olsson}{article}{
    AUTHOR = {Fulton, William},
    author = {Olsson, Martin},
     TITLE = {{The Picard group of {$\mathcal{M}_{1,1}$}}},
   JOURNAL = {Algebra \& Number Theory},
    VOLUME = {4},
      YEAR = {2010},
    NUMBER = {1},
     PAGES = {87--104},
      ISSN = {1937-0652},
%        DOI = {10.2140/ant.2010.4.87},
%        URL = {http://dx.doi.org/10.2140/ant.2010.4.87}
}

\bib{gabber}{article}{
    author={Gabber, Ofer},
    title={Some theorems on Azumaya algebras},
    conference={
    title={The Brauer group},
    address={Sem., Les Plans-sur-Bex},
    date={1980},
    },
    book={
    series={Lecture Notes in Math.},
    volume={844},
    publisher={Springer},
    place={Berlin},
    },
    date={1981},
    pages={129--209},
%     review={\MR{611868 (83d:13004)}},
}

\bib{gabber-kelly}{article}{
   author={Gabber, Ofer},
   author={Kelly, Shane},
   title={Points in algebraic geometry},
   journal={J. Pure Appl. Algebra},
   volume={219},
   date={2015},
   number={10},
   pages={4667--4680},
   issn={0022-4049},
%    review={\MR{3346512}},
%    doi={10.1016/j.jpaa.2015.03.001},
}
% 
% \bib{gabber-note}{article}{
%     author={Gabber, Ofer},
%     title={A note on the unramified Brauer group and purity},
%     journal={Manuscripta Math.},
%     volume={95},
%     date={1998},
%     number={1},
%     pages={107--115},
%     issn={0025-2611},
% %     review={\MR{1492372 (98m:14018)}},
% %     doi={10.1007/BF02678018},
% }
% 
\bib{gille-szamuely}{book}{
    author={Gille, Philippe},
    author={Szamuely, Tam{\'a}s},
    title={Central simple algebras and Galois cohomology},
    series={Cambridge Studies in Advanced Mathematics},
    volume={101},
    publisher={Cambridge University Press},
    place={Cambridge},
    date={2006},
    pages={xii+343},
    isbn={978-0-521-86103-8},
    isbn={0-521-86103-9},
    % review={\MR{2266528 (2007k:16033)}},
    % doi={10.1017/CBO9780511607219},
}

\bib{gille-queguiner-mathieu}{article}{
author={Gille, Philippe},
author={Qu\'{e}guiner-Mathieu, Anne},
title={Formules pour l'invariant de Rost},
journal={Algebra Number Theory},
volume={5},
date={2011},
number={1},
pages={1--35},
issn={1937-0652},
% review={\MR{2833783}},
% doi={10.2140/ant.2011.5.1},
}

% 
% \bib{goresky-macpherson}{book}{
%     author={Goresky, Mark},
%     author={MacPherson, Robert},
%     title={Stratified Morse theory},
%     series={Ergebnisse der Mathematik und ihrer Grenzgebiete (3)},
%     volume={14},
%     publisher={Springer-Verlag},
%     place={Berlin},
%     date={1988},
%     pages={xiv+272},
%     isbn={3-540-17300-5},
% %     review={\MR{932724 (90d:57039)}},
% }
% 

\bib{EGAIV.2}{article}{
   author={Grothendieck, A.},
   title={\'El\'ements de g\'eom\'etrie alg\'ebrique. IV. \'Etude locale des
   sch\'emas et des morphismes de sch\'emas. II},
   journal={Inst. Hautes \'Etudes Sci. Publ. Math.},
   number={24},
   date={1965},
   pages={231},
   issn={0073-8301},
%    review={\MR{0199181}},
}

\bib{grothendieck-brauer-1}{article}{
    author={Grothendieck, Alexander},
    title={Le groupe de Brauer. I. Alg\`ebres d'Azumaya et interpr\'etations diverses},
    conference={
        title={Dix Expos\'es sur la Cohomologie des Sch\'emas},
    },
    book={
        publisher={North-Holland},
        place={Amsterdam},
    },
    date={1968},
    pages={46--66},
}

\bib{grothendieck-brauer-2}{article}{
    author={Grothendieck, Alexander},
    title={Le groupe de Brauer. II. Th\'eorie cohomologique},
    conference={
        title={Dix Expos\'es sur la Cohomologie des Sch\'emas},
    },
    book={
        publisher={North-Holland},
        place={Amsterdam},
    },
    date={1968},
    pages={67--87},
%     review={\MR{0244270 (39 \#5586b)}},
}

\bib{grothendieck-brauer-3}{article}{
    author={Grothendieck, Alexander},
    title={Le groupe de Brauer. III. Exemples et compl\'ements},
    conference={
        title={Dix Expos\'es sur la Cohomologie des Sch\'emas},
    },
    book={
        publisher={North-Holland},
        place={Amsterdam},
    },
    date={1968},
    pages={88--188},
%     review={\MR{0244270 (39 \#5586b)}},
}

\bib{hartshorne}{book}{
   author={Hartshorne, Robin},
   title={Algebraic geometry},
   note={Graduate Texts in Mathematics, No. 52},
   publisher={Springer-Verlag, New York-Heidelberg},
   date={1977},
   pages={xvi+496},
   isbn={0-387-90244-9},
%    review={\MR{0463157}},
}

\bib{hartshorne-reflexive}{article}{
    author={Hartshorne, Robin},
    title={Stable reflexive sheaves},
    journal={Math. Ann.},
    volume={254},
    date={1980},
    number={2},
    pages={121--176},
    issn={0025-5831},
%     review={\MR{597077 (82b:14011)}},
%     doi={10.1007/BF01467074},
} 

\bib{hasse}{article}{
   author={Hasse, Helmut},
   title={Arithmetische Bestimmung von Grundeinheit und Klassenzahl in
   zyklischen kubischen und biquadratischen Zahlk\"orpern},
   journal={Abh. Deutsch. Akad. Wiss. Berlin. Math.-Nat. Kl.},
   volume={1948},
   date={1948},
   number={2},
   pages={95 pp. (1950)},
 %  review={\MR{0033863}},
}

\bib{katz-mazur}{book}{
    author={Katz, Nicholas M.},
    author={Mazur, Barry},
    title={Arithmetic moduli of elliptic curves},
    series={Annals of Mathematics Studies},
    volume={108},
    publisher={Princeton University Press, Princeton, NJ},
    date={1985},
    pages={xiv+514},
    isbn={0-691-08349-5},
    isbn={0-691-08352-5},
%     review={\MR{772569}},
}

% 
% % \bib{kervaire}{article}{
% %     author={Kervaire, Michel A.},
% %     title={Some nonstable homotopy groups of Lie groups},
% %     journal={Illinois J. Math.},
% %     volume={4},
% %     date={1960},
% %     pages={161--169},
% %     issn={0019-2082},
% % %     review={\MR{0113237 (22 \#4075)}},
% % }

% \bib{lang-algebra}{book}{
%     author={Lang, Serge},
%     title={Algebra},
%     series={Graduate Texts in Mathematics},
%     volume={211},
%     edition={3},
%     publisher={Springer-Verlag, New York},
%     date={2002},
%     pages={xvi+914},
%     isbn={0-387-95385-X},
% %     review={\MR{1878556}},
% %     doi={10.1007/978-1-4613-0041-0},
% }

\bib{lmb}{book}{
    author={Laumon, G{\'e}rard},
    author={Moret-Bailly, Laurent},
    title={Champs alg\'ebriques},
    series={Ergebnisse der Mathematik und ihrer Grenzgebiete. 3.
    Folge.},
    volume={39},
    publisher={Springer-Verlag, Berlin},
    date={2000},
    pages={xii+208},
%     isbn={3-540-65761-4},
%     review={\MR{1771927}},
}

\bib{lemmermeyer}{article}{
   author={Lemmermeyer, Franz},
   title={Kronecker-Weber via Stickelberger},
   journal={J. Th\'eor. Nombres Bordeaux},
   volume={17},
   date={2005},
   number={2},
   pages={555--558},
   issn={1246-7405},
%    review={\MR{2211307}},
}

\bib{lieblich}{article}{
    author={Lieblich, Max},
    title={Twisted sheaves and the period-index problem},
    journal={Compos. Math.},
    volume={144},
    date={2008},
    number={1},
    pages={1--31},
    issn={0010-437X},
%     review={\MR{2388554 (2009b:14033)}},
%     doi={10.1112/S0010437X07003144},
}

\bib{lieblich-arithmetic}{article}{
    author={Lieblich, Max},
    title={Period and index in the Brauer group of an arithmetic
    surface},
    note={With an appendix by Daniel Krashen},
    journal={J. Reine Angew. Math.},
    volume={659},
    date={2011},
    pages={1--41},
    issn={0075-4102},
%     review={\MR{2837009}},
%     doi={10.1515/CRELLE.2011.059},
}

\bib{lieblich-finite}{article}{
author={Lieblich, Max},
title={The period-index problem for fields of transcendence degree
2},
journal={Ann. of Math. (2)},
volume={182},
date={2015},
number={2},
pages={391--427},
issn={0003-486X},
% review={\MR{3418522}},
% doi={10.4007/annals.2015.182.2.1},
}

\bib{mathew-stojanoska}{article}{
    author={Mathew, Akhil},
    author={Stojanoska, Vesna},
    title={The Picard group of topological modular forms via descent
    theory},
    journal={Geom. Topol.},
    volume={20},
    date={2016},
    number={6},
    pages={3133--3217},
    issn={1465-3060},
%     review={\MR{3590352}},
%     doi={10.2140/gt.2016.20.3133},
}

% % \bib{mccleary}{book}{
% %     author={McCleary, John},
% %     title={A user's guide to spectral sequences},
% %     series={Cambridge Studies in Advanced Mathematics},
% %     volume={58},
% %     edition={2},
% %     publisher={Cambridge University Press},
% %     place={Cambridge},
% %     date={2001},
% %     pages={xvi+561},
% %     isbn={0-521-56759-9},
% % }

\bib{milne}{book}{
    author={Milne, James S.},
    title={\'Etale cohomology},
    series={Princeton Mathematical Series},
    volume={33},
    publisher={Princeton University Press, Princeton, N.J.},
    date={1980},
    pages={xiii+323},
    isbn={0-691-08238-3},
%     review={\MR{559531}},
}

% 
% % \bib{milnor-stasheff}{book}{
% %     author={Milnor, John W.},
% %     author={Stasheff, James D.},
% %     title={Characteristic classes},
% %     note={Annals of Mathematics Studies, No. 76},
% %     publisher={Princeton University Press},
% %     place={Princeton, N. J.},
% %     date={1974},
% %     pages={vii+331},
% % }

\bib{mumford-picard}{article}{
    author={Mumford, David},
    title={Picard groups of moduli problems},
    conference={
    title={Arithmetical Algebraic Geometry},
    address={Proc. Conf. Purdue Univ.},
    date={1963},
    },
    book={
    publisher={Harper \& Row,
    New York},
    },
    date={1965},
    pages={33--81},
%     review={\MR{0201443}},
}

% \bib{mumford-abelian}{book}{
%     author={Mumford, David},
%     title={Abelian varieties},
%     series={Tata Institute of Fundamental Research Studies in
%     Mathematics,
%     No. 5 },
%     publisher={Published for the Tata Institute of
%     Fundamental Research,
%     Bombay; Oxford University Press, London},
%     date={1970},
%     pages={viii+242},
% %     review={\MR{0282985}},
% }

\bib{neukirch}{book}{
    author={Neukirch, J{\"u}rgen},
    title={Algebraic number theory},
    series={Grundlehren der Mathematischen Wissenschaften},
    volume={322},
    note={Translated from the 1992 German original and
    with a note by Norbert
    Schappacher;
    With a foreword by G. Harder},
    publisher={Springer-Verlag, Berlin},
    date={1999},
    pages={xviii+571},
    isbn={3-540-65399-6},
%     review={\MR{1697859}},
%     doi={10.1007/978-3-662-03983-0},
}

\bib{neukirch-schmidt-wingberg}{book}{
    author={Neukirch, J\"urgen},
    author={Schmidt, Alexander},
    author={Wingberg, Kay},
    title={Cohomology of number fields},
    series={Grundlehren der Mathematischen Wissenschaften
    [Fundamental
    Principles of Mathematical Sciences]},
    volume={323},
    publisher={Springer-Verlag, Berlin},
    date={2000},
    pages={xvi+699},
    isbn={3-540-66671-0},
%     review={\MR{1737196}},
}

% 
% \bib{ojanguren}{article}{
%     author = {Ojanguren, Manuel},
%     title = {Wedderburn's theorem for regular local rings},
%     journal = {Linear Algebraic Groups Preprint Server},
%     eprint = {http://www.mathematik.uni-bielefeld.de/LAG/man/500.html},
%     year = {2013},
% }
% 
% \bib{ojanguren-panin}{article}{
%     author={Ojanguren, Manuel},
%     author={Panin, Ivan},
%     title={A purity theorem for the Witt group},
%     journal={Ann. Sci. \'Ecole Norm. Sup. (4)},
%     volume={32},
%     date={1999},
%     number={1},
%     pages={71--86},
%     issn={0012-9593},
% %     review={\MR{1670591 (2000a:11057)}},
% %     doi={10.1016/S0012-9593(99)80009-3},
% }
% 
% \bib{panin-purity}{article}{
%     author={Panin, I. A.},
%     title={Purity conjecture for reductive groups},
%     journal={Vestnik St. Petersburg Univ. Math.},
%     volume={43},
%     date={2010},
%     number={1},
%     pages={44--48},
%     issn={1063-4541},
% %     review={\MR{2662409 (2011j:14100)}},
% %     doi={10.3103/S1063454110010085},
% }
% 
% \bib{parimala-sridharan-2}{article}{
%     author={Parimala, R.},
%     author={Sridharan, R.},
%     title={Nonsurjectivity of the Clifford invariant map},
%     note={K. G. Ramanathan memorial issue},
%     journal={Proc. Indian Acad. Sci. Math. Sci.},
%     volume={104},
%     date={1994},
%     number={1},
%     pages={49--56},
%     issn={0253-4142},
% %     review={\MR{1280057 (95g:11030)}},
% %     doi={10.1007/BF02830873},
% }

\bib{parimala-srinivas}{article}{
author={Parimala, R.},
author={Srinivas, V.},
title={Analogues of the Brauer group for algebras with
involution},
journal={Duke Math. J.},
volume={66},
date={1992},
number={2},
pages={207--237},
issn={0012-7094},
% review={\MR{1162189}},
% doi={10.1215/S0012-7094-92-06606-3},
}

% 
% % \bib{puttmann-rigas}{article}{
% %     author={P{\"u}ttmann, Thomas},
% %     author={Rigas, A.},
% %     title={Presentations of the first homotopy groups of the unitary groups},
% %     journal={Comment. Math. Helv.},
% %     volume={78},
% %     date={2003},
% %     number={3},
% %     pages={648--662},
% %     issn={0010-2571},
% % }
% 
% % \bib{raynaud}{article}{
% %    author={Raynaud, Mich{\`e}le},
% %    title={Modules projectifs universels},
% %    journal={Invent. Math.},
% %    volume={6},
% %    date={1968},
% %    pages={1--26},
% %    issn={0020-9910},
% %  %  review={\MR{0236164 (38 \#4462)}},
% % }
% 

\bib{romagny}{article}{
   author={Romagny, Matthieu},
   title={Group actions on stacks and applications},
   journal={Michigan Math. J.},
   volume={53},
   date={2005},
   number={1},
   pages={209--236},
   issn={0026-2285},
%    review={\MR{2125542}},
%    doi={10.1307/mmj/1114021093},
}

\bib{saltman}{book}{
    author={Saltman, David J.},
    title={Lectures on division algebras},
    series={CBMS Regional Conference Series in Mathematics},
    volume={94},
    publisher={Published by American Mathematical Society, Providence, RI},
    date={1999},
    pages={viii+120},
    isbn={0-8218-0979-2},
%     review={\MR{1692654 (2000f:16023)}},
}
% 
% \bib{schoen}{article}{
%     author={Schoen, Chad},
%     title={Complex varieties for which the Chow group mod $n$ is not finite},
%     journal={J. Algebraic Geom.},
%     volume={11},
%     date={2002},
%     number={1},
%     pages={41--100},
%     issn={1056-3911},
% %     review={\MR{1865914 (2002h:14004)}},
% %     doi={10.1090/S1056-3911-01-00291-0},
% }
\bib{serre-course}{book}{
    author={Serre, J.-P.},
    title={A course in arithmetic},
%     note={Translated from the French; Graduate Texts in Mathematics, No. 7},
    publisher={Springer-Verlag, New York-Heidelberg},
    date={1973},
    pages={viii+115},
%     review={\MR{0344216}},
}

\bib{serre-galois}{book}{
	author={Serre, Jean-Pierre},
	title={Galois cohomology},
	note={Translated from the French by Patrick Ion and revised by the
		author},
	publisher={Springer-Verlag, Berlin},
	date={1997},
	pages={x+210},
	isbn={3-540-61990-9},
	review={\MR{1466966}},
	doi={10.1007/978-3-642-59141-9},
}

\bib{sga4-3}{book}{
    title={Th\'eorie des topos et cohomologie \'etale des sch\'emas. Tome 3},
    label = {SGA4.3},
    series={Lecture Notes in Mathematics, Vol. 305},
    note={S\'eminaire de G\'eom\'etrie Alg\'ebrique du Bois-Marie 1963--1964 (SGA 4);
    Dirig\'e par M. Artin, A. Grothendieck et J. L. Verdier. Avec la collaboration de P. Deligne et B. Saint-Donat},
    publisher={Springer-Verlag},
    place={Berlin},
    date={1973},
    pages={vi+640},
%     review={\MR{0354654 (50 \#7132)}},
}

\bib{shin}{article}{
author={Shin, Minseon},
title={The Brauer group of the moduli stack of elliptic curves over
algebraically closed fields of characteristic 2},
journal={J. Pure Appl. Algebra},
volume={223},
date={2019},
number={5},
pages={1966--1999},
issn={0022-4049},
% review={\MR{3906536}},
% doi={10.1016/j.jpaa.2018.08.010},
}

\bib{silverman}{book}{
   author={Silverman, Joseph H.},
   title={The arithmetic of elliptic curves},
   series={Graduate Texts in Mathematics},
   volume={106},
   edition={2},
   publisher={Springer, Dordrecht},
   date={2009},
   pages={xx+513},
   isbn={978-0-387-09493-9},
%    review={\MR{2514094}},
%    doi={10.1007/978-0-387-09494-6},
}

\bib{lmfdb}{article}{
    shorthand    = {LMFDB},
    author       = {The {LMFDB Collaboration}},
    title        =  {The L-functions and Modular Forms Database},
    eprint = {http://www.lmfdb.org},
    note         = {Accessed: 26 May 2016},
}

\bib{stacks-project}{book}{
  shorthand    = {Stacks},
  author       = {The {Stacks Project Authors}},
  title        = {Stacks Project},
  howpublished = {\url{http://stacks.math.columbia.edu}},
  note         = {Accessed: 17 October 2017},
}
\bib{weibel}{book}{
    author={Weibel, Charles A.},
    title={An introduction to homological algebra},
    series={Cambridge Studies in Advanced Mathematics},
    volume={38},
    publisher={Cambridge University Press, Cambridge},
    date={1994},
    pages={xiv+450},
    isbn={0-521-43500-5},
    isbn={0-521-55987-1},
%     review={\MR{1269324 (95f:18001)}},
%     doi={10.1017/CBO9781139644136},
}
% 
% % \bib{williams2013}{article}{
% %   author={Williams, Ben},
% %   title={The $\Gm$--equivariant motivic cohomology of Stiefel varieties},
% %   journal={Algebraic \& Geometric Topology},
% %   volume={13},
% %   date={2013},
% %   pages={747--793},
% % }
% 
% % \bib{whitehead}{book}{
% %    author={Whitehead, George W.},
% %    title={Elements of homotopy theory},
% %    series={Graduate Texts in Mathematics},
% %    volume={61},
% %    publisher={Springer-Verlag},
% %    place={New York},
% %    date={1978},
% %    pages={xxi+744},
% %    isbn={0-387-90336-4},
% % %    review={\MR{516508 (80b:55001)}},
% % }
% 
% % \bib{woodward}{article}{
% %     author={Woodward, L. M.},
% %     title={The classification of principal ${\rm PU}_{n}$-bundles over a $4$-complex},
% %     journal={J. London Math. Soc. (2)},
% %     volume={25},
% %     date={1982},
% %     number={3},
% %     pages={513--524},
% %     issn={0024-6107},
% % }

\end{biblist}
\end{bibdiv}

\end{document}